\documentclass[11pt]{amsart}

\usepackage{amssymb,amsfonts,amsmath,amscd} 
\usepackage{amsthm}
\usepackage{stmaryrd}
\usepackage{graphicx,graphics}
\usepackage{latexsym}
\usepackage{color} 
\usepackage{pstricks}
\usepackage{pst-node}
\usepackage{setspace} 
\usepackage{verbatim}
\usepackage{multirow}
\usepackage[dvips,pdfstartview={FitH -32768},bookmarks=false,colorlinks=false]{hyperref}

\voffset=0mm
\headheight=0mm
\topmargin=0mm
\oddsidemargin=0mm
\evensidemargin=0mm
\textheight=230mm
\textwidth=162mm
\abovedisplayskip=0pt
\abovedisplayshortskip=0pt
\belowdisplayskip=0pt
\belowdisplayshortskip=0pt

\newtheorem{thm}{Theorem}[section]
\newtheorem{prop}[thm]{Proposition}
\newtheorem{lem}[thm]{Lemma}

\newtheorem{col}[thm]{Corollary}
\newtheorem{prop-defn}[thm]{Proposition-Definition}
\newtheorem{conj}[thm]{Conjeture}

\theoremstyle{definition}
\newtheorem{defn}[thm]{Definition}
\newtheorem{exa}[thm]{Example}
\newtheorem{rem}[thm]{Remark}

\numberwithin{equation}{subsection}

\newcommand{\Span}[1]{\left<#1\right>}
\newcommand{\Hilb}[2]{#1\text{-Hilb}(#2)}
\newcommand{\Mtheta}{\mathcal{M}_{\theta}(Q,R)}
\newcommand{\C}{\mathbb C}
\newcommand{\R}{\mathbb R}

\newcommand{\Z}{\mathbb Z}
\newcommand{\Q}{\mathbb Q}
\newcommand{\PP}{\mathbb P}

\newcommand{\F}{\mathcal F}
\newcommand{\G}{\mathcal G}

\newcommand{\RR}{\mathcal R}
\newcommand{\kk}{\ensuremath{\Bbbk}}

\newcommand{\OO}{\mathcal O}

\DeclareMathOperator{\GL}{GL}
\DeclareMathOperator{\SL}{SL}
\DeclareMathOperator{\rep}{rep}
\DeclareMathOperator{\Irr}{Irr}

\DeclareMathOperator{\Hom}{Hom}

\DeclareMathOperator{\Spec}{Spec}
\DeclareMathOperator{\diag}{diag}

\DeclareMathOperator{\Aut}{Aut}
\DeclareMathOperator{\Coh}{Coh}

\DeclareMathOperator{\D}{D}

\DeclareMathOperator{\SO}{SO}

\DeclareMathOperator{\Orb}{Orb}
\DeclareMathOperator{\Id}{Id}

\DeclareMathOperator{\Supp}{Supp}
\DeclareMathOperator{\Ker}{Ker}
\DeclareMathOperator{\Mat}{Mat}

\title{On $G/N$-Hilb of $N$-Hilb}
\author{Akira Ishii}
\author{Yukari Ito}
\author{\'Alvaro Nolla de Celis}
\address{Department of Mathematics, Graduate School of Science, Hiroshima
University, 1-3-1
Kagamiyama, Higashi-Hiroshima, 739-8526, Japan}
\email{akira@math.sci.hiroshima-u.ac.jp}
\address{Graduate School of Mathematics\\ 
 Nagoya University, Chikusa-ku Nagoya 464-8602 Japan}
\email{y-ito@math.nagoya-u.ac.jp}
\address{Graduate School of Mathematics\\ 
 Nagoya University, Chikusa-ku Nagoya 464-8602 Japan}
\email{alnolla@gmail.com}
\date{}
\thanks{The first author is supported by JSPS grant No.21540039 and No. 23540045.
The second and third authors are supported by JSPS grant No.23540045, No.22224001 and No.09F09768.
The third author is supported by FY2009 JSPS Fellowship for Foreign Researchers and JSPS grant No.09F09768.}

\subjclass[2010]{Primary~14E16, Secondary~14C05, 14D20, 14E15, 16G20}

\begin{document}

\maketitle
\begin{abstract}
In this paper we consider the iterated $G$-equivariant Hilbert scheme $G/N$-Hilb($N$-Hilb) and prove that $\Hilb{G/N}{\Hilb{N}{\C^3}}$ is a crepant resolution of $\C^3/G$ isomorphic to the moduli space $\mathcal{M}_\theta(Q)$ of $\theta$-stable representations of the McKay quiver $Q$ for certain stability condition $\theta$. We provide several explicit examples to illustrate this construction. We also consider the problem of when G/N-Hilb(N-Hilb) is isomorphic to G-Hilb showing the fact that these spaces are most of the times different.
\end{abstract}

\setcounter{tocdepth}{1}
\tableofcontents

\section{Introduction}

Let $X$ be a nonsingular quasiprojective complex 3-fold and let $G\subset\Aut X$ be a finite subgroup such that the stabilizer subgroup of any point $x\in X$ acts on the tangent space $T_xX$ as a subgroup of $\SL(T_xX)$. Let $\Hilb{G}{X}$ be the fine moduli space of $G$-clusters and $\mathcal{Z}$ be the universal subscheme. We have the following celebrated theorem of Bridgeland, King and Reid:

\begin{thm}[\cite{BKR}]\label{BKR} $Y=\Hilb{G}{X}$ is irreducible and $f:Y\to X/G$ is a crepant resolution. Furthermore, $\Phi:\D^b(\Coh Y)\longrightarrow\D^b(\Coh^GX)$ is an equivalence of derived categories where $\Phi$ is the Fourier-Mukai transform with kernel $\OO_{\mathcal{Z}}$.
\end{thm}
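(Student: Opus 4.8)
The plan is to produce the derived equivalence first and to read off every other assertion from it. Consider the Fourier--Mukai functor
\[
\Phi \;=\; \Phi^{\OO_{\mathcal{Z}}}_{Y\to X}\colon\ \D^b(\Coh Y)\longrightarrow \D^b(\Coh^G X),\qquad
F\longmapsto Rp_{X*}\bigl(p_Y^*F\otimes^L\OO_{\mathcal{Z}}\bigr),
\]
with kernel the structure sheaf of the universal subscheme $\mathcal{Z}\subset Y\times X$. Because $\mathcal{Z}$ is finite and flat over $Y$ --- this is exactly the moduli property of $\Hilb{G}{X}$, all $G$-clusters having length $|G|$ --- the functor $\Phi$ is exact and bounded, and since $Y$ is projective over $X/G$ it has left and right adjoints realised by Fourier--Mukai transforms with the twisted dual kernels; write $\Psi$ for the left adjoint. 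Granting that $\Phi$ is fully faithful, it is automatically an equivalence: its essential image is an admissible subcategory of $\D^b(\Coh^G X)=\D^b([X/G])$, which is indecomposable because $X$, hence the quotient stack $[X/G]$, is connected. Moreover the hypothesis that every stabiliser acts through $\SL$ makes the Serre functor of $\D^b(\Coh^G X)$ equal to $-\otimes\omega_X[\Dim X]$ with its canonical linearisation, and a computation with the kernel $\OO_{\mathcal{Z}}$ (again using the $\SL$ condition) shows that $\Phi$ intertwines it with $-\otimes\omega_Y[\Dim Y]$. Thus the whole theorem reduces to one assertion, which I call $(\star)$: \emph{$\Phi$ is fully faithful}.

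To prove $(\star)$ I would apply the Fourier--Mukai full-faithfulness criterion (Bridgeland): it suffices to show that for all closed points $y_1,y_2\in Y$ one has $\Hom^0_{\D^b(\Coh^G X)}(\Phi\OO_{y_1},\Phi\OO_{y_2})=\C$ when $y_1=y_2$, that this $\Hom$ vanishes for $y_1\neq y_2$, and that the higher and lower $\Hom$'s vanish outside the range $[0,\Dim X]$ (and in every degree when $y_1\neq y_2$). By adjunction these groups are computed on $Y\times Y$ by the kernel $\mathcal{R}$ of $\Psi\circ\Phi$, whose cohomology sheaves are set-theoretically supported on the fibre product $Y\times_{X/G}Y$; over the locus of free $G$-orbits the morphism $f$ is an isomorphism, so there $\mathcal{R}$ is simply $\OO_{\Delta_Y}$ and the desired identities are immediate. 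The remaining vanishings follow from the \emph{intersection theorem} (a consequence of the New Intersection Theorem of commutative algebra): a non-zero object of the bounded derived category of a smooth $n$-dimensional variety whose cohomology is supported in dimension $d$ must, at a suitable point of that support, have derived restriction of cohomological amplitude at least $n-d$. Feeding in the dimension bound below, the off-diagonal part of $\mathcal{R}$ is supported on a set too small to carry any cohomology in the forbidden degrees, which yields $(\star)$.

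The geometric core of the argument, and the step I expect to be the main obstacle, is the estimate
\[
\Dim\bigl(Y\times_{X/G}Y\bigr)\ \le\ \Dim X+1 ,
\]
which has no soft proof and is precisely where the assumption $\Dim X=3$ is used. One stratifies $X/G$ by orbit type: over the free locus the fibre product is the diagonal, of dimension $\Dim X$; over a one-dimensional stratum a transverse slice is a Kleinian surface singularity, whose minimal resolution has one-dimensional exceptional fibres, so that stratum contributes $1+1+1\le\Dim X+1$; over an isolated singular point the fibre of $f$ has dimension at most $2$, contributing $2+2+0\le\Dim X+1$. The delicate point is that $Y=\Hilb{G}{X}$ is not yet known to be irreducible or even equidimensional, so one must rule out a three-dimensional fibre over a point and control any spurious components; this requires a direct local analysis of the Hilbert scheme in a neighbourhood of such a $G$-cluster. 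Once $(\star)$ --- hence the equivalence $\D^b(\Coh Y)\simeq\D^b(\Coh^G X)$ --- is in hand, the rest is formal: $Y$ is smooth since $\D^b(\Coh^G X)=\D^b([X/G])$ has finite homological dimension; $Y$ is connected, and being smooth it is therefore irreducible, because $\D^b(\Coh^G X)$ is indecomposable; $f\colon Y\to X/G$ is birational (it is an isomorphism over the dense free locus) with $Y$ smooth, so it is a resolution; and the compatibility of $\Phi$ with Serre functors, together with the fact that $X\to X/G$ is itself crepant under the $\SL$ hypothesis, forces $\omega_Y\cong f^*\omega_{X/G}$, i.e. $f$ is crepant.
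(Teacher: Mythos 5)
This theorem is stated in the paper as a quotation from \cite{BKR} with no proof given, so there is no internal argument to compare against; the relevant comparison is with the cited source itself. Your outline reproduces essentially the original Bridgeland--King--Reid argument --- the Fourier--Mukai functor with kernel $\OO_{\mathcal{Z}}$, Bridgeland's point-object criterion for full faithfulness, the intersection theorem, the key estimate $\Dim(Y\times_{X/G}Y)\le \Dim X+1$ (with the honest caveat about first working on the component dominating $X/G$), and the Serre-functor/orthogonal-indecomposability step upgrading full faithfulness to an equivalence and yielding smoothness and crepancy --- so it is the same approach as the proof in \cite{BKR}.
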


Our framework is the following: let $G\subset\SL(3,\C)$ be finite and $N\unlhd G$ a normal subgroup. First consider the action of $N$ on $\C^3$ and take the crepant resolution $Y:=\Hilb{N}{\C^3}$. Next act with $G/N$ on $Y$ to obtain $G/N$-Hilb($Y$).

\begin{center}
\begin{pspicture}(0,-1.5)(6,1.8)
\psset{nodesep=4pt}
\rput(0,1.3){\rnode{HoH}{$\Hilb{G/N}{Y}$}}
\rput(2,0){\rnode{YN}{$Y/(G/N)$}}
\rput(4,1.3){\rnode{NN}{$Y=\Hilb{N}{\C^3}$}}
\rput(6,0){\rnode{C3N}{$\C^3/N$}}
\rput(8,1.3){\rnode{C3}{$\C^3$}}
\rput(4,-1.3){\rnode{C3G}{$\C^3/G$}}
\ncline{->}{C3}{C3N}\Aput[0.05]{$\pi_1$}
\ncline{->}{NN}{C3N}\Bput[0.05]{$\tau_1$}
\ncline{->}{NN}{YN}\Aput[0.05]{$\pi_2$}
\ncline{->}{HoH}{YN}\Bput[0.05]{$\tau_2$}
\ncline{->}{C3N}{C3G}\Aput[0.05]{$\phi_1$}
\ncline{->}{YN}{C3G}\Bput[0.05]{$\phi_2$}
\end{pspicture}
\end{center}

As an immediate consequence of Theorem \ref{BKR} we have the following corollary.
\begin{col}\label{cor:crep}
$\Hilb{G/N}{\Hilb{N}{\C^3}}$ is a crepant resolution of $\C^3/G$.
\end{col}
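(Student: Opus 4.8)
The plan is to obtain Corollary~\ref{cor:crep} by applying Theorem~\ref{BKR} twice, once for $N$ acting on $\C^3$ and once for $G/N$ acting on the result. First I would verify the hypothesis needed for the second application: Theorem~\ref{BKR} requires that the group act on a nonsingular quasiprojective $3$-fold so that every point stabilizer acts on the tangent space inside the special linear group. Since $N\subset\SL(3,\C)$, the first application of Theorem~\ref{BKR} gives that $Y=\Hilb{N}{\C^3}$ is a nonsingular quasiprojective irreducible $3$-fold and that $\tau_1:Y\to\C^3/N$ is a crepant resolution; crepancy here means $K_Y=\tau_1^*K_{\C^3/N}$, and in particular $Y$ carries a nowhere vanishing canonical section pulled back from the orbifold canonical class.

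Next I would check that the induced $G/N$-action on $Y$ satisfies the local $\SL$-condition. The action of $G$ on $\C^3$ descends to an action of $G/N$ on $\C^3/N$, and by functoriality of the $N$-Hilbert scheme (the universal property of $\Hilb{N}{\C^3}$ is preserved by the $G$-action normalising $N$) this lifts to an action of $G/N$ on $Y$ covering the action on $\C^3/N$. For a point $y\in Y$ with image $\bar x\in\C^3/N$ and a preimage $x\in\C^3$, the stabiliser of $y$ in $G/N$ maps to a subgroup of the stabiliser of $\bar x$, which is a quotient of the stabiliser of $x$ in $G$ by (a subgroup of) $N$; crepancy of $\tau_1$ then forces this stabiliser to act on $T_yY$ with determinant one, because $\tau_1$ is an isomorphism near the generic point of each stratum and the canonical character is preserved. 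Hence Theorem~\ref{BKR} applies to $G/N\curvearrowright Y$, yielding that $\Hilb{G/N}{Y}$ is irreducible and that $\tau_2:\Hilb{G/N}{Y}\to Y/(G/N)$ is a crepant resolution.

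Finally I would compose. The diagram in the excerpt shows $Y/(G/N)\cong (\C^3/N)/(G/N)\cong\C^3/G$ via $\phi_2$, because $\pi_2$ is the quotient by $G/N$ and taking quotients is transitive. Thus $\tau:=\phi_2\circ\tau_2:\Hilb{G/N}{Y}\to\C^3/G$ is a proper birational morphism from a nonsingular variety. To see it is crepant I would write $K_{\Hilb{G/N}{Y}}=\tau_2^*K_{Y/(G/N)}$ from the second application of Theorem~\ref{BKR}, interpreting the right-hand side via the orbifold canonical class on $[\,Y/(G/N)\,]$, which pulls back compatibly along $\phi_2$ from the orbifold canonical class on $[\C^3/G]$ since the $G/N$-action on $Y$ has trivial-determinant stabilisers and $\tau_1$ is itself crepant. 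Combining the two crepancy statements gives $K_{\Hilb{G/N}{Y}}=\tau^*K_{\C^3/G}$ in the appropriate ($\Q$-Gorenstein, or orbifold) sense, so $\tau$ is a crepant resolution.

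I expect the main obstacle to be the verification that the $G/N$-action on $Y=\Hilb{N}{\C^3}$ genuinely has the local $\SL$-property at \emph{every} point, not merely generically: one must rule out that a blown-up exceptional locus introduces a fixed point whose stabiliser acts with a nontrivial determinant character on the tangent space. The clean way to handle this is to track the canonical bundle: since $\tau_1$ is crepant, $\omega_Y$ is $G/N$-equivariantly trivial as a pullback of the (trivial) orbifold canonical bundle on $\C^3/N$, and an equivariantly trivial canonical bundle at a fixed point is exactly the statement that the stabiliser acts on $\bigwedge^3 T_yY$ trivially, i.e. inside $\SL(T_yY)$. Making this argument precise — and checking that the iterated construction really does recover $\Hilb{G/N}{Y}$ as advertised by the universal properties — is where the care is needed; the rest is formal composition.
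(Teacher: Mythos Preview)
Your approach is exactly what the paper has in mind: it states the corollary as ``an immediate consequence of Theorem~\ref{BKR}'' without further argument, and applying BKR twice---first to $N\curvearrowright\C^3$, then to $G/N\curvearrowright Y$---is precisely that. Your identification of the local $\SL$-condition as the one nontrivial verification, and your argument via equivariant triviality of $\omega_Y$, is correct and is the standard way to handle it.

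One slip to fix: you write ``$Y/(G/N)\cong(\C^3/N)/(G/N)\cong\C^3/G$ via $\phi_2$''. The second isomorphism is fine, but the first is false in general: $\phi_2:Y/(G/N)\to\C^3/G$ is only a proper birational morphism, not an isomorphism, since the exceptional locus of $\tau_1$ survives the quotient. Your subsequent paragraph in fact treats $\phi_2$ correctly as a morphism along which one pulls back, so this seems to be a momentary lapse rather than a genuine misunderstanding. The clean way to finish the crepancy argument is simply to observe that $K_{\C^3/G}$, $K_{\C^3/N}$, $K_Y$, $K_{Y/(G/N)}$ and $K_{\Hilb{G/N}{Y}}$ are all trivial (the first two because $G,N\subset\SL(3,\C)$, the third by crepancy of $\tau_1$, the fourth because the $G/N$-invariant nowhere-vanishing $3$-form on $Y$ descends, the last by crepancy of $\tau_2$), so crepancy of the composite is automatic.
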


 A similar construction was considered before by the second author in  \cite{Ito94}, \cite{Ito95b} in the case of trihedrals subgroups in $\SL(3,\C)$. 
The trihedral group is a non-Abelian finite subgroup generated by diagonal matrices and the matrix $T:=\left(\begin{smallmatrix}0&1&0\\0&0&1\\1&0&0\end{smallmatrix}\right)$. In this
case, $G= N \rtimes T$ is a semidirect product. Then in Ito's construction, we require that $T=G/N$ acts on the crepant resolution $Y$ of $\C^3/N$ symmetrically on the exceptional locus. Therefore,  this construction gives the $\Hilb{G/N}{\Hilb{N}{\C^3}}$ when  $Y=\Hilb{N}{\C^3}$.

This construction can be extended in a natural way to obtain crepant resolutions of $\C^3/G$ for any finite non-simple group $G\subset\SL(3,\C)$ (see \cite{YY} for a classification of such groups). In general, if we consider the sequence of normal subgroups $N_i$ of the form $N_0:=G$, $N_1:=N \lhd G$, $N_2 \lhd N_0/N_1, \ldots, N_i \lhd (\ldots((G/N_1)/N_2)\ldots)/N_{i-1}$ for $i\geq1$ and $N_i$ is normal in $G$, the iterated equivariant Hilbert scheme 
\[
\text{$N_i$-Hilb($N_{i-1}$-Hilb($\ldots(\Hilb{N_1}{\C^3})\ldots)$}
\] 
described in this paper is crepant. In particular it is always possible to find such a crepant resolution with $N_i$ Abelian for all $i$.

Denote by $\Irr(G)$ the set of irreducible representations of $G$ and let $(Q,R)$ be the McKay quiver of $G$ with relations $R$. For $\textbf{d}=(\dim(\rho))_{\rho\in\Irr(G)}$ and any generic $\theta$ in the space of stability conditions $\Theta$ we can define $\mathcal{M}_{\theta,\textbf{d}}(Q,R)$ to be the moduli space of $\theta$-stable representations of $Q$ satisfying the relations $R$. Moreover, there exists a chamber decomposition of $\Theta$ such that the GIT quotient $\mathcal{M}_{\theta, \mathbf{d}}(Q,R)$ is constant for all $\theta$ in any given (open) chamber $C$.
Thus,we also denote this moduli space simply by $\mathcal{M}_C$. Thus we may also denote this moduli space simply by $\mathcal{M}_C$. The methods in \cite{BKR} can be applied to prove that $\tau:\mathcal{M}_C\longrightarrow\C^3/G$ is a crepant resolution and $\Phi_C:D(\mathcal{M}_C)\longrightarrow\D^G(\C^3)$ is an equivalence of categories (see \cite{CI}, \S 2). 

For these moduli spaces, the problem of whether every (projective) crepant resolution of $\C^3/G$ is a moduli of representations of the McKay quiver was treated by Craw and the first author in the case of Abelian group actions.

\begin{thm}[\cite{CI}]\label{CIAb} For a finite Abelian subgroup $A\subset\SL(3,\C)$ let $Y\rightarrow\C^3/A$ be a projective crepant resolution. Then $Y\cong\mathcal{M}_C$ for some chamber $C\subset\Theta$.
\end{thm}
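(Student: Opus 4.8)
The plan is to prove the statement by induction over flops, exploiting the fact that $\C^3/A$ is a toric Gorenstein canonical threefold. Recall the two inputs already available: by the discussion above, for every generic $\theta\in\Theta$ the quiver moduli space $\mathcal{M}_C$ (where $C$ is the chamber of $\theta$) is a projective crepant resolution of $\C^3/A$ carrying a Fourier--Mukai equivalence $\Phi_C\colon D(\mathcal{M}_C)\xrightarrow{\sim}\D^A(\C^3)$; and one of these spaces, for a distinguished chamber $C_0$, is $\Hilb{A}{\C^3}$ itself (Ito--Nakajima). The goal is to show that the finite family $\{\mathcal{M}_C\}$, as $C$ runs over all chambers of $\Theta$, already contains \emph{every} projective crepant resolution of $\C^3/A$.

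\textbf{Step 1 (wall-crossing gives a flop or an isomorphism).} Fix a wall $W$ of a chamber $C$, let $C'$ be the chamber across $W$, and pick $\theta_0\in W$ generic in $W$. Then we obtain projective birational morphisms $\mathcal{M}_C\to\mathcal{M}_{\theta_0}\leftarrow\mathcal{M}_{C'}$ over $\C^3/A$, all crepant. The exceptional prime divisors of \emph{any} crepant resolution of the toric threefold $\C^3/A$ are in fixed bijection with the non-vertex lattice points of the junior simplex; hence $\mathcal{M}_C$ and $\mathcal{M}_{C'}$ have the same prime exceptional divisors, the birational map $\mathcal{M}_C\dashrightarrow\mathcal{M}_{C'}$ is an isomorphism in codimension one, and $\mathcal{M}_C\to\mathcal{M}_{\theta_0}$ is a small (flopping) contraction — or an isomorphism, in which case $\mathcal{M}_C\cong\mathcal{M}_{C'}$. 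In the small case I would show that $\mathcal{M}_{C'}$ is the flop of $\mathcal{M}_C$ along this contraction: the equivalence $\Phi_{C'}^{-1}\circ\Phi_C\colon D(\mathcal{M}_C)\to D(\mathcal{M}_{C'})$ restricts to an isomorphism over the complement of the flopping locus and, after twisting by a line bundle, is Bridgeland's flop functor; since the flop of a threefold along a fixed small contraction is unique, $\mathcal{M}_{C'}$ must be it. In toric language the passage $C\rightsquigarrow C'$ is exactly a bistellar flip of the corresponding triangulations of the junior simplex, which is a flop (the $1\leftrightarrow3$ type of flip, which would change the set of lattice points used, is excluded here).

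\textbf{Step 2 (every flop of $\mathcal{M}_C$ is realized by a wall).} Conversely, I claim that every flopping contraction of a given $\mathcal{M}_C$ equals $\mathcal{M}_C\to\mathcal{M}_{\theta_0}$ for some $\theta_0$ in a wall of $C$. Using the tautological bundles $\mathcal{R}_\rho$ on $\mathcal{M}_C$, the assignment $\theta\mapsto L_\theta:=\bigotimes_{\rho}\det(\mathcal{R}_\rho)^{\otimes\theta_\rho}$ defines a linear map $\Theta\to N^1(\mathcal{M}_C)_{\R}$ carrying $C$ into the ample cone. The content is that $C$ maps \emph{onto} a subcone of $\mathrm{Amp}(\mathcal{M}_C)$ that is cofinal at every codimension-one face, so that the walls of $C$ surject onto the walls of $\overline{\mathrm{Amp}(\mathcal{M}_C)}$ lying in the interior of the movable cone; equivalently, after quotienting by the subspace of $\Theta$ acting trivially on $\mathcal{M}_C$, the VGIT chamber structure restricts on the relevant cone to the secondary-fan decomposition governing the crepant triangulations. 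I expect this matching of the quiver stability parameters with the birational geometry of $\mathcal{M}_C$ — the step ruling out that some flop is ``invisible'' to $\Theta$ — to be the main obstacle.

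\textbf{Step 3 (induction).} Since $A\subset\SL(3,\C)$, the quotient $\C^3/A$ is a Gorenstein canonical toric threefold; its projective crepant resolutions correspond bijectively to the regular triangulations of the junior simplex into basic simplices (which automatically use all of its lattice points), and any two such triangulations are joined by a finite chain of bistellar flips — equivalently, any two projective crepant resolutions of $\C^3/A$ are joined by a finite sequence of flops. Starting from $\mathcal{M}_{C_0}\cong\Hilb{A}{\C^3}$ and combining Steps 1 and 2, each flop in such a chain keeps us inside the family $\{\mathcal{M}_C\}$: if $Y'$ is the flop of $\mathcal{M}_C$ along a given small contraction, Step 2 produces a wall $W$ of $C$ realizing that contraction, and Step 1 identifies the space $\mathcal{M}_{C'}$ on the other side of $W$ with $Y'$. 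By induction on the length of a flop chain from $\Hilb{A}{\C^3}$ to $Y$, every projective crepant resolution $Y\to\C^3/A$ is isomorphic to $\mathcal{M}_C$ for some chamber $C\subset\Theta$.
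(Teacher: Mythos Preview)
This theorem is not proved in the present paper: it is quoted from \cite{CI}, so the comparison is with the Craw--Ishii argument there. Your three-step strategy---wall-crossing yields an isomorphism or a flop, every flop is realised by some wall, then induct along a flop chain from $A$-Hilb---is exactly the architecture of \cite{CI}.

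There is, however, a genuine error in Step~1. From the fact that $\mathcal{M}_C$ and $\mathcal{M}_{C'}$ share the same set of exceptional prime divisors you correctly deduce that $\mathcal{M}_C\dashrightarrow\mathcal{M}_{C'}$ is an isomorphism in codimension one, but you then conclude that $\mathcal{M}_C\to\mathcal{M}_{\theta_0}$ is small. This does not follow: nothing prevents both $\mathcal{M}_C\to\mathcal{M}_{\theta_0}$ and $\mathcal{M}_{C'}\to\mathcal{M}_{\theta_0}$ from contracting the \emph{same} divisor, so that $\mathcal{M}_{\theta_0}$ is not the flopping contraction at all. In \cite{CI} this actually occurs (their ``Type III'' walls), and showing that in such cases $\mathcal{M}_C\cong\mathcal{M}_{C'}$ as varieties requires a separate argument via the Fourier--Mukai equivalences, not the geometry of $\mathcal{M}_{\theta_0}$. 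So the dichotomy ``isomorphism or flop'' is correct, but your justification is not, and the repair is the wall classification carried out in \cite{CI}.

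You are right that Step~2 is the crux. Your proposed route---show that the map $\theta\mapsto L_\theta$ identifies the chamber $C$ with (a cone cofinal in) the ample cone so that walls of $C$ surject onto walls of $\overline{\mathrm{Amp}(\mathcal{M}_C)}$ inside the movable cone---is morally what \cite{CI} does, but the execution there is considerably more hands-on: for Abelian $A$ they use the toric description of $\mathcal{M}_C$ and an explicit parametrisation of the walls by certain subrepresentations to exhibit, for each flopping curve, a wall whose crossing realises that flop. The abstract VGIT/secondary-fan matching you sketch is plausible but is not what is actually proved, and making it precise is nontrivial because many walls of $\Theta$ map to the interior of the ample cone (the Type~0 walls), so the map on chamber structures is far from a bijection.
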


Then Craw and Ishii proposed the following conjecture.

\begin{conj}\label{CI-conj}For a finite subgroup $G\subset\SL(3,\C)$ let $Y\rightarrow\C^3/G$ be a projective crepant resolution. Then $Y\cong\mathcal{M}_C$ for some chamber $C\subset\Theta$.
\end{conj}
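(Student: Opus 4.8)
\smallskip
\noindent\textbf{Towards a proof of the conjecture.}
The plan is to reduce the statement to the claim that the wall-and-chamber structure on $\Theta$ accounts for \emph{every} projective crepant resolution of $\C^3/G$; to settle this for non-simple $G$ by induction on $|G|$ using the iterated construction of this paper together with the Abelian case (Theorem~\ref{CIAb}); and to approach the simple subgroups of $\SL(3,\C)$, which lie outside that induction, through a derived-category and tilting argument.

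I would first fix the following framework. For every chamber $C\subset\Theta$ the space $\mathcal{M}_C$ is a projective crepant resolution of $\C^3/G$ admitting a Fourier--Mukai equivalence $\Phi_C\colon\D(\mathcal{M}_C)\to\D^G(\C^3)$ (recalled above from \cite{CI}, \S2); moreover a variation of GIT identifies the birational map between $\mathcal{M}_C$ and $\mathcal{M}_{C'}$ for neighbouring chambers $C,C'$ with a flop (or an isomorphism), and $\Hilb{G}{\C^3}=\mathcal{M}_{C_0}$ for a distinguished chamber $C_0$. On the other hand, by the minimal model theory of threefolds (Kawamata; and Bridgeland in the analytic setting) any two projective crepant resolutions of $\C^3/G$ are joined by a finite chain of flops, and by Theorem~\ref{BKR} each is derived equivalent to $\D^G(\C^3)$. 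Hence the collection $\{\mathcal{M}_C\}_{C\subset\Theta}$ is a connected subset of the flop graph of $\C^3/G$ containing $G$-Hilb, and the conjecture reduces to showing that it is the \emph{whole} flop graph --- equivalently, that every flop issuing from some $\mathcal{M}_C$ is realised by crossing a wall of $C$.

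For $G$ Abelian this is precisely Theorem~\ref{CIAb}, whose proof identifies $\Theta$, via variation of GIT, with the parameter space governing all toric crepant resolutions. For non-simple $G$ I would induct on $|G|$: pick a nontrivial normal subgroup $N\lhd G$; given a projective crepant resolution $Y\to\C^3/G$, use a chamber-wise strengthening of the main theorem of this paper to present $Y$ as a moduli space $\mathcal{M}_{C'}(Q_{G/N})$ of $\theta'$-stable representations of the McKay quiver of $G/N$ over a $(G/N)$-equivariant crepant resolution $W$ of $\C^3/N$; then $W\cong\mathcal{M}_{C_N}(Q_N)$ by induction, and a further extension of the identification of this paper --- from ``$N$-Hilb'' to an arbitrary $\mathcal{M}_{C_N}(Q_N)$, and from the single distinguished $\theta$ to all of $\Theta$ --- should recover $Y$ as $\mathcal{M}_C(Q_G)$ for some chamber $C$.

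The main obstacle is the surjectivity underlying each inductive step, namely that an arbitrary projective crepant resolution of $\C^3/G$ genuinely arises from \emph{some} equivariant intermediate resolution $W$ and \emph{some} $(G/N)$-stability: already in the Abelian base case this rests on delicate toric combinatorics (of Craw--Maclagan--Thomas type) with no known non-Abelian counterpart, and a flop produced by a non-equivariant wall-crossing need not visibly fit the scheme. Worse, the simple subgroups of $\SL(3,\C)$ --- those with no nontrivial normal subgroup, such as $A_5$ or $\mathrm{PSL}(2,7)$ --- fall entirely outside the induction, and for them one must argue directly. The natural route is to exhibit on a given crepant resolution $Y$ a tilting bundle $T$ whose endomorphism algebra $\End(T)$ is Morita equivalent to the skew group algebra $\C[x,y,z]\rtimes G$, so that the McKay quiver with its relations $R$ is read off from $Y$, and then to match the polarisation determined by the summands of $T$ with a point of $\Theta$; showing that the moduli space for that stability parameter is $Y$ itself --- rather than a further partial contraction of it --- is the crux, and is exactly where the problem remains genuinely hard.
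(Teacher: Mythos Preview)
The statement you are attempting to prove is labelled as a \emph{Conjecture} in the paper, and the paper does \emph{not} prove it. What the paper establishes is only the special case in which the given crepant resolution is $Y=\Hilb{G/N}{\Hilb{N}{\C^3}}$ (Theorem~\ref{thm:main} $=$ Theorem~\ref{Stab}); for this particular $Y$ it exhibits an explicit stability parameter $\theta$ (Definition~\ref{theta}) and shows that every $G$-constellation in the canonical family on $Y$ is $\theta$-stable, whence $Y\cong\mathcal{M}_C$ for the chamber $C\ni\theta$. There is therefore no ``paper's own proof'' of the full conjecture to compare against.

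Your proposal is not a proof but a research programme, and you say so yourself: the inductive step requires that an \emph{arbitrary} projective crepant resolution of $\C^3/G$ arise from some $G/N$-equivariant crepant resolution $W$ of $\C^3/N$ together with some stability on the McKay quiver of $G/N$ over $W$, and you correctly identify this surjectivity as the main obstacle with no known mechanism. The paper's result goes in the opposite direction --- it starts from a specific $W=\Hilb{N}{\C^3}$ and a specific $G/N$-stability (the $0$-generated one) and produces a chamber --- and gives no hint of how to invert this. For the simple subgroups of $\SL(3,\C)$ your tilting sketch is likewise only a wish: producing a tilting bundle on $Y$ with $\End(T)$ Morita equivalent to $\C[x,y,z]\rtimes G$ is essentially equivalent to what you want to prove, and the final sentence of your proposal concedes this. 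In short, you have accurately surveyed why the conjecture is open rather than supplied a proof.
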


In this paper we show that the projective crepant resolution $\Hilb{G/N}{\Hilb{N}{\C^3}}$ is a fine moduli space for a particular chamber $C\subset\Theta$ as follows:

\begin{thm}\label{thm:main}(= \ref{Stab})  Let $G\subset\SL(3,\C)$ be finite and let $N\unlhd G$ be a normal subgroup. The crepant resolution $\Hilb{G/N}{\Hilb{N}{\C^3}}$ is isomorphic to a moduli space of $G$-constellations $\mathcal{M}_C$ for some chamber $C\subset\Theta$.
\end{thm}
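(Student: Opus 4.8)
The plan is to equip $Z:=\Hilb{G/N}{\Hilb{N}{\C^{3}}}$ with a tautological flat family of $G$-constellations obtained by composing the two Hilbert-scheme constructions, to produce a stability parameter $\theta$ for which every member of this family is $\theta$-stable, and then to identify the induced classifying morphism $Z\to\mathcal M_{C}$ with an isomorphism. Write $Y=\Hilb{N}{\C^{3}}$. Since $N$ is normal in $G$, every $g\in G$ carries $N$-clusters to $N$-clusters while $N$ itself acts trivially on $Y$, so $G/N$ acts on $Y$; the stabiliser condition for this action follows from crepancy of $Y\to\C^{3}/N$ together with the $\SL$ condition on $G$, and is implicit already in Corollary~\ref{cor:crep}. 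Theorem~\ref{BKR} then supplies Fourier--Mukai equivalences $\Phi_{N}\colon\D^{b}(\Coh Y)\xrightarrow{\sim}\D^{b}(\Coh^{N}\C^{3})$ and $\Phi_{G/N}\colon\D^{b}(\Coh Z)\xrightarrow{\sim}\D^{b}(\Coh^{G/N}Y)$ with kernels the structure sheaves of the universal subschemes $\mathcal Z_{N}\subset\C^{3}\times Y$ and $\mathcal W\subset Y\times Z$. Because $\OO_{\mathcal Z_{N}}$ is $G$-equivariant on $\C^{3}\times Y$ (with $G$ acting on $Y$ through $G/N$), the functor $\Phi_{N}$ is $G/N$-equivariant and therefore induces an equivalence $\D^{b}(\Coh^{G/N}Y)\xrightarrow{\sim}\D^{b}(\Coh^{G}\C^{3})$, using that for a normal subgroup a $G$-equivariant structure is the same datum as a $G/N$-equivariant structure on an $N$-equivariant object. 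Composing yields an equivalence $\Phi\colon\D^{b}(\Coh Z)\xrightarrow{\sim}\D^{b}(\Coh^{G}\C^{3})$ whose kernel is the object $\mathcal P$ on $\C^{3}\times Z$ obtained by convolving $\OO_{\mathcal Z_{N}}$ with $\OO_{\mathcal W}$ along $Y$.

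The central point is that $\mathcal P$ is a sheaf, flat over $Z$, whose fibres are $G$-constellations. Both universal families are finite and flat over their bases, so for each $z\in Z$ the derived restriction $\mathcal P\otimes^{\mathbf L}\OO_{z}$ is the push-forward to $\C^{3}$ of the restriction of $\OO_{\mathcal Z_{N}}$ to $\C^{3}\times\mathcal W_{z}$, where $\mathcal W_{z}\subset Y$ is the length-$|G/N|$ subscheme classified by $z$; this is a genuine $0$-dimensional sheaf on $\C^{3}$ of length $|N|\cdot|G/N|=|G|$, so cohomology and base change forces $\mathcal P$ to be a $Z$-flat coherent sheaf concentrated in degree $0$. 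Being $G$-equivariant, its push-forward to $Z$ splits equivariantly as $\bigoplus_{\rho\in\Irr G}W_{\rho}\otimes\rho$ with each $W_{\rho}$ locally free; evaluating over the free locus of $\C^{3}/G$, where the fibre is the structure sheaf of a free $G$-orbit and hence isomorphic to $\C[G]$ as a $G$-module, shows $\operatorname{rank}W_{\rho}=\dim\rho$ for all $\rho$, so $H^{0}(\mathcal P_{z})\cong\C[G]$ as a $G$-module for every $z$. Thus $\mathcal P$ is a flat family of $G$-constellations over $Z$, equivalently a flat family of representations of the McKay quiver $(Q,R)$ with dimension vector $\mathbf d=(\dim\rho)_{\rho}$.

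It remains to choose the chamber and to conclude. Since $\Phi$ is an equivalence, $\End_{G}(\mathcal P_{z})=\Hom_{\D^{b}(Z)}(\OO_{z},\OO_{z})=\C$ and $\Hom_{G}(\mathcal P_{z},\mathcal P_{z'})=0$ for $z\neq z'$; moreover over the free locus $\mathcal P_{z}$ is a simple $G$-equivariant sheaf, hence $\theta$-stable for every generic $\theta$. As there are only finitely many walls in the space $\Theta$ of stability parameters — governed by the finitely many dimension vectors of sub-constellations that can occur among the fibres of $\mathcal P$, which vary in closed families over $Z$ — one can pick a generic $\theta$ lying in a chamber $C$ for which \emph{every} $\mathcal P_{z}$ is $\theta$-stable; this $C$ should admit an explicit description via a two-step, lexicographic stability test, first against the $G/N$-quotient structure and then against the $N$-structure. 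The family $\mathcal P$ then defines a morphism $g\colon Z\to\mathcal M_{C}$ over $\C^{3}/G$, and the composite of $\mathbf L g^{*}$ with the Fourier--Mukai functor of the universal family on $\mathcal M_{C}$ agrees with $\Phi$, so $\mathbf L g^{*}$ is an equivalence; being a projective birational, crepant morphism between the two smooth — hence $\Q$-factorial — crepant resolutions $Z$ and $\mathcal M_{C}$ of $\C^{3}/G$ (Corollary~\ref{cor:crep} and \cite{CI}), $g$ contracts no divisor, is therefore small, and hence is an isomorphism. I expect the main obstacle to be the combination of the flatness/constellation step with the verification that a single chamber $C$ works for all of $Z$ at once — that is, controlling the $G$-constellation $\mathcal P_{z}$ at the points $z$ lying over the singular locus of $\C^{3}/G$, where it degenerates; the derived-category rigidity of the $\mathcal P_{z}$ (trivial endomorphisms and pairwise orthogonality) is precisely what should reduce this to the chamber combinatorics of \cite{CI}, \S 2.
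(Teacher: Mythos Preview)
Your setup and your endgame are both essentially the same as the paper's: you build the flat family of $G$-constellations on $Z=\Hilb{G/N}{\Hilb{N}{\C^3}}$ by convolving the two universal families (the paper's Lemma~\ref{Gconst}), and you conclude by observing that the classifying map $Z\to\mathcal M_{C}$ is a morphism of crepant resolutions over $\C^{3}/G$, hence an isomorphism. Those parts are fine.

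The genuine gap is exactly the step you flag yourself at the end: the existence of a single chamber $C$ that works for every $z\in Z$. Your argument for this is a non-sequitur. You invoke that each $\mathcal P_{z}$ has $\End_{G}(\mathcal P_{z})=\C$ (correct, from the equivalence) and that there are only finitely many walls in $\Theta$. But Schurian does \emph{not} imply $\theta$-stable for some $\theta$: a brick in a module category need not lie in any stability chamber at all. And even granting that each individual $\mathcal P_{z}$ were $\theta$-stable for some nonempty open cone of $\theta$'s, finitely many such cones have no reason to intersect. So ``pick a generic $\theta$ in a chamber for which every $\mathcal P_{z}$ is stable'' is precisely the assertion to be proved, not a consequence of what you have written.

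The paper's proof is entirely devoted to filling this gap, and it does so by making your throwaway remark about a ``two-step, lexicographic stability test'' into the actual argument. Concretely: one takes $0$-generated parameters $\theta^{N}\in\Theta^{N}$ and $\theta^{G/N}\in\Theta^{G/N}$ and sets $\theta(\rho)=\theta^{N}(\rho|_{N})+\varepsilon\,\theta^{G/N}(\rho)$ for $\rho\in\Irr(G/N)$ and $\theta(\rho)=\theta^{N}(\rho|_{N})$ otherwise, with $0<\varepsilon\ll 1$ (Definition~\ref{theta}). The substantive work is Lemma~\ref{semistable}: for any $E\in\Coh^{G/N}_{0}(Y)$ the image $\Phi(E)$ is $\theta^{N}$-\emph{semistable}, and any $G$-subsheaf $\F\subset\Phi(E)$ with $\theta^{N}(\F)=0$ is of the form $\Phi(F)$ for a $G/N$-subsheaf $F\subset E$. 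This is proved by passing through the forgetful functors and using that the Jordan--H\"older factors of $\Phi^{N}(E)$ are $N$-clusters. Given this, for $E=\OO_{W}$ a $G/N$-cluster one gets $\theta(\F)>0$ either because $\theta^{N}(\F)>0$ (and $\varepsilon\ll 1$), or because $\theta^{N}(\F)=0$ forces $\F=\Phi(F)$ and then $\theta(\F)=\varepsilon\,\theta^{G/N}(F)>0$ by the $G/N$-cluster stability of $\OO_{W}$. That is the content you are missing; the derived rigidity of the $\mathcal P_{z}$ does not shortcut it.
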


Thus our main result shows that the conjecture holds for the family $\Hilb{G/N}{\Hilb{N}{\C^3}}$ of crepant resolutions for general $G\subset\SL(3,\C)$. 

As we see in this paper, the varieties underlying the fine moduli spaces $\Hilb{G}{\C^3}$ and $\Hilb{G/N}{\Hilb{N}{\C^3}}$ are in general non-isomorphic quasiprojective varieties. Even when they coincide, as moduli spaces of representations of the McKay quiver almost always they belong to different chambers in the space of stability conditions $\Theta$, or in other words, the corresponding tautological vector bundles are not the same. 

It is therefore natural to ask when the iterated Hilbert scheme $\Hilb{G/N}{\Hilb{N}{\C^3}}$ is isomorphic to $\Hilb{G}{\C^3}$. In this paper we give a complete answer for this problem when they are considered as moduli spaces, i.e.\ both the underlying variety and the tautological vector bundle coincide. For the problem of when they are isomorphic as algebraic varieties, we present the list of such cases when the group $G$ is Abelian. For non-Abelian cases, we prove that $\Hilb{G/N}{\Hilb{N}{\C^3}}$ and $\Hilb{G}{\C^3}$ are non-isomorphic varieties when $G$ is a non-Abelian small subgroup of $\GL(2, \C)$ embedded in $\SL(3,\C)$ and $N=G \cap \SL(3,\C)$, and for some polyhedral groups $G$ in $\SO(3)$. These results suggest that the moduli spaces $\mathcal{M}_C$ are actually varying. We summarize the results in this direction in the following theorems.

\begin{thm}\label{thm:isoDim2}(= \ref{IsoModulidim3},\ref{IsoModulidim2},\ref{prop:non-min})
Let $G\subset\GL(2,\C)$ be a finite small subgroup and let $N\neq G,\{1\}$ be a normal subgroup. Let $Y:=\Hilb{G/N}{\Hilb{N}{\C^2}}$. Then,
\begin{itemize}
\item[(i)] If $G\subset\GL(2,\C)$ then $Y\cong\Hilb{G}{\C^2}$ as moduli spaces if and only if $G\cong\frac{1}{rs}(1,1)$ and $N\cong\frac{1}{s}(1,1)$ for some $r,s\geq2$.
\item[(ii)] If $G\subset\SL(2,\C)$ then $Y\cong\Hilb{G}{\C^2}$ as algebraic varieties.
\item[(iii)] If $G\not\subset\SL(2,\C)$ is non-Abelian and $N=G\cap\SL(2,\C)$ then $Y$ and $\Hilb{G}{\C^2}$ are non-isomorphic as algebraic varieties.
\end{itemize}
\end{thm}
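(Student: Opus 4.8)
The plan is to handle the three parts separately, the common ingredient being that for a finite small subgroup $H\subset\GL(2,\C)$ the scheme $\Hilb{H}{\C^2}$ is the unique minimal resolution of $\C^2/H$ (Ito--Nakamura when $H\subset\SL(2,\C)$, Ishii in general); since a normal subgroup of a small group is small, both $\Hilb{N}{\C^2}$ and $\Hilb{G}{\C^2}$ are minimal resolutions. Write $Y_0:=\Hilb{N}{\C^2}$, a smooth surface carrying an action of $H:=G/N$, so that $Y=\Hilb{H}{Y_0}$. For \emph{part (ii)}, when $G\subset\SL(2,\C)$ the surface $\C^2/N$ is du Val, hence $Y_0$ has trivial canonical class and carries a nowhere-vanishing $2$-form $\omega$, the descent of $dx\wedge dy$, which the $H$-action preserves because $N\unlhd G\subset\SL(2,\C)$. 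A finite group acting symplectically on a connected surface has only isolated fixed points --- at a smooth point of a fixed curve the differential would fix both a line and a symplectic form, hence be the identity --- and at each of them the stabilizer acts symplectically on the tangent plane, so through $\SL(2,\C)$. Consequently $Y_0\to Y_0/H$ is \'etale in codimension one (so $Y_0/H$ has trivial canonical class), $Y_0/H$ has only du Val singularities, and the proper birational morphism $Y_0/H\to\C^2/G$ is crepant ($\C^2/G$ also has trivial canonical class); moreover $Y=\Hilb{H}{Y_0}\to Y_0/H$ is \'etale-locally $\Hilb{\Gamma}{\C^2}\to\C^2/\Gamma$ for the $\SL(2,\C)$-stabilizers $\Gamma$, so it too is crepant and $Y$ is smooth. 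Thus $Y\to\C^2/G$ is a crepant, hence minimal, resolution, and by uniqueness $Y\cong\Hilb{G}{\C^2}$ as algebraic varieties.

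For \emph{part (iii)}, $N=G\cap\SL(2,\C)=\ker(\det|_G)$ is a proper normal subgroup; it is non-trivial, for otherwise $\det$ would embed $G$ in $\C^\ast$ and $G$ would be cyclic. So $\C^2/N$ is a nontrivial du Val singularity, $H=G/N\cong\det(G)$ is cyclic of order $m\ge2$, and a generator of $H$ acts on $Y_0$ rescaling $\omega$ by a primitive $m$-th root of unity; in particular the $H$-action on $Y_0$ is \emph{not} symplectic. Since $G$ is small and $G\not\subset\SL(2,\C)$, the singularity $\C^2/G$ is not Gorenstein (Watanabe) and admits no crepant resolution, so $\Hilb{G}{\C^2}$, being the minimal resolution, has some negative discrepancies and no $(-1)$-curve; hence $Y\cong\Hilb{G}{\C^2}$ would force $Y$ to be the minimal resolution of $\C^2/G$, and it suffices to show that $Y$ is not minimal --- i.e.\ that $Y$ is singular or contains a $(-1)$-curve. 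I would prove this by going through the classification of finite small subgroups of $\GL(2,\C)$: for each admissible pair $(G,N)$ one writes down the dual graph of $\Hilb{G}{\C^2}$ (a known star-shaped Hirzebruch--Jung configuration) and, separately, the dual graph of $Y=\Hilb{H}{Y_0}$ --- obtained by recording how the cyclic group $H$ permutes and acts on the du Val configuration of $Y_0=\Hilb{N}{\C^2}$, forming $Y_0/H$, and minimally resolving its cyclic quotient singularities --- and compares them, finding in each case that $Y$ carries a $(-1)$-curve: passing through the Gorenstein surface $\C^2/N$ forces strictly more exceptional curves than $\C^2/G$ requires.

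For the finer statement \emph{(i)} one uses that $Y$ is a fine moduli space $\mathcal M_C$ of representations of the McKay quiver of $G$ for a suitable chamber $C$ (the two-dimensional counterpart of Theorem~\ref{thm:main}), the chamber being determined by the tautological bundles $\mathcal R_\rho$ produced by the two Hilbert-scheme steps, assembled via Clifford theory relating $\Irr(G)$ to $\Irr(N)$ and the irreducibles of the relevant stabilizers in $H$; likewise $\Hilb{G}{\C^2}\cong\mathcal M_{C_0}$ for the distinguished $G$-Hilbert chamber $C_0$, and an isomorphism \emph{as moduli spaces} is exactly the equality $C=C_0$, i.e.\ the coincidence of the two families of tautological bundles. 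If $G=\tfrac{1}{rs}(1,1)$ and $N=\tfrac{1}{s}(1,1)$ all groups are diagonal scalars, everything is toric, and a direct computation shows the iterated construction produces the same subdivided fan with the same marking as $G$-Hilb, so $C=C_0$. Conversely, for every other $(G,N)$ one exhibits a wall between $C$ and $C_0$ --- for instance an exceptional curve $E$ and a $\rho\in\Irr(G)$ for which $\deg(\mathcal R_\rho|_E)$ differs in the two descriptions: in the Abelian case this says the two toric structures use different triangulations unless $G$ is scalar of the above type, and in the non-Abelian case it follows a fortiori from (iii) when $N=G\cap\SL(2,\C)$, and from a direct comparison of marked fans and bundles otherwise. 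The main obstacle is clearly part (iii): there seems to be no one-line argument, and the proof rests on the classification of small subgroups of $\GL(2,\C)$ and the explicit bookkeeping of the $G/N$-action on the exceptional configuration of $\Hilb{N}{\C^2}$; the harder direction of (i) (showing that no other $(G,N)$ gives matching moduli), which demands pinning down exactly which chamber the iterated construction occupies, is a close second.
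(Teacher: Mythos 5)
Your part (ii) is correct and is essentially the paper's argument (the paper simply invokes uniqueness of the minimal resolution; your symplectic-form justification that $Y$ is crepant is a reasonable expansion of what the paper leaves implicit). The problems are in (i) and (iii), where in each case the decisive step is announced rather than carried out, and where the paper has a uniform argument that your plan misses.

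For (iii) you reduce correctly to showing that $Y=\Hilb{G/N}{\Hilb{N}{\C^2}}$ is not a minimal resolution, but then propose to verify this ``by going through the classification of finite small subgroups of $\GL(2,\C)$'' and comparing dual graphs case by case --- none of which is executed, and you yourself flag it as the main obstacle. The paper's Proposition \ref{prop:non-min} avoids any classification: since $G$ is non-Abelian the Dynkin configuration on $Y_1=\Hilb{N}{\C^2}$ has a trivalent curve $E_0$, which the cyclic group $G/N$ must fix pointwise; hence the quotient map $Y_1\to X_2=Y_1/(G/N)$ ramifies along $E_0$ with index $e_0=|G/N|$, Hurwitz gives $K_{X_2}\equiv-\sum(1-\tfrac1{e_i})\bar E_i$, and intersecting $K_{Y_2}$ with the proper transform of an adjacent unramified curve $E_1$ yields $K_{Y_2}\cdot\widetilde E_1=-(1-\tfrac1{e_0})+\sum a_j<0$, so $K_{Y_2}$ is not nef and $Y_2$ is not minimal. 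For (i), the ``only if'' direction is likewise left as a promise (``one exhibits a wall \ldots for which $\deg(\RR_\rho|_E)$ differs''), with no mechanism for producing such a wall. The paper's mechanism is Lemma \ref{LemDimFibre}: if the fibre $\tau_1^{-1}(0)\subset Y_1$ is not contained in the $G/N$-fixed locus, then the support map $\mathcal Z_2\times_{Y_1}\mathcal Z_1\to Y_2\times\C^2$ identifies distinct points, so the pushed-forward family is not a family of clusters and $C\neq C'$. From this one deduces that $C=C'$ forces the entire exceptional fibre of $\Hilb{N}{\C^2}$ to be a single $\PP^1$ fixed pointwise (two fixed curves meeting would fix a two-dimensional tangent space), whence $N\cong\frac1s(1,1)$ and every $g\in G/N$ acts as a scalar on $(x:y)$, giving $G\cong\frac1{rs}(1,1)$. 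Without this fixed-locus criterion your argument for the ``only if'' direction of (i) does not get off the ground; the ``if'' direction (the toric/building-block computation) is fine and matches the paper.
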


\begin{thm}\label{thm:isoDim3}(= \ref{IsoModulidim3},\ref{IsoVarAb},\ref{col:non-iso})
Let $G\subset\SL(3,\C)$ be a finite small subgroup and let $N\neq G,\{1\}$ be a normal subgroup. Let $Y:=\Hilb{G/N}{\Hilb{N}{\C^3}}$. Then,
\begin{itemize}
\item[(i)] $Y\cong\Hilb{G}{\C^3}$ as moduli spaces if and only if $G\cong\frac{1}{2r}(1,1,2r-2)$ and $N\cong\frac{1}{2}(1,1,0)$.
\item[(ii)] If $G$ is Abelian, then $Y\cong\Hilb{G}{\C^3}$ as algebraic varieties if and only if we are in one of the following situations:
	\begin{enumerate}
	\item $G/N\cong\Z/m\Z\times\Z/m\Z$ for some $m>1$.
	\item $G\cong\frac{1}{r}(1,1,r-2)$ or $G\cong\frac{1}{r}(1,r-1,0)$, i.e.\ $\C^3/G$ has a 	unique crepant resolution.
	\item $G\cong\frac{1}{2r}(1,a,-a-1)$ with $(2r,a)=1$, $a^2\equiv1$ (mod $4r$) and 	$N\cong\frac{1}{2}(1,1,0)$.
	\item There is a subgroup $G'\subset G$ containing $N$ such that $(G',N)$ fits into 	either $(2)$ or $(3)$ and $G/G'\cong\Z/m\Z\times\Z/m\Z$ for some $m>1$.
	\end{enumerate}
\item[(iii)] If $G\subset\SO(3)$ is of type $D_{2n}$ or $G_{12}$ (defined in sections \ref{exaDn} and \ref{exaE6} respectively) and $N$ is the maximal Abelian subgroup, or if $G$ is isomorphic to a non-Abelian finite small subgroup of $\GL(2,\C)$ and $N=G\cap\SL(2,\C)$, then $Y$ and $\Hilb{G}{\C^3}$ are non-isomorphic as algebraic varieties.
\end{itemize}
\end{thm}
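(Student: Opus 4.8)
The three parts require rather different methods, and I would use Theorem~\ref{thm:main} throughout to present $Y=\Hilb{G/N}{\Hilb{N}{\C^3}}$ as a moduli space $\mathcal{M}_C$ of $G$-constellations and $Z:=\Hilb{G}{\C^3}$ as $\mathcal{M}_{C_0}$, where $C_0$ is the Hilbert chamber (the one for which $\theta$-stability is the cyclic generation condition). For part~(i), ``isomorphic as moduli spaces'' means that the tautological bundles $\bigoplus_{\rho\in\Irr(G)}\mathcal{R}_\rho\otimes\rho$ agree, equivalently that $C=C_0$ after identifying chambers related by the obvious symmetries of the McKay quiver. The plan is to read the chamber $C$ off the iterated construction: the $N$-Hilbert chamber of $N$ acting on $\C^3$ together with the $G/N$-Hilbert chamber on $Y$ determines, through the stability analysis of \ref{Stab}, a finite system of sign conditions on the numbers $\theta(\rho)$, and this is to be compared with the sign conditions that define $C_0$ via Nakamura's and Craw--Reid's combinatorial description of $\Hilb{G}{\C^3}$. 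Reducing first to the case $N$ Abelian and then running through the resulting inequalities on the junior simplex forces $G\cong\frac{1}{2r}(1,1,2r-2)$ with $N\cong\frac12(1,1,0)$; in that case one checks directly that the chambers coincide. I expect the matching of inequalities, rather than any conceptual difficulty, to be the bulk of the work here.

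For part~(ii), with $G$ Abelian the situation is toric: crepant resolutions of $\C^3/G$ are the basic (unimodular) triangulations of the junior simplex $\Delta$, the variety $Z$ corresponds to the Craw--Reid triangulation, and $Y$ corresponds to the triangulation obtained by first subdividing $\Delta$ using only the lattice points visible to $N$ (the $N$-Hilb triangulation) and then refining each subcell by the $G/N$-Hilb rule on the toric $3$-fold $\Hilb{N}{\C^3}$. The plan is then: (a) recall that $\C^3/G$ has a unique crepant resolution exactly for $G\cong\frac1r(1,1,r-2)$ and $G\cong\frac1r(1,r-1,0)$, which gives case~(2) at once; (b) show by a direct toric computation that $G/N\cong\Z/m\Z\times\Z/m\Z$ forces the two triangulations to agree (up to a lattice symmetry), giving case~(1); (c) exhibit the lattice involution of the resolution that exists precisely when $a^2\equiv1\pmod{4r}$ in the family $G\cong\frac{1}{2r}(1,a,-a-1)$ with $N\cong\frac12(1,1,0)$, which yields the abstract isomorphism of case~(3); and (d) observe that the class of pairs $(G,N)$ with $Y\cong Z$ is closed under passing to a further $\Z/m\Z\times\Z/m\Z$-quotient, giving case~(4). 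The main obstacle is the converse: showing that no other $(G,N)$ produces abstractly isomorphic varieties. I would handle this by exhibiting, in each remaining family, an exceptional prime divisor or compact curve in one resolution whose self-intersection data, or whose incidence with the other exceptional divisors, cannot be reproduced in the other, so that no isomorphism of abstract varieties can exist.

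For part~(iii) the groups are non-Abelian and the goal is a non-isomorphism, so I need a birational invariant that separates $Y$ from $Z$. All crepant resolutions of $\C^3/G$ have the same Betti and Euler numbers --- these compute the stringy invariants of the quotient --- so coarse invariants are useless, and I would instead compare the incidence structure of the exceptional locus: the dual graph of exceptional prime divisors, the configuration of compact curves over the singular point, and the way $\tau_2$ contracts them. For $G$ a non-Abelian small subgroup of $\GL(2,\C)$ embedded in $\SL(3,\C)$ with $N=G\cap\SL(2,\C)$, I would first describe $\Hilb{N}{\C^3}$ and the induced $G/N$-action, compute $\Hilb{G/N}{\Hilb{N}{\C^3}}$, and compare with the known description of $\Hilb{G}{\C^3}$; the mechanism is the two-dimensional phenomenon of \ref{thm:isoDim2}(iii) transported to dimension three, and I would reuse that computation where possible. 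For the polyhedral cases $D_{2n}$ and $G_{12}$ in $\SO(3)$ with $N$ the maximal (cyclic) Abelian normal subgroup, I would compute both resolutions explicitly from the McKay quiver and the toric data as in sections~\ref{exaDn} and \ref{exaE6}, and read off a concrete discrepancy --- for instance the number of exceptional divisors meeting a distinguished curve, or the presence of a particular flopping curve --- that rules out an isomorphism. The genuinely hard point throughout (iii) is that there is no uniform invariant: each family needs its own explicit resolution, and the real work is organising the case analysis.
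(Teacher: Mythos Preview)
Your plan for part~(iii) is essentially the paper's: explicit computation of the exceptional fibre and its normal bundles for $D_{2n}$ and $G_{12}$, and for the intransitive case reducing to the two-dimensional non-minimality statement (Proposition~\ref{prop:non-min}) by looking at the proper transform of $\C^2/G\subset\C^3/G$. That is fine.

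For part~(i) there is a real gap. You propose to match chamber inequalities directly, ``reducing first to the case $N$ Abelian and then running through the resulting inequalities on the junior simplex''. But $G$ here is an \emph{arbitrary} finite subgroup of $\SL(3,\C)$, so there is no junior simplex available, and you give no mechanism for reducing to $N$ Abelian (nor is one obvious). The paper avoids this entirely via a geometric criterion (Lemma~\ref{LemDimFibre}): if the fibre $\tau_1^{-1}(0)\subset\Hilb{N}{\C^3}$ is not pointwise fixed by $G/N$, then the universal family on $Y$ fails to be a closed subscheme of $Y\times\C^3$, hence $C\neq C_0$. This forces $\tau_1^{-1}(0)\subset Y_1^{G/N}$, so the fibre is a single curve, the rank of the Grothendieck group of sheaves supported on it is two, and hence $N\cong\Z/2\Z$. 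From there a short computation with the $G/N$-action on the two affine charts of $\Hilb{N}{\C^3}$ pins down $G$. Without this lemma your inequality-matching has to cope with all $G$ at once, which is not feasible.

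For part~(ii) your ``if'' direction is right and matches the paper (Lemmas~\ref{ZmZm}, \ref{Lem:ZmZm}, \ref{CaseZ2}). The ``only if'' direction as you sketch it --- exhibit a distinguishing exceptional curve or divisor ``in each remaining family'' --- is an open-ended case analysis over all Abelian $(G,N)$ not on the list, and you do not say how to organise it. The paper's argument is structural rather than case-by-case: using the Craw--Reid property that every line in the $G$-Hilb triangulation either passes through a vertex $e_i$ or lies in a regular triangle (Proposition~\ref{KeyProp}), one shows that if the isomorphism holds then the $N$-Hilb triangulation contains no regular triangle of side $\geq 2$, which forces $N$ into the short list $\frac{1}{r}(1,1,r-2)$, $\frac{1}{r}(1,r-1,0)$, $\frac{1}{7}(1,2,4)$. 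Each of these is then dispatched by a valency or symmetry argument on the junior simplex, collapsing everything to cases (1)--(4). You are missing this reduction step, and without it the converse does not terminate.
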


This paper is organized as follows. 
In section \ref{Sect:Moduli}, we introduce moduli spaces of $G$-constellations and find the stability for $\Hilb{G/N}{\Hilb{N}{\C^3}}$ to prove Theorem \ref{thm:main}.
In section \ref{semidirect}, we recall representations of semidirect products
and fix some notation used in examples.
In section \ref{AbCase}, we show some examples of $\Hilb{G/N}{\Hilb{N}{\C^3}}$ when $G$ is Abelian. The construction of crepant resolutions for Abelian quotient singularities are well known as toric resolutions where one of them is $\Hilb{G}{\C^3}$. However, if we have several choices for normal subgroups $N$ of $G$, then we have several $\Hilb{G/N}{\Hilb{N}{\C^3}}$ which can be obtained by a finite sequence of flops from $\Hilb{G}{\C^3}$. Moreover, the actions of $G/N$ on $\Hilb{N}{\C^3}$ are also interesting.
In section \ref{LocalCoord}, we introduce the notion of {\it skeletons} to compute local coordinates in examples.
In section \ref{Examples}, we give several examples of $\Hilb{G/N}{\Hilb{N}{\C^3}}$ when $G$ is non-Abelian. We also describe the $G$-constellations and McKay quiver with relations.  
In section \ref{When}, we investigate when $\Hilb{G/N}{\Hilb{N}{\C^3}}$ is isomorphic to  $\Hilb{G}{\C^3}$ as moduli spaces and as algebraic varieties.

The authors thank Alastair King and Laurent Demonet for their valuable comments and Alastair Craw and Michael Wemyss for useful discussions.

\section{The moduli space of $G$-constellations and $G/N$-Hilb($N$-Hilb)}\label{Sect:Moduli}

Recall that $G$-Hilb($\C^n$) is the moduli space of $G$-clusters, where a {\em $G$-cluster} $\mathcal{Z}$ is a $G$-invariant subscheme $\mathcal{Z}\subset\C^n$ such that $H^0(\OO_Z)\cong R_G$ the regular representation of $G$ as $\C[G]$-modules. Thus a point $y\in\Hilb{G/N}{\Hilb{N}{\C^n}}$ is a $G/N$-cluster of $N$-clusters. 

\subsection{A family of $G$-constellations}
The first observation that appears is that $y$ may not be a $G$-cluster. Therefore, in order to construct the moduli space of such objects we need a generalized notion of $G$-cluster called $G$-{\em constellation} (see \cite{CI} and \cite{Craw01}, Chapter 5).

\begin{defn} A {\em $G$-constellation} $\mathcal{F}$ on $X$ is a $G$-equivariant coherent sheaf on $X$ such that $H^0(\mathcal{F})\cong R_G$ as $\C[G]$-modules. 
\end{defn}

Notice that a $G$-cluster is a $G$-equivariant $\C[x_1,\ldots,x_n]$-module $\OO_\mathcal{Z}$ generated from $1$ mod $I_\mathcal{Z}$, which is precisely a $G$-constellation generated from the trivial representation $\rho_0$. 

Equivalently, when $X=\C^n$ a $G$-equivariant coherent sheaf $\mathcal{F}$ on $X$ is a representation of the McKay quiver $Q$ satisfying the relations $R$. This identification was first stated in \cite[\S 3]{IN00} and rewritten in the language of $G$-constellations in \cite[\S 2.1]{CI}. For an explicit description of the relations $R$ see \cite{BSW}. Recall that the McKay quiver is the quiver with $\Irr G$ as vertex set, and $\dim_\C \Hom(\rho,V\otimes\rho')$ arrows from $\rho$ to $\rho'$.

We fix the following notation in this paper.
Denote by $Y_1:=\Hilb{N}{\C^n}$ and $Y_2:=\Hilb{G/N}{Y_1}$. Then we have the diagram

\begin{center}
\begin{pspicture}(0,-0.25)(6,2.25)
\psset{nodesep=4pt}
\rput(1.5,2){\rnode{Z2}{$\mathcal{Z}_2$}}
\rput(4.5,2){\rnode{Z1}{$\mathcal{Z}_1$}}
\rput(0,1){\rnode{HoH}{$Y_2$}}
\rput(1.5,0){\rnode{YN}{$Y_1/(G/N)$}}
\rput(3,1){\rnode{NN}{$Y_1$}}
\rput(4.5,0){\rnode{C3N}{$\C^n/N$}}
\rput(6,1){\rnode{C3}{$\C^n$}}
\ncline{->}{C3}{C3N}\Aput[0.05]{$\pi_1$}
\ncline{->}{NN}{C3N}\Bput[0.05]{$\tau_1$}
\ncline{->}{NN}{YN}\Aput[0.05]{$\pi_2$}
\ncline{->}{HoH}{YN}\Bput[0.05]{$\tau_2$}
\ncline{->}{Z2}{HoH}\Bput[0.05]{$p_2$}
\ncline{->}{Z2}{NN}\Aput[0.05]{$q_2$}
\ncline{->}{Z1}{NN}\Bput[0.05]{$p_1$}
\ncline{->}{Z1}{C3}\Aput[0.05]{$q_1$}
\end{pspicture}
\end{center}
where $\mathcal{Z}_1$ and $\mathcal{Z}_2$ are the universal families for $Y_1$ and $Y_2$ respectively.

\begin{lem}\label{Gconst} Every point in the connected component of $\Hilb{G/N}{\Hilb{N}{\C^n}}$ dominating $\C^3/G$ is a $G$-constellation on $\C^n$.
More precisely, there is a canonically defined flat family of $G$-constellations parametrised by this connected component of $\Hilb{G/N}{\Hilb{N}{\C^n}}$.
\end{lem}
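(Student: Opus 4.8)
The plan is to build the family of $G$-constellations by pushing forward structure sheaves along the maps in the diagram and then verifying the regular-representation condition fiberwise. First I would consider the universal subscheme $\mathcal{Z}_1 \subset Y_1 \times \C^n$ with its two projections $p_1 : \mathcal{Z}_1 \to Y_1$ and $q_1 : \mathcal{Z}_1 \to \C^n$. By definition of $Y_1 = \Hilb{N}{\C^n}$, the sheaf $(p_1)_*\OO_{\mathcal{Z}_1}$ is a locally free $\OO_{Y_1}$-module of rank $|N|$ carrying a fiberwise $N$-action isomorphic to the regular representation $R_N$, and it is equivariant for the residual $G/N$-action on $Y_1$ (since $N \lhd G$, the group $G/N$ acts on $Y_1$ and this lifts compatibly to $\mathcal{Z}_1$). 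The key point is that $(p_1)_*\OO_{\mathcal{Z}_1}$ is simultaneously a module over $\OO_{Y_1}$ and over $\C[x_1,\ldots,x_n]$, compatibly with the two group actions.

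Next I would form the analogous construction one level up: over $Y_2 = \Hilb{G/N}{Y_1}$ we have $\mathcal{Z}_2 \subset Y_2 \times Y_1$ with $p_2 : \mathcal{Z}_2 \to Y_2$ and $q_2 : \mathcal{Z}_2 \to Y_1$, and $(p_2)_*\OO_{\mathcal{Z}_2}$ is locally free of rank $|G/N|$ with fiberwise $G/N$-action $\cong R_{G/N}$. The candidate family of $G$-constellations is then
\[
\mathcal{F} := (p_2)_* \, q_2^* \big( (p_1)_*\OO_{\mathcal{Z}_1} \big),
\]
i.e. pull back the level-one family along $q_2$ and push down along $p_2$. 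I would need to check that this is a well-defined coherent sheaf on $Y_2$ (flatness of $p_2$, or rather the fact that $q_2$ lands in $Y_1$ where the level-one sheaf is locally free, makes $q_2^*$ exact here), that it carries a $G$-action — the $N$-action comes from the $\C[G]$-module structure on the level-one sheaf, the $G/N$-action comes from the equivariance of $\mathcal{Z}_2 \to Y_1$, and one must see these assemble into a genuine $G$-action, which uses that the extension $1 \to N \to G \to G/N \to 1$ is being realized correctly by the semidirect-product-like bookkeeping of the two Hilbert schemes — and that it is an $\OO_{\C^n}$-module, so a $G$-equivariant sheaf on $\C^n$. Flatness over $Y_2$ follows since both push-forwards in the composition are along finite flat maps of the relevant sheaves.

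The main content, and the step I expect to be the real obstacle, is the fiberwise computation $H^0(\mathcal{F}_y) \cong R_G$ as $\C[G]$-modules for $y$ in the distinguished component. Over a point $y \in Y_2$ lying over $Y_1/(G/N)$, the fiber of $\mathcal{F}$ is a sum over the $G/N$-cluster supporting points: roughly $\bigoplus_{[z]} V_z$ where $z$ runs over the (scheme-theoretic) points of the $G/N$-cluster in $Y_1$ and $V_z$ is the $|N|$-dimensional $\C[G]$-module which is the fiber of $(p_1)_*\OO_{\mathcal{Z}_1}$ at $z$. As a vector space this has dimension $|G/N| \cdot |N| = |G|$, which is correct; the work is to identify the $G$-module structure. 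One reduces to the case where the $G/N$-cluster is a single free orbit — which is exactly why we must restrict to the component dominating $\C^n/G$, as there the generic point is such an orbit and semicontinuity/flatness propagates the isomorphism type $R_G$ to the whole component once it holds generically. On a free $G/N$-orbit $\{g_1 z_0, \ldots, g_k z_0\}$ in $Y_1$, the $N$-clusters at the various $g_i z_0$ are $G$-translates of the one at $z_0$, so $\mathcal{F}_y \cong \mathrm{Ind}_N^G(V_{z_0}) = \mathrm{Ind}_N^G(\mathrm{Res}\ \text{of something} )$; using that $V_{z_0} \cong R_N$ as $\C[N]$-modules and that induction of the regular representation of a subgroup gives the regular representation, $\mathrm{Ind}_N^G R_N \cong R_G$, we get the claim on the free locus. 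Then flatness of $\mathcal{F}$ and the fact that $\dim H^0(\mathcal{F}_y)$ is constant $=|G|$ forces $\mathcal{F}_y$ to be the unique flat degeneration, and a standard argument (the $G$-module structure is locally constant on a connected base for a flat family with constant dimension, since the multiplicities are upper semicontinuous and sum to a constant) shows $\mathcal{F}_y \cong R_G$ for every $y$ in the component. I would also remark that the connectedness hypothesis is essential and that "the connected component dominating $\C^3/G$" is well-defined because $Y_2 \to \C^n/G$ is the composite of the crepant resolutions, whose relevant component is singled out by Corollary~\ref{cor:crep}.
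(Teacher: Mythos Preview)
Your proof is correct and follows essentially the same approach as the paper: build the family by composing the two universal families, verify the regular-representation condition on free $G$-orbits, and propagate to the whole connected component by flatness. The only cosmetic difference is that the paper packages the family as $p_{20*}\OO_{\mathcal{Z}_2\times_{Y_1}\mathcal{Z}_1}$ on $Y_2\times\C^n$ via the fibre product, whereas you write it as $(p_2)_*q_2^*(p_1)_*\OO_{\mathcal{Z}_1}$ on $Y_2$ with an $\OO_{\C^n}$-module structure; these agree by flat base change.
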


\begin{proof} 
 Consider the fibre product $\mathcal{Z}_2\times_{Y_1}\mathcal{Z}_1\subset Y_2\times Y_1\times\C^n$ and the projection $p_{20}:Y_2\times Y_1\times\C^n\longrightarrow Y_2\times\C^n$ onto the first and third factor. Then $p_{20*}(\OO_{\mathcal{Z}_2\times_{Y_1}\mathcal{Z}_1})$ is a $G$-equivariant coherent sheaf on $Y_2 \times \C^n$, flat over
$Y_2$.
For a closed point $y \in Y_2$, let $\OO_W=q_{2*}p_2^* \OO_y \subset Y_1$ be the corresponding $G/N$-cluster.
Then the fibre of $p_{20*}(\OO_{\mathcal{Z}_2\times_{Y_1}\mathcal{Z}_1})$ over $y$ is
$q_{1*}p_{1}^*\OO_W$.
Especially, if $y$ lies over a free orbit in $\C^n/G$, then $q_{1*}p_{1}^*\OO_W$ is the $G$-cluster
supported by the free orbit and it is the regular representation as a $G$-module.
Since $p_{20*}(\OO_{\mathcal{Z}_2\times_{Y_1}\mathcal{Z}_1})$ is flat over $Y_2$, it is a
flat family of $G$-constellations in the connected component containing free orbits.
%
%
\end{proof}
\subsection{Stability for $\Hilb{G/N}{\Hilb{N}{\C^3}}$}

Let $\Theta=\Theta^G:=\{\theta\in\Hom_\Z(R[G],\Q)|\theta(R_G)=0\}$ with $R[G]$ the representation ring. The notion of stability by \cite{King} translates into the language of $G$-constellations as follows. For $\theta\in\Theta$, a $G$-constellation $\mathcal{F}$ is $\theta$-stable (or $\theta$-semistalbe) if $\theta(\mathcal{E})=\theta(H^0(\mathcal{E}))>0=\theta(\mathcal{F})$ (or $\theta(\mathcal{E}) \ge 0$) for $0\subsetneq\mathcal{E}\subsetneq\mathcal{F}$. 
With quiver-theoretic point of view, if $M$ is a representation of $Q$ of dimension vector $\textbf{d}=(d_i)_{i\in Q_0}$ then the notion of stability for $M$ is given as follows: let $\theta\in\Q^{Q_0}$ and define $\theta(M):=\sum\theta_id_i$. Then $M$ is $\theta$-stable (or $\theta$-semistable) if $\theta(M')>0=\theta(M)$ (or $\theta(M') \ge 0$) for $0\subsetneq M'\subsetneq M$.
More generally, $\theta$-stability and semistability are defined for a $G$-equivariant coherent sheaf
$\mathcal{F}$ on $\C^3$ with finite support such that $\theta(H^0(\mathcal{F}))=0$
in the same way.

It is known from \cite{IN00} that $\Hilb{G}{\C^3}$ can be considered as a moduli $\mathcal{M}_\theta$ of $\theta$-stable representations of the McKay quiver of $G$
satisfying the relations, for any stability condition $\theta\in\Theta$ satisfying $\theta(\rho)>0$ for every
non-trivial irreducible representation $\rho$ of $G$
(and hence $\theta(\rho_0)<0$ for the trivial representation $\rho_0$).
We call such $\theta$ a {\em 0-generated} stability. 

\begin{rem}
In general, the chamber of stability parameters defining $G$-Hilb is larger than the cone defined by the inequalities above.
\end{rem}

Let $\theta^N \in \Theta^N$ and $\theta^{G/N}\in \Theta^{G/N}$ be 
{\em \textbf{0}-generated} stabilities for $N$ and $G/N$ respectively.
(In the following argument, $\theta^N$ and $\theta^{G/N}$ can be arbitrary parameters in the chambers of $N$-Hilb and $G/N$-Hilb respectively.
However, we assume they are \textbf{0}-generated to simplify the proof
of Lemma \ref{genericity} below.)

\begin{defn}\label{theta} Let $\rho\in\Irr(G)$ and $\theta\in\Theta$. We define
\[
\theta(\rho):=\left\{
{\renewcommand{\arraystretch}{1.25}
\begin{array}{ll}
\theta^N(\rho|_N)+\varepsilon\cdot\theta^{G/N}(\rho) & \text{ if $\rho\in\Irr(G/N)$} \\
\theta^N(\rho|_N)				 & \text{ if $\rho\notin\Irr(G/N)$} 
\end{array}}
\right.
\]
where $0<\varepsilon<<1$.
\end{defn}

Note that $\theta^N\in \Theta^N$ can be regarded as an element of $\Theta$ by
composing the map $\theta^N: R[N] \to \Q$ with the restriction map $R[G] \to R[N]$.
The condition $\rho\in\Irr(G/N)$ as a representation of $G$ means that $\rho$ is trivial for every element in $N$. It is straightforward to check that $\theta(R_G)=0$ for the regular representation $R_G$ as required.

\begin{lem}\label{genericity}
The parameter $\theta$ defined in Lemma \ref{theta} is generic.
\end{lem}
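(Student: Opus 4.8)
The plan is to show that $\theta$ is generic by showing that any $G$-constellation $\mathcal{F}$ that is $\theta$-semistable is in fact $\theta$-stable; equivalently, no proper nonzero subsheaf $\mathcal{E}\subsetneq\mathcal{F}$ has $\theta(\mathcal{E})=0$. The key structural input is the two-term shape of $\theta$: it is the restriction-pullback of a $\mathbf{0}$-generated stability $\theta^N\in\Theta^N$, perturbed by a tiny multiple $\varepsilon\cdot\theta^{G/N}$ of a $\mathbf{0}$-generated stability for $G/N$. So I would argue in two stages, first killing the ``$\varepsilon=0$'' part and then using the perturbation.

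First I would analyze a $\theta$-semistable $G$-constellation $\mathcal{F}$ purely with respect to $\theta^N$ (the $\varepsilon$-free part). Restricting the $G$-action to $N$, $\mathcal{F}$ becomes an $N$-equivariant sheaf on $\C^3$ with $H^0(\mathcal{F})\cong R_G\cong R_N^{\oplus |G/N|}$ as $\C[N]$-modules, i.e. a direct sum of $|G/N|$ copies of the $N$-regular representation ``worth'' of dimension vector. Since $\theta^N$ is $\mathbf{0}$-generated for $N$, semistability with respect to $\theta^N$ forces (by the standard argument, e.g. the one behind $N$-Hilb being a moduli space, \cite{IN00}) that $\mathcal{F}|_N$ decomposes, up to the $N$-action, into pieces each of which is an $N$-cluster sheaf supported at a single $N$-orbit — more precisely $\mathcal{F}$ is $S$-equivalent under $\theta^N$ to a direct sum of such $N$-constellations. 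The point is that $\theta^N$-semistable but not $\theta^N$-stable $G$-constellations are exactly those whose underlying $N$-structure is a nontrivial direct sum; all their proper subsheaves with $\theta^N$-value zero are sub-sums of these summands. I would make this precise: if $\theta(\mathcal{E})=0$ and $\theta(\mathcal{E})\ge 0$ forces $\theta^N(\mathcal{E})\ge 0$ in the limit $\varepsilon\to 0$, then $\theta^N(\mathcal{E})=0$, so $\mathcal{E}$ is one of these ``block'' subsheaves.

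Next I would bring in the $G/N$-action and the perturbation $\varepsilon\cdot\theta^{G/N}$. The group $G/N$ permutes the $N$-blocks of $\mathcal{F}$ (it acts on $Y_1=N\text{-Hilb}$ and hence on $N$-clusters), and $G$-equivariance of $\mathcal{F}$ means these blocks form $G/N$-orbits. A $\theta$-destabilizing subsheaf $\mathcal{E}$ with $\theta(\mathcal{E})=0$ must be a union of $N$-blocks that is also $G$-invariant, hence a union of whole $G/N$-orbits of blocks. On such an $\mathcal{E}$, the $\theta^N$-part vanishes and the $\varepsilon$-part reads $\varepsilon\cdot\theta^{G/N}(\text{the }G/N\text{-submodule }H^0(\mathcal{E}))$. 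But $H^0(\mathcal{F})$ as a $\C[G/N]$-module — after collapsing each $N$-block, which as a $G/N$-equivariant object records one copy of $R_{G/N}$ — is the regular representation of $G/N$, and $\mathcal{E}$ corresponds to a proper nonzero $G/N$-subrepresentation-with-sheaf-structure, i.e. a proper nonzero ``$G/N$-subconstellation''; since $\theta^{G/N}$ is $\mathbf{0}$-generated, hence lies in a chamber and is generic for $G/N$-constellations, we get $\theta^{G/N}(\mathcal{E})>0$, a contradiction. Hence no such $\mathcal{E}$ exists and $\theta$ is generic. (The role of the hypothesis that $\theta^N,\theta^{G/N}$ are $\mathbf{0}$-generated, rather than arbitrary chamber parameters, is exactly to make these two reductions clean, as the parenthetical remark before Definition~\ref{theta} anticipates.)

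The main obstacle I expect is the bookkeeping in the first stage: translating ``$\theta^N$-semistable $G$-constellation on $\C^3$'' into the statement that its underlying $N$-equivariant sheaf is a direct sum of $N$-cluster-type pieces indexed by points of $Y_1$, and making sure the limiting argument ``$\theta(\mathcal{E})\ge 0$ for all small $\varepsilon>0$ implies $\theta^N(\mathcal{E})\ge 0$ and, if equality at the $\theta$-level, then $\theta^N(\mathcal{E})=0$'' is airtight for possibly-infinitely-many test subsheaves. Since $\mathcal{F}$ has finite length, there are only finitely many subsheaves to check, so compactness is not an issue; the care is in the equivariance of the block decomposition under $G/N$ and in identifying the collapsed object with a genuine $G/N$-constellation so that genericity of $\theta^{G/N}$ applies. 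Once that identification is in place, the conclusion is immediate.
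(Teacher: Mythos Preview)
Your two-stage strategy (first the $\theta^N$-part, then the $\varepsilon$-perturbation by $\theta^{G/N}$) is exactly the idea the paper uses, but the paper executes it purely on the level of dimension vectors rather than on sheaves. The paper's proof is essentially one line: for any proper nonzero sub-$\C[G]$-module $S\subsetneq R_G$ one checks $\theta(S)\neq 0$ directly, and this is the usual sufficient condition for genericity. Concretely, writing $S'=S^N\subseteq R_{G/N}$ for the part of $S$ built from $\rho\in\Irr(G/N)$, one has $\theta(S)=\theta^N(S|_N)+\varepsilon\,\theta^{G/N}(S')$. If $\theta^N(S|_N)\neq 0$ we are done for small $\varepsilon$; if $\theta^N(S|_N)=0$, then $\mathbf{0}$-generatedness of $\theta^N$ forces $0\neq S'\subsetneq R_{G/N}$ (if $S'=0$ then $S|_N$ contains no copy of the trivial $N$-representation, giving $\theta^N(S|_N)>0$; if $S'=R_{G/N}$ then $S|_N$ contains the maximal number $|G/N|$ of trivial copies, giving $\theta^N(S|_N)<0$ unless $S=R_G$), and then $\mathbf{0}$-generatedness of $\theta^{G/N}$ gives $\theta^{G/N}(S')\neq 0$.

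Your geometric route through Jordan--H\"older filtrations and $N$-cluster ``blocks'' has a genuine gap: you assert that for a $\theta$-semistable $G$-constellation $\mathcal{F}$, the restriction $\mathcal{F}|_N$ is $\theta^N$-semistable as an $N$-equivariant sheaf, so that its JH factors are $N$-clusters. But $\theta$-semistability of $\mathcal{F}$ only tests $G$-invariant subsheaves, not arbitrary $N$-invariant ones, and this implication does not follow. (Lemma~\ref{semistable} establishes $\theta^N$-semistability only for objects of the form $\Phi(E)$ with $E\in\Coh^{G/N}_0(Y_1)$, not for arbitrary $G$-constellations; in fact that lemma comes after the present one and plays no role here.) Once you drop the block picture and simply compute with $S=H^0(\mathcal{E})$ as a $G$-module, your argument collapses to the paper's. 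One small correction: at the end you write $\theta^{G/N}(\mathcal{E})>0$; in fact $\mathbf{0}$-generatedness only gives $\theta^{G/N}(S')\neq 0$, with the sign determined by whether the trivial $G/N$-representation appears in $S'$, and $\neq 0$ is all you need.
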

\begin{proof}
For any non-zero subrepresentation $S \subsetneq R_G$,
$\theta(S) \ne 0$ by the choice of $\theta$.
This implies that $\theta$ is generic.
\end{proof}

From now on we restrict ourselves to the case $G\subset\SL(3,\C)$. 
Consider the functor
$$
\Phi: D^b(\Coh^{G/N} Y_1) \to D^b(\Coh^G \C^3)
$$
defined by $\Phi(-)=\R q_{1*}p_1^*(-)$.
Then it is an equivalence of triangulated categories by \cite{BKR}(see also \cite[Theorem 3.1]{IU}).
Let $\Coh^{G/N}_0(Y_1)$ denote the Abelian category of $G/N$-equivariant coherent sheaves
on $Y_1$ with zero-dimensional supports.
Then $\Phi$ sends objects of $\Coh^G_0(Y_1)$ to $G$-equivariant sheaves with zero-dimensional supports
and $\Phi$ is exact on $\Coh^G_0(Y_1)$.
\begin{lem}\label{semistable}
Let $G \subset \SL(3, \C)$ be a finite subgroup and $N$ be a normal subgroup of $G$.
Let $\theta^N$ be a 0-generated stability parameter for $N$.
Then for an object $E \in \Coh^{G/N}_0{Y_1}$, $\Phi(E) \in \Coh^G\C^3$ is $\theta^N$-semistable.
Moreover, if $\F \subseteq \Phi(E)$ is a $G$-equivariant subsheaf of $\Phi(E)$
with $\theta^N(\F)=0$,
then there is a $G/N$-equivariant subsheaf $F$ of $E$ such that $\F=\Phi(F)$.
\end{lem}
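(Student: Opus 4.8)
The plan is to analyze semistability by pushing an arbitrary subsheaf of $\Phi(E)$ back through the equivalence $\Phi$. Since $\Phi$ is exact on $\Coh^G_0(\C^3)$ with quasi-inverse $\Psi$ (exact on the corresponding subcategory), any $G$-equivariant subsheaf $\F \subseteq \Phi(E)$ with zero-dimensional support arises as $\Phi(F)$ for a genuine subobject $F \subseteq E$ in $\Coh^{G/N}_0(Y_1)$ — provided we know $\Psi(\F) \hookrightarrow \Psi(\Phi(E)) = E$ lands in $\Coh^{G/N}_0(Y_1)$ rather than merely in the derived category. The key point is that $\Psi$ sends the heart $\Coh^G_0(\C^3)$ into the heart $\Coh^{G/N}_0(Y_1)$: this is where one invokes that $\Phi$ restricts to an exact equivalence between these two Abelian categories (a standard consequence of the BKR-type argument, already cited here via \cite{BKR} and \cite{IU}), so that the inclusion $\F \hookrightarrow \Phi(E)$ is carried to an honest monomorphism of sheaves $F \hookrightarrow E$.

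First I would recall that $\theta^N$, viewed as a stability parameter on $\Coh^G\C^3$ via the restriction $R[G] \to R[N]$, computes $\theta^N(\Phi(A))$ for $A \in \Coh^{G/N}_0(Y_1)$ in terms of the $N$-module structure of the global sections, and that under $\Phi$ this equals a quantity depending only on the underlying $G/N$-equivariant sheaf $A$ on $Y_1$ — essentially the dimension of $H^0(A)$ as a vector space, weighted trivially in the $N$-directions. Concretely, for $A \in \Coh^{G/N}_0(Y_1)$, each fibre contributes a copy of the regular representation of $N$ (pulled up from $Y_1$), so $\theta^N$ evaluated on $\Phi(A)$ is a nonnegative multiple of $\theta^N(R_N) = 0$ plus contributions that vanish because $\theta^N(R_N)=0$; in short $\theta^N(\Phi(A)) = 0$ for every such $A$. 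This immediately gives $\theta^N(\Phi(E)) = 0$ and, for any subobject $F \subseteq E$, also $\theta^N(\Phi(F)) = 0 \geq 0$, hence $\theta^N$-semistability of $\Phi(E)$.

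Then for the second assertion: given $\F \subseteq \Phi(E)$ with $\theta^N(\F) = 0$, I would apply the quasi-inverse to obtain $F := \Psi(\F)$, argue as above that $F \in \Coh^{G/N}_0(Y_1)$ and that the inclusion is carried to a subsheaf inclusion $F \subseteq E$, and conclude $\F = \Phi(F)$ by applying $\Phi$ back. The hypothesis $\theta^N(\F)=0$ is actually automatic here — it holds for every subsheaf, because $\theta^N\circ\Phi$ is identically zero on $\Coh^{G/N}_0(Y_1)$ — so its role is really just to match the phrasing of the conclusion and to signal that in the later (full) stability argument one will split off exactly the $\theta^N$-trivial part and then use the $\varepsilon\cdot\theta^{G/N}$ perturbation on that part.

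The main obstacle I anticipate is the categorical bookkeeping needed to justify that $\Psi$ preserves the relevant hearts and that a monomorphism in $\Coh^G_0(\C^3)$ pulls back to a monomorphism (not just a map whose cone is concentrated in the wrong degrees) in $\Coh^{G/N}_0(Y_1)$. Once one knows $\Phi$ restricts to an equivalence of the two Abelian categories $\Coh^G_0(\C^3) \simeq \Coh^{G/N}_0(Y_1)$ — which follows from the BKR machinery applied to the crepant resolution $q_1, p_1$ of $\C^3/G$ coming from $Y_1 = \Hilb{N}{\C^3}$ and its $G/N$-action, exactly as set up just before the lemma — everything else is the routine verification that $\theta^N$ is insensitive to the $N$-directions. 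I would structure the write-up so that this equivalence of hearts is the one nontrivial input, stated cleanly, with the semistability computation reduced to the observation $\theta^N(R_N) = 0$.
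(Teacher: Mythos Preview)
Your central claim---that $\Phi$ restricts to an equivalence of Abelian categories $\Coh^{G/N}_0(Y_1)\simeq\Coh^G_0(\C^3)$---is false, and this invalidates both halves of the argument. Take any $\rho\in\Irr(G)$ whose restriction to $N$ is nontrivial and consider the simple object $\rho\otimes\OO_0\in\Coh^G_0(\C^3)$. Forgetting to the $N$-level, $(\Phi^N)^{-1}(\rho|_N\otimes\OO_0)$ is a genuine complex on $Y_1$ (the BKR inverse of a nontrivial irreducible at the origin is never a skyscraper), so the quasi-inverse $\Psi$ does not carry $\Coh^G_0(\C^3)$ back into $\Coh^{G/N}_0(Y_1)$. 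Consequently there really are $G$-equivariant subsheaves $\F\subseteq\Phi(E)$ with $\theta^N(\F)>0$: for instance, inside any $N$-cluster one has the submodule generated by the nontrivial $N$-isotypic pieces. So neither ``$\theta^N(\F)=0$ is automatic'' nor ``every $\F$ equals some $\Phi(F)$'' holds.

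The paper circumvents this by working at the $N$-equivariant level via the forgetful functors. The point is that $\Phi^N$ identifies $\Coh_0(Y_1)$ with the category of $\theta^N$-semistable $N$-equivariant sheaves whose Jordan--H\"older factors are $N$-clusters. Since $F_1(E)$ is filtered by skyscrapers, $F_0(\Phi(E))$ is filtered by $N$-clusters and hence $\theta^N$-semistable; this gives the first assertion. For the second, the hypothesis $\theta^N(\F)=0$ is essential (not redundant): it forces $F_0(\F)$ to be itself $\theta^N$-semistable with JH factors among the $N$-clusters, so $(\Phi^N)^{-1}(F_0(\F))$ lands in $\Coh_0(Y_1)$. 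One then builds the unique subsheaf $G\subset F_1(E)$ with $\Phi^N(G)=F_0(\F)$ by induction along the JH filtration, and uniqueness forces $G$ to be $G/N$-invariant. Your sketch skips exactly this step, and the hypothesis $\theta^N(\F)=0$ is what makes it work.
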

\begin{proof}
Let $F_0:D^b(\Coh^G \C^3) \to D^b(\Coh^N \C^3)$ and $F_1: D^b(\Coh^{G/N} Y_1) \to D^b(\Coh Y_1)$
be the forgetful functors.
We have a commutative diagram
$$
\begin{CD}
D^b(\Coh^{G/N} Y_1) @>\Phi>\sim> D^b(\Coh^G \C^3) \\
@VF_1VV @VVF_0V \\
D^b(\Coh Y_1) @>\sim>\Phi^N> D^b(\Coh^N \C^3)
\end{CD}
$$
where $\Phi^N$ is the functor which is defined in the same way as $\Phi$
and is an equivalence by \cite{BKR}. 
Since $F_1(E)$ has a filtration in $\Coh Y_1$ whose factors are skyscraper sheaves,
$\Phi^N(F_1(E))$ has a filtration in $\Coh^N \C^3$ whose factors are $N$-clusters.
Since $N$-clusters are $\theta^N$-stable, $F_0(\Phi(E))\cong \Phi^N(F_1(E))$ is $\theta^N$-semistable.
Now for any $G$-invariant subsheaf $\F$ of $\Phi(E)$, we have
$\theta^N(\F)=\theta^N(F_0(\F))\ge 0$ by the semistability of $F_0(\Phi(E))$,
which shows that $\Phi(E)$ is $\theta^N$-semistable.

Suppose $\F \subseteq \Phi(E)$ is a $G$-invariant subsheaf with $\theta^N(\F)=0$.
Then $\F$ is also $\theta^N$-semistable by the definition of semistability for
a $G$-equivariant coherent sheaf.
Moreover, $F_0(\Phi(E))$ is also $\theta^N$-semistable as an $N$-equivariant coherent sheaf
as in the previous paragraph and hence so is $F_0(\F)$.
Consider the Jordan-H{\"o}lder filtraions on the $\theta^N$-semistable
$N$-equivariant coherent sheaves $F_0(\Phi(E))$, $F_0(\F)$ and
$F_0(\Phi(E)/F)$ respectively, whose factors are $\theta^N$-stable.
Since the Jordan-H{\"o}lder factors of $F_0(\Phi(E))$ are $N$-clusters,
those of $F_0(\F)$ (and $F_0(\Phi(E)/F)$) are also $N$-clusters by the Jordan-H{\"o}lder theorem
for semistable sheaves.
Note that $N$-clusters are of the form $\Phi^N(\OO_y)$ for $y \in Y_1$.
Therefore, there is a filtration
$$
0=\G_0 \subset \G_1 \subset \dots \subset \G_l=F_0(\F)
$$
such that $\G_i/\G_{i-1} \cong \Phi(\OO_{y_i})$ for some $y_i \in Y_1$.
Since the equivalence $\Phi^N$ induces an isomorphism
$$\Hom(\OO_y, F_1(E)/G) \cong \Hom(\Phi^N(\OO_y), F_0(\Phi(E))/\Phi^N(G))$$
for any closed point $y \in Y_1$ and any subsheaf $G\subset F_1(E)$,
induction on $i$ shows that there is a unique subsheaf $G_i \subset F_1(E)$ such that $\Phi^N(G_i)=\G_i$ for each $i$.
Especially, $G:=G_l$ is a unique subsheaf of $F_1(E)$
such that $F_0(\F)=\Phi^N(G)$.
$G$ must be preserved by the action of $G/N$ by its uniqueness,
which shows that $G$ is of the form $F_1(F)$ for a $G/N$-invariant subsheaf $F \subseteq E$.
\end{proof}

%


\begin{thm}\label{Stab} Let $\theta\in\Theta$ be as in Definition \ref{theta}. Let $G\subset\SL(3,\C)$ be a finite subgroup and $N$ be a normal subgroup of $G$. Then 
\[
\Hilb{G/N}{\Hilb{N}{\C^3}}\cong\mathcal{M}_C
\] 
for the chamber $C\subset\Theta$ which contains $\theta$.
\end{thm}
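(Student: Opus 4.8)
The plan is to show that $\Hilb{G/N}{\Hilb{N}{\C^3}}$ represents the same moduli functor as $\mathcal{M}_C$ for the chamber $C$ containing the parameter $\theta$ of Definition \ref{theta}, by identifying their universal families of $G$-constellations. First I would invoke Lemma \ref{Gconst} to obtain the canonical flat family $\mathcal{G}:=p_{20*}(\OO_{\mathcal{Z}_2\times_{Y_1}\mathcal{Z}_1})$ of $G$-constellations on $\C^3$ parametrised by $Y_2 = \Hilb{G/N}{\Hilb{N}{\C^3}}$ (restricting to the connected component dominating $\C^3/G$, which by Corollary \ref{cor:crep} is all of it). The key point is that, under the equivalence $\Phi$, the fibre of $\mathcal{G}$ over a closed point $y \in Y_2$ is $\Phi(\OO_W)$ where $\OO_W = q_{2*}p_2^*\OO_y$ is the corresponding $G/N$-cluster of $N$-clusters, an object of $\Coh^{G/N}_0(Y_1)$. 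So the whole problem reduces to a fibrewise stability computation: I must show that $\Phi(\OO_W)$ is $\theta$-stable for the $\theta$ of Definition \ref{theta}, and conversely that every $\theta$-stable $G$-constellation arises this way.

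The stability argument has two layers, corresponding to the two summands in the formula for $\theta(\rho)$. For the $\theta^N$-part, Lemma \ref{semistable} already does the heavy lifting: $\Phi(\OO_W)$ is $\theta^N$-semistable, and any $G$-equivariant subsheaf $\mathcal{F} \subseteq \Phi(\OO_W)$ with $\theta^N(\mathcal{F}) = 0$ is of the form $\Phi(F)$ for a $G/N$-equivariant subsheaf $F \subseteq \OO_W$. Thus the only subsheaves that could violate strict stability with respect to the full $\theta$ are exactly those "saturated" subsheaves $\Phi(F)$, and on these $\theta = \varepsilon\cdot\theta^{G/N}$ (applied to the $G/N$-structure). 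Now I use that $\OO_W$ is a $\theta^{G/N}$-stable object in $\Coh^{G/N}(Y_1)$ — it is a $G/N$-cluster — so $\theta^{G/N}(F) > 0 = \theta^{G/N}(\OO_W)$ for every proper nonzero $G/N$-subsheaf $F$, and since $\Phi$ is an equivalence respecting the relevant $G/N$-gradings (via $H^0$), $\theta^{G/N}(\Phi(F)) = \theta^{G/N}(F) > 0$. Because $\varepsilon$ is infinitesimal relative to the finite set of values $\theta^N$ takes on subsheaves, we conclude $\theta(\mathcal{F}) > 0$ for all $0 \subsetneq \mathcal{F} \subsetneq \Phi(\OO_W)$, i.e. $\theta$-stability. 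For the converse, given a $\theta$-stable $G$-constellation $\mathcal{E}$, its $\theta$-stability forces $\theta^N$-semistability (since $\varepsilon$ is small), so by the Jordan--Hölder analysis of Lemma \ref{semistable} its $N$-equivariant Jordan--Hölder factors are $N$-clusters; hence $\mathcal{E} = \Phi(E)$ for some $E \in \Coh^{G/N}_0(Y_1)$ with $H^0(E)$ the regular representation of $G/N$, and the residual $\theta^{G/N}$-stability of $\mathcal{E}$ translates to $\theta^{G/N}$-stability of $E$, making $E$ a $G/N$-cluster — i.e. a point of $Y_2$. Genericity of $\theta$ (Lemma \ref{genericity}) ensures stable $=$ semistable, so this is an honest chamber $C$.

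Finally I would upgrade this pointwise bijection to an isomorphism of fine moduli spaces: the family $\mathcal{G}$ over $Y_2$ is a flat family of $\theta$-stable $G$-constellations (stability being an open condition and holding at every fibre), so it induces a morphism $Y_2 \to \mathcal{M}_C$; conversely the universal family $\mathcal{Z}_{\mathcal{M}_C}$ over $\mathcal{M}_C$, being fibrewise of the form $\Phi(\OO_W)$, can be run through $\Phi^{-1}$ and the two-step Hilbert-scheme universal properties (first recovering a flat family of $N$-clusters over $\mathcal{M}_C$, i.e. a map to $Y_1$, then a flat family of $G/N$-clusters in $Y_1$, i.e. a map to $Y_2$) to produce the inverse morphism. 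Checking these are mutually inverse is formal from the uniqueness in the universal properties. The main obstacle I anticipate is precisely this last globalisation step — making sure that applying $\Phi^{-1}$ to a \emph{family} over an arbitrary base scheme $S$ yields a family flat over $S$ with the right fibrewise description, so that the $N$-Hilb and then $G/N$-Hilb universal properties genuinely apply; the pointwise stability estimate, while requiring care with the "$\varepsilon$ small" quantifier (one needs it uniform, which is fine since there are only finitely many dimension vectors of subrepresentations of $R_G$), is comparatively routine given Lemmas \ref{semistable} and \ref{genericity}.
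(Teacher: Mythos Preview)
Your stability computation in the forward direction is essentially identical to the paper's: invoke Lemma~\ref{semistable} to get $\theta^N$-semistability, split into the cases $\theta^N(\mathcal F)>0$ (where smallness of $\varepsilon$ gives $\theta(\mathcal F)>0$) and $\theta^N(\mathcal F)=0$ (where $\mathcal F=\Phi(F)$ for a proper $G/N$-subsheaf $F\subseteq\OO_W$, and then $\theta(\mathcal F)=\varepsilon\,\theta^{G/N}(F)>0$ by $\theta^{G/N}$-stability of the $G/N$-cluster $\OO_W$). The paper makes one extra remark you glossed over: to know that $\theta^{G/N}$ evaluated on $\Phi(F)$ equals $\theta^{G/N}$ evaluated on $F$, it uses the identification $\tau_{1*}F\cong(\pi_{1*}\Phi(F))^N$, which ensures the multiplicities of irreducible $G/N$-representations match in $H^0(F)$ and $H^0(\Phi(F))$. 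You asserted this compatibility without comment, which is a small gap worth noting.

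Where you diverge from the paper is in the second half. You propose to prove a \emph{converse} (every $\theta$-stable constellation is of the form $\Phi(\OO_W)$) and then construct an explicit inverse morphism $\mathcal M_C\to Y_2$ by running the universal family through $\Phi^{-1}$ and invoking the two Hilbert-scheme universal properties in succession. You correctly flag this globalisation step as the main obstacle. The paper sidesteps all of this: once the family over $Y_2$ is shown to be $\theta$-stable, it induces a morphism $f:Y_2\to\mathcal M_C$ over $\C^3/G$; since $G\subset\SL(3,\C)$, both source and target are crepant resolutions of $\C^3/G$ (by Corollary~\ref{cor:crep} and \cite{CI}), and any birational morphism between two crepant resolutions is an isomorphism. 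So the converse direction and the inverse-family construction are simply not needed. Your route would also work if carried out carefully, but it is longer and the obstacle you identified is genuine; the paper's one-line geometric argument is the cleaner finish.
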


\begin{proof}
$\Hilb{G/N}{\Hilb{N}{\C^3}}$ parametrises a family of $G$-constellations of the form $\Phi(\OO_W)$ where $W \subset Y_1$ is a $G/N$-cluster.
Take a $G$-invariant subsheaf $\F$ of $\Phi(\OO_W)$.
If $\theta^N(\F)>0$, then we have $\theta(\F)>0$ by the assumption $\varepsilon<<1$
and we may assume $\theta^N(\F)=0$. In this case, there is a $G/N$-invariant subsheaf $F\subseteq \OO_W$ such that $\F=\Phi(F)$ by Lemma \ref{semistable}.
Note that we have $\tau_{1*} F \cong (\pi_{1*} \F)^N$ by the definition of $\Phi$,
which implies that the number of copies of an irreducible representation of $G/N$ appearing in $H^0(F)$ is the same as
that in $H^0(\F)$, proving $\theta(\F)=\varepsilon \theta^{G/N}(F)>0$.
Thus we obtain the $\theta$-stability of $\Phi(\OO_W)$ and hence $\Phi$ induces a morphism $f:\Hilb{G/N}{\Hilb{N}{\C^3}}\to\mathcal{M}_C$. Now since $G\subset\SL(3,\C)$ both are crepant resolutions of $\C^3/G$ so $f$ is an isomorphism.
\end{proof}

\begin{rem}
In the above proof, we can replace $\theta^N$ by an arbitrary $G/N$-invariant generic stability parameter
for $N$-constellations.
Especially, we have similar results for iterated constuctions such as ``Hilb of Hilb of Hilb".

Note that to let $\theta^{G/N}$ be general, we have to construct the moduli space of $\theta$-stable $G$-constellations
on a quasi-projective variety with $G$-action, which could be done by patching local constructions.
\end{rem}

\section{Representations of semidirect products}\label{semidirect}


We recall representations of semidirect products from \cite[8.2]{Serre}
to compute several examples in \S \ref{Examples}.
We also use the same notation for an Abelian group $G$ with a subgroup $N$
in \S \ref{AbCase}.
Let $G$ be a finite group obtained as the semidirect product $N \rtimes H$ of subgroups $N$ and $H$.
We assume the normal subgroup $N$ is Abelian.
All the examples in \S \ref{Examples} are of this form.
 Let $\Irr(N)=\{\sigma_0,\ldots,\sigma_{p-1}\}$ be the set of irreducible representations of $N$ where $\dim(\sigma_i)=1$ and denote by $I=\{0,\ldots,p-1\}$ the set of subindices. Let us also denote by $\Irr(G/N)=\{\tau_0,\ldots,\tau_{h_0}\}$ the set of irreducible representations of $G/N=H$ with $d^{G/N}_j:=\dim(\tau_j)$. 

The group $H=G/N$ acts on $\Irr(N)$ as follows:
$H$ acts on $N$ by conjugation and thus on $\Irr(N)$ by 
$h \cdot \sigma (n)=\sigma(h^{-1}nh)$, for $h \in H$, $\sigma \in \Irr(N)$ and $n \in N$.
%
Choose a set of representatives of the classes in $\Irr N$ under the action of $G/N$ and denote by $\tilde{I}=\{0,\ldots,k\}\subseteq I$ the corresponding subset of subindices. For any $i\in\tilde{I}$ consider the orbit $\Orb(\sigma_i)$ of $\sigma_i$ under $G/N$ of length $n_i$. Let $G_i$ be the stabilizer and let $\Irr(G_i)=\{\tau_i^0,\ldots,\tau_i^{h_i}\}$ the set of irreducible representations of $G_i$. Recall that if $\sigma_i$ and $\sigma_j$ are in the same orbit then $G_i$ and $G_j$ are conjugate, in particular isomorphic. The trivial representation $\sigma_0\in\Irr(N)$ is always fixed so that $G_0=G/N$ and $\tau_0^j=\tau_j$ for all $j$.

The irreducible representations of $G$ are obtained as follows: for every $i\in\tilde{I}$ the representations in the orbit $\Orb(\sigma_i)$ combine to give $h_i+1$ irreducible representations $\rho_i^j$ for $j=0,\ldots,h_i$ with $\dim(\rho_i^j)=n_i\dim(\tau_i^j)$ (see Table \ref{IrrG}).
In other words, they are induced by the representations of $N \rtimes G_i$ obtained as the tensor product of the extensions of $\sigma_i$ and $\tau_i^j$ to $N \rtimes G$.
In particular, if $\sigma_i$ is fixed by $G/N$ then it give rise to $h_0+1$ irreducible representations, each corresponding to an irreducible representation of $G/N$. Note that $\rho_0^0$ is the trivial representation of $G$. 
Then $\rho_i^j$ are all the irreducible representations of $G$ by \cite[8.2]{Serre}.

{\renewcommand{\arraystretch}{1.15}
\begin{table}[htdp]
\begin{center}
\begin{tabular}{|c|c|c|c|}
\multicolumn{1}{r}{$\sigma_0$}
 &  \multicolumn{1}{c}{$\Orb(\sigma_1)$}
 & \multicolumn{1}{c}{$\ldots$} 
 & \multicolumn{1}{c}{$\Orb(\sigma_k)$} \\
\cline{1-4}
$\rho_0^0$ & \multirow{3}{*}{$\rho_1^0$} & & \multirow{2}{*}{$\rho_k^0$} \\
\cline{1-1}
$\rho_0^1$ &  &  &  \\
\cline{1-1}\cline{4-4}
 &  &  &  \\
\cline{2-2}
$\vdots$ &  $\vdots$ & $\ldots$ & $\vdots$ \\
\cline{2-2}
 & \multirow{3}{*}{$\rho_1^{h_1}$} &  &  \\
\cline{4-4}
 &  &  & \multirow{2}{*}{$\rho_k^{h_k}$} \\
\cline{1-1}
$\rho_0^{h_0}$ &  &  &  \\
\cline{1-4}
\end{tabular}
\end{center}
\caption{Irreducible representations of $G$ from the action of $G/N$ into $\Irr(N)$.}
\label{IrrG}
\end{table}}


\begin{rem} The action of $G/N$ on $\Irr(N)$ to produce $\Irr(G)$ can be translated into the McKay quiver $N$, where every vertex corresponds to a irreducible representation of $N$. Then $G/N$ acts on the set of vertices and on the set of arrows of $Q$, as well as on the path algebra $\kk Q$ permuting the set of primitive idempotents $\{e_i|i\in I\}$. We thus can construct the McKay quiver of $G$ as the $G/N$-orbifold quiver of the McKay quiver of $N$. See \cite{Dem} for the general formulation and \cite{NdC2} for the case of binary dihedral groups in $\GL(2,\C)$.
\end{rem}

Let us describe the stability parameter defined in Definition \ref{theta} which is shown in Table \ref{StabTable}. We are going to use the \textbf{0}-generated stabilities for the groups $N$ and $G/N$ separately so let us denote them as follows:   
\[
\begin{array}{l}
\text{$\theta^N\in\Q^{p}$ such that $\theta^N_i:=\theta^N(\sigma_i)>0$ for $i\neq0$ and $\sigma_i\in\Irr(N)$.} \\
\text{$\theta^{G/N}\in\Q^{h_0+1}$ such that $\theta^{G/N}_j:=\theta^{G/N}(\tau_j)>0$ for $j\neq0$ and $\tau_j\in\Irr(G/N)$.} 
\end{array}
\]

In particular we have $\sum_{i=0}^{p-1}d^N_i\theta^N_i=0$ and $\sum_{i=0}^{h_0}d^{G/N}_j\theta^{G/N}_j=0$, so that 
$\theta^N_0=-\sum_{i=1}^{p-1}d^N_i\theta^N_i$ and $\theta^{G/N}_0=-\sum_{j=1}^{h_0}d^{G/N}_j\theta^{G/N}_j$.

{\renewcommand{\arraystretch}{1.3}
\begin{table}[htdp]
\begin{small}
\begin{center}
\begin{tabular}{|c|c|c|c|}
\multicolumn{1}{c}{$\sigma_0$}
 &  \multicolumn{1}{c}{$\Orb(\sigma_1)$}
 & \multicolumn{1}{c}{$\ldots$} 
 & \multicolumn{1}{c}{$\Orb(\sigma_k)$} \\
\cline{1-4}
$\theta_0^N+\varepsilon\theta^{G/N}_0$ & \multirow{3}{*}{$\displaystyle\dim(\tau_1^0)\cdot\!\!\!\!\!\!\!\!\sum_{\sigma_i\in\Orb(\sigma_1)}\!\!\!\!\!\!\!\theta^N_i$} & & \multirow{2}{*}{$\displaystyle\dim(\tau_k^0)\cdot\!\!\!\!\!\!\!\!\sum_{\sigma_i\in\Orb(\sigma_k)}\!\!\!\!\!\!\!\theta^N_i$} \\
\cline{1-1}
$d^{G/N}_1\theta_0^N+\varepsilon\theta^{G/N}_1$ &  &  &  \\
\cline{1-1}\cline{4-4}
 &  &  &  \\
\cline{2-2}
$\vdots$ &  $\vdots$ & $\ldots$ & $\vdots$ \\
\cline{2-2}
 & \multirow{3}{*}{$\displaystyle\dim(\tau_1^{h_1})\cdot\!\!\!\!\!\!\!\!\sum_{\sigma_i\in\Orb(\sigma_1)}\!\!\!\!\!\!\!\theta^N_i$} &  &  \\
\cline{4-4}
 &  &  & \multirow{2}{*}{$\displaystyle\dim(\tau_k^{h_k})\cdot\!\!\!\!\!\!\!\!\sum_{\sigma_i\in\Orb(\sigma_k)}\!\!\!\!\!\!\!\theta^N_i$} \\
\cline{1-1}
$d^{G/N}_{h_0}\theta_0^N+\varepsilon\theta^{G/N}_{h_0}$ &  &  &  \\
\cline{1-4}
\end{tabular}
\end{center}
\end{small}
\caption{Stability condition $\theta$ in terms of $\theta^N$ and $\theta^{G/N}$.}
\label{StabTable}
\end{table}}

\section{The case $G$ Abelian}\label{AbCase}

Let $G\subset\SL(3,\C)$ be a finite Abelian subgroup and let $A\lhd G$ be a normal subgroup of $G$ with $|A|=p$ and $|G/A|=q$. After introducing the toric notation that is needed, we describe how to calculate the triangulation of the junior simplex $\Delta$ corresponding to $Y:=\Hilb{G/A}{\Hilb{A}{\C^3}}$ and we construct explicitly every $G$-constellation in $Y$ from the $A$-clusters. Then we describe a method to calculate the local coordinates of a moduli space of $G$-constellations using the McKay quiver and finish the section describing the stability condition in the Abelian case.

\subsection{How to calculate $\Hilb{A/N}{\Hilb{N}{\C^3}}$}

Every element of $G$ can be written of the form $g=\diag(\varepsilon^{a_1},\varepsilon^{a_2},\varepsilon^{a_3})$ where $\varepsilon$ is an $r$th primitive root of unity and $0\leq a_i<r$. Let $L\supset\Z^3$ the lattice generated by the elements of $G$ written in the form $\frac{1}{r}(a_1,a_2,a_3)$ and let $M:=L^\vee$ the dual lattice of Laurent monomials. The {\em junior simplex} is the triangle $\Delta\subset L_\R:=L\otimes_\Z\R$ with vertices the standard basis $e_1$,$e_2$,$e_3$. We denote by $\R^2_\Delta$ the affine plane spanned by $\Delta$ and $\Z^2_\Delta:=L\cap\R^2_\Delta$. Recall that $\Delta$ contains all lattice points with $a_1+a_2+a_3=r, a_i\geq0$, and triangulations of $\Delta$ are in 1-to-1 correspondence with crepant resolutions of $\C^3/G$.

First consider the action of $A$ on $\C^3$. In \cite{CR02} Craw and Reid give a method to triangulate $\Delta$ into $p$ regular triangles $\Delta_i$ which produces the crepant resolution $\Hilb{A}{\C^3}$. This triangulation shows that $\Hilb{A}{\C^3}\cong\bigcup_{i=1}^p Y_i$ where $Y_i:=\sigma(\Delta_i)\cong\C^3_{\varepsilon_i, \eta_i, \zeta_i}$ is the affine toric variety associated to the triangle $\Delta_i$, and $\varepsilon_i$, $\eta_i$, $\zeta_i$ are Laurent monomials in $x$, $y$ and $z$. 

The action of $G/A$ on $\Hilb{A}{\C^3}$ is again Abelian so it is given by diagonal matrices, thus it acts on every $Y_i$ separately. For every triangle $\Delta_i$ with $i=1,\ldots,p$, we form the the toric singular quotient $Y_i/(G/A)$ and take $G/A$-Hilb($Y_i$) as crepant resolution. Therefore, 

\[ \Hilb{G/A}{\Hilb{A}{\C^3}}=\bigcup_{i=1}^p\Hilb{G/A}{Y_i} \]

In other words, the triangulation of $\Delta$ which gives $\Hilb{G/A}{\Hilb{A}{\C^3}}$ is produced in two steps: Firstly calculate $\Hilb{A}{\C^3}$ according to \cite{CR02} to obtain $\Delta=\bigcup_{i=1}^p\Delta_i$. Secondly, triangulate every $\Delta_i$ into $q$ regular triangles with the same method according to the $\Z/q$-action of $G/A$ into $Y_i$ to produce $\Delta=\bigcup\Delta_{ij}$ for $i=1,\ldots,p$, $j=1,\ldots,q$. Obviously, the same process of successive triangulations can be done as many times as nontrivial normal subgroups we have in a filtration of $G$. See Figure $\ref{Z30}$.

\begin{figure}[ht]
\begin{center}
\begin{pspicture}(0,-0.25)(9,2.5)
	\psset{arcangle=15}
\scalebox{0.65}{
\rput(-1,0){
\scalebox{0.4}{
	\rput(5,10){\rnode{1}{\Large $\bullet$}}\rput(7.5,5){\rnode{7}{\Large $\bullet$}}
	\rput(0,0){\rnode{18}{\Large $\bullet$}}\rput(10,0){\rnode{19}{\Large $\bullet$}}	
	\ncline{-}{1}{18}\ncline{-}{18}{19}\ncline{-}{19}{1}  
	\ncline{-}{18}{7}
	}	
\rput(2,-0.5){$\Z/2$-Hilb}
	}
	
\rput(5,0){
\scalebox{0.4}{
	\rput(5,10){\rnode{1}{\Large $\bullet$}}\rput(7.5,5){\rnode{7}{\Large $\bullet$}}
	\rput(0,0){\rnode{18}{\Large $\bullet$}}\rput(10,0){\rnode{19}{\Large $\bullet$}}
	\rput(4.17,5){\rnode{5}{\Large \red{$\bullet$}}}
	\rput(3.33,0){\rnode{12}{\Large \red{$\bullet$}}}
	\rput(6.67,0){\rnode{16}{\Large \red{$\bullet$}}}
	\ncline{-}{1}{18}\ncline{-}{18}{19}\ncline{-}{19}{1}  
	\ncline{-}{18}{7}
	\ncline[linecolor=red]{-}{5}{1}\ncline[linecolor=red]{-}{5}{7}
	\ncline[linecolor=red]{-}{5}{18}\ncline[linecolor=red]{-}{12}{7}
	\ncline[linecolor=red]{-}{16}{7}
	}	
\rput(2,-0.5){$\Z/3$-Hilb($\Z/2$-Hilb)}
	}
	
\rput(11,0){
\scalebox{0.4}{
	\rput(5,10){\rnode{1}{\Large $\bullet$}}\rput(3.83,7){\rnode{2}{\Large \blue{$\bullet$}}}
	\rput(5.33,8){\rnode{3}{\Large \blue{$\bullet$}}}\rput(2.67,4){\rnode{4}{\Large \blue{$\bullet$}}}
	\rput(4.17,5){\rnode{5}{\Large \red{$\bullet$}}}\rput(5.67,6){\rnode{6}{\Large \blue{$\bullet$}}}
	\rput(7.5,5){\rnode{7}{\Large $\bullet$}}\rput(1.5,1){\rnode{8}{\Large \blue{$\bullet$}}}
	\rput(3,2){\rnode{9}{\Large \blue{$\bullet$}}}\rput(4.5,3){\rnode{10}{\Large \blue{$\bullet$}}}
	\rput(6,4){\rnode{11}{\Large \blue{$\bullet$}}}\rput(3.33,0){\rnode{12}{\Large \red{$\bullet$}}}
	\rput(4.83,1){\rnode{13}{\Large \blue{$\bullet$}}}\rput(6.33,2){\rnode{14}{\Large \blue{$\bullet$}}}
	\rput(7.83,3){\rnode{15}{\Large \blue{$\bullet$}}}\rput(6.67,0){\rnode{16}{\Large \red{$\bullet$}}}
	\rput(8.17,1){\rnode{17}{\Large \blue{$\bullet$}}}\rput(0,0){\rnode{18}{\Large $\bullet$}}
	\rput(10,0){\rnode{19}{\Large $\bullet$}}
	\ncline{-}{1}{18}\ncline{-}{18}{19}\ncline{-}{19}{1}  
	\ncline{-}{18}{7}
	\ncline[linecolor=red]{-}{5}{1}\ncline[linecolor=red]{-}{5}{7}
	\ncline[linecolor=red]{-}{5}{18}\ncline[linecolor=red]{-}{12}{7}
	\ncline[linecolor=red]{-}{16}{7}
	\ncline[linecolor=blue]{-}{1}{6}\ncline[linecolor=blue]{-}{1}{4}
	\ncline[linecolor=blue]{-}{2}{10}\ncline[linecolor=blue]{-}{2}{18}
	\ncline[linecolor=blue]{-}{3}{9}\ncline[linecolor=blue]{-}{3}{7}
	\ncline[linecolor=blue]{-}{4}{18}\ncline[linecolor=blue]{-}{4}{5}
	\ncline[linecolor=blue]{-}{6}{5}\ncline[linecolor=blue]{-}{6}{7}
	\ncline[linecolor=blue]{-}{8}{5}\ncline[linecolor=blue]{-}{8}{12}
	\ncline[linecolor=blue]{-}{9}{12}\ncline[linecolor=blue]{-}{10}{12}
	\ncline[linecolor=blue]{-}{11}{5}\ncline[linecolor=blue]{-}{11}{12}
	\ncline[linecolor=blue]{-}{12}{14}\ncline[linecolor=blue]{-}{14}{7}
	\ncline[linecolor=blue]{-}{14}{16}\ncline[linecolor=blue]{-}{13}{7}
	\ncline[linecolor=blue]{-}{13}{16}\ncline[linecolor=blue]{-}{7}{17}
	\ncline[linecolor=blue]{-}{15}{16}\ncline[linecolor=blue]{-}{15}{19}
	\ncline[linecolor=blue]{-}{17}{16}\ncline[linecolor=blue]{-}{17}{19}
	}	
\rput(2,-0.5){$\Z/5$-Hilb($\Z/3$-Hilb($Z/2$-Hilb))}
	}}
\psline{->}(2.25,1.5)(3.25,1.5)\psline{->}(6,1.5)(7,1.5)
\end{pspicture}
\caption{Successive triangulations of $\Delta$ for a group $G$ of order $r=30=2\!\cdot\!3\!\cdot\!5$.}
\label{Z30}
\end{center}
\end{figure}
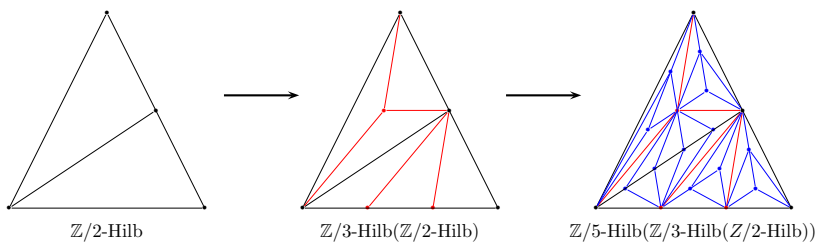

We now look at how $\Hilb{G/A}{\Hilb{A}{\C^3}}$ can be constructed explicitly as a moduli of $G$-constellations. As an $A$-constellation, a point $\mathcal{F}\in\Hilb{A}{\C^3}$ is an $A$-equivariant coherent sheaf on $\C^3$ such that $H^0(\mathcal{F})\cong\C[A]\cong\bigoplus_{\sigma\in\Irr(A)}\sigma$, and $\mathcal{F}\cong\OO_Z$ for some $A$-cluster $Z$. Therefore, locally at $U\subset\Hilb{A}{\C^3}$ we can take a basis $\Gamma:=\{\lambda_\sigma|\sigma\in\Irr(A),\lambda_\sigma\text{ is $\sigma$-semi-invariant}\}$ of $H^0(\mathcal{F})$ to be an $A$-graph. That is, $\lambda_\sigma$ is a monomial in $\C[x,y,z]$ and if $x^iy^jz^k\in\Gamma$ then $x^{i'}y^{j'}z^{k'}\in\Gamma$ for any $i'\leq i$, $j'\leq j$ and $k'\leq k$ (see \cite{Nak01}). We call $\Gamma$ the {\em building block} for $U$. 

It is also known from \cite{Nak01} that $U=\sigma(\Delta_i)\cong\C^3_{a,b,c}$ where $a=\frac{f_\sigma}{\lambda_\sigma}$, $b=\frac{f_{\sigma'}}{\lambda_{\sigma'}}$ and $c=\frac{f_{\sigma''}}{\lambda_{\sigma''}}$ are Laurent monomials in $x$,$y$ and $z$, where $f_\sigma$, $f_{\sigma'}$ and $f_{\sigma''}$ are $\sigma,\sigma'$ and $\sigma''$-semi-invariants respectively. Then, an open set $V=\sigma(\Delta_{ij})\subset\Hilb{G/N}{U}$ is determined by a $G/N$-graph $\Omega:=\{\omega_\tau|\tau\in\Irr(G/N)\}\cong\C[G/N]$ where $\omega_\tau$ are now monomials in $\C[a,b,c]$. Thus, a point $\mathcal{Z}\in V$ as a $G$-equivariant module can be written in the form
\[
\mathcal{Z}=\{ \omega_\tau\Gamma | \tau\in\Irr(G/N) \} = \{ \omega_\tau\lambda_\sigma | \tau\in\Irr(G/N), \sigma\in\Irr(N) \}\cong\C[G]
\]
In other words, the resulting $G$-constellations arising from the open set $U$ are obtained by multiplying the building block $\Gamma$ by the $q$ different $G/N$-graphs $\Omega$. 

\begin{exa}\label{ex!Z6} Let $G=\frac{1}{6}(1,2,3)=\frac{1}{2}(1,0,1)\times\frac{1}{3}(1,2,0)\cong\Z/6\Z$. Take the normal subgroup in $G$ to be $A=\frac{1}{2}(1,0,1)$. The triangulation of the junior simplex $\Delta=\Delta_1\cup\Delta_2$ corresponding to $\Hilb{A}{\C^3}=U_1\cup U_2$ and the toric coordinates are given in Figure \ref{1over2}. 

\begin{figure}[h]
\begin{center}
\begin{pspicture}(0,0)(11,2.75)
	\psset{arcangle=15}
\scalebox{0.8}{
\rput(0,0){
\scalebox{0.35}{
	\rput(0,0){\rnode{z}{\Huge $\bullet$}}\rput(-0.75,0){\Huge $e_3$}
	\rput(5,10){\rnode{x}{\Huge $\bullet$}}\rput(5,10.75){\Huge $e_1$}
	\rput(2.50, 5){\rnode{3}{\Huge $\bullet$}}
	\rput(10,0){\rnode{y}{\Huge $\bullet$}}\rput(10.75,0){\Huge $e_2$}	
	}}
	\ncline{-}{x}{y}\aput[0.05]{:U}{$z^2$}\ncline{-}{z}{y}\bput[0.05]{:U}{$x^2$}
	\ncline{-}{z}{3}\aput[0.05]{:U}{$y$}\ncline{-}{3}{x}\aput[0.05]{:U}{$y$}
	\ncline{-}{3}{y}\aput[0.05]{:U}{$x:z$}
\rput(1.9,1.9){$U_1$}
\rput(1.25,0.75){$U_2$}
\rput(9,1.5){
${\renewcommand{\arraystretch}{2}\begin{array}{|c|c|c|c|}
\hline
U_i & \varepsilon_i, \eta_i, \zeta_i & \text{$A$-graph} & \text{$G/A$-action type on $U_i$} \\
\hline
U_1 & \frac{x}{z}, z^2, y & \framebox{1~$z$} & \frac{1}{3}(1,0,2)  \\
U_2 & \frac{z}{x}, x^2, y & \framebox{1~$x$} & \frac{1}{3}(1,1,1)  \\
\hline
\end{array}}$
}}
\end{pspicture}
\end{center}
\caption{Toric fan and coordinates for $\Hilb{\frac{1}{2}(1,0,1)}{\C^3}$.}
\label{1over2}
\end{figure}
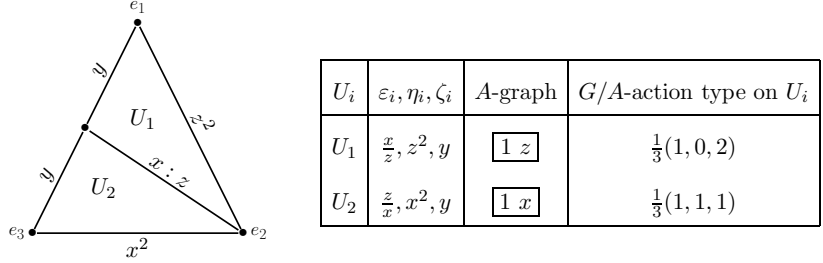

The action of $G/A\cong\frac{1}{3}(1,2,0)$ on $\Hilb{A}{\C^3}$ leaves invariant the open sets $U_1$ and $U_2$, sending $(\varepsilon_1,\eta_1,\zeta_1)\mapsto(\omega\varepsilon_1,\eta_1,\omega^2\zeta_1)$ and $(\varepsilon_2,\eta_2,\zeta_2)\mapsto(\omega\varepsilon_2,\omega\eta_2,\omega\zeta_2)$ respectively, where $\omega$ is a primitive cubic root of unity. Therefore, each of the quotient open sets $U_i/(G/A)$ contains the singularities $\frac{1}{3}(1,0,2)$ and $\frac{1}{3}(1,1,1)$ respectively, which we resolve with the crepant resolutions $(G/A)$-Hilb($U_i$) for $i=1,2$. The triangulations of $\Delta_1$ and $\Delta_2$ are shown in Figure \ref{Zmod3}. 

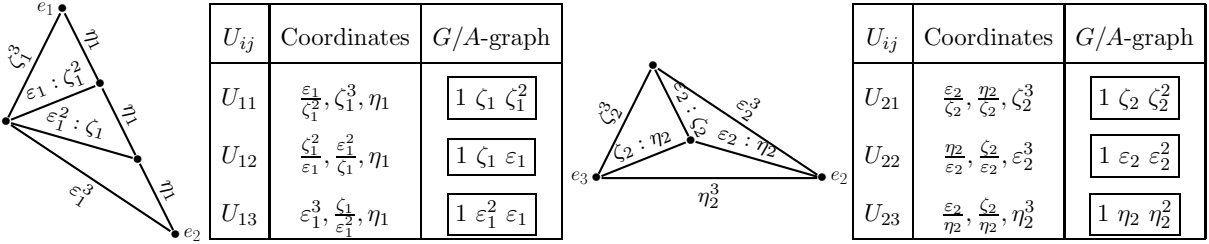
\begin{figure}[h]
\begin{center}
\begin{pspicture}(0,0)(20,2.75)
	\psset{arcangle=15}
\rput(-0.6,0){
\scalebox{0.3}{
	\rput(5,10){\rnode{x}{\Huge $\bullet$}}\rput(4.25,10){\Huge $e_1$}
	\rput(8.33, 3.33){\rnode{2}{\Huge $\bullet$}}
	\rput(2.50, 5){\rnode{3}{\Huge $\bullet$}}
	\rput(6.67, 6.67){\rnode{4}{\Huge $\bullet$}}
	\rput(10,0){\rnode{y}{\Huge $\bullet$}}\rput(10.75,0){\Huge $e_2$}	
	}}
	\ncline{-}{x}{4}\aput[0.05]{:U}{\scriptsize $\eta_1$}
	\ncline{-}{4}{2}\aput[0.05]{:U}{\scriptsize $\eta_1$}
	\ncline{-}{2}{y}\aput[0.05]{:U}{\scriptsize $\eta_1$}
	\ncline{-}{3}{y}\bput[0.05]{:U}{\scriptsize $\varepsilon_1^3$}
	\ncline{-}{3}{x}\aput[0.05]{:U}{\scriptsize $\zeta_1^3$}
	\ncline{-}{3}{2}\aput[0.05]{:U}{\scriptsize $\varepsilon_1^2:\zeta_1$}
	\ncline{-}{3}{4}\aput[0.05]{:U}(0.6){\scriptsize $\varepsilon_1:\zeta_1^2$}
\rput(5.2,1.5){
\scalebox{0.85}{
${\renewcommand{\arraystretch}{2}
\begin{array}{|c|c|c|}
\hline
U_{ij} & \text{Coordinates} & \text{$G/A$-graph} \\
\hline
U_{11} & \frac{\varepsilon_1}{\zeta_1^2},\zeta_1^3,\eta_1 & \framebox{1~$\zeta_1$~$\zeta_1^2$} \\
U_{12} & \frac{\zeta_1^2}{\varepsilon_1},\frac{\varepsilon_1^2}{\zeta_1},\eta_1 & \framebox{1~$\zeta_1$~$\varepsilon_1$}  \\
U_{13} & \varepsilon_1^3,\frac{\zeta_1}{\varepsilon_1^2},\eta_1  &  \framebox{1~$\varepsilon_1^2$~$\varepsilon_1$}  \\
\hline
\end{array}}$
}}
\rput(8,0.75){
\scalebox{0.3}{
	\rput(0,0){\rnode{z}{\Huge $\bullet$}}\rput(-0.75,0){\Huge $e_3$}
	\rput(4.17, 1.67){\rnode{1}{\Huge $\bullet$}}
	\rput(2.50, 5){\rnode{3}{\Huge $\bullet$}}
	\rput(10,0){\rnode{y}{\Huge $\bullet$}}\rput(10.75,0){\Huge $e_2$}	
	}}
	\ncline{-}{z}{y}\bput[0.05]{:U}{\scriptsize $\eta_2^3$}
	\ncline{-}{z}{3}\aput[0.05]{:U}{\scriptsize $\zeta_2^3$}
	\ncline{-}{3}{y}\aput[0.05]{:U}{\scriptsize $\varepsilon_2^3$}
	\ncline{-}{3}{1}\aput[0.05]{:U}(0.7){\scriptsize $\varepsilon_2:\zeta_2$}
	\ncline{-}{z}{1}\aput[0.05]{:U}(0.55){\scriptsize $\zeta_2:\eta_2$}
	\ncline{-}{1}{y}\aput[0.05]{:U}(0.4){\scriptsize $\varepsilon_2:\eta_2$}
\rput(13.75,1.5){
\scalebox{0.85}{
${\renewcommand{\arraystretch}{2}\begin{array}{|c|c|c|}
\hline
U_{ij} & \text{Coordinates} & \text{$G/A$-graph} \\
\hline
U_{21} & \frac{\varepsilon_2}{\zeta_2},\frac{\eta_2}{\zeta_2},\zeta_2^3  & \framebox{1~$\zeta_2$~$\zeta_2^2$} \\
U_{22} & \frac{\eta_2}{\varepsilon_2},\frac{\zeta_2}{\varepsilon_2},\varepsilon_2^3 & \framebox{1~$\varepsilon_2$~$\varepsilon_2^2$} \\
U_{23} & \frac{\varepsilon_2}{\eta_2},\frac{\zeta_2}{\eta_2},\eta_2^3 & \framebox{1~$\eta_2$~$\eta_2^2$} \\
\hline
\end{array}}$
}}
\end{pspicture}
\end{center}
\caption{$\frac{1}{3}(1,0,2)$-Hilb($U_1$) and $\frac{1}{3}(1,1,1)$-Hilb($U_2$).}
\label{Zmod3}
\end{figure}

In constructing the resolution $\Hilb{\Z/2\Z}{\C^3}$ we added only one new lattice point to $\Delta$, namely $\frac{1}{2}(1,0,1)$, producing the subdivision $\Delta=\Delta_1\cup\Delta_2$. To construct $\Hilb{\Z/3\Z}{\Hilb{\Z/2\Z}{\C^3}}$ we now introduce the remaining lattice points, namely $\frac{1}{6}(1,2,3)$, $\frac{1}{6}(2,4,0)$ and $\frac{1}{6}(4,2,0)$, and triangulate $\Delta_1$ and $\Delta_2$ according to the \cite{CR02} algorithm. Changing back to the coordinates $x$, $y$ and $z$ we obtain the fan shown in Figure \ref{z3z2}. 

\begin{figure}[h]
\begin{center}
\begin{pspicture}(0,0)(3.5,3.5)
	\psset{arcangle=15}
\scalebox{0.75}{
\rput(0,-0.25){
\scalebox{0.45}{
	\rput(0,0){\rnode{z}{\Huge $\bullet$}}\rput(-0.75,0){\Huge $e_3$}
	\rput(5,10){\rnode{x}{\Huge $\bullet$}}\rput(5,10.75){\Huge $e_1$}
	\rput(4.17, 1.67){\rnode{1}{\Huge\red $\bullet$}}
	\rput(8.33, 3.33){\rnode{2}{\Huge\red $\bullet$}}
	\rput(2.50, 5){\rnode{3}{\Huge $\bullet$}}
	\rput(6.67, 6.67){\rnode{4}{\Huge\red $\bullet$}}
	\rput(10,0){\rnode{y}{\Huge $\bullet$}}\rput(10.75,0){\Huge $e_2$}	
	}
	\ncline{-}{x}{4}\aput[0.05]{:U}{\scriptsize $z^2$}
	\ncline{-}{4}{2}\aput[0.05]{:U}{\scriptsize $z^2$}
	\ncline{-}{2}{y}\aput[0.05]{:U}{\scriptsize $z^2$}
	\ncline{-}{3}{x}\aput[0.05]{:U}{\scriptsize $y^3$}
	\ncline{-}{3}{2}\aput*[-0.1]{:U}{\tiny $x^2:yz^2$}
	\ncline{-}{3}{4}\aput*[-0.1]{:U}(0.6){\tiny $x:y^2z$}	
	\ncline{-}{z}{y}\bput[0.05]{:U}{\scriptsize $z^6$}
	\ncline{-}{z}{3}\aput[0.05]{:U}{\scriptsize $y^3$}
	\ncline{-}{3}{y}\aput*[-0.1]{:U}{\tiny $x^3:z^3$}
	\ncline{-}{3}{1}\aput*[-0.1]{:U}(0.6){\tiny $z:xy$}
	\ncline{-}{z}{1}\aput*[-0.1]{:U}(0.55){\tiny $x^2:y$}
	\ncline{-}{1}{y}\aput*[-0.1]{:U}(0.4){\tiny $x^3:z$}
	\rput(2.25,3.35){\small $U_{11}$}
	\rput(2.35,1){\small $U_{22}$}
	\rput(1.1,1.1){\small $U_{21}$}
	\rput(2.85,2.25){\small $U_{12}$}
	\rput(3.5,1.15){\small $U_{13}$}
	\rput(2,0.35){\small $U_{23}$}
}}
\end{pspicture}
\end{center}
\caption{Toric fan for $\Z/3\Z$-Hilb($\Hilb{\Z/2\Z}{\C^3})$.}
\label{z3z2}
\end{figure}

For $j=1,2,3$ the basis for the $G$-constellations of the open set $U_{1j}$ are given by multiplying every basis element in the $(G/A)$-graphs $\Omega_1=\{1,\zeta_1,\zeta_1^2\}$, $\Omega_2=\{1,\zeta_1,\varepsilon_1\}$ and $\Omega_3=\{1,\varepsilon_1,\varepsilon_1^2\}$ by the building block $\Gamma=\{1,z\}$ coming from the open set $U_1$. Similarly for the open sets $U_{2j}\subset\Hilb{\Z/3\Z}{\Hilb{\Z/2\Z}{\C^3}}$. More precisely, the $G$-constellations are 
\[
\small 
{\renewcommand{\arraystretch}{1.5}
\begin{array}{l}
M_{11} = \{ 1\cdot\framebox{1~$z$}, \zeta_1\cdot\framebox{1~$z$}, \zeta_1^2\cdot\framebox{1~$z$}\}	= \{1,z,y,yz,y^2,y^2z\} \\
M_{12} = \{ 1\cdot\framebox{1~$z$}, \zeta_1\cdot\framebox{1~$z$}, \varepsilon_1\cdot\framebox{1~$z$}\} 
		= \{1,z,y,yz,\text{\Large $\frac{x}{z}$},x\} \\
M_{13} = \{ 1\cdot\framebox{1~$z$}, \varepsilon_1^2\cdot\framebox{1~$z$}, \varepsilon_1\cdot\framebox{1~$z$}\} 
		= \{1,z,\text{\Large $\frac{x^2}{z^2}$},\text{\Large $\frac{x^2}{z}$},\text{\Large $\frac{x}{z}$},x\} \\
M_{21} = \{ 1\cdot\framebox{1~$x$}, \zeta_2\cdot\framebox{1~$x$}, \zeta_2^2\cdot\framebox{1~$x$}\} 
		= \{1,x,y,xy,y^2,xy^2\} \\
M_{22} = \{ 1\cdot\framebox{1~$x$}, \varepsilon_2\cdot\framebox{1~$x$}, \varepsilon_2^2\cdot\framebox{1~$x$}\} 
		= \{1,x,\text{\Large $\frac{z}{x}$},z,\text{\Large $\frac{z^2}{x^2}$},\text{\Large $\frac{z^2}{x}$}\} \\
M_{23} = \{ 1\cdot\framebox{1~$x$}, \eta_2\cdot\framebox{1~$x$}, \eta_2^2\cdot\framebox{1~$x$}\} 
		= \{1,x,x^2,x^3,x^4,x^5\} 
\end{array}}
\]

Let now $A=\frac{1}{3}(1,2,0)$ be the normal subgroup. Then $\Hilb{A}{\C^3}\cong V_1\cup V_2\cup V_3$ where $V_i\cong\C^3$. The quotient group $G/A\cong\frac{1}{2}(1,0,1)$ produces a $\Z/2\Z$-action on every $V_i$ for $i=1,2,3$. The resolution of these singularities is translated into the junior simplex $\Delta=\Delta_1\cup\Delta_2\cup\Delta_3$ by adding the points $\frac{1}{6}(3,0,3)$ and $\frac{1}{6}(1,2,3)$, triangulating in the only possible way as in Figure \ref{z2z3}. All crepant resolutions of $\C^3/\frac{1}{6}(1,2,3)$ are shown in Figure \ref{crepZ6}.

\begin{figure}[h]
\begin{center}
\begin{pspicture}(0,0)(8,3.5)
	\psset{arcangle=15}
\scalebox{0.8}{
\rput(-0.5,-0.25){
\scalebox{0.45}{
	\rput(0,0){\rnode{z}{\Huge $\bullet$}}\rput(-0.75,0){\Huge $e_3$}
	\rput(5,10){\rnode{x}{\Huge $\bullet$}}\rput(5,10.75){\Huge $e_1$}
	\rput(4.17, 1.67){\rnode{1}{\Huge\red $\bullet$}}
	\rput(8.33, 3.33){\rnode{2}{\Huge $\bullet$}}
	\rput(2.50, 5){\rnode{3}{\Huge\red $\bullet$}}
	\rput(6.67, 6.67){\rnode{4}{\Huge $\bullet$}}
	\rput(10,0){\rnode{y}{\Huge $\bullet$}}\rput(10.75,0){\Huge $e_2$}	
	}
	\ncline{-}{x}{4}\aput[0.05]{:U}{\scriptsize $z^2$}
	\ncline{-}{4}{2}\aput[0.05]{:U}{\scriptsize $z^2$}
	\ncline{-}{2}{y}\aput[0.05]{:U}(0.6){\scriptsize $z^2$}
	\ncline{-}{3}{4}\aput*[-0.15]{:U}{\tiny $x:y^2z$}
	\ncline{-}{z}{3}\aput[0.05]{:U}{\scriptsize $y^3$}
	\ncline{-}{3}{x}\aput[0.05]{:U}{\scriptsize $y^3$}	
	\ncline{-}{z}{1}\aput*[-0.15]{:U}(0.5){\tiny $x^2:y$}
	\ncline{-}{1}{2}\aput*[-0.15]{:U}(0.5){\tiny $x^2:y$}
	\ncline{-}{1}{4}\aput*[-0.15]{:U}(0.5){\tiny $xz:y^2$}
	\ncline{-}{1}{y}\aput*[-0.1]{:U}{\tiny $x^3:z$}
	\ncline{-}{z}{4}\aput*[-0.15]{:U}(0.45){\tiny $x^2:y^4$}
	\ncline{-}{z}{y}\bput[0.05]{:U}{\scriptsize $x^6$}
	\rput(2.25,3.35){\small $U'_{11}$}
	\rput(1.5,2){\small $U'_{12}$}
	\rput(1.65,1.075){\small $U'_{21}$}
	\rput(3.15,1.8){\small $U'_{22}$}
	\rput(3.6,0.8){\small $U'_{32}$}
	\rput(2,0.35){\small $U'_{31}$}
}
\rput(8.25,2)
{
${\renewcommand{\arraystretch}{1.5}\begin{array}{|c|c|}
\hline
	& \text{$G$-constellations} \\
\hline
M'_{11} & \{1,y,y^2,z,yz,y^2z\} \\
M'_{12} & \{1,y,y^2,\frac{x}{y^2},\frac{x}{y},x\} \\
M'_{21} & \{1,x,y,\frac{y^2}{x},y^2,\frac{y^3}{x}\} \\
M'_{22} & \{1,x,y,z,xz,yz\} \\
M'_{31} & \{1,x,x^2,x^3,x^4,x^5\} \\
M'_{32} & \{1,x,x^2,z,xz,x^2z\} \\
\hline
\end{array}}$
}}
\end{pspicture}
\end{center}
\caption{Toric fan $\Hilb{\Z/2\Z}{\Hilb{\Z/3\Z}{\C^3}}$ and the corresponding $G$-constellations.}
\label{z2z3}
\end{figure}

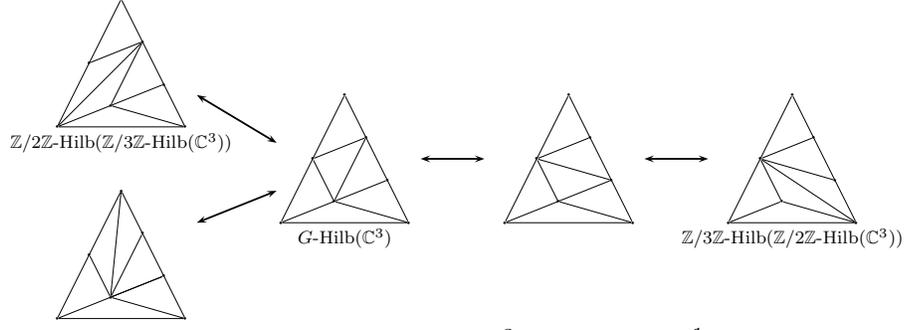
\begin{figure}[h]
\begin{center}
\begin{pspicture}(0,0)(10,3.5)
	\psset{arcangle=15}
\rput(0,-0.5){
\scalebox{0.85}{
\rput(0,3){
\scalebox{0.2}{
	\rput(0,0){\rnode{z}{\Large $\bullet$}}
	\rput(5,10){\rnode{x}{\Large $\bullet$}}
	\rput(4.17, 1.67){\rnode{1}{\Large $\bullet$}}
	\rput(8.33, 3.33){\rnode{2}{\Large $\bullet$}}
	\rput(2.50, 5){\rnode{3}{\Large $\bullet$}}
	\rput(6.67, 6.67){\rnode{4}{\Large $\bullet$}}
	\rput(10,0){\rnode{y}{\Large $\bullet$}}	
	\ncline{-}{x}{y}\ncline{-}{y}{z}\ncline{-}{z}{x}  
	\ncline{-}{z}{2}\ncline{-}{z}{4}
	\ncline{-}{1}{y}\ncline{-}{1}{4}
	\ncline{-}{3}{4}
	}
\rput(1,-0.25){\scriptsize $\Hilb{\Z/2\Z}{\Hilb{\Z/3\Z}{\C^3}}$}
	}

\rput(0,0){
\scalebox{0.2}{
	\rput(0,0){\rnode{z}{\Large $\bullet$}}
	\rput(5,10){\rnode{x}{\Large $\bullet$}}
	\rput(4.17, 1.67){\rnode{1}{\Large $\bullet$}}
	\rput(8.33, 3.33){\rnode{2}{\Large $\bullet$}}
	\rput(2.50, 5){\rnode{3}{\Large $\bullet$}}
	\rput(6.67, 6.67){\rnode{4}{\Large $\bullet$}}
	\rput(10,0){\rnode{y}{\Large $\bullet$}}	
	\ncline{-}{x}{y}\ncline{-}{y}{z}\ncline{-}{z}{x}  
	\ncline{-}{z}{2}
	\ncline{-}{1}{y}\ncline{-}{1}{2}\ncline{-}{1}{4}
	\ncline{-}{1}{3}\ncline{-}{1}{x}
	}}

\rput(3.5,1.5){
\scalebox{0.2}{
	\rput(0,0){\rnode{z}{\Large $\bullet$}}
	\rput(5,10){\rnode{x}{\Large $\bullet$}}
	\rput(4.17, 1.67){\rnode{1}{\Large $\bullet$}}
	\rput(8.33, 3.33){\rnode{2}{\Large $\bullet$}}
	\rput(2.50, 5){\rnode{3}{\Large $\bullet$}}
	\rput(6.67, 6.67){\rnode{4}{\Large $\bullet$}}
	\rput(10,0){\rnode{y}{\Large $\bullet$}}	
	\ncline{-}{x}{y}\ncline{-}{y}{z}\ncline{-}{z}{x}  
	\ncline{-}{z}{2}\ncline{-}{1}{y}
	\ncline{-}{1}{3}\ncline{-}{1}{4}
	\ncline{-}{3}{4}
	}
\rput(1,-0.25){\scriptsize $G$-Hilb($\C^3$)}
	}

\rput(7,1.5){
\scalebox{0.2}{
	\rput(0,0){\rnode{z}{\Large $\bullet$}}
	\rput(5,10){\rnode{x}{\Large $\bullet$}}
	\rput(4.17, 1.67){\rnode{1}{\Large $\bullet$}}
	\rput(8.33, 3.33){\rnode{2}{\Large $\bullet$}}
	\rput(2.50, 5){\rnode{3}{\Large $\bullet$}}
	\rput(6.67, 6.67){\rnode{4}{\Large $\bullet$}}
	\rput(10,0){\rnode{y}{\Large $\bullet$}}	
	\ncline{-}{x}{y}\ncline{-}{y}{z}\ncline{-}{z}{x}  
	\ncline{-}{z}{2}\ncline{-}{1}{y}
	\ncline{-}{1}{3}\ncline{-}{3}{2}
	\ncline{-}{3}{4}
	}}
\rput(10.5,1.5){
\scalebox{0.2}{
	\rput(0,0){\rnode{z}{\Large $\bullet$}}
	\rput(5,10){\rnode{x}{\Large $\bullet$}}
	\rput(4.17, 1.67){\rnode{1}{\Large $\bullet$}}
	\rput(8.33, 3.33){\rnode{2}{\Large $\bullet$}}
	\rput(2.50, 5){\rnode{3}{\Large $\bullet$}}
	\rput(6.67, 6.67){\rnode{4}{\Large $\bullet$}}
	\rput(10,0){\rnode{y}{\Large $\bullet$}}	
	\ncline{-}{x}{y}\ncline{-}{y}{z}\ncline{-}{z}{x}  
	\ncline{-}{1}{z}\ncline{-}{1}{y}
	\ncline{-}{1}{3}\ncline{-}{3}{y}
	\ncline{-}{3}{4}\ncline{-}{3}{2}
	}
\rput(1,-0.25){\scriptsize $\Hilb{\Z/3\Z}{\Hilb{\Z/2\Z}{\C^3}}$}
	}
\psline{<->}(2.25,1.5)(3.5,2)\psline{<->}(5.75,2.5)(6.75,2.5)
\psline{<->}(2.25,3.5)(3.5,2.75)\psline{<->}(9.25,2.5)(10.25,2.5)
}}
\end{pspicture}
\end{center}
\caption{Crepant resolutions of $\C^3/G$ with $G=\frac{1}{6}(1,2,3)$.}
\label{crepZ6}
\end{figure}

\end{exa}

\subsection{$\theta$-stability in the Abelian case.}

Every irreducible representation $\sigma_i\in\Irr(N)$ is fixed so all stabilizers $G_i$ are isomorphic to $G/N$. The irreducible representations of $G$ are therefore distributed as in Table \ref{tAbStab}. Since $n_i=d^N_i=d^{G/N}_j=1$ for all $i=0,\ldots,p-1$, $j=0,\ldots,q-1$, we have that $\dim(\rho)=1$ for $\rho\in\Irr(G)$. The stability condition $\theta\in\Theta$ for which the crepant resolution $\Hilb{G/A}{\Hilb{A}{\C^3}}\cong\mathcal{M}_\theta$ is given also in Table \ref{tAbStab}. Notice that in this case $\theta^N_0=-\sum_{i=1}^{p-1}\theta_i^N$ and $\theta^{G/N}_0=-\sum_{j=1}^{q-1}\theta_i^{G/N}$, so the fact that $0<\varepsilon<<1$ implies that $\theta(\rho^k_0)<0$ for all $k$.

{\renewcommand{\arraystretch}{1.4}
\begin{table}[htdp]
\begin{small}
\begin{center}
$\Irr(G)=$
\begin{tabular}{|c|c|c|c|}
 \multicolumn{1}{c}{$\sigma_0$}
 &  \multicolumn{1}{c}{$\sigma_1$}
 & \multicolumn{1}{c}{$\ldots$} 
 & \multicolumn{1}{c}{$\sigma_{p-1}$} \\
\cline{1-4}
$\rho_0^0$	& $\rho^0_1$	&  $\ldots$	& $\rho^0_{p-1}$	 \\
\cline{1-4}
$\rho_0^1$	& $\rho^1_1$	&  $\ldots$	& $\rho^1_{p-1}$	 \\
\cline{1-4}
& $\vdots$ &	& $\vdots$ \\
\cline{1-4}
$\rho_0^{q-1}$	& $\rho^{q-1}_1$	&  $\ldots$	& $\rho^{q-1}_{p-1}$	 \\
\cline{1-4}
\end{tabular} $~~\text{  }~~\theta=$
\begin{tabular}{|c|c|c|c|}
 \multicolumn{1}{c}{$\sigma_0$}
 &  \multicolumn{1}{c}{$\sigma_1$}
 & \multicolumn{1}{c}{$\ldots$} 
 & \multicolumn{1}{c}{$\sigma_{p-1}$} \\
\cline{1-4}
$-\sum_{i=1}^{p-1}\theta^N_i-\varepsilon\sum_{j=1}^{q-1}\theta^{G/N}_j$	& $\theta^N_1$	&  $\ldots$	& $\theta^N_{p-1}$ \\
\cline{1-4}
$-\sum_{i=1}^{p-1}\theta^N_i+\varepsilon\theta^{G/N}_1$ & $\theta^N_1$ & $\ldots$ & $\theta^N_{p-1}$ \\
\cline{1-4}
$\vdots$ & $\vdots$ &	& $\vdots$  \\
\cline{1-4}
$-\sum_{i=1}^{p-1}\theta^N_i+\varepsilon\theta^{G/N}_{q-1}$ & $\theta^N_1$ & $\ldots$ & $\theta^N_{p-1}$ \\
\cline{1-4}
\end{tabular}
\end{center}
\end{small}
\caption{$\Irr(G)$ and the stability condition $\theta$ in terms of $\theta_N$ and $\theta_{G/N}$.}
\label{tAbStab}
\end{table}}

\begin{exa} Let us take the group $G=\frac{1}{6}(1,2,3)$ and consider $\Hilb{\Z/3\Z}{\Hilb{\Z/2\Z}{\C^3}}$. The distribution of $\Irr(G)$ and the stability condition $\theta$ are shown in Figure \ref{McKZ6} where $a,b_1,b_2\in\Q$ are positive numbers and $0<\varepsilon<<1$.  

The stability condition is in this case given clockwise around the McKay quiver. By checking the subrepresentations in every affine piece we can see that chamber $C\subset\Theta$ is given by 
\[
\begin{array}{rrr}
\theta_2,\theta_4 < 0, & \theta_2+\theta_5 > 0, &  \theta_1+\theta_4 > 0, \\
\theta_3 > 0, & \theta_4+\theta_5 > 0, & \theta_0+\theta_1+\theta_3 > 0. 
\end{array}
\]
Thus if we take $0<\varepsilon<a/$max$\{b_1,b_2\}$ every inequality is satisfied by $\theta$.

{\renewcommand{\arraystretch}{1.25}
\begin{figure}[h]
\begin{center}
\begin{pspicture}(0,0)(3,1.25)
	\psset{arcangle=20,nodesep=2pt}
\rput(-2.5,0.5){
$\Irr(G)=
\begin{tabular}{|c|c|}
\cline{1-2}
$\rho_0^0$	& $\rho_1^0$	 \\
\cline{1-2}
$\rho_0^1$	& $\rho_1^1$	 \\
\cline{1-2}
$\rho_0^2$	& $\rho_1^2$	 \\
\cline{1-2}
\end{tabular}$}

\rput(3,0.5){
$\theta := 
\begin{tabular}{|c|c|}
\cline{1-2}
$\theta_0$	& $\theta_1$	 \\
\cline{1-2}
$\theta_2$	& $\theta_3$	 \\
\cline{1-2}
$\theta_4$	& $\theta_5$	 \\
\cline{1-2}
\end{tabular}
=
\begin{tabular}{|c|c|}
\cline{1-2}
$-a-\varepsilon(b_1+b_2)$	& $a$	 \\
\cline{1-2}
$-a+\varepsilon b_1$	& $a$	 \\
\cline{1-2}
$-a+\varepsilon b_2$	& $a$	 \\
\cline{1-2}
\end{tabular}$}
\end{pspicture}
\end{center}
\caption{$\Irr(G)$ and stability condition for $\Hilb{\Z/3}{\Hilb{\Z/2}{\C^3}}$.}
\label{McKZ6}
\end{figure}}

\end{exa}

\section{Local coordinates}\label{LocalCoord} 

In this section, we introduce some notation and terminology to illustrate non-Abelian examples in the next section.
Let $(Q, R)$ be a quiver with relations, where $R$ is a two-sided
ideal in the path algebra $\C Q$ of $Q$. Let $Q'$ be a connected quiver and let $\phi:Q' \to Q$ be a morphism of quivers, that is, a pair of morphisms $\phi_0:Q'_0\to Q_0$ and $\phi_1:Q'_1\to Q_1$ between the respective vertex and arrows sets. For a vertex $v$ of $Q$, let $d_v$ be the number of vertices in the preimage $H_v:=\{\phi^{-1}(v)\}\subseteq Q'_0$ and let $\C^{H_v} \cong \C^{d_v}$ be the vector space with a distinguished basis $\{e_w \mid w \in H_v\}$.
Notice that for an arrow $a \in Q_1$ with the head $h(a)$ and the tail $t(a)$,
a linear map $\C^{H_{t(a)}} \to \C^{H_{h(a)}}$ is given by a matrix in $\Mat_{d_{t(a)} \times d_{h(a)}}$.

To the pair $(Q', \phi)$ we construct a representation $S_{Q'}$ of dimension vector $(d_v)$
such that for an arrow $a\in Q_1$, the associated matrix in $\Mat_{d_{t(a)} \times d_{h(a)}}$
is given by writing $k_\alpha \in \C$, $k_\alpha \neq 0$ at the $(t(\alpha), h(\alpha))$-entry 
for every $\alpha \in \phi^{-1}(a)\subseteq Q'_1$ and $0$ everywhere else.
Note that $S_{Q'}$ can be regarded as the direct image of a representation of $Q'$ with dimension vector $(1, \dots, 1)$
whose linear maps are nonzero by the morphism $\phi$.


\begin{defn}\label{defn:skeleton} We say that $(Q',\phi)$ is a {\em skeleton} if the representation $S_{Q'}$ verifies the relations $R$ for a suitable choice of
$(k_{\alpha})_{\alpha \in Q_1'} \in (\C^*)^{Q_1'}$ and the isomorphism class of $S_{Q'}$ does not depend on such a choice. By abuse of notation we also call $S_{Q'}$ a skeleton.
\end{defn}

When the quiver $Q$ is the McKay quiver, it happens often (for instance in every family of examples treated in this paper) that a suitable subset $\mathcal{U}$ of skeletons determines an open cover of the moduli space $\mathcal{M}_C$ for a given $C\subset\Theta$. In other words, the conditions $k_a\neq0$ for all $a\in Q'$ determines an open set $U_{Q'}\subset\mathcal{M}_C$, and the union of such open sets for skeletons in $\mathcal{U}$ form an open cover.  
Here, the skeleton $S_{Q'}$ is the representation corresponding to the origin in the affine open set $U_{Q'}\subseteq\C^m$ for some $m$ (compare with \cite{NdCS},$\S7$).

For Abelian groups in $\SL(3,\C)$, the set $\mathcal{U}$ is determined by the torus fixed points (see \cite{Ish04} $\S3$). As it will be shown in the examples of the following sections, in the case of the iterated Hilbert scheme $\Hilb{G/N}{\Hilb{N}{\C^3}}$ the subset $\mathcal{U}$ is induced by the skeletons defining the open cover of $\Hilb{N}{\C^3}$.

The local coordinates of an open set $U_{Q'}$ associated with a skeleton $(Q', \phi)$ can be obtained explicitly as follows.
Fix $(k_{\alpha})\in (\C^*)^{Q'_1}$ in the definition of skeletons.
Consider representations of $Q$ which associates to each arrow $a \in Q_1$
a matrix in $\Mat_{d_{t(a)} \times d_{h(a)}}$ whose $(h(\alpha), t(\alpha))$-entry
is $k_{\alpha}$ for $\alpha \in \phi^{-1}(a)$.
These representations form an affine space whose coordinates are the remaining entries
of the matrices.
If we consider only representations which satisfy the relations $R$,
we obtain an affine scheme $U_{Q'}\subseteq\C^m$ for some $m$ which contains $S_{Q'}$ as the representation corresponding to the origin.
In good cases including all the examples in this paper, $U_{Q'}$ becomes an affine
open neighbourhood of $S_{Q'}$ in the moduli space of representations of $(Q, R)$
and we can specify the entries of the matrices which form the local coordinates around $S_{Q'}$.

Let $S:=\C[x,y,z]$ and for every $\rho\in\Irr(G)$ consider the Cohen-Macaulay $S^G$-module $S_\rho:=(S\otimes\rho^*)^G$.
We have the tautological bundle $\RR_\rho$ on the moduli space of $G$-constellations
whose global sections form the module $S_\rho$.
On the open set $U_{Q'}$ corresponding to a skeleton $Q'$, the vertices of $Q'$ correspond
to sections of $\RR_{\rho}$'s over $U_{Q'}$, where we always assume the vertex over
the trivial representation $\rho_0$ corresponds to $1$.
These sections can be regarded as rational sections of $S_{\rho}$ over $\C^3/G$.
If we take a basis $u_1, \dots, u_d$ of the representation space $\rho$,
a rational section of $S_{\rho}$ over $\C^3/G$ is of the form $\sum_{i=1}^d f_i u_i^*$,
where $f_i$ are rational functions in the $\rho$-part of $\C(x, y, z)$ and $u_1^*, \dots, u_d^*$ form the dual basis of $\rho^*$. 
Then such a rational section is given by a $d$-tuple $(f_1, \dots, f_d)$ of rational functions which spans the representation $\rho$ in $\C(x, y, z)$.

%

\begin{exa} Let $M_{22}=\{1,x,\frac{z}{x},z,\frac{z^2}{x^2},\frac{z^2}{x}\}$ be the $G$-constellation defining the open $U_{22}\in\Hilb{\Z/3\Z}{Y}$ where $Y:=\Hilb{\Z/2\Z}{\C^3}$. Let $Q$ be the McKay quiver of $G$ with the usual commutativity relations deriving from $xy=yx$, $xz=zx$ and $yz=zy$ (see Figure \ref{OpensMcKZ6}), and consider $M_{22}$ as a representation of $Q$. Then by choosing the basis element at every vector space $\C_\rho$ to be given by the unique element $\lambda_\rho\in M$, we have that $x\cdot x=a\cdot\frac{z}{x}$, $y\cdot1=b\cdot\frac{z}{x}$ and $z\cdot\frac{z^2}{x^2}=c\cdot x$ for some $a,b,c\in\C$. This implies that $a=\frac{x^3}{z}$, $b=\frac{xy}{z}$ and $c=\frac{z^3}{x^3}$, which are precisely the local coordinates of $\sigma(\Delta_{22})$. Since after change of basis any nonzero map can be chosen to be 1, the skeleton for $U_{22}$ in this case is formed by the linear maps equal to 1. 

{\renewcommand{\arraystretch}{1.25}
\begin{figure}[hd]
\begin{center}
\begin{pspicture}(0,0.5)(10,3.5)
	\psset{arcangle=20,nodesep=2pt}
\scalebox{0.9}{
\rput(0,0.25){
	\rput(0,1.5){\rnode{0}{$S_{\rho_0^0}$}}
	\rput(1,3){\rnode{1}{$S_{\rho_1^0}$}}
	\rput(2.75,3){\rnode{2}{$S_{\rho_0^1}$}}
	\rput(3.75,1.5){\rnode{3}{$S_{\rho_1^1}$}}
	\rput(2.75,0){\rnode{4}{$S_{\rho_0^2}$}}
	\rput(1,0){\rnode{5}{$S_{\rho_1^2}$}}
	\ncline{->}{0}{1}\Aput[0.05]{\footnotesize $x$}
	\ncline{->}{1}{2}\Aput[0.05]{\footnotesize $x$}
	\ncline{->}{2}{3}\Aput[0.05]{\footnotesize $x$}
	\ncline{->}{3}{4}\Aput[0.05]{\footnotesize $x$}
	\ncline{->}{4}{5}\Aput[0.05]{\footnotesize $x$}
	\ncline{->}{5}{0}\Aput[0.05]{\footnotesize $x$}
	\ncarc{->}{0}{2}\Bput[0.025]{\footnotesize $y$}
	\ncarc{->}{2}{4}\Bput[0.025]{\footnotesize $y$}
	\ncarc{->}{4}{0}\Bput[0.025]{\footnotesize $y$}
	\ncarc{->}{1}{3}\Bput[0.025]{\footnotesize $y$}
	\ncarc{->}{3}{5}\Bput[0.025]{\footnotesize $y$}
	\ncarc{->}{5}{1}\Bput[0.025]{\footnotesize $y$}
	\ncarc[arcangle=30]{->}{0}{3}\Bput[0.025]{\footnotesize $z$}
	\ncarc[arcangle=30]{->}{3}{0}\Bput[0.025]{\footnotesize $z$}
	\ncarc[arcangle=30]{->}{1}{4}\Bput[0.025]{\footnotesize $z$}
	\ncarc[arcangle=30]{->}{4}{1}\Bput[0.025]{\footnotesize $z$}
	\ncarc[arcangle=30]{->}{2}{5}\Bput[0.025]{\footnotesize $z$}
	\ncarc[arcangle=30]{->}{5}{2}\Bput[0.025]{\footnotesize $z$}
	}
\rput(7,0.25){
	\rput(0,1.5){\rnode{0}{$\C_1$}}
	\rput(1,3){\rnode{1}{$\C_x$}}
	\rput(2.75,3){\rnode{2}{$\C_\frac{z}{x}$}}
	\rput(3.75,1.5){\rnode{3}{$\C_z$}}
	\rput(2.75,0){\rnode{4}{$\C_\frac{z^2}{x^2}$}}
	\rput(1,0){\rnode{5}{$\C_\frac{z^2}{x}$}}
	\ncline[linecolor=red]{->}{0}{1}\Aput[0.05]{\scriptsize ${\red{1}}$}
	\ncline{->}{1}{2}\Aput[0.05]{\scriptsize $a$}
	\ncline[linecolor=red]{->}{2}{3}\Aput[0.05]{\scriptsize ${\red{1}}$}
	\ncline{->}{3}{4}\Aput[0.05]{\scriptsize $a$}
	\ncline[linecolor=red]{->}{4}{5}\Aput[0.05]{\scriptsize ${\red{1}}$}
	\ncline{->}{5}{0}\Aput[0.05]{\footnotesize $ac$}
	\ncarc{->}{0}{2}\Bput[0.025]{\footnotesize $b$}
	\ncarc{->}{2}{4}\Bput[0.025]{\footnotesize $b$}
	\ncarc{->}{4}{0}\Bput[0.025]{\footnotesize $bc$}
	\ncarc{->}{1}{3}\Bput[0.025]{\footnotesize $b$}
	\ncarc{->}{3}{5}\Bput[0.025]{\footnotesize $b$}
	\ncarc{->}{5}{1}\Bput[0.025]{\footnotesize $bc$}
	\ncarc[arcangle=30,linecolor=red]{->}{0}{3}\Bput[0.025]{\footnotesize ${\red{1}}$}
	\ncarc[arcangle=30]{->}{3}{0}\Bput[0.025]{\footnotesize $ac$}
	\ncarc[arcangle=30]{->}{1}{4}\Bput[0.025]{\footnotesize $a$}
	\ncarc[arcangle=30]{->}{4}{1}\Bput[0.025]{\footnotesize $c$}
	\ncarc[arcangle=30,linecolor=red]{->}{2}{5}\Bput[0.025]{\footnotesize ${\red{1}}$}
	\ncarc[arcangle=30]{->}{5}{2}\Bput[0.025]{\footnotesize $ac$}
}}
\end{pspicture}
\end{center}
\caption{McKay quiver for $G=\frac{1}{6}(1,2,3)$ and the open set $U_{22}\cong\C^3_{a,b,c}$.}
\label{OpensMcKZ6}
\end{figure}
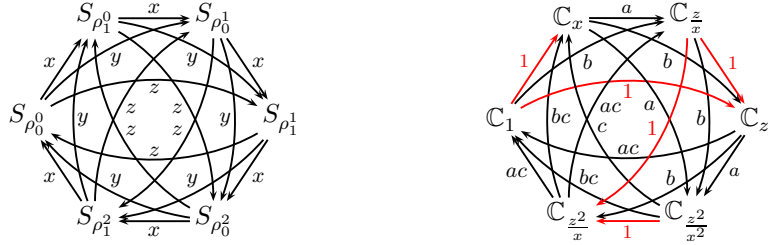}

The skeletons in every open set in $\Hilb{\Z/3\Z}{Y}$ are shown in Figure \ref{GconstZ3Z2}. Notice that in the skeleton for $U_{ij}$ it is possible to find the skeletons $U_1,U_2\subset Y$, repeated $|\Z/3\Z|=3$ times.

\begin{figure}[h]
\begin{center}
\begin{pspicture}(0,0)(9,5.5)
	\psset{arcangle=20,nodesep=2pt}
\scalebox{0.9}{
\rput(0,4){
\scalebox{0.8}{
	\rput(0,1.25){\rnode{0}{1}}
	\rput(0.75,2.5){\rnode{1}{$y^2z$}}
	\rput(2.25,2.5){\rnode{2}{$y$}}
	\rput(3,1.25){\rnode{3}{$z$}}
	\rput(2.25,0){\rnode{4}{$y^2$}}
	\rput(0.75,0){\rnode{5}{$yz$}}
	\ncarc{->}{0}{2}
	\ncarc{->}{2}{4}
	\ncarc{->}{3}{5}
	\ncarc{->}{5}{1}
	\ncline[linecolor=blue]{->}{0}{3}
	\ncline[linecolor=blue]{->}{4}{1}
	\ncline[linecolor=blue]{->}{2}{5}
	}}
\rput(3.5,4){
\scalebox{0.85}{
	\rput(0,1.25){\rnode{0}{1}}
	\rput(0.75,2.5){\rnode{1}{$x$}}
	\rput(2.25,2.5){\rnode{2}{$y$}}
	\rput(3,1.25){\rnode{3}{$z$}}
	\rput(2.25,0){\rnode{4}{\Large $\frac{x}{z}$}}
	\rput(0.75,0){\rnode{5}{$yz$}}
	\ncline{->}{0}{1}
	\ncarc{->}{0}{2}
	\ncarc{->}{3}{5}
	\ncline[linecolor=blue]{->}{0}{3}
	\ncline[linecolor=blue]{->}{4}{1}
	\ncline[linecolor=blue]{->}{2}{5}
	}}
\rput(7,4){
\scalebox{0.85}{
	\rput(0,1.25){\rnode{0}{1}}
	\rput(0.75,2.5){\rnode{1}{$x$}}
	\rput(2.25,2.5){\rnode{2}{\Large $\frac{x^2}{z^2}$}}
	\rput(3,1.25){\rnode{3}{$z$}}
	\rput(2.25,0){\rnode{4}{\Large $\frac{x}{z}$}}
	\rput(0.75,0){\rnode{5}{\Large $\frac{x^2}{z}$}}
	\ncline{->}{0}{1}
	\ncline{->}{4}{5}
	\ncline[linecolor=blue]{->}{0}{3}
	\ncline[linecolor=blue]{->}{4}{1}
	\ncline[linecolor=blue]{->}{2}{5}
	}}
\rput(0,0.5){
\scalebox{0.85}{
	\rput(0,1.25){\rnode{0}{1}}
	\rput(0.75,2.5){\rnode{1}{$x$}}
	\rput(2.25,2.5){\rnode{2}{\Large $\frac{z}{x}$}}
	\rput(3,1.25){\rnode{3}{$z$}}
	\rput(2.25,0){\rnode{4}{\Large $\frac{z^2}{x^2}$}}
	\rput(0.75,0){\rnode{5}{\Large $\frac{z^2}{x}$}}
	\ncline[linecolor=blue]{->}{0}{1}
	\ncline[linecolor=blue]{->}{2}{3}
	\ncline[linecolor=blue]{->}{4}{5}
	\ncline{->}{0}{3}
	\ncline{->}{2}{5}
	}}
\rput(3.5,0.5){
\scalebox{0.85}{
	\rput(0,1.25){\rnode{0}{1}}
	\rput(0.75,2.5){\rnode{1}{$x$}}
	\rput(2.25,2.5){\rnode{2}{$x^2$}}
	\rput(3,1.25){\rnode{3}{$x^3$}}
	\rput(2.25,0){\rnode{4}{$x^4$}}
	\rput(0.75,0){\rnode{5}{$x^5$}}
	\ncline[linecolor=blue]{->}{0}{1}
	\ncline{->}{1}{2}
	\ncline[linecolor=blue]{->}{2}{3}
	\ncline{->}{3}{4}
	\ncline[linecolor=blue]{->}{4}{5}
	}}
\rput(7,0.5){
\scalebox{0.85}{
	\rput(0,1.25){\rnode{0}{1}}
	\rput(0.75,2.5){\rnode{1}{$x$}}
	\rput(2.25,2.5){\rnode{2}{$y$}}
	\rput(3,1.25){\rnode{3}{$xy$}}
	\rput(2.25,0){\rnode{4}{$y^2$}}
	\rput(0.75,0){\rnode{5}{$xy^2$}}
	\ncline[linecolor=blue]{->}{0}{1}
	\ncline[linecolor=blue]{->}{2}{3}
	\ncline[linecolor=blue]{->}{4}{5}
	\ncarc{->}{0}{2}
	\ncarc{->}{2}{4}
	\ncarc{->}{1}{3}
	\ncarc{->}{3}{5}
	}}
\rput(1.5,3.5){$U_{11}$}
\rput(5,3.5){$U_{12}$}
\rput(8.5,3.5){$U_{13}$}
\rput(1.5,-0.25){$U_{21}$}
\rput(5,-0.25){$U_{22}$}
\rput(8.5,-0.25){$U_{23}$}
	}
\end{pspicture}
\end{center}
\caption{Skeletons for $\Hilb{\Z/3}{\Hilb{\Z/2}{\C^3}}$ with the corresponding $G$-constellations.}
\label{GconstZ3Z2}
\end{figure}
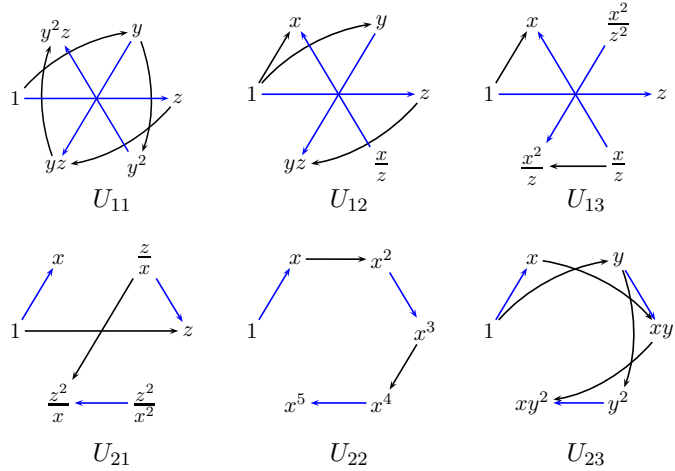

\end{exa}

\newpage
\section{Non-Abelian examples}\label{Examples}

\subsection{Dihedral groups $D_{2n}\subset\SO(3)$.}\label{exaDn}

These groups are generated by
\[D_{2n}:=\Span{\alpha=\frac{1}{n}(1,-1,0),\beta=\left(\begin{smallmatrix}0&1&0\\1&0&0\\0&0&-1\end{smallmatrix}\right)}
\] 
and they have order $2n$. The normal subgroup $N:=\Span{\alpha}$ has $n$ 1-dimensional irreducible representations $\sigma_i(\alpha)=\varepsilon^i$ where $\varepsilon$ is an $n$-th root of unity and $i=0,\ldots,n-1$. The action of $G/N=\Span{\overline{\beta}}\cong\Z/2\Z$ give the irreducible representations of $G$ as in Table \ref{IrrD2n} depending on the parity of $n$. 

{\renewcommand{\arraystretch}{1.25}
\begin{table}[h]
\begin{center}
\begin{small}
\begin{tabular}{|c|c|c|c|c|}
\multicolumn{1}{c}{$\sigma_0$}
 &  \multicolumn{1}{c}{$\{\sigma_1,\sigma_{2m-1}\}$}
 & \multicolumn{1}{c}{} 
 & \multicolumn{1}{c}{$\{\sigma_{m-1},\sigma_{m+1}\}$} 
 & \multicolumn{1}{c}{$\sigma_m$} \\
\cline{1-5}
$\rho_0^0$ & \multirow{2}{*}{$\rho_1^0$} & \multirow{2}{*}{$\cdots$} & \multirow{2}{*}{$\rho_{m-1}^0$} & \multirow{1}{*}{$\rho_m^0$} \\
\cline{1-1}\cline{5-5} 
$\rho_0^1$ & &  &  & $\rho_n^1$ \\
\cline{1-5}
\multicolumn{1}{c}{}
 &  \multicolumn{1}{c}{}
 & \multicolumn{1}{c}{$(a)$} 
 & \multicolumn{1}{c}{} 
 & \multicolumn{1}{c}{}
\end{tabular}
\begin{tabular}{cc|c|c|c|c|}
 & \multicolumn{1}{c}{}
 & \multicolumn{1}{c}{$\sigma_0$}
 &  \multicolumn{1}{c}{$\{\sigma_1,\sigma_{2m}\}$}
 & \multicolumn{1}{c}{} 
 & \multicolumn{1}{c}{$\{\sigma_{m},\sigma_{m+1}\}$} \\
\cline{3-6}
&& $\rho_0^0$ & \multirow{2}{*}{$\rho_1^0$} & \multirow{2}{*}{$\cdots$} & \multirow{2}{*}{$\rho_{m}^0$}  \\
\cline{3-3}
&& $\rho_0^1$ & &  & \\
\cline{3-6}
 & \multicolumn{1}{c}{}
 &  \multicolumn{1}{c}{}
 &  \multicolumn{1}{c}{}
 & \multicolumn{1}{c}{$(b)$} 
 & \multicolumn{1}{c}{} 
\end{tabular}
\end{small}
\end{center}
\caption{$\Irr(D_{2n})$ for (a) $n=2m$ even, and (b) $n=2m+1$ odd.}
\label{IrrD2n}
\end{table}}

The dimension vectors are ${\Large{\begin{smallmatrix}1\\1\end{smallmatrix}}}2\ldots2{\Large{\begin{smallmatrix}1\\1\end{smallmatrix}}}$ and ${\Large{\begin{smallmatrix}1\\1\end{smallmatrix}}}2\ldots2$ respectively. In this case, as well as for any subgroup $G\subset\SO(3)$, the fibre over the origin $f^{-1}(0)$ of any crepant resolution $f:Y\to\C^3/G$ has dimension 1. The description of $f^{-1}(0)$ in the case of $Y=\Hilb{G}{\C^3}$ was first given by \cite{GNS2}, see \cite{NdCS} for the rest of the crepant resolutions of $\C^3/G$.

The open cover of $\Hilb{N}{\C^3}$ is given by $n$ open sets $\bigcup_{j=1}^{n}U_j$, covering $n-1$ rational curves $E_i$ for $i=1,\ldots,n-1$. The action $G/N$ on $\Hilb{N}{\C^3}$ identifies $U_i$ and $U_{n-i+1}$ for $i=1,\ldots,n$. 

If $n=2m$ is even then $E_m$ is fixed by $G/N$ having two fixed lines $L_+$ and $L_-$ crossing transversally the $\PP^1$ covered by $U_m$ and $U_{m+1}$. They give rise to $E_+$ and $E_-$ respectively. If $n=2m+1$ is odd then the open set $U_{m+1}$ is fixed by $G/N$ and there is just one fixed line $L$, producing the new rational curve $E$.

Diagonalizing the action of $G/N$ we see that in both cases these singularities are of type $\frac{1}{2}(1,1,0)$. By blowing up these singular lines it follows that the dual graph of the fibre over the origin of the singularity is shown in Figure \ref{ExclDnHoH}.

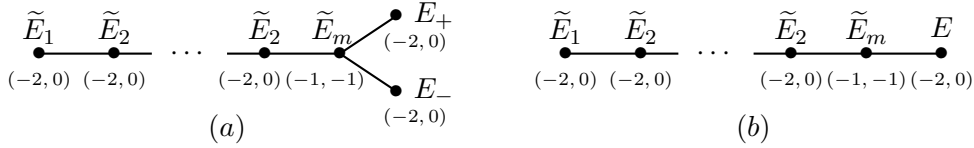
\begin{figure}[h]
\begin{center}
\begin{pspicture}(0,0.5)(12,2.25)
	\psset{arcangle=15,nodesep=2pt}
\rput(0,0.5){
	\rput(0,1){$\bullet$}\rput(0,1.35){$\widetilde{E}_1$}\rput(0,0.65){\tiny $(-2,0)$}
	\rput(1,1){$\bullet$}\rput(1,1.35){$\widetilde{E}_2$}\rput(1,0.65){\tiny $(-2,0)$}
	\rput(3,1){$\bullet$}\rput(3,1.35){$\widetilde{E}_2$}\rput(2.8,0.65){\tiny $(-2,0)$}
	\rput(4,1){$\bullet$}\rput(3.9,1.35){$\widetilde{E}_{m}$}\rput(3.8,0.65){\tiny $(-1,-1)$}	
	\rput(4.75,1.5){$\bullet$}\rput(5.25,1.5){$E_+$}\rput(5,1.15){\tiny $(-2,0)$}
	\rput(4.75,0.5){$\bullet$}\rput(5.25,0.5){$E_-$}\rput(5,0.15){\tiny $(-2,0)$}
	\psline(0,1)(1.5,1)
	\rput(2,1){$\cdots$}
	\psline(2.5,1)(3,1)
	\psline(3,1)(4,1)
	\psline(4,1)(4.75,1.5)
	\psline(4,1)(4.75,0.5)
	\rput(2.5,0){$(a)$}
	}
\rput(7,0.5){
	\rput(0,1){$\bullet$}\rput(0,1.35){$\widetilde{E}_1$}\rput(0,0.65){\tiny $(-2,0)$}
	\rput(1,1){$\bullet$}\rput(1,1.35){$\widetilde{E}_2$}\rput(1,0.65){\tiny $(-2,0)$}
	\rput(3,1){$\bullet$}\rput(3,1.35){$\widetilde{E}_2$}\rput(3,0.65){\tiny $(-2,0)$}
	\rput(4,1){$\bullet$}\rput(4,1.35){$\widetilde{E}_{m}$}\rput(4,0.65){\tiny $(-1,-1)$}	
	\rput(5,1){$\bullet$}\rput(5,1.35){$E$}\rput(5,0.65){\tiny $(-2,0)$}
	\psline(0,1)(1.5,1)
	\rput(2,1){$\cdots$}
	\psline(2.5,1)(3,1)
	\psline(3,1)(4,1)
	\psline(4,1)(5,1)
	\rput(2.5,0){$(b)$}
	}
\end{pspicture}
\end{center}
\caption{Dual graph of $f^{-1}(0)$ for $\Hilb{G/N}{\Hilb{N}{\C^3}}$ for (a) $n=2m$ even, and (b) $n=2m+1$ odd. The curve $\widetilde{E}_i$ denote the strict transform of $E_i$ and the numbers denote the degree of the normal bundle at every curve.}
\label{ExclDnHoH}
\end{figure}

 The fibre over the origin in $\Hilb{G}{\C^3}$ with the degrees of the normal bundles in each of the rational curves is shown in Figure \ref{ExclDnHilb} (see \cite{GNS2} and \cite{NdCS} for details). We can therefore see the difference between $\Hilb{G/N}{\Hilb{N}{\C^3}}$ and $\Hilb{G}{\C^3}$: in the case $n$ even the graph is different, whereas in the case $n$ odd the difference resides in the degrees of the normal bundles. This concludes the proof of the $D_{2n}$ case in Theorem \ref{thm:isoDim3} (iii).

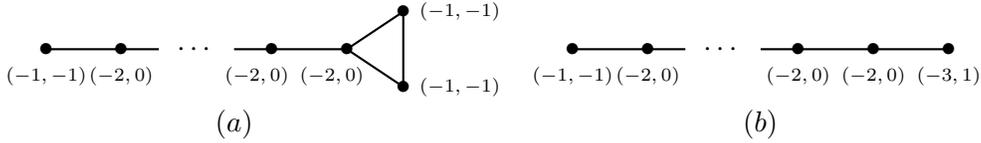
\begin{figure}[h]
\begin{center}
\begin{pspicture}(0,0.5)(12,2.25)
	\psset{arcangle=15,nodesep=2pt}
\rput(0,0.5){
	\rput(0,1){$\bullet$}\rput(0,1.35){}\rput(0,0.65){\tiny $(-1,-1)$}
	\rput(1,1){$\bullet$}\rput(1,1.35){}\rput(1,0.65){\tiny $(-2,0)$}
	\rput(3,1){$\bullet$}\rput(3,1.35){}\rput(2.8,0.65){\tiny $(-2,0)$}
	\rput(4,1){$\bullet$}\rput(3.9,1.35){}\rput(3.8,0.65){\tiny $(-2,0)$}	
	\rput(4.75,1.5){$\bullet$}\rput(5.25,1.5){}\rput(5.5,1.5){\tiny $(-1,-1)$}
	\rput(4.75,0.5){$\bullet$}\rput(5.25,0.5){}\rput(5.5,0.5){\tiny $(-1,-1)$}
	\psline(0,1)(1.5,1)
	\rput(2,1){$\cdots$}
	\psline(2.5,1)(3,1)
	\psline(3,1)(4,1)
	\psline(4,1)(4.75,1.5)
	\psline(4,1)(4.75,0.5)
	\psline(4.75,1.5)(4.75,0.5)
	\rput(2.5,0){$(a)$}
	}
\rput(7,0.5){
	\rput(0,1){$\bullet$}\rput(0,1.35){}\rput(0,0.65){\tiny $(-1,-1)$}
	\rput(1,1){$\bullet$}\rput(1,1.35){}\rput(1,0.65){\tiny $(-2,0)$}
	\rput(3,1){$\bullet$}\rput(3,1.35){}\rput(3,0.65){\tiny $(-2,0)$}
	\rput(4,1){$\bullet$}\rput(4,1.35){}\rput(4,0.65){\tiny $(-2,0)$}	
	\rput(5,1){$\bullet$}\rput(5,1.35){}\rput(5,0.65){\tiny $(-3,1)$}
	\psline(0,1)(1.5,1)
	\rput(2,1){$\cdots$}
	\psline(2.5,1)(3,1)
	\psline(3,1)(4,1)
	\psline(4,1)(5,1)
	\rput(2.5,0){$(b)$}
	}
\end{pspicture}
\end{center}
\caption{Dual graph of $f^{-1}(0)$ for $\Hilb{G}{\C^3}$ for (a) $n=2m$ even, and (b) $n=2m+1$ odd.}
\label{ExclDnHilb}
\end{figure}

We illustrate the construction of $\Hilb{G/N}{\Hilb{N}{\C^3}}$ in this case by an example. The general case is analogous.

\begin{exa} Let $G=D_{12}$. In this case the McKay quiver with relations $(Q,R)$ is given in Figure \ref{McKayD12} where the relations $R$ provide the Morita equivalence between $\mathbb CQ/\langle R \rangle$ and $S \ast G$, and are obtained following \cite{BSW}.

\begin{figure}[h]
\begin{center}
\begin{pspicture}(0,-0.75)(12,1.25)
	\psset{arrowsize=4pt,arrowlength=2,arcangle=15,nodesep=1pt}
\rput(-0.5,0){
\scalebox{0.85}{
	\rput(-0,1.2){\rnode{0}{$\rho_0^0$}}
	\rput(-0,-1.2){\rnode{0'}{$\rho_0^1$}}
	\rput(2,0){\rnode{1}{$\rho_1^0$}}
	\rput(4.5,0){\rnode{2}{$\rho_2^0$}}
	\rput(6.5,1.2){\rnode{3}{$\rho_3^0$}}
	\rput(6.5,-1.2){\rnode{3'}{$\rho_3^1$}}	
	\rput(2,0.3){\rnode{u1}{}}	
	\rput(4.5,0.3){\rnode{u2}{}}	
	\ncarc{->}{0}{0'}\mput*[0.1]{$a$}	
	\ncarc{->}{0'}{0}\Aput*[0.1]{$A$}	
	\ncarc{->}{0}{1}\Aput*[0.1]{$c$}	
	\ncarc{->}{1}{0}\mput*[0.1]{$C$}	
	\ncarc{->}{0'}{1}\mput*[0.1]{$b$}	
	\ncarc{->}{1}{0'}\Aput*[0.1]{$B$}	
	\ncarc{->}{1}{2}	\Aput*[0.1]{$d$}	
	\ncarc{->}{2}{1}	\Aput*[0.1]{$D$}	
	\ncarc{->}{2}{3}	\Aput*[0.1]{$C'$}	
	\ncarc{->}{3}{2}	\mput*[0.1]{$c'$}	
	\ncarc{->}{2}{3'}\mput*[0.1]{$B'$}	
	\ncarc{->}{3'}{2}\Aput*[0.1]{$b'$}	
	\ncarc[arcangle=-15]{->}{3}{3'}\mput*[0.1]{$a'$}	
	\ncarc[arcangle=-15]{->}{3'}{3}\Bput*[0.1]{$A'$}	
	 \nccircle[angleA=-25,nodesep=3pt]{->}{u1}{.3cm}\Bput[0.05]{$u$}
	 \nccircle[angleA=25,nodesep=3pt]{->}{u2}{.3cm}\Bput[0.05]{$v$}	
	 }}	

\rput(9.5,1){\scriptsize $bC=0, b'C'=0, cB=0, c'B'=0, $}
\rput(9.5,0.5){\scriptsize $Ca=uB, C'a'=vB', Ac=bu, A'c'=b'v,$}
\rput(9.5,0){\scriptsize $BA=uC, B'B'=vC', ab=cu, a'b'=c'v$}
\rput(9.5,-0.5){\scriptsize $Bb+Cc=dD, B'b'+C'c'=Dd$}
\rput(9.5,-1){\scriptsize $Du=vD, ud=dv$}

\end{pspicture}
\end{center}
\caption{The McKay of $D_{12}$ with relations.}
\label{McKayD12}
\end{figure}
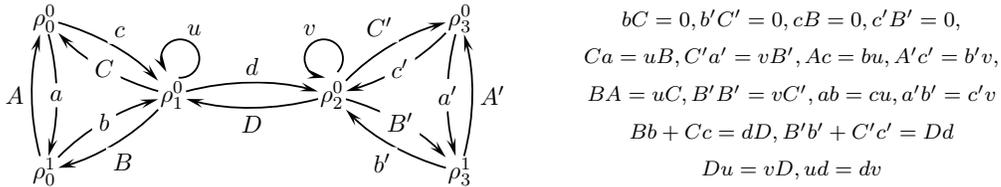

In this case the stability condition given in Definition \ref{theta} is shown in Table \ref{ThetaD12}, where $a_i,b>0$ for $i=1,\ldots,5$ and $0<\varepsilon<<1$.

{\renewcommand{\arraystretch}{1.4}
\begin{table}[h]
\begin{center}
\begin{small}
$\theta:=$
\begin{tabular}{|c|c|c|c|}
\multicolumn{1}{c}{$\sigma_0$}
 &  \multicolumn{1}{c}{$\{\sigma_1,\sigma_{5}\}$}
 & \multicolumn{1}{c}{$\{\sigma_{2},\sigma_{4}\}$} 
 & \multicolumn{1}{c}{$\sigma_3$} \\
\cline{1-4}
$-\sum_{i=1}^5a_i-\varepsilon b$ & \multirow{2}{*}{$a_1+a_5$} & \multirow{2}{*}{$a_2+a_4$} & \multirow{1}{*}{$a_3$} \\
\cline{1-1}\cline{4-4} 
$-\sum_{i=1}^5a_i+\varepsilon b$ & &  &  $a_3$ \\
\cline{1-4}
\multicolumn{1}{c}{}
 &  \multicolumn{1}{c}{}
 & \multicolumn{1}{c}{} 
 & \multicolumn{1}{c}{}
\end{tabular}
\end{small}
\end{center}
\caption{Stability condition for $\Hilb{D_{12}/N}{\Hilb{N}{\C^3}}$ with $N=\frac{1}{6}(1,-1,0)$.}
\label{ThetaD12}
\end{table}}

We start by considering the gluing of $U_1$ and $U_2$ with $U_5$ and $U_6$ by the action of $G/N$. Since $U_3$ and $U_4$ contain the fixed part we will treat them separately. We need to consider the $N$-constellations at the origin of each open chart, so after choosing a basis for every $H^0(gZ)$ for $g\in G/N$ consisting of $N$-graphs, we obtain the $G$-constellations shown in Figure \ref{OpOrbits}.

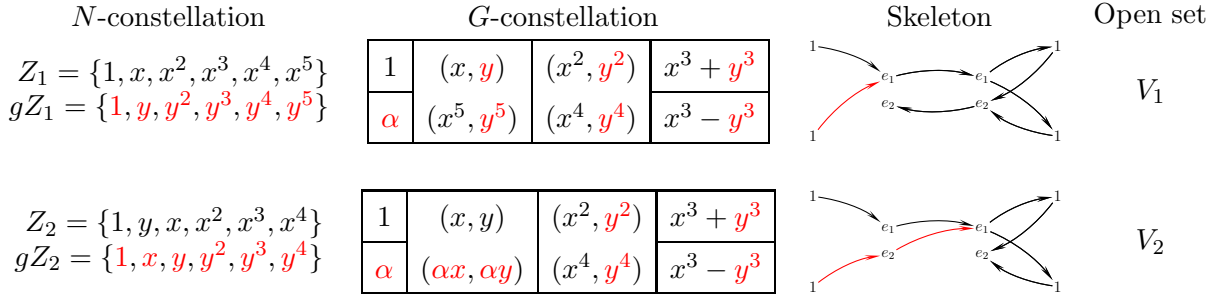
\begin{figure}[h]
\begin{center}
\begin{pspicture}(0,-0.75)(15,3.5)
	\psset{arrowsize=4pt,arrowlength=3,arcangle=15,nodesep=1pt}

\rput(1.75,3){$N$-constellation}
\rput(7,3){$G$-constellation}
\rput(12,3){Skeleton}
\rput(14.8,3){Open set}
\rput(14.8,2){$V_1$}
\rput(14.8,0){$V_2$}

\rput(1.75,2){$
\begin{array}{r}
Z_1 = \{1,x,x^2,x^3,x^4,x^5\} \\
gZ_1 = \{ {\red{1}},{\red{y}},{\red{y^2}},{\red{y^3}},{\red{y^4}},{\red{y^5}}\}  \\
\end{array}
$}
\rput(7,2){
{\renewcommand{\arraystretch}{1.5}
\begin{tabular}{|c|c|c|c|}
\cline{1-4}
$1$ & $(x,{\red{y}})$ & $(x^2,{\red{y^2}})$ & $x^3+{\red{y^3}}$ \\
\cline{1-1}\cline{4-4} 
$\red{\alpha}$ & $(x^5,{\red{y^5}})$ & $(x^4,{\red{y^4}})$ & $x^3-{\red{y^3}}$ \\
\cline{1-4}
\end{tabular}}
}
\rput(10.25,2){
\scalebox{0.5}{
	\rput(-0,1.2){\rnode{0}{\large $1$}}
	\rput(-0,-1.2){\rnode{0'}{\large $1$}}
	\rput(2,0.35){\rnode{1a}{\large $e_1$}}
	\rput(2,-0.35){\rnode{1b}{\large $e_2$}}
	\rput(4.5,0.35){\rnode{2a}{\large $e_1$}}
	\rput(4.5,-0.35){\rnode{2b}{\large $e_2$}}
	\rput(6.5,1.2){\rnode{3}{\large $1$}}
	\rput(6.5,-1.2){\rnode{3'}{\large $1$}}	
	\ncarc{->}{0}{1a}	
	\ncarc{->}{1a}{2a}
	\ncarc{->}{1a}{2a}
	\ncarc{->}{2a}{3}
	\ncarc{->}{2a}{3}
	\ncarc{->}{2a}{3'}
	\ncarc{->}{2a}{3'}
	\ncarc{->}{3}{2b}
	\ncarc{->}{3}{2b}
	\ncarc{->}{3'}{2b}
	\ncarc{->}{3'}{2b}
	\ncarc{->}{2b}{1b}
	\ncarc{->}{2b}{1b}	
	\ncarc[linecolor=red]{->}{0'}{1a}
	}}

\rput(1.75,0){$
\begin{array}{r}
Z_2 = \{1,y,x,x^2,x^3,x^4\} \\
gZ_2 = \{{\red{1}},{\red{x}},{\red{y}},{\red{y^2}},{\red{y^3}},{\red{y^4}}\} \\
\end{array}
$}
\rput(7,0){
{\renewcommand{\arraystretch}{1.5}
\begin{tabular}{|c|c|c|c|}
\cline{1-4}
$1$ & $(x,y)$ & $(x^2,{\red{y^2}})$ & $x^3+{\red{y^3}}$ \\
\cline{1-1}\cline{4-4} 
$\red{\alpha}$ & $({\red{\alpha x}},{\red{\alpha y}})$ & $(x^4,{\red{y^4}})$ & $x^3-{\red{y^3}}$ \\
\cline{1-4}
\end{tabular}}
}
\rput(10.25,0){
\scalebox{0.5}{
	\rput(-0,1.2){\rnode{0}{\large $1$}}
	\rput(-0,-1.2){\rnode{0'}{\large $1$}}
	\rput(2,0.35){\rnode{1a}{\large $e_1$}}
	\rput(2,-0.35){\rnode{1b}{\large $e_2$}}
	\rput(4.5,0.35){\rnode{2a}{\large $e_1$}}
	\rput(4.5,-0.35){\rnode{2b}{\large $e_2$}}
	\rput(6.5,1.2){\rnode{3}{\large $1$}}
	\rput(6.5,-1.2){\rnode{3'}{\large $1$}}		
	\ncarc{->}{0}{1a}	
	\ncarc{->}{1a}{2a}
	\ncarc{->}{2a}{3}
	\ncarc{->}{2a}{3}
	\ncarc{->}{2a}{3'}
	\ncarc{->}{2a}{3'}
	\ncarc{->}{3}{2b}
	\ncarc{->}{3}{2b}
	\ncarc{->}{3'}{2b}
	\ncarc{->}{3'}{2b}
	\ncarc[linecolor=red]{->}{1b}{2a}	
	\ncarc[linecolor=red]{->}{0'}{1b}
	}}

\end{pspicture}
\caption{$G$-constellations at the open sets $V_1$ and $V_2$.}
\label{OpOrbits}
\end{center}
\end{figure}

In the skeleton of the open sets the dots stacked vertically in the two middle vertices of the McKay quiver denote the two linearly independent vectors $e_1$ and $e_2$ in the corresponding vector space at that vertex. 

With a similar calculation as in Section \ref{LocalCoord} we can calculate the local coordinates to obtain 
\begin{align*}
V_1&\cong\Spec\C\bigg[\frac{z(x^3+y^3)}{x^3-y^3},\frac{xy}{(x^3+y^3)^2},(x^3+y^3)^2\bigg], \\
V_2&\cong\Spec\C\bigg[\frac{z(x^3+y^3)}{x^3-y^3},\frac{x^2y^2}{(x^3+y^3)^2},\frac{(x^3+y^3)^2}{xy}\bigg]
\end{align*}
and $\alpha=\frac{x^3-y^3}{x^3+y^3}$.

Let us now consider the $G$-constellations arising from the blowup of the fixed lines $L_+$ and $L_-$. In the open set $U_3\cong\C^3_{a,b,c}$ with $a=x^4/y^2$, $b=y^3/x^3$ and $c=z$, every $N$-cluster is given by the ideal $I_{a,b,c}=(x^4-ay^2,y^3-bx^3,xy-ab,z-c)$. Then the lines $L_\pm$ are defined by $b=\pm1$, $c=0$, which means that the ideals defining the lines are $I_{L_\pm}=(x^4-ay^2,y^3\mp x^3,xy\mp a,z)$. Therefore we can choose as basis for the $N$-constellations at these lines the $N$-graphs $\Gamma_\pm=\{ 1,x,y,x^2,y^2,x^3\pm y^3 \}$, which are invariant under the action of $G/N$. 

For the line $L_+$ we have $b=1$. By the change of coordinate $b^+=1-b=\frac{x^3-y^3}{x^3}$, we have that the action of $G/N$ as $\frac{1}{2}(1,1,0)$ is defined on $\C^3_{A,B,C}$ where $A=c$, $B=\frac{2-b^+}{b^+}$, $C=a(b^+)^2$. This implies that the rational curve $E_+$ is covered by the open sets $V_3\cong\C^3_{B^2,A/B,C}$, $V_6\cong\C^3_{A^2,B/A,C}$, and $E^+$ is given by the ratio $(A:B)=(z(x^3-y^3):x^3+y^3)$. In terms of $G$-constellations, the calculation is the same as in the Abelian case, where $V_3$ and $V_6$ are the open cover of $\Hilb{\frac{1}{2}(1,1,0)}{\C^3_{A,B,C}}$. See Figure \ref{OpFixLines}. In the case of $V_3$ for instance, we have the nonzero maps coming from the $N$-constellations $\Gamma_+$ and $\Gamma_+\cdot B$ (generated from $\rho_0^0$ and $\rho_0^1$ respectively) and the extra arrow comes from the fact that $(x^3+y^3)B=(x^3-y^3)+\frac{4x^3y^3}{x^3-y^3}$, which is induced by the $G/N$-equivariant map $\Gamma_+\to B\Gamma_+$.

\begin{figure}[h]
\begin{center}
\begin{pspicture}(0,-0.75)(15,6)
	\psset{arrowsize=4pt,arrowlength=3,arcangle=15,nodesep=1pt}

\scalebox{0.95}{
\rput(6,6.25){$G$-constellation}
\rput(12.25,6.25){Skeleton}
\rput(15.5,6.25){Open set}
\rput(15.5,5.25){$V_3$}
\rput(15.5,3.5){$V_6$}
\rput(15.5,1.75){$V_4$}
\rput(15.5,0){$V_5$}

\rput(1,5.25){$
\begin{array}{r}
\Gamma_+\cdot\{1,{\red{B}}\}: \\
\end{array}
$}
\rput(5.9,5.25){
{\renewcommand{\arraystretch}{1.5}
\begin{tabular}{|c|c|c|c|}
\cline{1-4}
$1$ & $(x,y)$ & $(x^2,y^2)$ & $x^3+y^3$ \\
\cline{1-1}\cline{4-4} 
$\red{B}$ & $({\red{xB}},{\red{yB}})$ & $({\red{x^2B}},{\red{y^2B}})$ & ${\red{(x^3+y^3)B}}$ \\
\cline{1-4}
\end{tabular}}
}
\rput(10.5,5.25){
\scalebox{0.5}{
	\rput(-0,1.2){\rnode{0}{\large $1$}}
	\rput(-0,-1.2){\rnode{0'}{\large $1$}}
	\rput(2,0.35){\rnode{1a}{\large $e_1$}}
	\rput(2,-0.35){\rnode{1b}{\large $e_2$}}
	\rput(4.5,0.35){\rnode{2a}{\large $e_1$}}
	\rput(4.5,-0.35){\rnode{2b}{\large $e_2$}}
	\rput(6.5,1.2){\rnode{3}{\large $1$}}
	\rput(6.5,-1.2){\rnode{3'}{\large $1$}}		
	\ncarc{->}{0}{1a}	
	\ncarc{->}{1a}{2a}
	\ncarc{->}{2a}{3}
	\ncarc[linecolor=blue]{->}{2a}{3'}
	\ncarc[linecolor=red]{->}{2b}{3'}
	\ncarc[linecolor=red]{->}{1b}{2b}	
	\ncarc[linecolor=red]{->}{0'}{1b}
	}}

\rput(1,3.5){$
\begin{array}{r}
\Gamma_+\cdot\{1,{\red{z}}\}: \\\end{array}
$}
\rput(5.9,3.5){
{\renewcommand{\arraystretch}{1.5}
\begin{tabular}{|c|c|c|c|}
\cline{1-4}
$1$ & $(x,y)$ & $(x^2,y^2)$ & $x^3+y^3$ \\
\cline{1-1}\cline{4-4} 
$\red{z}$ & $({\red{xz}},{\red{yz}})$ & $({\red{x^2z}},{\red{y^2z}})$ & ${\red{(x^3+y^3)z}}$ \\
\cline{1-4}
\end{tabular}}
}
\rput(10.5,3.5){
\scalebox{0.5}{
	\rput(-0,1.2){\rnode{0}{\large $1$}}
	\rput(-0,-1.2){\rnode{0'}{\large $1$}}
	\rput(2,0.35){\rnode{1a}{\large $e_1$}}
	\rput(2,-0.35){\rnode{1b}{\large $e_2$}}
	\rput(4.5,0.35){\rnode{2a}{\large $e_1$}}
	\rput(4.5,-0.35){\rnode{2b}{\large $e_2$}}
	\rput(6.5,1.2){\rnode{3}{\large $1$}}
	\rput(6.5,-1.2){\rnode{3'}{\large $1$}}	
	\ncarc{->}{0}{1a}	
	\ncarc{->}{1a}{2a}
	\ncarc{->}{2a}{3}
	\ncarc[linecolor=red]{->}{0'}{1b}
	\ncarc[linecolor=red]{->}{1b}{2b}	
	\ncarc[linecolor=red]{->}{2b}{3'}
	\ncarc[linecolor=blue]{->}{0}{0'}
	\ncarc[arrowlength=2,arcangle=0,linecolor=blue,nodesep=0pt]{->}{1a}{1b}
	\ncarc[arrowlength=2,arcangle=0,linecolor=blue,nodesep=0pt]{->}{2a}{2b}
	\ncarc[arcangle=-15,linecolor=blue]{->}{3}{3'}
	}}

\rput(1,1.75){$
\begin{array}{r}
\Gamma_-\cdot\{1,{\red{B'}}\}: \\
\end{array}
$}
\rput(5.9,1.75){
{\renewcommand{\arraystretch}{1.5}
\begin{tabular}{|c|c|c|c|}
\cline{1-4}
$1$ & $(x,y)$ & $(x^2,y^2)$ & $\red{{(x^3-y^3)B'}}$ \\
\cline{1-1}\cline{4-4} 
$\red{B'}$ & $({\red{xB'}},{\red{yB'}})$ & $({\red{x^2B'}},{\red{y^2B'}})$ & $x^3-y^3$ \\
\cline{1-4}
\end{tabular}}
}
\rput(10.5,1.75){
\scalebox{0.5}{
	\rput(-0,1.2){\rnode{0}{\large $1$}}
	\rput(-0,-1.2){\rnode{0'}{\large $1$}}
	\rput(2,0.35){\rnode{1a}{\large $e_1$}}
	\rput(2,-0.35){\rnode{1b}{\large $e_2$}}
	\rput(4.5,0.35){\rnode{2a}{\large $e_1$}}
	\rput(4.5,-0.35){\rnode{2b}{\large $e_2$}}
	\rput(6.5,1.2){\rnode{3}{\large $1$}}
	\rput(6.5,-1.2){\rnode{3'}{\large $1$}}			
	\ncarc{->}{0}{1a}	
	\ncarc{->}{1a}{2a}
	\ncarc{->}{2a}{3}
	\ncarc{->}{2a}{3'}
	\ncarc[linecolor=blue]{->}{2a}{3}	
	\ncarc[linecolor=red]{->}{2b}{3}
	\ncarc[linecolor=red]{->}{1b}{2b}	
	\ncarc[linecolor=red]{->}{0'}{1b}
	}}

\rput(1,0){$
\begin{array}{r}
\Gamma_+\cdot\{1,{\red{z}}\}: \\\end{array}
$}
\rput(5.9,0){
{\renewcommand{\arraystretch}{1.5}
\begin{tabular}{|c|c|c|c|}
\cline{1-4}
$1$ & $(x,y)$ & $(x^2,y^2)$ & ${\red{z(x^3-y^3)}}$ \\
\cline{1-1}\cline{4-4} 
$\red{z}$ & $({\red{xz}},{\red{yz}})$ & $({\red{x^2z}},{\red{y^2z}})$ & $x^3-y^3$ \\
\cline{1-4}
\end{tabular}}
}
\rput(10.5,0){
\scalebox{0.5}{
	\rput(-0,1.2){\rnode{0}{\large $1$}}
	\rput(-0,-1.2){\rnode{0'}{\large $1$}}
	\rput(2,0.35){\rnode{1a}{\large $e_1$}}
	\rput(2,-0.35){\rnode{1b}{\large $e_2$}}
	\rput(4.5,0.35){\rnode{2a}{\large $e_1$}}
	\rput(4.5,-0.35){\rnode{2b}{\large $e_2$}}
	\rput(6.5,1.2){\rnode{3}{\large $1$}}
	\rput(6.5,-1.2){\rnode{3'}{\large $1$}}	
	\ncarc{->}{0}{1a}	
	\ncarc{->}{1a}{2a}
	\ncarc{->}{2a}{3'}
	\ncarc[linecolor=red]{->}{0'}{1b}
	\ncarc[linecolor=red]{->}{1b}{2b}	
	\ncarc[linecolor=red]{->}{2b}{3}
	\ncarc[linecolor=blue]{->}{0}{0'}
	\ncarc[arrowlength=2,arcangle=0,linecolor=blue,nodesep=0pt]{->}{1a}{1b}
	\ncarc[arrowlength=2,arcangle=0,linecolor=blue,nodesep=0pt]{->}{2a}{2b}
	\ncarc[arcangle=-15,linecolor=blue]{->}{3'}{3}
	}}}
\end{pspicture}
\caption{$G$-constellations at the open sets covering the exceptional curves $E^+$ and $E^-$, where $B=\frac{x^3+y^3}{x^3-y^3}$ and $B'=\frac{x^3-y^3}{x^3+y^3}$.}
\label{OpFixLines}
\end{center}
\end{figure}
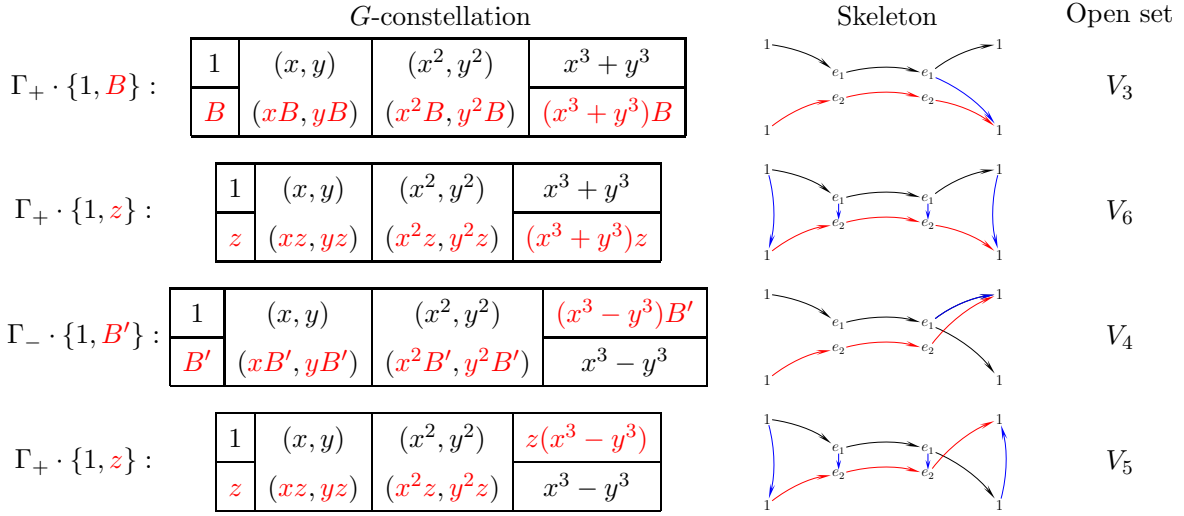

Similarly, for the line $L_-$ we have $b=-1$. By the change of coordinate $b^-=1-b=\frac{x^3+y^3}{x^3}$, the action of $G/N$ as $\frac{1}{2}(1,1,0)$ is defined on $\C^3_{A',B',C'}$ where $A'=c$, $B'=\frac{b^--2}{b^-}$, $C'=a(b^-)^2$. This implies that the rational curve $E_-$ is covered by the open sets $V_4\cong\C^3_{B'^2,A'/B',C'}$, $V_5\cong\C^3_{A'^2,B'/A',C'}$, and $E^-$ is given by the ratio $(A':B')=(z(x^3+y^3):x^3-y^3)$. In terms of $G$-constellations, the calculation is similar but now taking $\Gamma_-$ instead of $\Gamma_+$. See Figure \ref{OpFixLines}.

The skeletons provide the choices of nonzero variables in the representation space $\rep(Q)$. For example, the case $V_3$ can we choose $c=(1,0)$, $b=(0,1)$, $d=\Id$, $C_1=1$, $B=\left(\begin{smallmatrix}1\\1\end{smallmatrix}\right)$ and use the relations we obtain the representation space of $V_3\subset\mathcal{M}_\theta$ shown in Figure \ref{OpenV3}, where $K=c'(C'-1)$. Thus $V_3\cong\C^3_{a,c'C'}$ where 
\[
a=\frac{z(x^3+y^3)}{x^3-y^3}, c'=\frac{(x^3+y^3)^2}{x^2y^2} \text{ and } C'=\frac{(x^3-y^3)^2}{(x^3+y^3)^2}.
\]

\begin{figure}[h]
\begin{center}
\begin{pspicture}(0,-0.75)(7,1.55)
	\psset{arrowsize=4pt,arrowlength=2,arcangle=15,nodesep=1pt}
\rput(0.5,0.25){
\scalebox{0.9}{
	\rput(-0,1.2){\rnode{0}{$\rho_0^0$}}
	\rput(-0,-1.2){\rnode{0'}{$\rho_0^1$}}
	\rput(2,0){\rnode{1}{$\rho_1^0$}}
	\rput(4.5,0){\rnode{2}{$\rho_2^0$}}
	\rput(6.5,1.2){\rnode{3}{$\rho_3^0$}}
	\rput(6.5,-1.2){\rnode{3'}{$\rho_3^1$}}	
	\rput(2,0.3){\rnode{u1}{}}	
	\rput(4.5,0.3){\rnode{u2}{}}	
	\ncarc{->}{0}{0'}\mput*[0.1]{$z$}	
	\ncarc{->}{0'}{0}\Aput*[0.1]{$aC'$}	
	\ncarc{->}{0}{1}\Aput*[0.1]{\footnotesize $(1,0)$}\ncarc{->}{0}{1}		
	\ncarc{->}{1}{0}\aput*[-0.25](0.45){$\left(\begin{smallmatrix}K\\0\end{smallmatrix}\right)$}		
	\ncarc{->}{0'}{1}\aput*[-0.3](0.5){\footnotesize $(0,1)$}	
	\ncarc{->}{1}{0'}\aput*[0](0.4){$\left(\begin{smallmatrix}0\\K\end{smallmatrix}\right)$}\ncarc{->}{1}{0'}	
	\ncarc{->}{1}{2}	\Aput*[0.1]{$\left(\begin{smallmatrix}1&0\\0&1\end{smallmatrix}\right)$}	
	\ncarc{->}{2}{1}	\Aput*[0.1]{$\left(\begin{smallmatrix}K&0\\0&K\end{smallmatrix}\right)$}	
	\ncarc{->}{2}{3}	\aput*[0](0.6){$\left(\begin{smallmatrix}1\\C'\end{smallmatrix}\right)$}\ncarc{->}{2}{3}	
	\ncarc{->}{3}{2}	\Aput*[-0.25]{\footnotesize $(\text{-}c',c')$}	
	\ncarc{->}{2}{3'}\mput*[0.05]{$\left(\begin{smallmatrix}1\\1\end{smallmatrix}\right)$}	
	\ncarc{->}{3'}{2}\Aput*[0]{\footnotesize $(c'C',\text{-}c')$}\ncarc{->}{3'}{2}	
	\ncarc[arcangle=-15]{->}{3}{3'}\mput*[0.1]{$a$}	
	\ncarc[arcangle=-15]{->}{3'}{3}\Bput*[0.1]{$aC'$}	
	 \nccircle[angleA=-25,nodesep=3pt]{->}{u1}{.3cm}\Bput[-0.05]{\footnotesize $\left(\begin{smallmatrix}0&a\\aC'&0\end{smallmatrix}\right)$}
	 \nccircle[angleA=25,nodesep=3pt]{->}{u2}{.3cm}\Bput[-0.05]{\footnotesize $\left(\begin{smallmatrix}0&a\\aC'&0\end{smallmatrix}\right)$}	
	 }}	
\end{pspicture}
\caption{Open set $V_3\subset\mathcal{M}_\theta$.}
\label{OpenV3}
\end{center}
\end{figure}
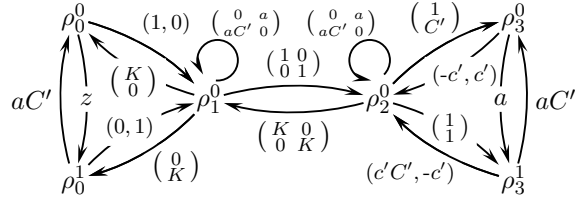

\end{exa}

\subsection{$\Hilb{D_8}{\Hilb{\frac{1}{3}(1,2,0)}{\C^3}}$.}

Let $G$ be the group $N\rtimes G/N\subset\SL(3,\C)$ where $N$ is generated by $g=\frac{1}{3}(1,2,0)$ and $G/N\cong D_8\subset\SO(3)$ is generated by $\alpha=\frac{1}{4}(1,3,0)$ and $\beta=\left(\begin{smallmatrix}0&1&0\\1&0&0\\0&0&-1\end{smallmatrix}\right)$. Let $\Irr(N)=\{\sigma_0,\sigma_1,\sigma_2\}$ where $\sigma_i(g)=\omega^i$ for $i=0,1,2$ and $\omega$ a primitive cube root of unity. In this case we have $\Orb(\sigma_0)=\sigma_0$ and $\Orb(\sigma_1)=\{\sigma_1,\sigma_2\}$ with stabilizers $G_0=G/N\cong D_8$ and $G_1=\Span{\alpha}\cong\Z/4\Z$. Therefore $\Irr G_0=\{\tau_0,\tau_1,\tau_2,\tau_3,\tau_4\}$ where $\dim(\tau_i)=1$ for $i=0,\ldots,3$ and $\dim(\tau_4)=2$. The irreducible representations of $G$ are shown in Table \ref{IrrD24}.

{\renewcommand{\arraystretch}{0.3}
\begin{table}[h]
\begin{center}
\begin{tabular}{|c|c|}
\multicolumn{1}{c}{$\sigma_0$} &  \multicolumn{1}{c}{$\{\sigma_1,\sigma_2\}$} \\
\multicolumn{1}{c}{} &  \multicolumn{1}{c}{} \\
\cline{1-2} 
\multirow{4}{*}{$\rho_0^0$} & \multirow{5}{*}{$\rho_1^0$}   \\
& \\
& \\
& \\
\cline{1-1}
\multirow{4}{*}{$\rho_0^1$} & \multirow{7}{*}{$\rho_1^1$}   \\
\cline{2-2}
& \\
& \\
& \\
\cline{1-1}
\multirow{4}{*}{$\rho_0^2$} & \multirow{9}{*}{$\rho_1^2$}   \\
& \\
\cline{2-2}
& \\
& \\
\cline{1-1}
\multirow{4}{*}{$\rho_0^3$} & \multirow{11}{*}{$\rho_1^3$}   \\
& \\
& \\
\cline{2-2}
& \\
\cline{1-1}
\multirow{4}{*}{$\rho_0^4$} &  \\
& \\
& \\
& \\
\cline{1-2}
\end{tabular}
, {\bf{d}} = 
\begin{tabular}{|c|c|}
\multicolumn{1}{c}{$\sigma_0$} &  \multicolumn{1}{c}{$\{\sigma_1,\sigma_2\}$} \\
\multicolumn{1}{c}{} &  \multicolumn{1}{c}{} \\
\cline{1-2} 
\multirow{4}{*}{$1$} & \multirow{5}{*}{$2$}   \\
& \\
& \\
& \\
\cline{1-1}
\multirow{4}{*}{$1$} & \multirow{7}{*}{$2$}   \\
\cline{2-2}
& \\
& \\
& \\
\cline{1-1}
\multirow{4}{*}{$1$} & \multirow{9}{*}{$2$}   \\
& \\
\cline{2-2}
& \\
& \\
\cline{1-1}
\multirow{4}{*}{$1$} & \multirow{11}{*}{$2$}   \\
& \\
& \\
\cline{2-2}
& \\
\cline{1-1}
\multirow{4}{*}{$2$} &  \\
& \\
& \\
& \\
\cline{1-2}
\end{tabular}
, $\theta$ = 
\begin{tabular}{|c|c|}
\multicolumn{1}{c}{$\sigma_0$} &  \multicolumn{1}{c}{$\{\sigma_1,\sigma_2\}$} \\
\multicolumn{1}{c}{} &  \multicolumn{1}{c}{} \\
\cline{1-2} 
\multirow{4}{*}{$-a-b-5\varepsilon c$} & \multirow{5}{*}{$a+b$}   \\
& \\
& \\
& \\
\cline{1-1}
\multirow{4}{*}{$-a-b+\varepsilon c$} & \multirow{7}{*}{$a+b$}   \\
\cline{2-2}
& \\
& \\
& \\
\cline{1-1}
\multirow{4}{*}{$-a-b+\varepsilon c$} & \multirow{9}{*}{$a+b$}   \\
& \\
\cline{2-2}
& \\
& \\
\cline{1-1}
\multirow{4}{*}{$-a-b+\varepsilon c$} & \multirow{11}{*}{$a+b$}   \\
& \\
& \\
\cline{2-2}
& \\
\cline{1-1}
\multirow{4}{*}{$-2a-2b+\varepsilon c$} &  \\
& \\
& \\
& \\
\cline{1-2}
\end{tabular}
\end{center}
\caption{Irreducible representations of $G$ with their dimensions and the stability condition for $\Hilb{G/N}{\Hilb{N}{\C^3}}$ with $a,b,c>0$ and $0<\varepsilon<<1$.}
\label{IrrD24}
\end{table}}

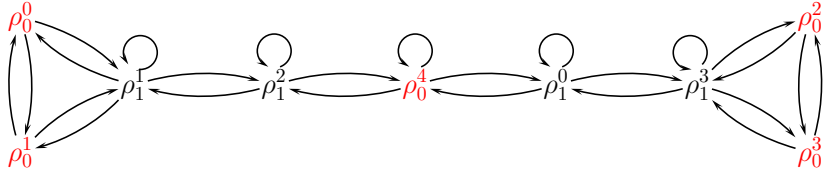
\begin{figure}[h]
\begin{center}
\begin{pspicture}(0,-0.75)(11,1)
	\psset{arrowlength=2,arcangle=15,nodesep=1pt}
\rput(0,0){
\scalebox{0.75}{
	\rput(-0,1.2){\rnode{0}{\Large ${\red{\rho_0^0}}$}}
	\rput(-0,-1.2){\rnode{0'}{\Large ${\red{\rho_0^1}}$}}
	\rput(2,0){\rnode{1}{\Large $\rho_1^1$}}
	\rput(4.5,0){\rnode{2}{\Large $\rho_1^2$}}
	\rput(7,0){\rnode{3}{\Large ${\red{\rho_0^4}}$}}
	\rput(9.5,0){\rnode{4}{\Large $\rho_1^0$}}
	\rput(12,0){\rnode{5}{\Large $\rho_1^3$}}
	\rput(14,1.2){\rnode{6}{\Large ${\red{\rho_0^2}}$}}
	\rput(14,-1.2){\rnode{6'}{\Large ${\red{\rho_0^3}}$}}	
	\rput(2,0.3){\rnode{u1}{}}	
	\rput(4.5,0.3){\rnode{u2}{}}	
	\rput(7,0.3){\rnode{u3}{}}	
	\rput(9.5,0.3){\rnode{u4}{}}	
	\rput(12,0.3){\rnode{u5}{}}	
	\ncarc{->}{0}{0'}
	\ncarc{->}{0'}{0}
	\ncarc{->}{0}{1}
	\ncarc{->}{1}{0}
	\ncarc{->}{0'}{1}
	\ncarc{->}{1}{0'}
	\ncarc{->}{1}{2}	
	\ncarc{->}{2}{1}	
	\ncarc{->}{2}{3}	
	\ncarc{->}{3}{2}	
	\ncarc{->}{3}{4}	
	\ncarc{->}{4}{3}	
	\ncarc{->}{4}{5}	
	\ncarc{->}{5}{4}	
	\ncarc{->}{5}{6}	
	\ncarc{->}{6}{5}	
	\ncarc{->}{5}{6'}
	\ncarc{->}{6'}{5}
	\ncarc[arcangle=-15]{->}{6}{6'}
	\ncarc[arcangle=-15]{->}{6'}{6}
	 \nccircle[angleA=-25,nodesep=3pt]{->}{u1}{.3cm}
	 \nccircle[nodesep=3pt]{->}{u2}{.3cm}
	 \nccircle[nodesep=3pt]{->}{u3}{.3cm}
	 \nccircle[nodesep=3pt]{->}{u4}{.3cm}
	 \nccircle[angleA=25,nodesep=3pt]{->}{u5}{.3cm}
	}}	
\end{pspicture}
\end{center}
\caption{The McKay quiver of $G$.}
\label{McKayD24}
\end{figure}

The McKay quiver is shown in Figure \ref{McKayD24} which coincides with the Mckay quiver of $D_{24}\subset\SO(3)$ as in \ref{exaDn} since $G\cong D_{24}=\Span{\frac{1}{12}(1,11,0),\beta}$. 

The scheme $\Hilb{N}{\C^3}$ can be covered by 3 open sets $U_i$ for $i=1,2,3$ with the corresponding distinguished $N$-constellations $Z_1=\{1,x,x^2\}$, $Z_2=\{1,x,y\}$ and $Z_3=\{1,y,y^2\}$. By the action of $G/N\cong D_8$ the open set $U_2$ is fixed while $U_1$ and $U_3$ are identified. 

The fixed open set $U_2$ has coordinates $a=\frac{x^2}{y}$, $b=\frac{y^2}{x}$ and $c=z$. If we denote by $(+):=a^2+b^2$ and $(-):=a^2-b^2$, we have that $\Hilb{G/N}{\C^3_{a,b,c}}$ is covered by 5 open sets given by the following $G/N$ constellations:
\begin{align*}
\Gamma_1 &= \{ 1,a,b,(+),(-),a(+),b(+),-(+)(-) \} \\
\Gamma_2 &= \{ 1,a,b,(+),(-),a(+),b(+),c \} \\
\Gamma_3 &= \{ 1,a,b,(+),(-),ac,-bc,c \} \\
\Gamma_4 &= \{ 1,a,b,c(-),(-),ac,-bc,c \} \\
\Gamma_5 &= \{ 1,a,b,(+),c(+),ac,-bc,c \} 
\end{align*}

We obtain in this way the open sets $V_i$ given by the $G$-constellations $\Gamma_i\cdot Z_2$, for $i=1,\ldots,5$ shown in Figure \ref{OpFixLinesD8}. 

In the case of the orbit $\{U_1,U_3\}$ we have $U_1\cong\C^3_{d,e,f}$ with coordinates $d=x^3$, $e=y/x^2$, $f=z$, and $U_3\cong\C^3_{d',e',f'}$ with coordinates $d'=y^3$, $e'=x/y^2$, $f'=z$. In each of the open sets there exist a fixed line with stabilizer subgroup $G_1=\Span{\alpha}\cong\Z/4\Z$. This implies that we have to consider the $G_1$-graphs
\[
\Omega_1=\{1,c,c^2,c^3\}, \Omega_2=\{1,c,c^2,d\}, \Omega_3=\{ 1,c,d,d^2\}, \Omega_4=\{1,d,d^2,d^3\} 
\]
in $U_1$, and 
\[
\Omega'_1=\{1,c',c'^2,c'^3\}, \Omega'_2=\{1,c',c'^2,d'\}, \Omega_3=\{ 1,c',d',d'^2\}, \Omega_4=\{1,d',d'^2,d'^3\}
\]
in $U_3$. The identification of $U_1$ and $U_3$ by $\beta$ produces the open sets $U_i$ given by the $G$-constellations $\Omega_i\cdot Z_1\cup\Omega'_i\cdot Z_3$ for $i=1,\ldots,4$. See Figure \ref{OpFixLinesD8}.

\begin{figure}[h]
\begin{center}
\begin{pspicture}(0,-0.25)(20,8)
	\psset{arrowsize=4pt,arrowlength=2,arcangle=15,nodesep=1pt}

\scalebox{0.95}{

\rput(0.75,8){$G$-const.}
\rput(3.75,8){Skeleton}
\rput(6.75,8){Open set}
\rput(6.75,7){$V_1$}
\rput(6.76,5.25){$V_2$}
\rput(6.75,3.5){$V_3$}
\rput(6.75,1.75){$V_4$}
\rput(6.75,0){$V_5$}

\rput(9.5,8){$G$-const.}
\rput(13,8){Skeleton}
\rput(16,8){Open set}
\rput(16,7){$U_1$}
\rput(16,5.25){$U_2$}
\rput(16,3.5){$U_3$}
\rput(16,1.75){$U_4$}

\psline(8,-0.5)(8,8.25)

\rput(0.75,7){$
\begin{array}{r}
\Gamma_1\cdot Z_2 \\
\end{array}
$}

\rput(1.75,7){
\scalebox{0.5}{
	\rput(-0,1.2){\rnode{0}{\large $1$}}
	\rput(-0,-1.2){\rnode{0'}{\large $1$}}
	\rput(1,0.35){\rnode{1a}{\large $e_1$}}
	\rput(1,-0.35){\rnode{1b}{\large $e_2$}}
	\rput(2.5,0.35){\rnode{2a}{\large $e_1$}}
	\rput(2.5,-0.35){\rnode{2b}{\large $e_2$}}
	\rput(4,0.35){\rnode{3a}{\large $e_1$}}
	\rput(4,-0.35){\rnode{3b}{\large $e_2$}}
	\rput(5.5,0.35){\rnode{4a}{\large $e_1$}}
	\rput(5.5,-0.35){\rnode{4b}{\large $e_2$}}
	\rput(7,0.35){\rnode{5a}{\large $e_1$}}
	\rput(7,-0.35){\rnode{5b}{\large $e_2$}}
	\rput(8,1.2){\rnode{6}{\large $1$}}
	\rput(8,-1.2){\rnode{6'}{\large $1$}}			
	\ncarc[linecolor=red]{->}{0}{1a}	
	\ncarc[linecolor=red]{->}{0'}{1b}	
	\ncarc[linecolor=blue]{->}{1a}{2a}
	\ncarc[linecolor=blue]{->}{2b}{1b}	
	\ncarc[linecolor=red]{->}{3a}{2a}
	\ncarc[linecolor=red]{->}{3b}{2b}	
	\ncarc[linecolor=red]{->}{3a}{4a}
	\ncarc[linecolor=red]{->}{3b}{4b}	
	\ncarc[linecolor=blue]{->}{4a}{5a}
	\ncarc[linecolor=blue]{->}{4a}{5b}
	\ncarc[linecolor=blue]{->}{5a}{4b}
	\ncarc[linecolor=blue]{->}{5b}{4b}
	\ncarc[linecolor=red]{->}{6}{5a}	
	\ncarc[linecolor=red]{->}{6'}{5b}
	}}

\rput(9.5,7){$
\begin{array}{r}
\Omega_1\!\cdot\!Z_1\cup \Omega'_1\!\cdot\!Z_3 \\
\end{array}
$}

\rput(11,7){
\scalebox{0.5}{
	\rput(-0,1.2){\rnode{0}{\large $1$}}
	\rput(-0,-1.2){\rnode{0'}{\large $1$}}
	\rput(1,0.35){\rnode{1a}{\large $e_1$}}
	\rput(1,-0.35){\rnode{1b}{\large $e_2$}}
	\rput(2.5,0.35){\rnode{2a}{\large $e_1$}}
	\rput(2.5,-0.35){\rnode{2b}{\large $e_2$}}
	\rput(4,0.35){\rnode{3a}{\large $e_1$}}
	\rput(4,-0.35){\rnode{3b}{\large $e_2$}}
	\rput(5.5,0.35){\rnode{4a}{\large $e_1$}}
	\rput(5.5,-0.35){\rnode{4b}{\large $e_2$}}
	\rput(7,0.35){\rnode{5a}{\large $e_1$}}
	\rput(7,-0.35){\rnode{5b}{\large $e_2$}}
	\rput(8,1.2){\rnode{6}{\large $1$}}
	\rput(8,-1.2){\rnode{6'}{\large $1$}}			
	\ncarc[linecolor=red]{->}{0}{1a}	
	\ncarc[linecolor=red]{->}{0'}{1a}	
	\ncarc[linecolor=red]{->}{1a}{2a}
	\ncarc[linecolor=blue]{->}{2a}{3a}
	\ncarc[linecolor=red]{->}{3a}{4a}
	\ncarc[linecolor=red]{->}{4a}{5a}
	\ncarc[linecolor=blue]{->}{5a}{6}
	\ncarc[linecolor=blue]{->}{5a}{6'}
	
	\ncarc[linecolor=red]{->}{6}{5b}
	\ncarc[linecolor=red]{->}{6'}{5b}
	\ncarc[linecolor=red]{->}{5b}{4b}
	\ncarc[linecolor=blue]{->}{4b}{3b}
	\ncarc[linecolor=red]{->}{3b}{2b}
	\ncarc[linecolor=red]{->}{2b}{1b}
	}}

\rput(0.75,5.25){$
\begin{array}{r}
\Gamma_2\cdot Z_2 \\
\end{array}
$}
\rput(1.75,5.25){
\scalebox{0.5}{
	\rput(-0,1.2){\rnode{0}{\large $1$}}
	\rput(-0,-1.2){\rnode{0'}{\large $1$}}
	\rput(1,0.35){\rnode{1a}{\large $e_1$}}
	\rput(1,-0.35){\rnode{1b}{\large $e_2$}}
	\rput(2.5,0.35){\rnode{2a}{\large $e_1$}}
	\rput(2.5,-0.35){\rnode{2b}{\large $e_2$}}
	\rput(4,0.35){\rnode{3a}{\large $e_1$}}
	\rput(4,-0.35){\rnode{3b}{\large $e_2$}}
	\rput(5.5,0.35){\rnode{4a}{\large $e_1$}}
	\rput(5.5,-0.35){\rnode{4b}{\large $e_2$}}
	\rput(7,0.35){\rnode{5a}{\large $e_1$}}
	\rput(7,-0.35){\rnode{5b}{\large $e_2$}}
	\rput(8,1.2){\rnode{6}{\large $1$}}
	\rput(8,-1.2){\rnode{6'}{\large $1$}}		
	\ncarc[linecolor=red]{->}{0}{1a}	
	\ncarc[linecolor=red]{->}{0'}{1b}	
	\ncarc[linecolor=blue]{->}{0}{0'}	
	\ncarc[linecolor=blue]{->}{1a}{2a}
	\ncline[linecolor=blue,arrowsize=4pt,arrowlength=1,nodesep=0pt]{->}{1a}{1b}	
	\ncarc[linecolor=red]{->}{3a}{2a}
	\ncarc[linecolor=red]{->}{3b}{2b}	
	\ncarc[linecolor=red]{->}{3a}{4a}
	\ncarc[linecolor=red]{->}{3b}{4b}	
	\ncarc[linecolor=blue]{->}{4a}{5a}
	\ncarc[linecolor=blue]{->}{4a}{5b}
	\ncarc[linecolor=blue]{->}{5a}{4b}
	\ncarc[linecolor=blue]{->}{5b}{4b}
	\ncarc[linecolor=red]{->}{6}{5a}	
	\ncarc[linecolor=red]{->}{6'}{5b}
	}}

\rput(9.5,5.25){$
\begin{array}{r}
\Omega_2\!\cdot\!Z_1\cup \Omega'_2\!\cdot\!Z_3 \\
\end{array}
$}

\rput(11,5.25){
\scalebox{0.5}{
	\rput(-0,1.2){\rnode{0}{\large $1$}}
	\rput(-0,-1.2){\rnode{0'}{\large $1$}}
	\rput(1,0.35){\rnode{1a}{\large $e_1$}}
	\rput(1,-0.35){\rnode{1b}{\large $e_2$}}
	\rput(2.5,0.35){\rnode{2a}{\large $e_1$}}
	\rput(2.5,-0.35){\rnode{2b}{\large $e_2$}}
	\rput(4,0.35){\rnode{3a}{\large $e_1$}}
	\rput(4,-0.35){\rnode{3b}{\large $e_2$}}
	\rput(5.5,0.35){\rnode{4a}{\large $e_1$}}
	\rput(5.5,-0.35){\rnode{4b}{\large $e_2$}}
	\rput(7,0.35){\rnode{5a}{\large $e_1$}}
	\rput(7,-0.35){\rnode{5b}{\large $e_2$}}
	\rput(8,1.2){\rnode{6}{\large $1$}}
	\rput(8,-1.2){\rnode{6'}{\large $1$}}			
	\ncarc[linecolor=red]{->}{0}{1a}	
	\ncarc[linecolor=red]{->}{0'}{1a}	
	\ncarc[linecolor=blue]{->}{0'}{1b}
	\ncarc[linecolor=red]{->}{1a}{2a}
	\ncarc[linecolor=blue]{->}{2a}{3a}
	\ncarc[linecolor=red]{->}{3a}{4a}
	\ncarc[linecolor=red]{->}{4a}{5a}
	\ncarc[linecolor=blue]{->}{5a}{6}
	\ncarc[linecolor=blue]{->}{5a}{6'}
	
	\ncarc[linecolor=red]{->}{6}{5b}
	\ncarc[linecolor=red]{->}{6'}{5b}
	\ncarc[linecolor=red]{->}{5b}{4b}
	\ncarc[linecolor=red]{->}{3b}{2b}
	\ncarc[linecolor=red]{->}{2b}{1b}
	}}

\rput(0.75,3.5){$
\begin{array}{r}
\Gamma_3\cdot Z_2 \\
\end{array}
$}

\rput(1.75,3.5){
\scalebox{0.5}{
	\rput(-0,1.2){\rnode{0}{\large $1$}}
	\rput(-0,-1.2){\rnode{0'}{\large $1$}}
	\rput(1,0.35){\rnode{1a}{\large $e_1$}}
	\rput(1,-0.35){\rnode{1b}{\large $e_2$}}
	\rput(2.5,0.35){\rnode{2a}{\large $e_1$}}
	\rput(2.5,-0.35){\rnode{2b}{\large $e_2$}}
	\rput(4,0.35){\rnode{3a}{\large $e_1$}}
	\rput(4,-0.35){\rnode{3b}{\large $e_2$}}
	\rput(5.5,0.35){\rnode{4a}{\large $e_1$}}
	\rput(5.5,-0.35){\rnode{4b}{\large $e_2$}}
	\rput(7,0.35){\rnode{5a}{\large $e_1$}}
	\rput(7,-0.35){\rnode{5b}{\large $e_2$}}
	\rput(8,1.2){\rnode{6}{\large $1$}}
	\rput(8,-1.2){\rnode{6'}{\large $1$}}		
	\ncarc[linecolor=red]{->}{0}{1a}	
	\ncarc[linecolor=red]{->}{0'}{1b}	
	\ncarc[linecolor=blue]{->}{0}{0'}	
	\ncarc[linecolor=blue]{->}{1a}{2a}
	\ncarc[linecolor=blue]{->}{1b}{2b}
	\ncline[linecolor=blue,arrowsize=4pt,arrowlength=1,nodesep=0pt]{->}{1a}{1b}	
	\ncline[linecolor=blue,arrowsize=4pt,arrowlength=1,nodesep=0pt]{->}{2a}{2b}	
	\ncline[linecolor=blue,arrowsize=4pt,arrowlength=1,nodesep=0pt]{->}{3a}{3b}
	\ncline[linecolor=blue,arrowsize=4pt,arrowlength=1,nodesep=0pt]{->}{4a}{4b}		
	\ncarc[linecolor=red]{->}{3a}{2a}
	\ncarc[linecolor=red]{->}{3b}{2b}	
	\ncarc[linecolor=red]{->}{3a}{4a}
	\ncarc[linecolor=red]{->}{3b}{4b}	
	\ncarc[linecolor=blue]{->}{4a}{5a}
	\ncarc[linecolor=blue]{->}{4a}{5b}
	\ncarc[linecolor=red]{->}{6}{5a}	
	\ncarc[linecolor=red]{->}{6'}{5b}
	}}

\rput(9.5,3.5){$
\begin{array}{r}
\Omega_3\!\cdot\!Z_1\cup \Omega'_3\!\cdot\!Z_3 \\
\end{array}
$}

\rput(11,3.5){
\scalebox{0.5}{
	\rput(-0,1.2){\rnode{0}{\large $1$}}
	\rput(-0,-1.2){\rnode{0'}{\large $1$}}
	\rput(1,0.35){\rnode{1a}{\large $e_1$}}
	\rput(1,-0.35){\rnode{1b}{\large $e_2$}}
	\rput(2.5,0.35){\rnode{2a}{\large $e_1$}}
	\rput(2.5,-0.35){\rnode{2b}{\large $e_2$}}
	\rput(4,0.35){\rnode{3a}{\large $e_1$}}
	\rput(4,-0.35){\rnode{3b}{\large $e_2$}}
	\rput(5.5,0.35){\rnode{4a}{\large $e_1$}}
	\rput(5.5,-0.35){\rnode{4b}{\large $e_2$}}
	\rput(7,0.35){\rnode{5a}{\large $e_1$}}
	\rput(7,-0.35){\rnode{5b}{\large $e_2$}}
	\rput(8,1.2){\rnode{6}{\large $1$}}
	\rput(8,-1.2){\rnode{6'}{\large $1$}}		
	\ncarc[linecolor=red]{->}{0}{1a}	
	\ncarc[linecolor=red]{->}{0'}{1a}	
	\ncarc[linecolor=blue]{->}{0'}{1b}
	\ncarc[linecolor=red]{->}{1a}{2a}
	\ncarc[linecolor=blue]{->}{2a}{3a}
	\ncarc[linecolor=red]{->}{3a}{4a}
	\ncarc[linecolor=blue]{->}{3b}{4b}
	\ncarc[linecolor=red]{->}{4a}{5a}
	
	\ncarc[linecolor=red]{->}{6}{5b}
	\ncarc[linecolor=red]{->}{6'}{5b}
	\ncarc[linecolor=red]{->}{5b}{4b}
	\ncarc[linecolor=red]{->}{3b}{2b}
	\ncarc[linecolor=red]{->}{2b}{1b}
	}}

\rput(0.75,1.75){$
\begin{array}{r}
\Gamma_4\cdot Z_2 \\
\end{array}
$}

\rput(1.75,1.75){
\scalebox{0.5}{
	\rput(-0,1.2){\rnode{0}{\large $1$}}
	\rput(-0,-1.2){\rnode{0'}{\large $1$}}
	\rput(1,0.35){\rnode{1a}{\large $e_1$}}
	\rput(1,-0.35){\rnode{1b}{\large $e_2$}}
	\rput(2.5,0.35){\rnode{2a}{\large $e_1$}}
	\rput(2.5,-0.35){\rnode{2b}{\large $e_2$}}
	\rput(4,0.35){\rnode{3a}{\large $e_1$}}
	\rput(4,-0.35){\rnode{3b}{\large $e_2$}}
	\rput(5.5,0.35){\rnode{4a}{\large $e_1$}}
	\rput(5.5,-0.35){\rnode{4b}{\large $e_2$}}
	\rput(7,0.35){\rnode{5a}{\large $e_1$}}
	\rput(7,-0.35){\rnode{5b}{\large $e_2$}}
	\rput(8,1.2){\rnode{6}{\large $1$}}
	\rput(8,-1.2){\rnode{6'}{\large $1$}}		
	\ncarc[linecolor=red]{->}{0}{1a}	
	\ncarc[linecolor=red]{->}{0'}{1b}	
	\ncarc[linecolor=blue]{->}{0}{0'}	
	\ncarc[linecolor=blue]{->}{1a}{2a}
	\ncarc[linecolor=blue]{->}{1b}{2b}
	\ncline[linecolor=blue,arrowsize=4pt,arrowlength=1,nodesep=0pt]{->}{1a}{1b}	
	\ncline[linecolor=blue,arrowsize=4pt,arrowlength=1,nodesep=0pt]{->}{2a}{2b}	
	\ncline[linecolor=blue,arrowsize=4pt,arrowlength=1,nodesep=0pt]{->}{3a}{3b}
	\ncline[linecolor=blue,arrowsize=4pt,arrowlength=1,nodesep=0pt]{->}{4a}{4b}		
	\ncarc[linecolor=red]{->}{3a}{2a}
	\ncarc[linecolor=red]{->}{3b}{2b}	
	\ncarc[linecolor=red]{->}{3a}{4a}
	\ncarc[linecolor=red]{->}{3b}{4b}	
	\ncarc[linecolor=blue]{->}{4a}{5b}
	\ncarc[linecolor=blue]{->}{4b}{5a}
	\ncline[linecolor=blue,arrowsize=4pt,arrowlength=1,nodesep=0pt]{->}{5b}{5a}
	\ncarc[linecolor=red]{->}{6}{5a}	
	\ncarc[linecolor=red]{->}{6'}{5b}		
	\ncarc[linecolor=blue,arcangle=-15]{->}{6'}{6}
	}}

\rput(9.5,1.75){$
\begin{array}{r}
\Omega_3\!\cdot\!Z_1\cup \Omega'_3\!\cdot\!Z_3 \\
\end{array}
$}

\rput(11,1.75){
\scalebox{0.5}{
	\rput(-0,1.2){\rnode{0}{\large $1$}}
	\rput(-0,-1.2){\rnode{0'}{\large $1$}}
	\rput(1,0.35){\rnode{1a}{\large $e_1$}}
	\rput(1,-0.35){\rnode{1b}{\large $e_2$}}
	\rput(2.5,0.35){\rnode{2a}{\large $e_1$}}
	\rput(2.5,-0.35){\rnode{2b}{\large $e_2$}}
	\rput(4,0.35){\rnode{3a}{\large $e_1$}}
	\rput(4,-0.35){\rnode{3b}{\large $e_2$}}
	\rput(5.5,0.35){\rnode{4a}{\large $e_1$}}
	\rput(5.5,-0.35){\rnode{4b}{\large $e_2$}}
	\rput(7,0.35){\rnode{5a}{\large $e_1$}}
	\rput(7,-0.35){\rnode{5b}{\large $e_2$}}
	\rput(8,1.2){\rnode{6}{\large $1$}}
	\rput(8,-1.2){\rnode{6'}{\large $1$}}		
	\ncarc[linecolor=red]{->}{0}{1a}	
	\ncarc[linecolor=red]{->}{0'}{1a}	
	\ncarc[linecolor=blue]{->}{0'}{1b}
	\ncarc[linecolor=red]{->}{1a}{2a}
	\ncarc[linecolor=red]{->}{3a}{4a}
	\ncarc[linecolor=blue]{->}{3b}{4b}
	\ncarc[linecolor=red]{->}{4a}{5a}
	
	\ncarc[linecolor=red,nodesep=0]{->}{6}{5b}
	\ncarc[linecolor=red]{->}{6'}{5b}
	\ncarc[linecolor=blue,arcangle=-15,nodesep=0]{->}{6'}{5a}
	\ncarc[linecolor=red]{->}{5b}{4b}
	\ncarc[linecolor=red]{->}{3b}{2b}
	\ncarc[linecolor=red]{->}{2b}{1b}
	}}

\rput(0.75,0){$
\begin{array}{r}
\Gamma_5\cdot Z_2 \\
\end{array}
$}

\rput(1.75,0){
\scalebox{0.5}{
	\rput(-0,1.2){\rnode{0}{\large $1$}}
	\rput(-0,-1.2){\rnode{0'}{\large $1$}}
	\rput(1,0.35){\rnode{1a}{\large $e_1$}}
	\rput(1,-0.35){\rnode{1b}{\large $e_2$}}
	\rput(2.5,0.35){\rnode{2a}{\large $e_1$}}
	\rput(2.5,-0.35){\rnode{2b}{\large $e_2$}}
	\rput(4,0.35){\rnode{3a}{\large $e_1$}}
	\rput(4,-0.35){\rnode{3b}{\large $e_2$}}
	\rput(5.5,0.35){\rnode{4a}{\large $e_1$}}
	\rput(5.5,-0.35){\rnode{4b}{\large $e_2$}}
	\rput(7,0.35){\rnode{5a}{\large $e_1$}}
	\rput(7,-0.35){\rnode{5b}{\large $e_2$}}
	\rput(8,1.2){\rnode{6}{\large $1$}}
	\rput(8,-1.2){\rnode{6'}{\large $1$}}		
	\ncarc[linecolor=red]{->}{0}{1a}	
	\ncarc[linecolor=red]{->}{0'}{1b}	
	\ncarc[linecolor=blue]{->}{0}{0'}	
	\ncarc[linecolor=blue]{->}{1a}{2a}
	\ncarc[linecolor=blue]{->}{1b}{2b}
	\ncline[linecolor=blue,arrowsize=4pt,arrowlength=1,nodesep=0pt]{->}{1a}{1b}	
	\ncline[linecolor=blue,arrowsize=4pt,arrowlength=1,nodesep=0pt]{->}{2a}{2b}	
	\ncline[linecolor=blue,arrowsize=4pt,arrowlength=1,nodesep=0pt]{->}{3a}{3b}
	\ncline[linecolor=blue,arrowsize=4pt,arrowlength=1,nodesep=0pt]{->}{4a}{4b}		
	\ncarc[linecolor=red]{->}{3a}{2a}
	\ncarc[linecolor=red]{->}{3b}{2b}	
	\ncarc[linecolor=red]{->}{3a}{4a}
	\ncarc[linecolor=red]{->}{3b}{4b}	
	\ncarc[linecolor=blue]{->}{4a}{5a}
	\ncarc[linecolor=blue]{->}{4b}{5b}
	\ncline[linecolor=blue,arrowsize=4pt,arrowlength=1,nodesep=0pt]{->}{5a}{5b}
	\ncarc[linecolor=red]{->}{6}{5a}	
	\ncarc[linecolor=red]{->}{6'}{5b}		
	\ncarc[linecolor=blue,arcangle=-15]{->}{6}{6'}
	}}
	}
\end{pspicture}
\end{center}
\caption{$G$-constellations for $\Hilb{D_8}{\Hilb{\frac{1}{3}(1,2,0)}{\C^3}}$.}
\label{OpFixLinesD8}
\end{figure}

Let $Y:=\Hilb{\frac{1}{3}(1,2,0)}{\C^3}$, Then the exceptional fibre over the origin $\pi^{-1}(0)$ of the crepant resolution $\pi:Y\to\C^3/N$ consist of two $(-2,0)$-rational curves intersecting in one point. The action of $G/N$ interchange these two curves, producing in $Y/(G/N)$ a single rational curve $E$ with singularities of types $\frac{1}{4}(1,3,0)$ and $D_8$ at 0 and $\infty$ respectively. The fibre $\phi^{-1}(0)$ of the crepant resolution $\phi:\Hilb{G/N}{Y}\to\C^3/G$ is therefore given by the following dual graph:

\begin{center}
\begin{pspicture}(0,0)(6,2.25)
	\psset{arcangle=15,nodesep=2pt}
	
	\rput(0,1){$\bullet$}\rput(0,1.35){$E_1$}\rput(0,0.65){\tiny $(-2,0)$}
	\rput(1,1){$\bullet$}\rput(1,1.35){$E_2$}\rput(0.9,0.65){\tiny $(-2,0)$}
	\rput(2,1){$\bullet$}\rput(2,1.35){$E_3$}\rput(1.8,0.65){\tiny $(-2,0)$}	
	\rput(3,1){$\bullet$}\rput(3,1.4){$\widetilde{E}$}\rput(2.8,0.65){\tiny $(-1,-1)$}	
	\rput(4,1){$\bullet$}\rput(4,1.35){$F_1$}\rput(3.9,0.65){\tiny $(-1,-1)$}
	\rput(5,1){$\bullet$}\rput(5,1.35){$F_2$}\rput(4.9,0.65){\tiny $(-2,0)$}
	\rput(5.75,1.5){$\bullet$}\rput(6.2,1.4){$F_3$}\rput(5.7,1.85){\tiny $(-1,-1)$}
	\rput(5.75,0.5){$\bullet$}\rput(6.2,0.6){$F_4$}\rput(5.7,0.15){\tiny $(-1,-1)$}
	\psline(0,1)(5,1)(5.75,1.5)(5.75,0.5)(5,1)

\end{pspicture}
\end{center}

where $E_i$ are covered by $U_i$ for $i=1,\ldots,3$, and $F_j$ are covered by $V_j$ for $j=1,\ldots,4$, and $\widetilde{E}$ is the strict transform of $E$.

\subsection{Trihedral group of order 12}\label{exaE6}

Let $G:=G_{12}\subset\SO(3)$ be the trihedral group of order 12 generated by $N:=\Span{\frac{1}{2}(1,1,0), \frac{1}{2}(1,0,1)}$ and $T:=\left(\begin{smallmatrix}0&1&0\\0&0&1\\1&0&0\end{smallmatrix}\right)$. The Abelian normal subgroup $N\cong\Z/2\times\Z/2\Z$ with $\Irr(N)=\{\sigma_0,\sigma_1,\sigma_2,\sigma_3\}$ induce the irreducible representations of $G$ as in Table \ref{IrrG12}. The McKay quiver with relations $(Q,R)$ is given in Figure \ref{McKQT12}.

{\renewcommand{\arraystretch}{1.25}
\begin{table}[h]
\begin{center}
\begin{tabular}{|c|c|}
\multicolumn{1}{r}{$\sigma_0$}
 &  \multicolumn{1}{c}{$\{\sigma_1,\sigma_2,\sigma_3\}$} \\
\cline{1-2}
$\rho_0^0$ & \multirow{3}{*}{$\rho_1$}   \\
\cline{1-1}
$\rho_0^1$&  \\
\cline{1-1}
$\rho_0^2$ &  \\
\cline{1-2}
\end{tabular}
, {\bf{d}}$=$
\begin{tabular}{|c|c|}
\multicolumn{1}{r}{$\sigma_0$}
 &  \multicolumn{1}{c}{$\{\sigma_1,\sigma_2,\sigma_3\}$} \\
\cline{1-2}
1 &  \multirow{3}{*}{3}   \\
\cline{1-1}
1 &  \\
\cline{1-1}
1 &  \\
\cline{1-2}
\end{tabular}
, $\theta=$
\begin{tabular}{|c|c|}
\multicolumn{1}{r}{$\sigma_0$}
 &  \multicolumn{1}{c}{$\{\sigma_1,\sigma_2,\sigma_3\}$} \\
\cline{1-2}
$-\sum a_i-2\varepsilon b$ &  \multirow{3}{*}{$a_1+a_2+a_3$}   \\
\cline{1-1}
$-\sum a_i+\varepsilon b$ &  \\
\cline{1-1}
$-\sum a_i+\varepsilon b$ &  \\
\cline{1-2}
\end{tabular}
\end{center}
\caption{Irreducible representations of $G$ with their dimensions and the stability condition for $\Hilb{G/N}{\Hilb{N}{\C^3}}$ with $a_i,b>0$ for $i=1,2,3$ and $0<\varepsilon<<1$.}
\label{IrrG12}
\end{table}}

By acting first on $\C^3_{x,y,z}$ with $N$ we have that $\Hilb{N}{\C^3}$ is given by 4 affine open sets $U_i$, $i=1,\ldots,4$. The $N$-constellations at each of these open sets are
\[
Z_1=\{ 1,x,y,xy \}, Z_2=\{ 1,y,z,yz \}, Z_3=\{ 1,x,z,xz \}, Z_4=\{ 1,x,y,z \}.
\]

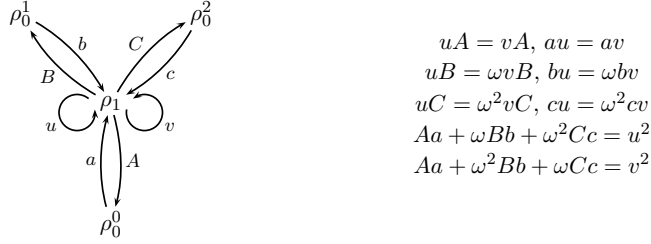
\begin{figure}[h]
\begin{center}
\begin{pspicture}(0,-1.5)(6,1.5)
	\psset{arcangle=15,nodesep=2pt}
\rput(1,0){
\scalebox{0.8}{
	\rput(0,-2){\rnode{0}{$\rho_0^0$}}
	\rput(0,0){\rnode{3}{$\rho_1$}}
	\rput(-1.5,1.5){\rnode{1}{$\rho_0^1$}}
	\rput(1.5,1.5){\rnode{2}{$\rho_0^2$}}
	\rput(-0.3,0){\rnode{u}{}}	
	\rput(0.3,0){\rnode{v}{}}	
	\ncarc{->}{0}{3}\Aput[0.05]{\footnotesize $a$}	
	\ncarc{->}{3}{0}\Aput[0.05]{\footnotesize $A$}	
	\ncarc{->}{1}{3}\Aput[0.05]{\footnotesize $b$}	
	\ncarc{->}{3}{1}\Aput[0.05]{\footnotesize $B$}	
	\ncarc{->}{2}{3}\Aput[0.05]{\footnotesize $c$}	
	\ncarc{->}{3}{2}\Aput[0.05]{\footnotesize $C$}	
	\nccircle[angleA=120,nodesep=3pt]{->}{u}{.3cm}\Bput[0.05]{\footnotesize $u$}
	 \nccircle[angleA=240,nodesep=3pt]{->}{v}{.3cm}\Bput[0.05]{\footnotesize $v$}
\rput(7,1){$uA=vA$, $au=av$}
\rput(7,0.5){$uB=\omega vB$,  $bu=\omega bv$}
\rput(7,0){$uC=\omega^2 vC$, $cu=\omega^2 cv$}
\rput(7,-0.5){$Aa+\omega Bb+\omega^2 Cc=u^2$}
\rput(7,-1){$Aa+\omega^2 Bb+\omega Cc=v^2$}
	}}
\end{pspicture}
\caption{McKay quiver of $G$ with relations.}
\label{McKQT12}
\end{center}
\end{figure}

The action of $G/N=\Span{\bar{T}}\cong\Z/3\Z$ identifies the open sets $U_1$, $U_2$ and $U_3$, and fixes $U_4$, inducing the corresponding action on the $N$-constellations. The orbit $\{U_1,U_2,U_3\}$ give rise to the open set $V_1\subset\Hilb{G/N}{\Hilb{N}{\C^3}}$ shown in Figure \ref{OpOrbitUi}.

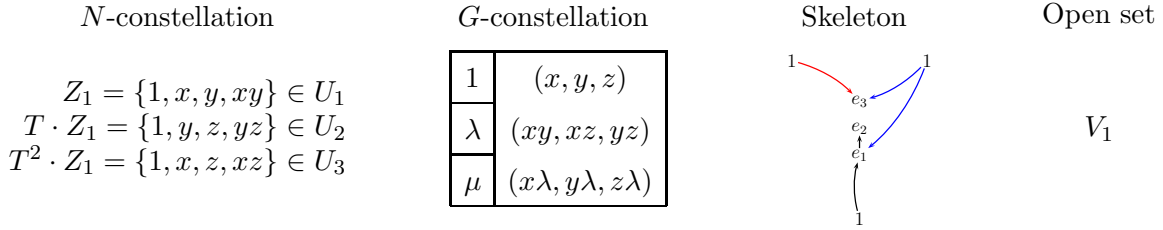
\begin{figure}[h]
\begin{center}
\begin{pspicture}(0,-1)(15,1.5)
	\psset{arcangle=15,nodesep=1pt}

\rput(2,1.5){$N$-constellation}
\rput(7,1.5){$G$-constellation}
\rput(11,1.5){Skeleton}
\rput(14.25,1.5){Open set}
\rput(14.25,0){$V_1$}

\rput(2,0){$
\begin{array}{r}
Z_1 = \{1,x,y,xy\}\in U_1 \\
T\cdot Z_1 = \{ 1,y,z,yz\} \in U_2 \\
T^2\cdot Z_1 = \{ 1,x,z,xz\} \in U_3 \\
\end{array}
$}
\rput(7,0){
{\renewcommand{\arraystretch}{1.5}
\begin{tabular}{|c|c|}
\cline{1-2}
$1$ & $(x,y,z)$  \\
\cline{1-1}
$\lambda$ & $(xy,xz,yz)$ \\
\cline{1-1}
$\mu$ & $(x\lambda,y\lambda,z\lambda)$ \\
\cline{1-2}
\end{tabular}}
}
\rput(11,0){
\scalebox{0.6}{
	\rput(0,-2){\rnode{0}{\large $1$}}
	\rput(0,0.6){\rnode{33}{\large $e_3$}}	
	\rput(0,0.0){\rnode{32}{\large $e_2$}} 	
	\rput(0,-0.6){\rnode{31}{\large $e_1$}}
	\rput(-1.5,1.5){\rnode{1}{\large $1$}}
	\rput(1.5,1.5){\rnode{2}{\large $1$}}
	\ncarc{->}{0}{31}
	\ncarc[linecolor=red]{->}{1}{33}
	\ncarc[linecolor=blue]{->}{2}{33}
	\ncarc[linecolor=blue]{->}{2}{31}
	\ncline{->}{31}{32}
	}}

\end{pspicture}
\end{center}
\caption{$G$-constellation arising from the orbit $\{U_1,U_2,U_3\}$.}
\label{OpOrbitUi}
\end{figure}

It follows from the same method as Section \ref{LocalCoord} that $\lambda=\frac{R_2}{R_0}$ and $\mu=\frac{R_1}{R_0}$, where $R_0:=y^2z^2+x^2z^2+x^2y^2$, $R_1:=y^2z^2+\omega x^2z^2+\omega^2x^2y^2$ and $R_2:=y^2z^2+\omega^2x^2z^2+\omega x^2y^2$. The local coordinates of this open set are written at the end of the section.

The remaining case is the fixed $N$-constellation $Z_4$ in $U_4$. The open set $U_4\cong\C^3_{a,b,c}$ has coordinates $a=\frac{yz}{x}$, $b=\frac{xz}{y}$ and $c=\frac{xy}{z}$ and we have $T(a)=b$, $T(b)=c$ and $T(c)=a$. On the other hand, diagonalizing the action of $T\cong\Z/3\Z$, we can consider it to be of type $\frac{1}{3}(1,2,0)$ on $\C^3_{\alpha,\beta,\gamma}$ with the new coordinates $\alpha=a+\omega^2b+\omega c$, $\beta=a+\omega b+\omega^2c$ and $\gamma=a+b+c$. That is, 
\[
\alpha=\frac{f_1}{f_3}, \beta=\frac{f_2}{f_3}, \gamma=\frac{f_0}{f_3}
\]
where $f_0:=x^2+y^2+z^2$, $f_1:=x^2+\omega^2 y^2+\omega z^2$, $f_2:=x^2+\omega y^2+\omega^2 z^2$, $f_3:=xyz$ and $\omega$ is a primitive cube root of unity. 

The situation is therefore identical to the Abelian case, so we need to consider the distinguished $G/N$-constellations in $\Hilb{\frac{1}{3}(1,2,0)}{\C_{\alpha,\beta,\gamma}}$, namely $\Gamma_1=\{ 1,\alpha,\alpha^2 \}$, $\Gamma_2=\{ 1,\alpha,\beta \}$, $\Gamma_3=\{ 1,\beta,\beta^2 \}$. They give rise to the $G$-constellations $Z_4\cdot\Gamma_i$ shown in Figure \ref{OpFixU4}.

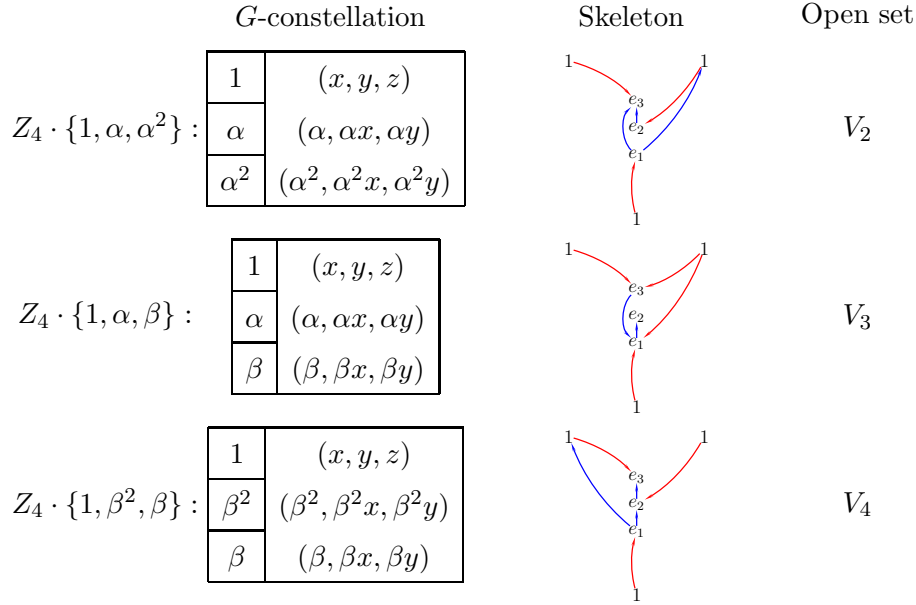
\begin{figure}[h]
\begin{center}
\begin{pspicture}(0,-1)(14,6.75)
	\psset{arrowsize=2pt,arrowlength=3,arcangle=15,nodesep=0pt}

\rput(5,6.5){$G$-constellation}
\rput(9,6.5){Skeleton}
\rput(12,6.5){Open set}
\rput(12,5){$V_2$}
\rput(12,2.5){$V_3$}
\rput(12,0){$V_4$}

\rput(2,5){$Z_4\cdot\{1,\alpha,\alpha^2\}:$}
\rput(5,5){
{\renewcommand{\arraystretch}{1.5}
\begin{tabular}{|c|c|}
\cline{1-2}
$1$ & $(x,y,z)$  \\
\cline{1-1}
$\alpha$ & $(\alpha,\alpha x,\alpha y)$ \\
\cline{1-1}
$\alpha^2$ & $(\alpha^2,\alpha^2x,\alpha^2y)$ \\
\cline{1-2}
\end{tabular}}
}
\rput(9,5){
\scalebox{0.6}{
	\rput(0,-2){\rnode{0}{\large $1$}}
	\rput(0,0.6){\rnode{33}{\large $e_3$}}	
	\rput(0,0.0){\rnode{32}{\large $e_2$}} 	
	\rput(0,-0.6){\rnode{31}{\large $e_1$}}
	\rput(-1.5,1.5){\rnode{1}{\large $1$}}
	\rput(1.5,1.5){\rnode{2}{\large $1$}}
	\ncarc[linecolor=red]{->}{0}{31}
	\ncarc[linecolor=red]{->}{1}{33}
	\ncarc[linecolor=red]{->}{2}{32}
	\ncarc[linecolor=blue,arcangle=-15]{->}{31}{2}
	\ncline[linecolor=blue]{<-}{33}{32}
	\ncarc[linecolor=blue,arcangle=45]{->}{31}{33}
	}}

\rput(2,2.5){$Z_4\cdot\{1,\alpha,\beta\}:$}
\rput(5,2.5){
{\renewcommand{\arraystretch}{1.5}
\begin{tabular}{|c|c|}
\cline{1-2}
$1$ & $(x,y,z)$  \\
\cline{1-1}
$\alpha$ & $(\alpha,\alpha x,\alpha y)$ \\
\cline{1-1}
$\beta$ & $(\beta,\beta x,\beta y)$ \\
\cline{1-2}
\end{tabular}}
}
\rput(9,2.5){
\scalebox{0.6}{
	\rput(0,-2){\rnode{0}{\large $1$}}
	\rput(0,0.6){\rnode{33}{\large $e_3$}}	
	\rput(0,0.0){\rnode{32}{\large $e_2$}} 	
	\rput(0,-0.6){\rnode{31}{\large $e_1$}}
	\rput(-1.5,1.5){\rnode{1}{\large $1$}}
	\rput(1.5,1.5){\rnode{2}{\large $1$}}
	\ncarc[linecolor=red]{->}{0}{31}
	\ncarc[linecolor=red]{->}{1}{33}
	\ncarc[linecolor=red]{->}{2}{33}
	\ncarc[linecolor=red]{->}{2}{31}
	\ncline[linecolor=blue]{->}{31}{32}
	\ncarc[linecolor=blue,arcangle=45]{<-}{31}{33}
	}}

\rput(2,0){$Z_4\cdot\{1,\beta^2,\beta\}:$}
\rput(5,0){
{\renewcommand{\arraystretch}{1.5}
\begin{tabular}{|c|c|}
\cline{1-2}
$1$ & $(x,y,z)$  \\
\cline{1-1}
$\beta^2$ & $(\beta^2,\beta^2x,\beta^2y)$ \\
\cline{1-1}
$\beta$ & $(\beta,\beta x,\beta y)$ \\
\cline{1-2}
\end{tabular}}
}
\rput(9,0){
\scalebox{0.6}{
	\rput(0,-2){\rnode{0}{\large $1$}}
	\rput(0,0.6){\rnode{33}{\large $e_3$}}	
	\rput(0,0.0){\rnode{32}{\large $e_2$}} 	
	\rput(0,-0.6){\rnode{31}{\large $e_1$}}
	\rput(-1.5,1.5){\rnode{1}{\large $1$}}
	\rput(1.5,1.5){\rnode{2}{\large $1$}}
	\ncarc[linecolor=red]{->}{0}{31}
	\ncarc[linecolor=red]{->}{1}{33}
	\ncarc[linecolor=red]{->}{2}{32}
	\ncarc[linecolor=blue]{->}{31}{1}
	\ncline[linecolor=blue]{->}{31}{32}
	\ncline[linecolor=blue]{->}{32}{33}
	}}

\end{pspicture}
\end{center}
\caption{$G$-constellations arising from the non-isolated $\frac{1}{3}(1,2,0)$ line.}
\label{OpFixU4}
\end{figure}

It can be checked that the matrices giving the open sets $V_i\subset\Mtheta$ are the following:
\[
\begin{array}{cl}
V_1: & a=(1,0,0), b=(0,0,1), c=(1,c_2,1), A=\left(\begin{smallmatrix}C_1+C_3 \\-c_2C_1A_1+c_2B_1C_3 \\ C_1(c_2^2A_1-1) \end{smallmatrix}\right), B=\left(\begin{smallmatrix}B_1 \\0 \\ A_1\end{smallmatrix}\right), C=\left(\begin{smallmatrix}C_1 \\0 \\ B_1(c_2^2C_1-1)\end{smallmatrix}\right), \\
& u=\left(\begin{smallmatrix}0&1&0 \\ C_3-\omega C_1 & \omega^2 c_2C_1 & \omega^2C_1+\omega B_1 \\ \omega^2c_2C_3 & -\omega^2(c_2^2C_1-1) & -\omega^2 c_2C_1 \end{smallmatrix}\right), v=\left(\begin{smallmatrix}0&1&0 \\ -\omega^2C_1+C_3 & \omega c_2C_1 & \omega C_1+\omega^2B_1 \\ \omega c_2C_3 & -\omega(c_2^2C_1-1) & -\omega c_2C_1 \end{smallmatrix}\right).
\end{array}
\]
\[
\begin{array}{cl}
V_2 :\ a=(1,0,0), b=(B_1-b_3^2B_3,1,b_3), c=(0,0,1), A=\left(\begin{smallmatrix}b_1B_1 + b_3B_3 \\B_3-B_1A_1 \\ (b_1b_3+1)B_1B_3 \end{smallmatrix}\right), B=\left(\begin{smallmatrix}B_1 \\0 \\ B_3\end{smallmatrix}\right), C=\left(\begin{smallmatrix}1 \\0 \\ A_1\end{smallmatrix}\right), \\
u=\left(\begin{smallmatrix}0&1&0 \\ -\omega^2 b_1B_1+b_3B_3& \omega B_1 & \omega^2 +\omega b_3B_1 \\ \omega B_3 & -\omega b_3B_3 & -\omega B_1 \end{smallmatrix}\right), v=\left(\begin{smallmatrix}0&1&0 \\ -\omega b_1B_1+b_3B_3& \omega^2 B_1 & \omega +\omega^2 b_3B_1 \\ \omega^2 B_3 & -\omega^2 b_3B_3 & -\omega^2 B_1 \end{smallmatrix}\right).
\end{array}
\]
\[
\begin{array}{cl}
V_3 :& a=(1,0,0), b=(0,0,1), c=(c_1,1,1), A=\left(\begin{smallmatrix}c_1C_1 + C_3 \\-C_1A_1+B_1C_3 \\ C_1(A_1-c_1^2) \end{smallmatrix}\right), B=\left(\begin{smallmatrix}B_1 \\0 \\ A_1\end{smallmatrix}\right), C=\left(\begin{smallmatrix}C_1 \\0 \\ B_1(C_1-c_1)\end{smallmatrix}\right), \\
& u=\left(\begin{smallmatrix}0&1&0 \\ C_3-\omega C_1c_1 & \omega^2 C_1 & \omega^2C_1+\omega B_1 \\ \omega^2(A_1-C_1c_1) & -\omega^2(C_1-c_1) & -\omega^2 C_1 \end{smallmatrix}\right), v=\left(\begin{smallmatrix}0&1&0 \\ C_3-\omega^2C_1c_1 & \omega C_1 & \omega C_1+\omega^2B_1 \\ \omega(A_1-C_1c_1) & -\omega(C_1-c_1) & -\omega C_1 \end{smallmatrix}\right).
\end{array}
\]
\[
\begin{array}{cl}
V_4 :& a=(1,0,0), b=(0,0,1), c=(C_1-c_3^2C_3,1,c_3), A=\left(\begin{smallmatrix}c_1C_1 + C_3 \\C_3-C_1A_1 \\ (c_1c_3+1)C_1C_3 \end{smallmatrix}\right), B=\left(\begin{smallmatrix}1 \\0 \\ A_1\end{smallmatrix}\right), C=\left(\begin{smallmatrix}C_1 \\0 \\ C_3\end{smallmatrix}\right), \\
& u=\left(\begin{smallmatrix}0&1&0 \\-\omega C_1^2+\omega c_3^2C_1C_3+c_3C_3 & \omega^2 C_1 & \omega +\omega^2 c_3C_1 \\ \omega^2 C_3 & -\omega^2 c_3C_3 & -\omega^2 C_1 \end{smallmatrix}\right), v=\left(\begin{smallmatrix}0&1&0 \\ -\omega^2 C_1^2+\omega^2 c_3^2C_1C_3+c_3C_3 & \omega C_1 & \omega^2 +\omega c_3C_1 \\ \omega C_3 & -\omega c_3C_3 & -\omega C_1 \end{smallmatrix}\right). 
\end{array}
\]

As in Section \ref{LocalCoord}, by using the McKay quiver as the quiver between the Cohen-Macaulay modules $S_\rho$ we can compute the local coordinates at every open set obtaining:
\begin{align*}
V_1 &= \mathbb C^3_{B_1,c_2,C_1} = \mathbb C[-f_1R_0/R_2,-\sqrt{3}f_3/R_0,-f_2R_0/R_1], \\
V_2 &= \mathbb C^3_{b_3,B_1,B_3} = \mathbb C[-R_1/(\sqrt{3}f_2f_3),\sqrt{3}f_1f_3/R_2,\sqrt{3}f_2^2f_3/R_2 ], \\
V_3 &= \mathbb C^3_{B_1,c_1,C_1} = \mathbb C[-\sqrt{3}f_1f_3/R_2,-R_0/(\sqrt{3}f_3),\sqrt{3}f_2f_3/R_1], \\
V_4 & = \mathbb C^3_{c_3,C_1,C_3} =\mathbb C[R_2/(\sqrt{3}f_1f_3),\sqrt{3}f_2f_3/R_1,\sqrt{3}f_1^2f_3/R_1].
\end{align*}

Moreover, the glueing between the different open sets is given as follows:
\begin{align*}
&V_4 \ni (c_3,C_1,C_3) \longleftrightarrow (-c_3^{-1},C_1-c_3^2C_3,C_1) \in V_3 \\
&V_1 \ni (B_1,c_2,C_1) \longleftrightarrow (B_1c_2,c_2^{-1},c_2C_1) \in V_3\\
&V_2 \ni (b_3,B_1,B_3) \longleftrightarrow (-B_1,B_1-b_3^2B_3,-b_3^{-1}) \in V_3
\end{align*}

Hence, the fiber of the origin of the quotient space are three rational curves meeting in a point. The dual graph with the appropriate degrees of the normal bundles is the following:

\begin{center}
\begin{pspicture}(0,-1.5)(2.5,0.75)
\rput(0,0){\rnode{1}{$\bullet$}}
\rput(2,0){\rnode{2}{$\bullet$}}
\rput(1,-1){\rnode{3}{$\bullet$}}
\ncline{-}{1}{2}
\ncline{-}{2}{3}
\ncline{-}{3}{1}
\rput(0,0.35){\small $(-2,-0)$}
\rput(2,0.35){\small $(-2,0)$}
\rput(1,-1.35){\small $(-1,-1)$}
\end{pspicture}
\end{center}

In this case the chamber $C\subset\Theta$ for which $\Hilb{G/N}{\Hilb{N}{\C^3}}\cong\mathcal{M}_C$ is given by the inequalities $\theta_1+\theta_3>0$, $\theta_2+\theta_3>0$ and $\theta_1+\theta_2+\theta_3<0$. On the other hand, the fibre over 0 in $\Hilb{G}{\C^3}$ is given by the dual graph 
\begin{center}
\begin{pspicture}(0,-0.5)(3.5,0.75)
\rput(0,0){\rnode{1}{$\bullet$}}
\rput(1.5,0){\rnode{2}{$\bullet$}}
\rput(3,0){\rnode{3}{$\bullet$}}
\ncline{-}{1}{2}
\ncline{-}{2}{3}
\rput(0,0.35){\small $(-1,-1)$}
\rput(1.5,0.35){\small $(-3,1)$}
\rput(3,0.35){\small $(-1,-1)$}
\end{pspicture}
\end{center}
and the chamber $C'$ for $\Hilb{G}{\C^3}\cong\mathcal{M}_{C'}$ is given by $\theta_i>0$ for $i\neq0$. See \cite{NdCS} for details. This concludes the proof of the case $G_{12}$ in Theorem \ref{thm:isoDim3} (iii).

\section{When does Hilb of Hilb coincide with Hilb?}\label{When}

In this section we study the relation between $G$-Hilb and $G/N$-Hilb($N$-Hilb). Since both can be constructed as moduli spaces of representations of the McKay quiver, we may ask when they are isomorphic as moduli spaces (i.e.\ their tautological bundles coincide) and if not, when their underlying algebraic varieties are isomorphic. We answer these questions in many cases.

Considering them as moduli spaces we have that $G$-Hilb $\cong\mathcal{M}_C$ and $G/N$-Hilb($N$-Hilb) $\cong\mathcal{M}_{C'}$ for chambers $C,C'\subset\Theta$, where $C$ is the chamber containing the 0-generated stability and $C'$ is the chamber containing the parameter in Definition \ref{theta}. Then the problem in this case is to determine which groups $G$ admit a normal subgroup $N$ such that $C=C'$. We give a complete answer to this question in the case of $G\subset\GL(2,\C)$ and $G\subset\SL(3,\C)$ in Theorem \ref{IsoModulidim3}.

As algebraic varieties, in dimension 2 and for $G \subset \SL(2, \C)$, there is nothing to prove since both of them are minimal resolutions of $\C^2/G$ thus isomorphic. For non-Abelian subgroups in $\GL(2,\C)$ we treat the case when $N=G\cap\SL(2,\C)$ and conclude that they are non-isomorphic (see Proposition \ref{prop:non-min}). For $G\subset\SL(3,\C)$ we give a complete answer when the group $G$ is Abelian by using the method of Craw and Reid \cite{CR02} to obtain the triangulation of the junior simplex $\Delta$ which corresponds to $\Hilb{G}{\C^3}$. As we saw in Section \ref{AbCase} the triangulation for $\Hilb{G/N}{\Hilb{N}{\C^3}}$ is given by using the Craw-Reid method in two steps, first for $N$ and then for $G/N$. Comparing both triangulations we are able to describe in Theorem \ref{IsoVarAb} all possible configurations for $G$ and $N$ for which there is an isomorphism of varieties over $\C^3/G$.

We finish treating some non-Abelian small subgroups $G \subset \SL(3, \C)$ for which $\Hilb{G}{\C^3}$ is not isomorphic to $\Hilb{G/N}{\Hilb{N}{\C^3}}$. In particular, finite subgroups of $\SO(3)$ of types $D_{2n}$ and $G_{12}$ with $N$ being the maximal normal subgroup, and non-Abelian intransitive subgroups with $N=G \cap \SL(2, \C)$. 

\subsection{As moduli spaces}

Let $G\subset\GL(n,\C)$, assuming either $n=3$ and $G\subset\SL(3,\C)$ or $n=2$ and $G\subset\GL(2,\C)$ small. Let $N$ be a normal subgroup in $G$. With the same notation as in Section \ref{Sect:Moduli} let $Y_1:=\Hilb{N}{\C^n}$ and $Y_2:=\Hilb{G/N}{Y_1}$ with universal families $\mathcal{Z}_1$ and $\mathcal{Z}_2$ respectively, and denote by $\mathcal{U}:=p_{20*}(\OO_{\mathcal{Z}_1\times_{Y_1}\mathcal{Z}_2})$ the flat family over $Y_2$ of $G$-constellations by the projection $p_{20}:Y_2\times Y_1\times\C^n\longrightarrow Y_2\times\C^n$. 
\begin{lem}
Put $X_1=\C^n/N$ and $X_2=Y_1/(G/N)$.
Then, $\mathcal{Z}_1$ is the reduced part of the fibre product $Y_1\times_{X_1}\C^n$
and $\mathcal{Z}_2$ is the reduced part of $Y_2 \times_{X_2} Y_1$.
\begin{proof}
It is sufficient to prove the latter statement.
$\mathcal{Z}_2$ is obviously a closed subscheme of $Y_2 \times_{X_2} Y_1$
and the morphism $\mathcal{Z}_2 \to Y_2 \times_{X_2} Y_1$ is an isomorphism over the
generic point of $Y_2$.
Now $\mathcal{O}_{\mathcal{Z}_2}$, regarded as an $\mathcal{O}_{{Y}_2}$-module via the finite morphism $p_2$, is locally free and therefore is the quotient of
$\mathcal{O}_{Y_2\times_{X_2}Y_1}$ by the torsion part, which must be the nilradical.
\end{proof}
\end{lem}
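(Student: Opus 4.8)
The plan is to reduce the statement to the second assertion, as the lemma itself suggests, and then to argue by first establishing the scheme-theoretic closed immersion and then identifying the structure sheaf of $\mathcal{Z}_2$ as the quotient of $\mathcal{O}_{Y_2\times_{X_2}Y_1}$ by its nilradical. First I would recall that $\mathcal{Z}_2\subset Y_2\times Y_1$ is by construction the universal family of $G/N$-clusters on $Y_1$, so over a closed point $y\in Y_2$ its fibre is a subscheme of $Y_1$ whose image in $X_2=Y_1/(G/N)$ is the corresponding point; this gives a canonical morphism $\mathcal{Z}_2\to Y_2\times_{X_2}Y_1$, and since $Y_1\to X_2$ is finite, $Y_2\times_{X_2}Y_1$ is finite over $Y_2$, hence the map $\mathcal{Z}_2\to Y_2\times_{X_2}Y_1$ is a morphism of schemes finite over $Y_2$. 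It is a closed immersion because $\mathcal{Z}_2\hookrightarrow Y_2\times Y_1$ is one and it factors through the closed subscheme $Y_2\times_{X_2}Y_1$.

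Next I would verify that this closed immersion is an isomorphism over the generic point $\eta$ of $Y_2$ (equivalently, over the free locus): there, $G/N$ acts freely, so the $G/N$-cluster over a point is a reduced free orbit, which is exactly the fibre of $Y_1\to X_2$ over the corresponding point of $X_2$; hence $\mathcal{Z}_2$ and $Y_2\times_{X_2}Y_1$ agree generically. Then comes the key structural point: by Lemma~\ref{Gconst} (or directly from the flatness of $\mathcal{Z}_2$ over $Y_2$ via $p_2$), $\mathcal{O}_{\mathcal{Z}_2}$ is locally free as an $\mathcal{O}_{Y_2}$-module. Since $Y_2$ is a variety (irreducible and reduced, being a crepant resolution of $\C^3/G$ by Corollary~\ref{cor:crep}) and $\mathcal{O}_{\mathcal{Z}_2}$ is torsion-free over $\mathcal{O}_{Y_2}$, the surjection $\mathcal{O}_{Y_2\times_{X_2}Y_1}\twoheadrightarrow\mathcal{O}_{\mathcal{Z}_2}$ kills the $\mathcal{O}_{Y_2}$-torsion submodule $T$ of $\mathcal{O}_{Y_2\times_{X_2}Y_1}$; conversely, since $\mathcal{O}_{Y_2\times_{X_2}Y_1}/T$ is torsion-free of the same generic rank $|G/N|$ as $\mathcal{O}_{\mathcal{Z}_2}$ and agrees with it at $\eta$, the two quotients coincide. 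Thus $\mathcal{O}_{\mathcal{Z}_2}=\mathcal{O}_{Y_2\times_{X_2}Y_1}/T$, and it remains to identify $T$ with the nilradical.

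For that last identification I would argue both inclusions. The torsion submodule $T$ is contained in the nilradical: any section supported on a proper closed subscheme of $Y_2\times_{X_2}Y_1$ lies over a proper closed subset of $Y_2$, and since the total space is finite and generically reduced of constant rank over the regular variety $Y_2$, such a section is nilpotent (it maps to $0$ in the locally free, hence generically faithful, module $\mathcal{O}_{\mathcal{Z}_2}$, and $\mathcal{Z}_2$ is reduced). Conversely the nilradical is $\mathcal{O}_{Y_2}$-torsion because $Y_2\times_{X_2}Y_1$ is finite and flat over $Y_2$ away from a proper closed locus — more precisely, $\mathcal{O}_{Y_2\times_{X_2}Y_1}$ is a coherent $\mathcal{O}_{Y_2}$-module whose generic fibre is reduced (a free $G/N$-orbit), so any nilpotent section vanishes generically and is therefore torsion. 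Hence $T$ equals the nilradical and $\mathcal{Z}_2$ is the reduced part of $Y_2\times_{X_2}Y_1$. The statement for $\mathcal{Z}_1$ follows by the identical argument applied to $N\curvearrowright\C^n$ (or is the $N$-Hilb case already implicit in the literature).

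The main obstacle I anticipate is the clean bookkeeping in the last paragraph: showing that the $\mathcal{O}_{Y_2}$-torsion submodule of $\mathcal{O}_{Y_2\times_{X_2}Y_1}$ is exactly its nilradical requires knowing that the generic fibre of $Y_2\times_{X_2}Y_1\to Y_2$ is reduced and that $Y_2$ is a regular (or at least normal, reduced, irreducible) base — the first is the free-orbit statement, the second is Corollary~\ref{cor:crep} — and then combining "torsion $\subseteq$ nilradical'' (generic faithfulness of the locally free quotient $\mathcal{O}_{\mathcal{Z}_2}$) with "nilradical $\subseteq$ torsion'' (generic reducedness). Everything else is formal: the factorization giving the closed immersion, and the generic isomorphism over the free locus.
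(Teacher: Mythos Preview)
Your approach is essentially identical to the paper's: closed immersion into the fibre product, generic isomorphism over the free locus, then use local freeness of $\mathcal{O}_{\mathcal{Z}_2}$ over $\mathcal{O}_{Y_2}$ to identify the kernel with the torsion and then with the nilradical. You simply supply considerably more detail than the paper's very terse argument.

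One step deserves a cleaner justification. Your argument for ``torsion $\subseteq$ nilradical'' does not quite work as written: knowing that a torsion section $f$ maps to zero in the reduced ring $\mathcal{O}_{\mathcal{Z}_2}$ does not by itself force $f$ to be nilpotent in $\mathcal{O}_{Y_2\times_{X_2}Y_1}$ (consider $B=k[t]\times k[t]/(t)$ over $A=k[t]$: the idempotent $(0,1)$ is $A$-torsion but not nilpotent, and the quotient $B/T\cong k[t]$ is reduced). What is actually needed is that every irreducible component of $Y_2\times_{X_2}Y_1$ dominates $Y_2$; equivalently, that $\mathcal{Z}_2$ and $Y_2\times_{X_2}Y_1$ have the same underlying set. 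This holds because for any closed point $y_2\in Y_2$ the corresponding $G/N$-cluster, being $G/N$-invariant and nonempty, is supported on the entire $G/N$-orbit $\pi_2^{-1}(\tau_2(y_2))$. Once the underlying sets agree and $\mathcal{Z}_2$ is reduced (which you correctly deduce from local freeness over the integral base $Y_2$ with reduced generic fibre), the equality $\mathcal{Z}_2=(Y_2\times_{X_2}Y_1)_{\mathrm{red}}$ is immediate. The paper glosses over this same point, so your proof is at the same level of rigour, just more explicit elsewhere.
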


As we know, $\Hilb{G}{\C^n}\cong\mathcal{M}_C$ and $Y_2\cong\mathcal{M}_{C'}$ are both resolutions of $\C^n/G$ isomorphic to a moduli space of $G$-constellations for some chambers $C,C'\subset\Theta$. Then,

\begin{align*}
C=C'	&\iff \text{$\exists$ a closed subscheme $\mathcal{Z}\subset Y_2\times\C^n$ such that $\mathcal{U}\cong\OO_\mathcal{Z}$ in $\Coh (Y_2 \times \C^n)$} \\
		&\iff \phi:\mathcal{Z}_2\times_{Y_1}\mathcal{Z}_1\longrightarrow Y_2\times\C^n \text{ is a closed immersion} \\
		& ~~\Longrightarrow\phi \text{ is injective on the $\C$-valued points.}
\end{align*}
If we denote by $Y_1^{G/N}$ the fixed locus of the action of $G/N$ into $Y_1$, we obtain the following sufficient condition for $Y_2$ and $\Hilb{G}{\C^n}$ being non-isomorphic as moduli spaces.

\begin{lem}\label{LemDimFibre} If $\tau_1^{-1}(0)\nsubseteq Y_1^{G/N}$ then $C\neq C'$.
\end{lem}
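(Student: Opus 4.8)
The plan is to argue by contraposition, using the chain of equivalences established immediately above: if $C=C'$ then the morphism $\phi\colon\mathcal{Z}_2\times_{Y_1}\mathcal{Z}_1\to Y_2\times\C^n$ (the restriction of $p_{20}$) is a closed immersion, hence injective on $\C$-valued points. So it suffices to exhibit, under the hypothesis $\tau_1^{-1}(0)\nsubseteq Y_1^{G/N}$, two distinct $\C$-points of $\mathcal{Z}_2\times_{Y_1}\mathcal{Z}_1$ having the same image in $Y_2\times\C^n$.

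First I would pick a closed point $p\in\tau_1^{-1}(0)\setminus Y_1^{G/N}$, which exists by hypothesis, together with an element $g\in G/N$ such that $g\cdot p\neq p$ (possible since $p$ is not fixed by all of $G/N$). Because $\tau_1$ is $G/N$-equivariant and the origin of $\C^n/N$ is a $G/N$-fixed point, both $p$ and $g\cdot p$ lie in $\tau_1^{-1}(0)$; consequently the $N$-clusters $Z_p:=\mathcal{Z}_1|_p$ and $Z_{g\cdot p}:=\mathcal{Z}_1|_{g\cdot p}$ are both supported at $0\in\C^n$, i.e.\ $\Supp(Z_p)=\Supp(Z_{g\cdot p})=\{0\}$. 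Next, let $\bar p:=\pi_2(p)\in X_2$. Since $\tau_2\colon Y_2\to X_2$ is a crepant resolution it is surjective, so we may choose a closed point $y\in\tau_2^{-1}(\bar p)$, which parametrises a $G/N$-cluster $W:=\mathcal{Z}_2|_y\subset Y_1$ supported over $\bar p$. Because $\pi_2$ identifies $\bar p$ with the $G/N$-orbit of $p$ (set-theoretically), the support of $W$ is a nonempty $G/N$-invariant subset of this orbit and hence equals the whole orbit; in particular $p,\,g\cdot p\in\Supp(W)$.

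Now the fibre of $\mathcal{Z}_2\times_{Y_1}\mathcal{Z}_1\to Y_2$ over $y$ is $\mathcal{Z}_1|_W$, a subscheme of $W\times\C^n$, whose $\C$-points lying over $p$ (resp.\ over $g\cdot p$) are exactly $\{p\}\times\Supp(Z_p)=\{(p,0)\}$ (resp.\ $\{(g\cdot p,0)\}$). Thus $(y,p,0)$ and $(y,g\cdot p,0)$ are $\C$-points of $\mathcal{Z}_2\times_{Y_1}\mathcal{Z}_1\subset Y_2\times Y_1\times\C^n$, distinct because $p\neq g\cdot p$, and both are sent by $p_{20}$ to $(y,0)\in Y_2\times\C^n$. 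Hence $\phi$ fails to be injective on $\C$-points, so it is not a closed immersion, and therefore $C\neq C'$.

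The step requiring the most care is the construction of $W$: one must be sure that some $G/N$-cluster genuinely contains two distinct points of the orbit of $p$ in its support, which is why I pass through $\tau_2^{-1}(\bar p)$ (nonempty by surjectivity of the resolution) and use that the fibre $\pi_2^{-1}(\bar p)$ is, set-theoretically, a single $G/N$-orbit, forcing every $G/N$-cluster over $\bar p$ to be supported on all of it. The remaining verifications — that $p$ and $g\cdot p$ sit over the origin of $\C^n/N$ and hence carry $N$-clusters supported at $0\in\C^n$ — are immediate from the $G/N$-equivariance of $\tau_1$.
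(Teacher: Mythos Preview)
Your proof is correct and follows essentially the same route as the paper's: pick a non-fixed point in $\tau_1^{-1}(0)$, find a $G/N$-cluster whose support contains two distinct points of its orbit, and observe that the corresponding two points of $\mathcal{Z}_2\times_{Y_1}\mathcal{Z}_1$ collapse under $p_{20}$. The paper simply asserts the existence of such a $W$, whereas you supply the justification via surjectivity of $\tau_2$ and $G/N$-invariance of $\Supp(W)$; this extra care is exactly what the paper leaves implicit.
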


\begin{proof} Assume $y,g(y)\in\tau_1^{-1}(0)\subset Y_1$ are two distinct points for some $g \in G/N$. Then there exists a $G/N$-cluster $W\in Y_2$ such that $y,g(y)\in\Supp(W)$. Then $(W,y),(W,g(y))\in\mathcal{Z}_2$ and $(y,0),(g(y),0)\in\mathcal{Z}_1$, which implies that $(W,y,0),(W,g(y),0)\in\mathcal{Z}_2\times_{Y_1}\mathcal{Z}_1$ are two distinct points. But then $\phi(W,y,0)=\phi(W,g(y),0)=(W,0)$ so $\phi$ is not injective.
\end{proof}

As the following theorem shows, in dimensions 2 and 3 the cases when $G/N$-Hilb($N$-Hilb) and $G$-Hilb coincide as moduli spaces are very few.

\begin{thm}\label{IsoModulidim3} 
(i) Let $G\subset\GL(2,\C)$ be a finite small subgroup and let $N\neq G,\{1\}$ be a normal subgroup in G. Then 
\[
C=C' \iff G\cong\frac{1}{rs}(1,1) \text{ and } N\cong\frac{1}{s}(1,1).
\]
(ii) Let $G\subset\SL(3,\C)$ be a finite subgroup and let $N\neq G,\{1\}$ be a normal subgroup in $G$. Then,
\[ 
C=C' \iff G\cong\frac{1}{2r}(1,1,2r-2) \text{ and } N\cong\frac{1}{2}(1,1,0).
\] 
\end{thm}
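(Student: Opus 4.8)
The plan is to prove both equivalences by combining the criterion of Lemma~\ref{LemDimFibre} with an analysis of the tautological bundles in the exceptional cases. For the ``only if'' direction in both (i) and (ii), I would argue by contrapositive: if $\tau_1^{-1}(0)$ is not contained in the fixed locus $Y_1^{G/N}$, then $C\neq C'$ by Lemma~\ref{LemDimFibre}. So the first step is to understand when the central fibre of $Y_1\to\C^n/N$ lies entirely inside $Y_1^{G/N}$. Since $G/N$ acts on $Y_1=\Hilb{N}{\C^n}$ and the fixed locus is a proper closed subvariety unless $G/N$ acts trivially on a component, I would show: if $\tau_1^{-1}(0)\subseteq Y_1^{G/N}$ then the action of $G/N$ on the exceptional set over the origin is trivial, which forces strong numerical constraints. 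In dimension $2$ the exceptional fibre $\tau_1^{-1}(0)$ is a chain of $\mathbb P^1$'s (the minimal resolution of the cyclic quotient, or the $ADE$ configuration if $N\subset\SL$); $G/N$ fixing every point of this chain pointwise is extremely restrictive. Here I expect to use that a finite group acting on $\mathbb P^1$ with more than two fixed points acts trivially, together with the known structure of $\Hilb{N}{\C^2}$ from \cite{CR02}/toric geometry, to pin down $N\cong\frac1s(1,1)$ (the case where the exceptional fibre is a single $\mathbb P^1$ with two torus-fixed points, and $G/N$ can act diagonally fixing both) and then $G\cong\frac{1}{rs}(1,1)$.

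For the ``if'' direction of (i), I would take $G=\frac{1}{rs}(1,1)$, $N=\frac1s(1,1)$, compute $Y_1=\Hilb{N}{\C^2}$ explicitly (it is the total space of $\mathcal O_{\mathbb P^1}(-s)$, a single blowup-type resolution with exceptional $\mathbb P^1$), check that the residual $\mathbb Z/r$-action fixes $\tau_1^{-1}(0)$ pointwise, so that Lemma~\ref{LemDimFibre} does not obstruct, and then verify directly from the $G$-constellation description that the family $\mathcal U=p_{20*}(\OO_{\mathcal Z_2\times_{Y_1}\mathcal Z_1})$ is the universal family of $G$-clusters, i.e.\ $C=C'$. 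The key computation is that for this very symmetric $\frac{1}{rs}(1,1)$ singularity, $\Hilb{G}{\C^2}$ is itself an iterated/weighted blowup, and the two constructions literally coincide as moduli (the tautological bundles match on each chart). I would do this chart by chart using the skeleton language of Section~\ref{LocalCoord}.

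For part (ii), the strategy is parallel but one dimension up, using $\mathbb P^1$-fibrations over the exceptional surfaces. If $\tau_1^{-1}(0)\subseteq Y_1^{G/N}$, then the crepant resolution $Y_1$ of $\C^3/N$ has its entire central fibre fixed by $G/N$; since crepant resolutions of three-dimensional quotient singularities have central fibre of dimension $\le 2$, and a nontrivial finite group cannot fix a positive-dimensional subvariety too large without the action being essentially trivial there, I would deduce that $N$ must be of a very special type. The candidate forced is $N\cong\frac12(1,1,0)$, whose singular locus is a line $\C^1$ of transverse $A_1$-singularities; $Y_1=\Hilb{N}{\C^3}$ is the blowup along that line, with exceptional divisor a $\mathbb P^1$-bundle over $\C^1$, and over the origin $\tau_1^{-1}(0)$ is a single $\mathbb P^1$ which $G/N$ can indeed fix pointwise. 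This then forces $G/N$ to act on $Y_1$ fixing that $\mathbb P^1$, which combined with $G\subset\SL(3,\C)$ and $N=\frac12(1,1,0)$ normal gives $G\cong\frac{1}{2r}(1,1,2r-2)$ after a short lattice computation (the quotient $G/N\cong\mathbb Z/r$ must act on the two-dimensional transverse slice as $\frac1r(1,-1)$-type, forced by the $\SL$-condition). The converse direction again amounts to checking, on the explicit toric charts of $\Hilb{\frac12(1,1,0)}{\C^3}$ and of $\Hilb{G}{\C^3}$ computed via \cite{CR02}, that the two triangulations of the junior simplex $\Delta$ agree \emph{and} that the tautological data agree.

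The main obstacle I anticipate is the ``only if'' direction, specifically showing that the geometric non-obstruction condition $\tau_1^{-1}(0)\subseteq Y_1^{G/N}$ (which is necessary by Lemma~\ref{LemDimFibre}) actually forces $N$ into the single listed isomorphism type, rather than merely constraining it. This requires a genuine classification argument: one must rule out all other normal subgroups $N$ by exhibiting, for each, a point of $\tau_1^{-1}(0)$ with nontrivial $G/N$-orbit. I would handle this by using the Craw--Reid description of the triangulation of $\Delta$ for $\Hilb{N}{\C^3}$ to locate the torus-fixed points of $Y_1$ over the origin and tracking how $G/N$ permutes them; the subtlety is that even when $G/N$ acts on each toric chart it may still move lattice points in the interior of $\Delta$, and one must check this cannot happen precisely when $N=\frac12(1,1,0)$. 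A secondary difficulty is that the ``if'' direction requires matching tautological bundles, not just underlying varieties, so one cannot simply invoke uniqueness of the crepant resolution; the chart-by-chart $G$-constellation comparison via skeletons is unavoidable but should be routine given the machinery of Sections~\ref{AbCase} and~\ref{LocalCoord}.
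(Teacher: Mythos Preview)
Your overall strategy matches the paper's: use Lemma~\ref{LemDimFibre} for the ``only if'' direction, and an explicit chart-by-chart comparison of $G$-constellations with $G$-clusters for the ``if'' direction. The ``if'' computations you outline are essentially what the paper does.

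There is, however, a genuine gap in your ``only if'' argument for part~(ii). You propose to classify the admissible $N$ by tracking torus-fixed points via the Craw--Reid triangulation of $\Delta$. That method is only available when $N$ is \emph{abelian}; for non-abelian $N\subset\SL(3,\C)$ (e.g.\ dihedral or trihedral), there is no such toric description of $\Hilb{N}{\C^3}$, and your proposed enumeration does not go through. The paper avoids this by a cleaner three-step argument that works uniformly:
\begin{enumerate}
\item Because $G\subset\SL(3,\C)$, the stabilizer of any point of $Y_1$ acts on the tangent space with determinant~$1$; hence a pointwise-fixed divisor would force the action to be trivial, so $\dim Y_1^{G/N}\le 1$.
\item Therefore $\tau_1^{-1}(0)$ is at most $1$-dimensional. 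If it contained two curves meeting at a point, $G/N$ would fix a $2$-dimensional subspace of the tangent space there, contradicting step~(1). So $\tau_1^{-1}(0)$ is a single irreducible curve.
\item By the McKay correspondence of \cite{IN00}, the Grothendieck group of sheaves on $Y_1$ supported in $\tau_1^{-1}(0)$ then has rank~$2$, which forces $N$ to have exactly two irreducible representations, i.e.\ $N\cong\Z/2\Z$.
\end{enumerate}
Step~(3) is the key device you are missing; it disposes of all $N$, abelian or not, in one stroke, and makes the subsequent identification $N\cong\frac{1}{2}(1,1,0)$ immediate. A secondary point: the fact you cite, that a nontrivial automorphism of $\PP^1$ has at most two fixed points, is not what drives step~(2); the relevant mechanism is the tangent-space argument at an intersection point of two exceptional curves.
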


\begin{proof} We begin by proving (ii). Recall that if $N \subsetneq G\subset\SL(3,\C)$ then $\dim Y_1^{G/N}\leq1$. Therefore if $C=C'$ then by Lemma \ref{LemDimFibre} we must have $\dim(\tau_1^{-1}(0))\leq1$. Moreover, since the $\tau^{-1}(0)$ is connected it must consist of a single curve. Indeed, if we have more than one curve in $\tau^{-1}(0)$ fixed by $G/N$ then at any intersection point of two curves a 2-dimensional subspace of the tangent space is fixed, thus $\dim Y_1^{G/N}>1$ a contradiction. 
Especially, the Grothendieck group of coherent sheaves on $Y_1$ whose supports
are contained in $\tau^{-1}(0)$ is of rank two.

Therefore $N\cong\Z/2\Z$ as in \cite{IN00} and we can suppose it to be isomorphic to $\frac{1}{2}(1,1,0)$. Then $\Hilb{N}{\C^3}\cong U_1\cup U_2$ where $U_1\cong\C^3_{x^2,z,\frac{y}{x}}$, $U_2\cong\C^3_{y^2,z,\frac{x}{y}}$ and $\tau^{-1}(0)=E\cong\PP^1$ with coordinates $x:y$. After extending the action of $G/N$ on $\C[x,y,z]^N$ naturally into $\C(x,y,z)^N$, we have that $G/N$ fixes $E$ if $g(\frac{x}{y})=\frac{x}{y}$ for all $g\in G/N$. In other words, $g$ as an element of $G$ can be written in the form $\left(\begin{smallmatrix}\varepsilon&0&a\\0&\varepsilon&b\\0&0&\varepsilon^{n-2}\end{smallmatrix}\right)$ with $a,b\in\C$ and $\varepsilon$ a primitive $n$-th root of unity. But the group $N$ is normal in $G$, so $g$ must commute with any element in $N$. This implies that $a=b=0$ and since $G$ contains $\frac{1}{2}(1,1,0)$ as a subgroup $n$ has to be an even number. Thus $G\cong\frac{1}{2r}(1,1,2r-2)$ for some $r>1$.

Conversely, if the group is of the form $G\cong\frac{1}{2r}(1,1,2r-2)$ and $N\cong\frac{1}{2}(1,1,0)$, then by the construction of $G$-constellations in the Abelian case of Section \ref{AbCase}, we see that the elements $\omega_\tau$ for $\tau\in\Irr(G/N)$ are not Laurent monomials. More precisely, for $U_1$ we have $\omega_{\tau_0}=1$, $\omega_{\tau_1}=x^2$ and for $U_2$ we have $\omega_{\tau_0}=1$, $\omega_{\tau_1}=y^2$, so there are no Laurent monomials in the $G$-constellations $\mathcal{Z}$ of $\Hilb{G/N}{\Hilb{N}{\C^3}}$. This means that they are precisely the $G$-graphs of $\Hilb{G}{\C^3}$, thus the chambers are the same.

The proof of (i) follows the same argument. If $C=C'$ then by Lemma \ref{LemDimFibre} we have $\tau^{-1}(0)\subset Y_1^{G/N}$. Since $\tau^{-1}(0)$ is a chain of rational curves then $\dim(Y_1^{G/N})=1$, which in particular implies that $G/N$ is not small. As in the proof of (ii) we have that $\tau^{-1}(0)$ must consists of a single rational curve, thus we may assume $N$ to be isomorphic $\frac{1}{s}(1,1)$ for some $s\geq2$. The exceptional divisor $E\cong\PP^1$ in $Y_1$ has coordinates $(x:y)$ and it is invariant under $G/N$. As before, it follows that any $g\in G/N$ has to be of the form $\frac{1}{n}(1,1)$ for some $n\geq2$, and since $N$ is subgroup we have that $n=rs$ for some $r\geq2$.

Conversely if $G\cong\frac{1}{rs}(1,1)$ and $N\cong\frac{1}{s}(1,1)$, the action of $G/N$ in the two affine pieces of $Y_1$ is of type $1/rs(s,0)$, which is not small as expected (in other words, $Y_1/(G/N) \cong Y_1$ is nonsingular). In terms of $G$-constellations $Y_1$ has two building blocks $\Gamma_1=\{1,\ldots,x^{s-1}\}$ and $\Gamma_2=\{1,\ldots,y^{s-1}\}$, and after the action of $G/N$ we obtain the $G$-constellations $\mathcal{Z}_1=\{1,\ldots,x^{rs-1}\}$ and $\mathcal{Z}_2=\{1,\ldots,y^{rs-1}\}$, so the chambers coincide.
\end{proof}

As an immediate consequence, for any finite subgroup $G\subset\SL(2,\C)$ they are never the same moduli space.

\begin{col}\label{IsoModulidim2}
Let $G\subset\SL(2,\C)$ be a finite subgroup. Then $C\neq C'$.
\end{col}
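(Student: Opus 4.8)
The plan is to deduce this directly from Theorem \ref{IsoModulidim3}(i), so the only real content is to check that the exceptional pair $(G,N)=(\frac{1}{rs}(1,1),\frac{1}{s}(1,1))$ occurring there can never be realised inside $\SL(2,\C)$. To apply that theorem, I first need to know that every finite subgroup $G\subset\SL(2,\C)$ is small. This is standard: a non-identity element of finite order fixing a line in $\C^2$ has eigenvalues $1$ and $\lambda$ with $\lambda\neq1$, hence determinant $\lambda\neq1$, contradicting $G\subset\SL(2,\C)$; so $G$ acts freely on $\C^2\setminus\{0\}$ and is small. Thus Theorem \ref{IsoModulidim3}(i) is available for $G$ together with any normal subgroup $N\neq G,\{1\}$.

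Now I would argue by contradiction. If $C=C'$, then by Theorem \ref{IsoModulidim3}(i) we would have $G\cong\frac{1}{rs}(1,1)$ for some integers $r,s\geq2$, i.e. $G$ is generated by $\diag(\zeta,\zeta)$ with $\zeta$ a primitive $(rs)$-th root of unity. But $\det\diag(\zeta,\zeta)=\zeta^2$, and $\zeta^2=1$ forces $rs\mid 2$, which is impossible since $rs\geq4$. Hence $G\not\subset\SL(2,\C)$, a contradiction, so $C\neq C'$.

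There is essentially no obstacle here: the statement is a genuine corollary. The only step that needs a (very short) justification is the smallness of finite subgroups of $\SL(2,\C)$, which is classical; everything else is a one-line determinant computation ruling out $rs\geq4$.
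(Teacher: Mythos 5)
Your proof is correct and follows essentially the same route as the paper: both deduce the corollary from Theorem \ref{IsoModulidim3}(i) by observing that a group of the form $\frac{1}{rs}(1,1)$ with $r,s\geq 2$ cannot lie in $\SL(2,\C)$, since $\det\diag(\zeta,\zeta)=\zeta^2=1$ forces $rs\le 2$. The paper phrases this as ``$rs=2$, so $r=1$ or $s=1$, contradicting $N\neq G,\{1\}$,'' while you phrase it as $rs\mid 2$ versus $rs\geq 4$; your extra remark that finite subgroups of $\SL(2,\C)$ are automatically small is a correct (and implicitly assumed) detail.
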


\begin{proof} If $G$ is in $\SL(2,\C)$ then $rs=2$, thus either $r=1$ or $s=1$, which contradicts $N\neq G,\{1\}$ and we are done.
\end{proof}

\subsection{As varieties}

In this section we treat the problem of when $G$-Hilb and $G/N$-Hilb($N$-Hilb) are isomorphic as algebraic varieties. We start with the dimension 2 case.

\subsubsection{$\Hilb{G/N}{\Hilb{N}{\C^2}}$}
Let $G$ be a finite subgroup of $\SL(2,\C)$. It is well know that the minimal resolution of $\C^2/G$ is unique. Therefore, since both $\Hilb{G}{\C^2}$ and $\Hilb{G/N}{\Hilb{N}{\C^2}}$ are minimal they are isomorphic. 

Now let $G$ be a finite small subgroup of $\GL(2,\C)$ and take $N=G\cap \SL(2, \C)$. In this particular case we have the following result, which proves Theorem \ref{thm:isoDim2} (iii).

\begin{prop}\label{prop:non-min}
Let $G\subset\GL(2,\C)$ be a finite non-Abelian subgroup such that $G \not\subseteq \SL(2, \C)$ and let $N=G\cap \SL(2, \C)$. Then $\Hilb{G/N}{\Hilb{N}{\C^2}}$ is not a minimal resolution of $\C^2/G$.
\end{prop}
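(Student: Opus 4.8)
The plan is to show that $Y:=\Hilb{G/N}{\Hilb{N}{\C^2}}$ strictly dominates the unique minimal resolution of $\C^2/G$. Since $G\subset\GL(2,\C)$ is small, $\C^2/G$ has a single singular point, so its minimal resolution is unique, and it is known that this minimal resolution is $\Hilb{G}{\C^2}$. As $Y$ is a smooth surface equipped with a birational morphism to $\C^2/G$, it factors canonically as $Y\to\Hilb{G}{\C^2}\to\C^2/G$; so the assertion is equivalent to showing that $Y\to\Hilb{G}{\C^2}$ is \emph{not} an isomorphism, i.e., by Castelnuovo, that the fibre of $Y\to\C^2/G$ over the origin contains an exceptional curve of self-intersection $-1$.

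First I would fix the structure. The determinant $\det\colon G\to\C^*$ has kernel $N$ and cyclic image, so $G/N\cong\Z/m\Z$ with $m\ge 2$; and $N\ne\{1\}$, since otherwise $G$ would be cyclic, contrary to the hypothesis. Hence $Y_1:=\Hilb{N}{\C^2}$ is the minimal resolution of a nontrivial Kleinian singularity $\C^2/N$, with exceptional configuration $\Gamma_N$ a tree of $(-2)$-curves; the $\Z/m\Z$-action on $\C^2/N$ induced by $G$ lifts canonically to $Y_1$, and by the cyclic case of the ``$G$-Hilb $=$ minimal resolution'' theorem, applied in local charts, $Y=\Hilb{G/N}{Y_1}$ is the minimal resolution of $Y_1/(G/N)$. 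Thus over the origin $Y$ carries, on the one hand, the strict transforms of the images in $Y_1/(G/N)$ of the components of $\Gamma_N$ (one per $G/N$-orbit of components), and on the other hand the Hirzebruch--Jung chains resolving the cyclic quotient singularities of $Y_1/(G/N)$ that lie over $0$, located at the images of the $G/N$-fixed points of $Y_1$ on or near $\Gamma_N$. This yields both a count of the exceptional components of $Y$ and an explicit dual graph, obtained precisely by the two-step toric procedure of \S\ref{AbCase}.

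The heart of the argument is then to compare this dual graph with that of the minimal resolution $\Hilb{G}{\C^2}$ of $\C^2/G$, which is described by Brieskorn's classification of the non-Abelian small subgroups of $\GL(2,\C)$ and their quotient singularities; since $N=G\cap\SL(2,\C)$ is determined, the pair $(N,\Z/m\Z)$ is pinned down within that classification. One checks that the iterated construction always produces strictly more exceptional curves over $0$ — hence a contractible $(-1)$-curve — because quotienting the economical resolution $Y_1$ by $\Z/m\Z$ and re-resolving never reproduces the minimal resolution of $\C^2/G$ obtained in one stroke. The cases $G\cong D_{2n}$ and $G\cong G_{12}$, where the dual graphs of $Y$ and of $\Hilb{G}{\C^3}$ are written out in \S\ref{exaDn} and \S\ref{exaE6} and visibly differ, are exactly the three-dimensional manifestation of this over-resolution; the two-dimensional statement proved here is its shadow. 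The main obstacle is to make the comparison uniform rather than case-by-case: the cleanest formulation would be to exhibit, for every such $(G,N)$, a single exceptional curve of $Y$ of self-intersection $-1$, arising from the interaction between the Hirzebruch--Jung resolution of a cyclic quotient point of $Y_1/(G/N)$ and the strict transform of the component of $\Gamma_N$ through it, and the verification that this self-intersection is indeed $-1$ in all cases is the technical crux.
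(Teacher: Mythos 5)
Your reduction is sound as far as it goes: $Y$ dominates the minimal resolution, so it suffices to show the two do not coincide, i.e.\ to find a $(-1)$-curve in the fibre over $0$ (equivalently, that $K_Y$ is not nef over $\C^2/G$); and your structural setup ($G/N$ cyclic via the determinant, $N\neq\{1\}$, $Y=\Hilb{G/N}{Y_1}$ the minimal resolution of $Y_1/(G/N)$) matches the paper. But the proof stops exactly where it has to start. The assertion that ``the iterated construction always produces strictly more exceptional curves over $0$ \dots because quotienting the economical resolution $Y_1$ by $\Z/m\Z$ and re-resolving never reproduces the minimal resolution'' is a restatement of the conclusion, not an argument for it, and you yourself flag the verification of the $(-1)$-curve ``in all cases'' as an unperformed technical crux. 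Appealing to Brieskorn's classification and comparing dual graphs case by case could in principle work, but none of that comparison is carried out here, so there is a genuine gap at the heart of the proof.

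The uniform argument you were looking for is a discrepancy computation, and it needs no classification. Since $G$ is non-Abelian, the exceptional locus of $Y_1\to\C^2/N$ contains a curve $E_0$ meeting three others $E_1,E_2,E_3$; the cyclic group $G/N$ fixes $E_0$ pointwise, so the quotient map $Y_1\to X_2:=Y_1/(G/N)$ has ramification index $e_0=|G/N|$ along $E_0$ and $e_i=1$ along the $E_i$. Pushing down $K_{Y_1}\equiv 0$ gives $K_{X_2}\equiv-\sum_{i'}\bigl(1-\tfrac{1}{e_i}\bigr)\bar E_{i}$, and writing $K_{Y}\equiv-\sum_{i'}\bigl(1-\tfrac{1}{e_i}\bigr)\widetilde E_{i}+\sum_j a_jF_j$ with $-1<a_j\le 0$ (discrepancies of the minimal resolution of quotient singularities), one gets $K_{Y}\cdot\widetilde E_1=-\bigl(1-\tfrac{1}{e_0}\bigr)+\sum_{F_j\cap\widetilde E_1\neq\emptyset}a_j<0$, so $K_Y$ is not nef. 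This closes in three lines the case analysis your proposal defers, and it is the step your write-up is missing.
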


\begin{proof}
In this proof, we use the following notation: $Y_1=\Hilb{N}{\C^2}$,
$X_2=Y_1/(G/N)$ and $Y_2=\Hilb{G/N}{Y_1}$.
Then $\tau_1: Y_1 \to \C^2/N$ is a crepant resolution and $Y_2$ is the
minimal resolution of $X_2$.
Let $\{E_i\}=\{E_0, E_1, \dots\}$ be the exceptional curves on $Y_1$.
Since $G$ is non-Abelian, we may assume $E_0$ intersects three other exceptional curves,
$E_1$, $E_2$ and $E_3$.
We denote by $\bar{E_{i}}$ the images of $E_i$ in $X_2$
and by $i'$ the $G/N$-orbit of $i$.
Since $Y_1$ is a crepant resolution, we have $K_{Y_1} \equiv 0$.
Therefore, if $e_i$ denotes the ramification index of $\tau_1$ along $E_i$,
we have $K_{X_2} \equiv -\sum_{i'} (1-\frac{1}{e_{i}})\bar{E_{i}}$.
Now since the action of $G/N$ fixes every point on $E_0$,
we see $e_0=|G/N|$ and $e_1=e_2=e_3=1$.
The canonical bundle of $Y_2$ is written
$$
K_{Y_2} \equiv -\sum_{i'} (1-\frac{1}{e_i})\widetilde{E}_{i} + \sum_j a_j F_j
$$
where $\widetilde{E}_i$ is the proper transform of $\bar{E_i}$ and
$\{F_j\}$ are the exceptional curves of $\tau_2$ with discrepancies $a_j$.
Since $-1<a_j\le 0$, it follows
$$
K_{Y_2}\cdot\widetilde{E}_1=-(1-\frac{1}{e_0})+\sum_{F_j \cap \widetilde{E}_1 \ne \emptyset} a_j < 0
$$
which shows that $K_{Y_2}$ is not nef.
\end{proof}


\subsubsection{Finite Abelian subgroups in $\SL(3,\C)$}
In this section we assume that $G\subset\SL(3,\C)$ is a finite Abelian subgroup. We use the same notation as in Section \ref{AbCase}. We start by recalling the properties of the the triangulation of the junior simplex $\Delta$ constructed by Craw and Reid in \cite{CR02} corresponding to $\Hilb{G}{\C^3}$ that we need. By abusing the notation, in what follows we identify $\Hilb{G}{\C^3}$ with its corresponding triangulation of $\Delta$ given by \cite{CR02}.

A {\em regular triangle} of side $r$ in $\Delta$ is a lattice triangle with $r+1$ points on each edge. In $\Hilb{G}{\C^3}$ every regular triangle of side $r$ is triangulated with the {\em regular tesselation}, which is done by drawing $r-1$ parallel lines to the sides of the regular triangle, obtaining $r^2$ regular triangles of side 1. There are only two types of regular triangles appearing in $\Hilb{G}{\C^3}$, namely the {\em corner triangle} and the {\em meeting of champions}, both shown in Figure \ref{RegT}.

\begin{figure}[ht]
\begin{center}
\begin{pspicture}(0,-0.5)(10,3.5)
	\psset{arcangle=15}
\rput(0,0){
	\scalebox{0.35}{
	\rput(0,0){\rnode{e3}{\Large $\bullet$}}
	\rput(5,8.65){\rnode{e1}{\Large $\bullet$}}
	\rput(10,0){\rnode{e2}{\Large $\bullet$}}
	\rput(5.67, 5.19){\rnode{1}{\Large $\bullet$}}
	\rput(5.89, 4.03){\rnode{2}{\Large $\bullet$}}
	\rput(6.11, 2.89){\rnode{3}{\Large $\bullet$}}
	\rput(6.33, 1.73){\rnode{4}{\Large $\bullet$}}
	\rput(4.22, 1.15){\rnode{5}{\Large $\bullet$}}
	\rput(2.11, .576){\rnode{6}{\Large $\bullet$}}
	\rput(3.78, 3.46){\rnode{7}{\Large $\bullet$}}
	\rput(1.89, 1.73){\rnode{8}{\Large $\bullet$}}
	\rput(4,2.30){\rnode{9}{\Large $\bullet$}}
	\ncline{e1}{e2}\ncline{e2}{e3}\ncline{e3}{e1}
	\ncline{e3}{4}\ncline{4}{1}\ncline[linestyle=dotted]{1}{e1}
	\ncline{e3}{1}\ncline{6}{2}\ncline{5}{3}
	\ncline{8}{6}\ncline{7}{5}
	\ncline{8}{3}\ncline{7}{2}
	\rput(-0.45,-0.45){\Huge $e_3$}
	\rput(10.45,-0.45){\Huge $e_2$}
	\rput(5,9.1){\Huge $e_1$}
	}}
\rput(6,0){
	\scalebox{0.35}{
	\rput(0,0){\rnode{e3}{\Large $\bullet$}}
	\rput(5,8.65){\rnode{e1}{\Large $\bullet$}}
	\rput(10,0){\rnode{e2}{\Large $\bullet$}}
	\rput(3.57, 1.24){\rnode{1}{\Large $\bullet$}}
	\rput(7.14, 2.47){\rnode{2}{\Large $\bullet$}} 
	\rput(4.29, 4.95){\rnode{3}{\Large $\bullet$}}
	\rput(4.05, 3.70){\rnode{4}{\Large $\bullet$}} 
	\rput(3.81, 2.47){\rnode{5}{\Large $\bullet$}}
	\rput(5.24, 4.12){\rnode{6}{\Large $\bullet$}} 
	\rput(6.19, 3.29){\rnode{7}{\Large $\bullet$}}
	\rput(4.76, 1.65){\rnode{8}{\Large $\bullet$}}
	\rput(5.95, 2.06){\rnode{9}{\Large $\bullet$}}
	\rput(5., 2.89){\rnode{10}{\Large $\bullet$}}
	\ncline{e1}{e2}\ncline{e2}{e3}\ncline{e3}{e1}
	\ncline[linestyle=dotted]{e3}{1}\ncline{1}{2}
	\ncline[linestyle=dotted]{e2}{2}\ncline{2}{3}
	\ncline[linestyle=dotted]{e1}{3}\ncline{3}{1}
	\ncline{5}{7}\ncline{4}{6}
	\ncline{5}{8}\ncline{4}{9}
	\ncline{6}{8}\ncline{7}{9}
	\rput(-0.45,-0.45){\Huge $e_3$}
	\rput(10.45,-0.45){\Huge $e_2$}
	\rput(5,9.1){\Huge $e_1$}
	}}
\rput(1.75,-0.5){$(a)$}
\rput(7.75,-0.5){$(b)$}
\end{pspicture}
\caption{Types of regular triangles: (a) corner triangle and (b) meeting of champions.}
\label{RegT}
\end{center}
\end{figure}
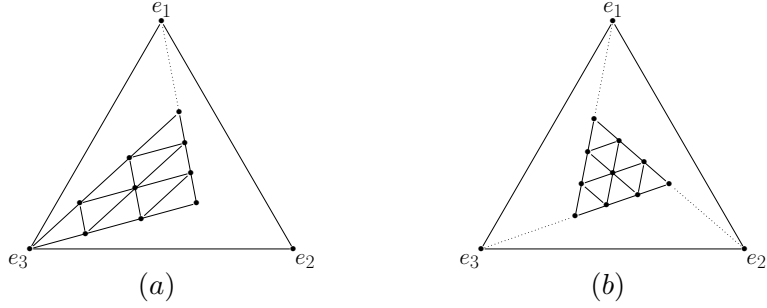

In particular, the sides of a regular triangle always extends to one of the vertices $e_i$. From the construction we can deduce the following properties that we use repeatedly in the rest of the section.

\begin{prop}\label{KeyProp} Consider the triangulation of $\Delta$ corresponding to $\Hilb{G}{\C^3}$. Then,

(i) Any line contained in $\Delta$ either passes through one of the vertices $e_i$ for $i=1,2,3$, or it is contained in a regular triangle. 

(ii) The valency of a vertex $v$ in $\Delta$ is either 3, 4, 5 or 6.

\end{prop}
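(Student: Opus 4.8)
The statement to be proved, Proposition \ref{KeyProp}, records two structural facts about the Craw--Reid triangulation of the junior simplex $\Delta$ attached to $\Hilb{G}{\C^3}$ for $G\subset\SL(3,\C)$ finite Abelian. Both are consequences of the explicit combinatorial description of that triangulation quoted just above the statement: $\Hilb{G}{\C^3}$ is assembled from regular triangles, each of which is subdivided by the regular tessellation, and the only two types occurring are the \emph{corner triangle} and the \emph{meeting of champions} (Figure \ref{RegT}). So the approach is purely combinatorial bookkeeping on these two local models, together with a global observation about how regular triangles sit inside $\Delta$. First I would set up notation: fix the triangulation $T$ of $\Delta$ coming from \cite{CR02}, recall that it is a union of regular triangles $R_1,\dots,R_k$ (the "champion" pieces and the corner pieces), each tessellated into unit triangles, and recall the key feature that every edge of every $R_j$ is \emph{parallel to an edge of $\Delta$} and, when extended, reaches one of the three vertices $e_1,e_2,e_3$ (this is exactly the "sides of a regular triangle always extend to one of the vertices $e_i$" remark preceding the statement).

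\textbf{Proof of (i).} Let $\ell$ be a line segment contained in $\Delta$ and suppose $\ell$ does not pass through any $e_i$. I want to show $\ell$ lies inside a single regular triangle $R_j$. The regular tessellation of each $R_j$ uses lines in exactly three directions, namely the three directions parallel to the sides of $\Delta$; hence every $1$-simplex (edge) of the triangulation $T$ is parallel to one of these three directions. If $\ell$ is an actual line of the triangulation, it is a maximal concatenation of collinear edges; tracing it out, at each interior lattice point it either continues straight (staying within one $R_j$ or crossing from $R_j$ into an adjacent $R_{j'}$ along a shared boundary). The crucial point is that a line in one of the three "champion directions", if it leaves a regular triangle through a side, must continue along a side of the next regular triangle — and such a side, extended, hits a vertex $e_i$. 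So if $\ell$ ever crosses the boundary between two regular triangles, it must reach some $e_i$, contrary to assumption. Hence $\ell$ stays inside one $R_j$. (For a general segment $\ell$ not on the triangulation skeleton the statement is trivially about containment in $\Delta$ itself, but the content — and the way it is used later — is for edges of $T$, so I would state it that way.) The main work here is to verify the dichotomy "pass through $e_i$ or stay inside a regular triangle" carefully against the Craw--Reid combinatorics; I would do this by examining how two adjacent regular triangles can share an edge — the shared edge is always a full side of at least one of them, and Craw--Reid's construction forces that side to lie on a line through a vertex $e_i$.

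\textbf{Proof of (ii).} For the valency bound, I would argue locally. Let $v$ be an interior vertex of $T$ (boundary vertices on the edges of $\Delta$ are handled similarly, with smaller valency). Then $v$ lies in the closure of one or more regular triangles $R_j$; inside each $R_j$, either $v$ is in the interior of the tessellated $R_j$, in which case the link of $v$ relative to $R_j$ is a full hexagon contributing valency $6$, or $v$ lies on the boundary of $R_j$ (on a side or at a corner), contributing a partial fan. Since the tessellation directions agree for all the $R_j$ meeting at $v$ (all three champion directions are global), the edges at $v$ all point in at most $6$ directions (the three champion directions, each usable in two senses), so the valency is at most $6$; and since $v$ is a vertex of a triangulation of a region with $v$ in its interior, the valency is at least $3$, and in fact the tessellation being triangular rules out valency $4$ only in special configurations — wait, the statement allows $4$ — so I simply need $3\le \mathrm{val}(v)\le 6$, which follows from: the edges at $v$ point among $6$ possible directions, opposite directions cannot both be "missing" at an interior vertex, and the complementary analysis of boundary/corner vertices of regular triangles. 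The cleanest route is a case check: $v$ is either (a) a tessellation-interior point of exactly one $R_j$ (valency $6$), (b) on a shared side of two $R_j$'s, (c) at a "meeting of champions" central vertex or a corner where up to three regular triangles come together. In each case I enumerate the fan of edges and read off valency $\in\{3,4,5,6\}$.

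\textbf{Main obstacle.} The real difficulty is not in (ii), which is a finite local case analysis once the global "three champion directions" fact is in hand, but in pinning down precisely enough of the Craw--Reid structure to make the "crossing a regular-triangle boundary forces passage through $e_i$" step in (i) airtight. Craw and Reid's triangulation is described recursively (peeling off corner triangles and meetings of champions), so I would need to invoke their result that the boundary between two adjacent regular triangles is always a complete side of one of them, lying on a line joining two of the $e_i$'s direction-wise and in fact passing through one $e_i$. If that lemma is not stated in the form I need, I would either cite the relevant proposition of \cite{CR02} or include a short inductive argument on the recursive construction. Everything else is routine planar combinatorics of the equilateral triangular lattice.
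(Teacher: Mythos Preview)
Your approach rests on a false premise: you claim that ``every $1$-simplex of the triangulation $T$ is parallel to one of these three directions,'' meaning the three directions of the sides of $\Delta$, and that ``the tessellation directions agree for all the $R_j$ meeting at $v$ (all three champion directions are global).'' This is not true. A regular triangle in the Craw--Reid construction is a lattice triangle with $r+1$ lattice points on each edge, but its sides need not be parallel to the sides of $\Delta$; the statement that ``the sides of a regular triangle always extend to one of the vertices $e_i$'' means exactly that each side, when prolonged, passes \emph{through} some $e_i$, which is the opposite of being parallel to the side $e_je_k$ of $\Delta$. Different regular triangles therefore have genuinely different sets of tessellation directions; you can already see multiple directions in Figures \ref{z3z2} and \ref{z2z3} of the paper.

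This error unravels both parts of your argument. For (i), your mechanism is backwards: a tessellation line inside one regular triangle $R_j$ typically does \emph{not} continue straight into an adjacent $R_{j'}$, precisely because the tessellation directions of $R_{j'}$ differ from those of $R_j$; that is why such a line is confined to $R_j$. The correct dichotomy is then immediate from the construction: each line of $T$ is either (a segment of) a side of some regular triangle, and hence extends to some $e_i$, or a tessellation line interior to a single regular triangle. This is exactly what the paper means by ``follows from the construction of the triangulation''; there is nothing more to prove. For (ii), your ``at most six global directions'' count collapses once there are no global directions; the bound on valency really does require the finer case analysis carried out by Craw and Reid, and the paper simply cites \cite{CR02}, Corollary~1.4. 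Your proposed case check (interior of one $R_j$, on a shared side, at a corner where several $R_j$ meet) is the right shape, but the actual verification that no configuration yields valency $\ge 7$ is the content of that corollary and cannot be replaced by a direction-counting argument.
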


\begin{proof} Part (i) follows from the construction of the triangulation of $\Delta$, and part (ii) form \cite{CR02} Corollary 1.4. 
\end{proof}

\begin{thm}\label{IsoVarAb} Let $G$ be a finite non-simple Abelian subgroup of $\SL(3,\C)$ and let $N$ be a normal subgroup of $G$, with $N\ne G,\{1\}$. Then 
\begin{equation}\label{eq:iso}
\Hilb{G/N}{\Hilb{N}{\C^3}}\cong\Hilb{G}{\C^3}
\end{equation}
as algebraic varieties if and only if we are in one of the following situations
\begin{enumerate}
\item $G/N\cong\Z/m\Z\times\Z/m\Z$ for some $m>1$.
\item $G\cong\frac{1}{r}(1,1,r-2)$ or $G\cong\frac{1}{r}(1,r-1,0)$, i.e.\ $\C^3/G$ has a unique crepant resolution.
\item $G\cong\frac{1}{2r}(1,a,-a-1)$ with $(2r,a)=1$, $a^2\equiv1$ (mod $4r$) and $N\cong\frac{1}{2}(1,1,0)$.
\item There is a subgroup $G'\subset G$ containing $N$ such that $(G',N)$ fits into either $(2)$ or $(3)$ and $G/G'\cong\Z/m\Z\times\Z/m\Z$ for some $m>1$.
\end{enumerate}
\end{thm}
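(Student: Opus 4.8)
The plan is to reduce everything to a careful comparison of the two triangulations of the junior simplex $\Delta$: the triangulation $T_G$ coming from $\Hilb{G}{\C^3}$ (via \cite{CR02}) and the triangulation $T_{G/N}\circ T_N$ coming from $\Hilb{G/N}{\Hilb{N}{\C^3}}$ (first the Craw–Reid triangulation for $N$, then Craw–Reid applied inside each triangle of $T_N$ for the induced $G/N$-action, as explained in Section \ref{AbCase}). Since both are crepant resolutions of the toric variety $\C^3/G$, an isomorphism over $\C^3/G$ is the same as equality of the two triangulations of $\Delta$; but we want isomorphism merely as algebraic varieties, so more generally the two triangulations must be related by a finite sequence of flops — equivalently, they have the same set of vertices and the same "local" combinatorial structure up to the operations that correspond to flops (crossing a wall between two adjacent triangulations). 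The first reduction I would make is: the vertex set of $\Delta$ is fixed (it is always the set of all lattice points of the junior simplex), so the only freedom is in the edges; hence the question becomes whether the two triangulations differ by flops, and in the toric setting a flop is precisely swapping the diagonal of a quadrilateral formed by two triangles. So I want to show that the two triangulations are flop-equivalent exactly in cases (1)–(4).

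Next I would handle the ``if'' direction, building the required isomorphisms case by case. Case (2) is trivial: $\C^3/G$ has a unique crepant resolution, so every crepant resolution, in particular both sides of \eqref{eq:iso}, is that one. For case (1), $G/N \cong \Z/m\Z \times \Z/m\Z$: here I would argue that $\Hilb{N}{\C^3}$ is a smooth toric variety on which $G/N$ acts, and a $\Z/m\Z\times\Z/m\Z$ quotient singularity in dimension $\le 2$ (which is what appears locally on $Y_1/(G/N)$, since $Y_1^{G/N}$ has dimension $\le 1$ for $G\subset\SL(3,\C)$) has a unique crepant resolution — so after fixing the first blow-up $Y_1$, the second step $G/N$-Hilb introduces no choices; on the other hand $\Hilb{G}{\C^3}$ corresponds to a specific triangulation which I must check is obtained from $T_N$ by exactly the same further subdivision. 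The key local fact is Proposition \ref{KeyProp}(i): any line in $\Delta$ lies in a regular triangle or passes through a vertex $e_i$, and for $\Z/m\times\Z/m$ the relevant regular triangles are subdivided canonically in both constructions. Case (3) is the case coming from Theorem \ref{IsoModulidim3}(ii) where even the moduli structure agrees, so the varieties certainly agree; I would just cite that. Case (4) is then a ``fibered'' or ``product-like'' combination: using a subgroup $G'$ with $N\subset G'\subset G$ where $(G',N)$ is one of the earlier cases and $G/G'\cong\Z/m\times\Z/m$, I would iterate: $\Hilb{G/N}{\Hilb{N}{\C^3}} = \Hilb{(G/N)/(G'/N)}{\Hilb{G'/N}{\Hilb{N}{\C^3}}}$ and $\Hilb{G'/N}{\Hilb{N}{\C^3}}\cong\Hilb{G'}{\C^3}$ by the case $(2)$ or $(3)$ hypothesis, then apply case (1) to the residual $\Z/m\times\Z/m$ action to conclude $\cong\Hilb{G}{\C^3}$. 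This requires checking that the intermediate normal subgroup chains behave well, which is routine from the definitions in the introduction but needs to be spelled out.

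The harder direction is ``only if'': assuming \eqref{eq:iso} holds, I must show $(G,N)$ is on the list. The strategy is to locate an obstruction in the triangulation of $\Delta$ coming from $\Hilb{G/N}{\Hilb{N}{\C^3}}$ that cannot be matched by $T_G$ unless we are in one of the four cases. The core tool is Proposition \ref{KeyProp}: in $T_G$ every vertex has valency $3,4,5$ or $6$, and every line segment either passes through a corner $e_i$ or lies inside a regular triangle (corner triangle or meeting-of-champions). In the two-step construction, the edges added at the first step are the sides of the triangles $\Delta_i$ of $T_N$; the edges added at the second step are parallel rulings inside each $\Delta_i$ according to the $\Z/q$-action. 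The point is that these second-step edges are ``straight lines'' across $\Delta_i$, and if $G/A$ is cyclic (not $\Z/m\times\Z/m$) and not forced into the $\frac12(1,1,0)$ shape of case (3), one of these lines will fail to be contained in any regular triangle of $T_G$ and will also create a vertex of valency $\ge 7$, or conversely a flop will be impossible because the quadrilateral needed is not present. Concretely, I would analyze the possible $\Z/q$-actions on the affine cone over a regular triangle: Craw–Reid tells us which triangles and rulings appear in $T_G$, so I enumerate the local actions $\frac1q(1,0,q-1)$, $\frac1q(1,1,\ldots)$, etc., and check which ones, subdivided à la Craw–Reid, reproduce a sub-triangulation of a regular triangle compatible (up to flops) with what $T_G$ does there. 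The cases that survive are exactly: $q$ arbitrary but acting as $\frac1q(1,1)$-type on a $\frac12(1,1,0)$ curve (this forces (3), via the argument already used in Theorem \ref{IsoModulidim3}), $q$ of the form $m^2$ acting as $\Z/m\times\Z/m$ (giving (1)), the unique-crepant-resolution situation (giving (2)), or a combination of these along a chain (giving (4)).

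The main obstacle I anticipate is the ``only if'' direction's combinatorial bookkeeping: one must rule out, for every cyclic $G/N$ of order $q$ that is neither $\frac12(1,1,0)$-adapted nor a perfect-square $\Z/m\times\Z/m$, that no sequence of flops connects the two triangulations. A clean way to package this is via the valency count of Proposition \ref{KeyProp}(ii): flops preserve the multiset of valencies only up to the local $\pm$ moves, and I would show that the second-step Craw–Reid subdivision of a triangle of $T_N$ along a cyclic action of order $q\ge 3$ (outside the listed cases) produces an interior lattice point whose valency in the resulting triangulation is $\ge 7$, or produces a vertex that cannot reach valency $\le 6$ by any flop because the necessary adjacent quadrilaterals are obstructed by the fixed locus structure — hence cannot occur in any triangulation flop-equivalent to $T_G$. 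Getting this valency/obstruction lemma stated precisely and proved for all the relevant local models is where the real work lies; everything else is assembling the cases and invoking Theorem \ref{IsoModulidim3} and the uniqueness-of-crepant-resolution classification for (2).
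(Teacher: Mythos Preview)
Your proposal rests on a fundamental misidentification at the very start: you claim that isomorphism ``merely as algebraic varieties'' corresponds to the two triangulations being related by a sequence of flops. This is false. A flop is a birational map that is \emph{not} an isomorphism; distinct triangulations of $\Delta$ give non-isomorphic toric varieties. Worse, \emph{any} two projective crepant resolutions of $\C^3/G$ are connected by flops, so your criterion is vacuous and would force \eqref{eq:iso} to hold for every $(G,N)$, contradicting the theorem. The correct translation (which the paper uses, see the opening of \S\ref{When}) is that since the map to $\C^3/G$ is the affinization and hence canonical, isomorphism as varieties amounts to equality of the two triangulations. The paper then compares triangulations directly: Lemma \ref{ZmZm} handles (1) by a lattice-scaling argument ($L_0=(1/m)L'_0$, not your uniqueness-of-resolution claim, which is also wrong since the $G/N$-action on each chart is three-dimensional); Lemma \ref{Lem:ZmZm} reduces via (4) to $G/N$ cyclic; then one shows that equality of triangulations forbids any regular triangle of side $\geq 2$ in $\Hilb{N}{\C^3}$, forcing $N$ into the short list $\frac{1}{r}(1,1,r-2)$, $\frac{1}{r}(1,r-1,0)$, $\frac{1}{7}(1,2,4)$; and each of these is analysed case by case, with the valency bound of Proposition \ref{KeyProp}(ii) applied to a \emph{fixed} triangulation, not modulo flops.

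There is a second concrete error in your ``if'' direction for case (3): you say it follows from Theorem \ref{IsoModulidim3}, where even the moduli structures agree. But Theorem \ref{IsoModulidim3} only covers $G\cong\frac{1}{2r}(1,1,2r-2)$, i.e.\ the subcase $a=1$ of (3). For general $a$ with $a^2\equiv 1\pmod{4r}$ the moduli structures do \emph{not} agree, and the isomorphism of varieties requires a separate Newton-polygon symmetry argument (Lemma \ref{CaseZ2}): the condition $a^2\equiv 1\pmod{4r}$ is exactly what makes the lattice symmetric across the line $e_3P$ so that the Craw--Reid triangulations on the two halves $\Delta_1,\Delta_2$ of $\Hilb{N}{\C^3}$ glue to the $G$-Hilb triangulation.
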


The proof of the theorem is deduced from Lemmas \ref{ZmZm} to \ref{CaseZ2}. The first lemma shows the biggest family of Abelian groups for which we have an an isomorphism of varieties, and constitutes the case (1) in Theorem \ref{IsoVarAb}. The rest of the cases are in some sense sporadic modulo case (4).

\begin{lem}\label{ZmZm} If $G/N\cong\Z/m\Z\times\Z/m\Z$ then $\Hilb{G/N}{\Hilb{N}{\C^3}}\cong\Hilb{G}{\C^3}$ as varieties.
\end{lem}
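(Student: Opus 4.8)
The plan is to argue entirely in toric terms. By \S\ref{AbCase} both $\Hilb{G}{\C^3}$ and $\Hilb{G/N}{\Hilb{N}{\C^3}}$ are the toric crepant resolutions of $\C^3/G$ attached to triangulations of the junior simplex $\Delta$ with respect to the lattice $L:=L_G$: write $\mathcal{T}_G$ for the Craw--Reid triangulation of $(\Delta,L)$ giving $\Hilb{G}{\C^3}$, and $\mathcal{T}$ for the triangulation giving $\Hilb{G/N}{\Hilb{N}{\C^3}}$, which by the two--step recipe of \S\ref{AbCase} is obtained by first taking the Craw--Reid triangulation $\mathcal{T}_N$ of $\Delta$ for $N$ (subdividing $\Delta$ into $|N|$ basic $L_N$--triangles $\Delta_i$, with $Y_i=\sigma(\Delta_i)\cong\C^3$) and then, inside each $\Delta_i$, applying the Craw--Reid recipe for the $G/N$--action on $Y_i$. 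Since the two varieties are the toric varieties of these two triangulations, it suffices to prove $\mathcal{T}=\mathcal{T}_G$, and this also yields the isomorphism over $\C^3/G$.

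First I would record the relevant lattice fact. Writing $\Lambda:=L\cap\R^2_\Delta$ and $\Lambda_N:=L_N\cap\R^2_\Delta$, one has $[\Lambda:\Lambda_N]=[L_G:L_N]=|G/N|=m^2$ and $\Lambda/\Lambda_N\cong G/N\cong\Z/m\Z\times\Z/m\Z$; since the only rank--$2$ sublattice of $\Lambda$ with such a quotient is $m\Lambda$, we get $\Lambda_N=m\Lambda$. Hence every basic $\Lambda_N$--triangle, in particular each $\Delta_i$, is a regular $\Lambda$--triangle of side $m$, and more generally a regular $\Lambda_N$--triangle of side $s$ is a regular $\Lambda$--triangle of side $ms$ whose $\Lambda$--lattice points are exactly those of its side--$ms$ regular tessellation. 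Next I would analyse the second step: on each chart $Y_i\cong\C^3$ the group $G/N$ acts diagonally (\S\ref{AbCase}), crepantly, and faithfully --- an element acting trivially on $Y_i$ acts trivially on the dense open $Y_i\subset Y_1$, hence on $\C^3/N$, hence lies in $N$ --- so it is conjugate to $\frac1m(1,m-1,0)\oplus\frac1m(0,1,m-1)$, the unique faithful diagonal $\Z/m\Z\times\Z/m\Z\subset\SL(3,\C)$, whose Hilbert scheme is the regular tessellation of the simplex of $Y_i$. Translating this through $\Lambda_N=m\Lambda$, $\mathcal{T}$ is obtained from the Craw--Reid decomposition of $\Delta$ into regular triangles \emph{for} $N$ by regularly tessellating each piece with respect to $L$: the successive subdivision ``$\Lambda_N$--regular tessellation, then $\Lambda$--regular tessellation within each $\Lambda_N$--cell'' of a regular triangle is precisely its $\Lambda$--regular tessellation.

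Finally I would compare with $\mathcal{T}_G$. By \cite{CR02} (see the paragraph before Proposition \ref{KeyProp}), $\mathcal{T}_G$ is likewise built by decomposing $\Delta$ into regular triangles --- corner triangles and meetings of champions --- and then regularly tessellating each with respect to $L$. Because the regular tessellation of a given regular triangle is a single, canonical triangulation, $\mathcal{T}=\mathcal{T}_G$ reduces to the claim that the Craw--Reid regular--triangle decomposition of $\Delta$ is the \emph{same} whether computed for $N$ or for $G$; equivalently, that this decomposition is unchanged under the isotropic refinement $\Lambda_N=m\Lambda$. I expect this to be the main obstacle. The way I would handle it is to run the Craw--Reid algorithm of \cite{CR02} for $N$ and for $G$ in parallel: along each edge of $\Delta$ the refinement multiplies the lattice spacing uniformly by $m$, so the Jung--Hirzebruch data, and with it the ``champions'' and the corner triangles they cut out, scale by $m$ and are carried bijectively between the two computations; feeding this into the recursive definition of the meetings of champions, an induction on the number of regular triangles shows the two decompositions coincide. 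Once this is established, $\mathcal{T}=\mathcal{T}_G$ and hence $\Hilb{G/N}{\Hilb{N}{\C^3}}\cong\Hilb{G}{\C^3}$.
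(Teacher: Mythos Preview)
Your approach is correct and is essentially the same as the paper's: both proofs rest on the single lattice identity $\Lambda_N=m\Lambda$ (equivalently $L_0=(1/m)L'_0$) and then observe that the Craw--Reid construction depends only on this relative lattice structure, so the triangulations for $N$ and for $G$ coincide after the regular tessellation refinement.

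The only difference is one of economy. You carry out the comparison in two stages---first identifying the $G/N$-action on each $Y_i$ as the standard $\Z/m\Z\times\Z/m\Z$ so that the second step is a regular tessellation, then arguing by running the Craw--Reid algorithm for $N$ and $G$ in parallel and matching champions inductively. The paper bypasses this and goes straight to the punchline: since $\Lambda_N=m\Lambda$, the Newton polygon (hence the Jung--Hirzebruch data) at each vertex $e_i$ for $G$ is exactly $1/m$ times that for $N$, and the Craw--Reid recipe of \cite{CR02} is scale-invariant in this sense, so the two triangulations agree without any induction. Your ``main obstacle'' thus dissolves once you phrase it in terms of Newton polygons rather than the recursive algorithm; the intermediate step of analysing the $G/N$-action on each chart, while correct, is not actually needed.
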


\begin{proof} Assume $G/N \cong \Z/m\Z \times \Z/m\Z$. Let $L\supset L' \supset \Z^3$ be the toric lattices for
$\C^3/G$ and $\C^3/N$ respectively. Then the assumption implies $L/L' \cong \Z/m\Z \times \Z/m\Z$. Since $L$ and $L'$ are generated by elements on the junior simplex, we have decompositions $L=L_0 \oplus \Z e_1$ and $L'=L'_0 \oplus \Z e_1$, where $L_0=L \cap \R^2_\Delta$ and $L'_0=L' \cap \R^2_\Delta$.

Then we have $L_0/L'_0 \cong \Z/m\Z \times \Z/m\Z$ for the two-dimensional lattices $L_0 \supset L'_0$, which implies $L_0 =(1/m)L'_0$. So the Newton polygon for $G$ at $e_1$ is $1/m$ times that for $G'$, thus the triangulations are the same by \cite{CR02}.
\end{proof} 

The following lemma justifies the Case $(4)$ in Theorem \ref{IsoVarAb} and allows us to obtain isomorphism as varieties between $Y_2$ and $\Hilb{G}{\C^3}$ by combining the Cases $(1)$, $(2)$ and $(3)$. 

\begin{lem}\label{Lem:ZmZm} Suppose there exists a surjection $\phi:G/N\twoheadrightarrow\Z/m\Z\times\Z/m\Z$ for some $m>1$ and let $G'$ be the pullback of $\Ker(\phi)$ to $G$.
Then \eqref{eq:iso} holds for the pair $(G,N)$ if and only if it holds for $(G',N)$.
\end{lem}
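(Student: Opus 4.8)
The strategy is to reduce the statement for the pair $(G,N)$ to the statement for $(G',N)$ by exploiting the iterated nature of the construction together with Lemma~\ref{ZmZm}. The key observation is that $G/N$-Hilb of a variety can be computed in two stages: first take $G'/N$-Hilb, then take $(G/N)/(G'/N)$-Hilb. Indeed, by Corollary~\ref{cor:crep} (or rather the paragraph following it, on iterated constructions) and the associativity of the iterated equivariant Hilbert scheme, we have
\[
\Hilb{G/N}{\Hilb{N}{\C^3}}\cong\Hilb{(G/N)/(G'/N)}{\Hilb{G'/N}{\Hilb{N}{\C^3}}}=\Hilb{G/G'}{\Hilb{G'/N}{\Hilb{N}{\C^3}}}.
\]
Since $G/G'\cong\Ker(\phi)'$... wait, more precisely $(G/N)/(G'/N)\cong G/G'$, and by hypothesis $G/G'\cong\Z/m\Z\times\Z/m\Z$. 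So the plan is: set $Z:=\Hilb{G'/N}{\Hilb{N}{\C^3}}$, which is a crepant resolution of $\C^3/G'$ carrying a $G/G'$-action, and then apply the argument of Lemma~\ref{ZmZm} (which is purely toric and local at the vertex $e_1$, comparing Newton polygons) to the pair $(G,G')$ to conclude $\Hilb{G/G'}{Z}\cong\Hilb{G/G'}{\Hilb{G'}{\C^3}}$. Combined with the displayed isomorphism on the left and the analogous one on the right (applied with $G'$ in place of $G$, using $\Hilb{G'}{\C^3}\cong\Hilb{G'/N}{\Hilb{N}{\C^3}}$), this gives
\[
\Hilb{G/N}{\Hilb{N}{\C^3}}\cong\Hilb{G}{\C^3}\iff \Hilb{G'/N}{\Hilb{N}{\C^3}}\cong\Hilb{G'}{\C^3},
\]
which is exactly \eqref{eq:iso} for $(G',N)$ on the right-hand side.

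First I would make the associativity of iterated equivariant Hilbert schemes precise in the toric (Abelian) setting, since that is all that is needed here: by the description in Section~\ref{AbCase}, $\Hilb{G/N}{\Hilb{N}{\C^3}}$ is the toric variety given by the triangulation of $\Delta$ obtained by applying the Craw--Reid algorithm successively along the chain $N\lhd G'\lhd G$, and reordering the last two steps (first resolve $\C^3/N$, then $\C^3/G'$, then $\C^3/G$, versus first $\C^3/G'$ then $\C^3/G$) gives the same triangulation because the steps refine disjoint regular subtriangles independently. Then I would invoke Lemma~\ref{ZmZm} with $(G,G')$ in the role of $(G,N)$: its proof shows that when $G/G'\cong\Z/m\Z\times\Z/m\Z$, the lattice $L$ for $\C^3/G$ satisfies $L_0=(1/m)L'_0$ where $L'_0$ is the lattice for $\C^3/G'$, hence the triangulation of $\Delta$ defining $\Hilb{G}{\C^3}$ is the image of that for $\Hilb{G'}{\C^3}$ under scaling by $1/m$, and likewise the triangulation defining $\Hilb{G/G'}{Z}$ (for any toric crepant resolution $Z\to\C^3/G'$ equivariant for the $G/G'$-action) is the $1/m$-scaling of the triangulation of $Z$. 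Since scaling is a bijection on triangulations, $Z\cong\Hilb{G'}{\C^3}$ if and only if the $1/m$-rescaled triangulations agree, i.e.\ $\Hilb{G/G'}{Z}\cong\Hilb{G/G'}{\Hilb{G'}{\C^3}}$.

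The main obstacle I anticipate is making the associativity step genuinely rigorous rather than hand-wavy: one must check that $\Hilb{G'/N}{\Hilb{N}{\C^3}}$, as a toric variety with $G/G'$-action, really is the object to which Lemma~\ref{ZmZm}'s argument applies, i.e.\ that its fan sits inside $(L'_0\oplus\Z e_1)_\R$ with $G/G'$ acting through $L/L'$, and that ``$G/G'$-Hilb'' of it is computed by the Craw--Reid algorithm applied to each maximal cone. This is essentially the content of the formula $\Hilb{G/A}{\Hilb{A}{\C^3}}=\bigcup_i\Hilb{G/A}{Y_i}$ from Section~\ref{AbCase}, applied one level down, but it should be stated carefully. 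Once that bookkeeping is in place, everything else is the formal equivalence displayed above together with a direct appeal to Lemma~\ref{ZmZm}. I would present the argument in that order: (1) the two-step decomposition of $G/N$-Hilb, (2) identification of the intermediate object with a $G/G'$-equivariant toric resolution of $\C^3/G'$, (3) the scaling argument of Lemma~\ref{ZmZm} applied to $(G,G')$, (4) assembling the ``if and only if''.
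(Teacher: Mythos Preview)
Your proposal is correct and follows essentially the same approach as the paper: decompose the construction as $\Hilb{G/G'}{\Hilb{G'/N}{\Hilb{N}{\C^3}}}$ via the chain $N\lhd G'\lhd G$, and then invoke Lemma~\ref{ZmZm} for the pair $(G,G')$. The paper's proof is a one-sentence sketch of exactly this; your version is more careful in that you explicitly justify the ``if and only if'' via the observation that the $\Z/m\Z\times\Z/m\Z$ step acts on triangulations by regular $m$-tesselation of each basic triangle, which is a bijection on triangulations of $\Delta$.
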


\begin{proof} If there exists a sequence of normal subgroups $G\rhd G'\rhd N$ and $G/G'\cong\Z/m\Z\times\Z/m\Z$ then we can construct $\Hilb{G/N}{\Hilb{N}{\C^3}}$ in three steps
\[ 
\Hilb{G/G'}{\Hilb{G'/N}{\Hilb{N}{\C^3}}}
\]
so by Lemma \ref{ZmZm} we can take $G'$ instead of $G$. 
\end{proof}

Therefore from now on we assume that no such surjection exists,
which means that $G/N$ is cyclic.

\begin{lem} If $\Hilb{G/N}{\Hilb{N}{\C^3}}\cong\Hilb{G}{\C^3}$ then there is no regular triangle of side $\geq2$ in $\Hilb{N}{\C^3}$.
\end{lem}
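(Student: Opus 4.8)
The plan is to argue by contradiction: assuming the triangulation $\mathcal{T}_N$ of $\Delta$ attached to $\Hilb{N}{\C^3}$ contains a regular triangle $T$ of side $s\geq 2$ (necessarily regularly tessellated, by the Craw--Reid description), I will exhibit in $\Hilb{G/N}{\Hilb{N}{\C^3}}$ a compact exceptional surface whose Picard rank is too large to occur in $\Hilb{G}{\C^3}$.

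First I would set up the mechanism and the reduction. Since $G/N$ is cyclic by the standing assumption, the triangulation $\mathcal{T}$ of $\Hilb{G/N}{\Hilb{N}{\C^3}}$ is obtained from $\mathcal{T}_N$ by applying the Craw--Reid algorithm for the induced cyclic action of $G/N$ separately to each unit triangle of $\mathcal{T}_N$; in particular every unit triangle of $\mathcal{T}_N$ acquires at least one new ray, every vertex of $\mathcal{T}_N$ remains a vertex of $\mathcal{T}$, and all new rays inside a unit triangle $\delta$ lie in the closed triangle $\delta$. Next, both $\Hilb{G/N}{\Hilb{N}{\C^3}}$ and $\Hilb{G}{\C^3}$ are crepant resolutions of $\C^3/G$; identifying $\C^3/G$ with the spectrum of global functions on either side, any isomorphism between them is an isomorphism over an automorphism of $\C^3/G$, which preserves the singular locus and hence matches up the compact exceptional divisors of the two resolutions, respecting isomorphism type and therefore Picard rank. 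By Proposition \ref{KeyProp}(ii) every vertex of the triangulation of $\Hilb{G}{\C^3}$ has valency at most $6$, so each of its compact exceptional divisors is a smooth complete toric surface with at most $6$ rays, hence of Picard rank at most $4$. Thus it suffices to find a vertex of $\mathcal{T}$ lying in the interior of $\Delta$ with valency at least $7$.

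For $s\geq 3$ I would take $v$ to be a lattice point in the relative interior of $T$; such $v$ lies in the interior of $\Delta$ (the relative interior of $T$ meets $\partial\Delta$ only along edges of $T$ that lie on $\partial\Delta$, which are contained in $\partial T$), and $v$ has valency exactly $6$ in $\mathcal{T}_N$, with the six surrounding unit triangles $\delta_1,\dots,\delta_6$ forming a hexagonal star and three ``long diagonals'' spanning three lines through $v$ (parallel to the three edges of $T$). Writing $L_0\supset L_0'$ for the plane lattices of $\C^3/G$ and $\C^3/N$ (notation as in the proof of Lemma \ref{ZmZm}), since $L_0\supsetneq L_0'$ near $v$ and $L_0$ is closed under addition, either some point of $L_0\setminus L_0'$ in the hexagonal star avoids all three of those lines --- then it lies in the relative interior of some $\delta_i$ or on the edge of $\delta_i$ opposite $v$, and the Craw--Reid subdivision of $\delta_i$ gives $v$ a new edge --- or else $L_0\setminus L_0'$ lies entirely on those three lines, which forces $L_0$ to refine $L_0'$ along a single one of them, and then for the $\delta_i$ whose edge opposite $v$ lies on that line the new ray again gives $v$ a new edge. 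In either case $v$ has valency $\geq 7$ in $\mathcal{T}$, so the compact exceptional surface $D_v$ has Picard rank $\geq 5$, the desired contradiction.

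The main obstacle is the remaining case $s=2$, where $T$ carries no interior lattice point of $\mathcal{T}_N$, so the surface argument above does not apply verbatim; here one must instead analyse directly the new lattice points introduced in the interior of $T$ by the $G/N$-step (the central triangle of the tessellation of $T$ always acquires such a point) together with the subdivisions of the three corner triangles, and show that the resulting local configuration --- its exceptional curves and their normal bundles, or a surface of Picard rank $\geq 5$ after all --- cannot occur in $\Hilb{G}{\C^3}$ by Proposition \ref{KeyProp}. I expect this small-side bookkeeping, and a careful statement of the ``closure under addition'' dichotomy used above, to be the only genuinely delicate points.
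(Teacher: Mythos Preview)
Your proposal has genuine gaps and takes a harder route than the paper.

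Your Case~A claim --- that if a new $L_0$-point lies in the interior of $\delta_i$ or on its opposite edge, then the Craw--Reid subdivision of \emph{that particular} $\delta_i$ gives $v$ a new edge --- is false as stated. Whether $v$ gains an edge in $\delta_i$ depends only on whether the cone at $v$ (rays toward the other two corners of $\delta_i$) is smooth in $L_0$, and this cone can be smooth even when $\delta_i$ contains interior $L_0$-points: take $G/N\cong\Z/6\Z$ with $u_1,u_2$ a basis of $L_0$ and $d_1=2u_1$, $d_2=3u_2$ the $L_0'$-primitive diagonal vectors; then $u_1+u_2$ is an interior point of the triangle on $v,v+d_1,v+d_2$, yet the cone at $v$ is smooth there. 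Your Case~B is also garbled: the edges opposite $v$ are the outer edges of the hexagon, not the long diagonals. What \emph{is} true is that if $v$'s valency stays equal to $6$ then all six cones at $v$ are smooth in $L_0$, and an easy determinant computation shows this forces $k_1=k_2=k_3=k$ and $L_0=\tfrac{1}{k}L_0'$, hence $G/N\cong(\Z/k\Z)^2$, contradicting the standing cyclic hypothesis. So the valency approach can be repaired for $s\geq 3$, but your dichotomy is not the right mechanism.

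More seriously, you leave $s=2$ open, and this is exactly where the paper's argument is cleanest. The paper uses Proposition~\ref{KeyProp}(i), not (ii), and needs no case split on $s$. For any $s\geq 2$ the regular tessellation of $T$ contains a unit triangle $\Delta_i$ touching none of the vertices of $T$ (for $s=2$ this is the central inverted triangle). Since each side of $T$ extends to some $e_j$, Proposition~\ref{KeyProp}(i) forces every edge of the $G$-Hilb triangulation lying inside $\Delta_i$ to be parallel to a side of $\Delta_i$. Hence the $G/N$-subdivision of $\Delta_i$ is itself a regular tessellation, which forces $G/N\cong\Z/m\Z\times\Z/m\Z$, contradicting the assumption that $G/N$ is cyclic. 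No Picard-rank bookkeeping or separate treatment of small $s$ is needed.
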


\begin{proof} Let $T$ be a regular triangle in $\Delta$ of side $\geq2$,
where $\Delta$ is triangulated for $\Hilb{N}{\C^3}$. Then $T$ is triangulated by the regular tesselation and there always exists a triangle $\Delta_i\subset\Delta$ which does not contain any of the vertices of $T$. Now consider the action of $G/N$ on $\Hilb{N}{\C^3}$ and the corresponding triangulation on $\Delta_i$. Since any side of $T$ extends to some vertex $e_i$, by Proposition \ref{KeyProp} (i) any line of the triangulation for
$\Hilb{G}{\C^3}$ inside $\Delta_i$ must be parallel to some side of $\Delta_i$.
This implies that the action of $G/N$ into $\Delta_i$ has to be of the form $\Z/m\Z\times\Z/m\Z$, so that $\Delta_i$ is triangulated again with the regular tesselation. But then $G/N\cong\Z/m\times\Z/m\Z$ which contradicts our assumption.
\end{proof}

\begin{lem} The triangulation of $\Delta$ corresponding to $\Hilb{N}{\C^3}$ contains only regular triangles of side 1 if and only if $N\cong\frac{1}{r}(1,1,r-2)$, $\frac{1}{r}(1,r-1,0)$ or $\frac{1}{7}(1,2,4)$
\end{lem}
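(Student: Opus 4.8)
The plan is to characterize explicitly, via the Craw–Reid tesselation of the junior simplex $\Delta$, exactly when all regular triangles occurring in $\Hilb{N}{\C^3}$ have side $1$. First I would invoke the structure theorem of \cite{CR02}: the triangulation of $\Delta$ for $\Hilb{N}{\C^3}$ is built from regular triangles, and the two building blocks that can occur are the \emph{corner triangle} (attached at a vertex $e_i$) and the \emph{meeting of champions}. Each is triangulated by the regular tesselation, so it introduces a regular triangle of side equal to the side length of that block. Hence the condition ``only regular triangles of side $1$'' is equivalent to saying that every corner triangle and the meeting-of-champions triangle appearing in the Craw–Reid decomposition has side $1$. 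I would then translate this combinatorial constraint on $\Delta$ back into an arithmetic constraint on the lattice $L' = L_N$ (equivalently, on the type $\frac{1}{r}(a_1,a_2,a_3)$ of $N$), using that the side length of the block attached at $e_i$ is governed by the index of the sublattice generated by $e_i$ together with the primitive lattice points on the two edges through $e_i$.

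The key steps, in order, would be: (1) Recall from \cite{CR02} that the sides of every regular triangle extend to a vertex $e_i$, so a corner triangle at $e_i$ has side $1$ iff the first lattice point along each edge from $e_i$ is a primitive generator adjacent to $e_i$; when some corner triangle has side $\ge 2$, one gets a longer ``champion'' line. (2) Observe that the meeting-of-champions block has side $\ge 2$ precisely when the three champion lines bound a genuinely two-dimensional region; ruling this out forces the three edge-subdivisions to be as coarse as possible. (3) Convert these two requirements into: for each $i$, the lattice $L'$ restricted near $e_i$ is the ``standard'' one, which after a short computation with the weights $\frac{1}{r}(a_1,a_2,a_3)$ (using $a_1+a_2+a_3 = r$ or $2r$ on the junior simplex) leaves only finitely many families. (4) Check directly that these families are exactly $N \cong \frac{1}{r}(1,1,r-2)$, $N\cong \frac{1}{r}(1,r-1,0)$, and the sporadic $N\cong\frac{1}{7}(1,2,4)$; the first two are the well-known cases where $\C^3/N$ has a unique crepant resolution (so $\Delta$ has no champion lines at all and is just a fan of side-$1$ triangles from each $e_i$), and $\frac{1}{7}(1,2,4)$ is the one exceptional type whose Craw–Reid triangulation happens to use only the three corner triangles of side $1$ together with a meeting of champions of side $1$. (5) Conversely, for each of the three listed types, exhibit the triangulation explicitly and note that every triangle has side $1$, giving the reverse implication.

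The main obstacle I expect is step (4): showing that the arithmetic conditions extracted in steps (1)–(3) admit \emph{no} solutions beyond the three listed families. This requires a careful case analysis of the possible shapes of the Craw–Reid triangulation — in particular ruling out, say, longer ``regular'' configurations coming from groups like $\frac{1}{r}(1,1,1)$-type behaviour or iterated champion lines — and making sure the bound ``side $1$'' genuinely pins down the weight vectors up to the stated normal forms. The sporadic appearance of $\frac{1}{7}(1,2,4)$ is the delicate point: one must verify that it really does satisfy the condition (its triangulation into $7$ unit triangles has no longer regular triangle) while every ``nearby'' type such as $\frac{1}{r}(1,2,4)$ for other $r$, or $\frac{1}{r}(1,3,3)$, etc., produces a regular triangle of side $\ge 2$. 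I would handle this by computing the champion lines explicitly for the small residual cases and appealing to Proposition \ref{KeyProp} to bound the valencies and hence the number of triangles meeting each interior lattice point.
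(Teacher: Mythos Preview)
Your plan is viable in principle but takes a substantially harder route than the paper's proof. You propose to extract arithmetic constraints on the weights directly from the Craw--Reid data (sides of corner triangles, side of the meeting of champions) and then run a case analysis to pin down the three types; you correctly flag step~(4) as the main obstacle, and indeed carrying it out cleanly would require nontrivial bookkeeping.

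The paper bypasses this entirely by inserting one conceptual equivalence: the triangulation for $\Hilb{N}{\C^3}$ has only side-$1$ regular triangles if and only if $\C^3/N$ admits a \emph{unique} crepant resolution. One direction uses Proposition~\ref{KeyProp}(i): if every regular triangle has side~$1$, every line of the triangulation hits a vertex $e_i$, so there is no parallelogram and hence no flop. Conversely, a unique crepant resolution means no parallelogram, hence in particular no regular triangle of side $\ge 2$. The classification then reduces to the elementary observation that any four interior lattice points with no three collinear produce a flop; so either all interior lattice points of $\Delta$ lie on a single line (giving the two families $\frac{1}{r}(1,1,r-2)$ and $\frac{1}{r}(1,r-1,0)$), or one is in the sporadic case $\frac{1}{7}(1,2,4)$.

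So the comparison is: your approach stays inside the Craw--Reid combinatorics and pays for it with a delicate case analysis, while the paper trades that for the flop/parallelogram criterion, which converts the problem into a one-line lattice-point argument. Both are correct; the paper's is shorter and more conceptual, and it is worth noting that the ``unique crepant resolution'' reformulation is exactly what dissolves the obstacle you identified in step~(4).
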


\begin{proof} Notice that there are only regular triangles of side 1 if and only if there exists a unique crepant resolution of $\C^3/N$, namely $\Hilb{N}{\C^3}$. Indeed, if
every regular triangle in the triangulation of $\Delta$ is of side 1,
then Proposition \ref{KeyProp} (i) implies that every line goes to one of the $e_i$'s, and there is no parallelogram in this triangulation.
Therefore, there is no flop from $\Hilb{N}{\C^3}$.
Conversely, if there exist a unique crepant resolution there is no parallelogram in the triangulation of $\Delta$, in particular there is no regular triangle of side bigger than one. 

Finally notice that the Abelian groups for which there exist a unique crepant resolution are $\frac{1}{r}(1,1,r-2)$, $\frac{1}{r}(1,r-1,0)$ or $\frac{1}{7}(1,2,4)$. This follows from the fact that either all points are contained in a line or $N\cong\frac{1}{7}(1,2,4)$, otherwise we would have 4 points in $\Delta$ with not 3 of them align, hence a flop.
\end{proof}

From the three possibilities for $N$ in the previous lemma we can exclude the case $N\cong\frac{1}{7}(1,2,4)$. Indeed, let $\Delta=\bigcup_{i=1}^7\Delta_i$ the triangulation corresponding to $\Hilb{N}{\C^3}$. Then, on the regular triangle with vertices $\frac{1}{7}(1,2,4)$, $\frac{1}{7}(2,4,1)$ and $\frac{1}{7}(4,1,2)$ in the middle of $\Delta$, the next triangulation created by the action of $G/N$ has to be a regular triangle again (otherwise it would contradict Proposition \ref{KeyProp} (i)), so we are again in the case of Lemma \ref{ZmZm}.

Now consider the case $N\cong\frac{1}{r}(1,1,r-2)$. Then every lattice point $P_j\in\Delta$ distinct from the vertices $e_i$ for $i=1,2,3$, are on a line $L$ passing through $e_3$. If we consider the triangulation induced by $G/N$, by Proposition \ref{KeyProp} (i) there are no new lines going out any of the points $P_j$ unless $r=2$ or $3$. Therefore, either $G/N\cong\frac{1}{s}(1,1,s-2)$ so that every new point is again on the line $L$ and $G$ is of type $\frac{1}{n}(1,1,n-2)$, or $N\cong\frac{1}{3}(1,1,1)$, or $N\cong\frac{1}{2}(1,1,0)$, or we are in the case of Lemma \ref{Lem:ZmZm}. 

Similarly, if $N\cong\frac{1}{r}(1,r-1,0)$ then either $G/N\cong\frac{1}{s}(1,s-1,0)$ for some $s|r$ or $N\cong\frac{1}{2}(1,1,0)$. In any case, we are either in the case $(2)$ of the Theorem \ref{IsoVarAb} or $N$ has order 2 or 3.

Let $N\cong\frac{1}{3}(1,1,1)$ and let $P$ be the point in the center of the triangulation $\Delta=\bigcup_{i=1}^3\Delta_i$ of $\Hilb{N}{\C^3}$ where the 3 lines $L_i$ from the vertices $e_i$ meet ($i=1,2,3$). Now consider the second triangulation produced by $G/N$. Because of the ``meeting of champions'' only one of the lines $L_i$ can be extended, so that the final valency of $P$ is at most $4$. 

If the valency is 3 then every $\Delta_i$ has a basic triangle around $P$. 
Since the areas of the basic triangles are the same, it follows that the three vectors at $P$ have the same length, and since two of them form a basis of the two-dimensional lattice $\Z^2_{\Delta}$, we can conclude that $G/N\cong\Z/m\Z\times\Z/m\Z$ as in Lemma \ref{ZmZm}. If the valency is 4 then at least 2 of the $\Delta_i$'s must have a regular triangle around $P$, which must be of the same side since they share a generator of the lattice. See Figure below.

\begin{center}
\begin{pspicture}(0,-0.25)(10,3.5)
	\psset{arcangle=15}
\rput(0,0){
	\scalebox{0.35}{
	\rput(0,0){\rnode{e3}{\Large $\bullet$}}
	\rput(5,8.65){\rnode{e1}{\Large $\bullet$}}
	\rput(10,0){\rnode{e2}{\Large $\bullet$}}
	\rput(5,0){\rnode{P}{}}
	\rput(5,3){\rnode{1}{\Huge $\bullet$}}
	\rput(5,6){\rnode{2}{}}
	\rput(5,4.5){\rnode{3}{}}
	\rput(2.5,3){\rnode{4}{}}
	\rput(2.5,1.5){\rnode{5}{}}
	\rput(7.5,3){\rnode{6}{}}
	\rput(7.5,1.5){\rnode{7}{}}
	\ncline{e1}{e2}\ncline{e2}{e3}\ncline{e3}{e1}
	\ncline{P}{e1}\ncline{1}{e2}\ncline{1}{e3}
	\ncline{e3}{2}\ncline{2}{e2}
	\ncline{7}{3}\ncline{3}{5}
	\ncline{5}{4}\ncline{4}{3}
	\ncline{3}{6}\ncline{6}{7}
	\rput(-0.45,-0.45){\Huge $e_1$}
	\rput(10.45,-0.45){\Huge $e_2$}
	\rput(5,9.1){\Huge $e_3$}
	\rput(4.5,2){\Huge $P$}
	}}
\rput(6,0){
	\scalebox{0.35}{
	\rput(0,0){\rnode{e3}{\Large $\bullet$}}
	\rput(5,8.65){\rnode{e1}{\Large $\bullet$}}
	\rput(10,0){\rnode{e2}{\Large $\bullet$}}
	\rput(5,0){\rnode{P}{}}
	\rput(5,3){\rnode{1}{\Huge $\bullet$}}
	\rput(2.5,1.5){\rnode{5}{}}
	\rput(7.5,1.5){\rnode{7}{}}
	\rput(3.75,2.25){\rnode{2}{}}
	\rput(3.75,5.075){\rnode{3}{}}	
	\rput(6.25,2.25){\rnode{4}{}}
	\rput(6.25,5.075){\rnode{6}{}}
	\rput(5,5.825){\rnode{8}{}}
	\ncline{e1}{e2}\ncline{e2}{e3}\ncline{e3}{e1}
	\ncline{P}{e1}\ncline{1}{e2}\ncline{1}{e3}
	\ncline{e1}{5}\ncline{e1}{7}
	\ncline{2}{8}\ncline{8}{4}
	\ncline{2}{3}\ncline{3}{8}
	\ncline{8}{6}\ncline{6}{4}
	\rput(-0.45,-0.45){\Huge $e_1$}
	\rput(10.45,-0.45){\Huge $e_2$}
	\rput(5,9.1){\Huge $e_3$}
	\rput(4.5,2){\Huge $P$}
	}}
\end{pspicture}
\end{center}

But then there exists a subgroup $G'\cong\frac{1}{3r}(1,1,3r-2)$ such that $G/G' \cong \Z/m\Z \times \Z/m\Z$ with $m>1$. Indeed, let $M$ be the middle point of $e_1$ and $e_2$. Then every regular triangle inside the triangle $e_1PM$ has $e_1$ as a vertex and the other two vertices lie on the segment $PM$. Moreover, these regular triangles have the same area. Then we can see that there is a subgroup
$G' \cong \frac{1}{3r}(1,1,3r-2)$ such that these regular triangles for $G$ are basic triangles for $G'$. If these regular triangles are divided into $m^2$ basic triangles, then $G/G' \cong \Z/m\Z \times \Z/m\Z$.

In the case $N\cong\frac{1}{2}(1,1,0)$ we have the following lemma which gives the case (3) in Theorem \ref{IsoVarAb} and finishes the proof.

\begin{lem}\label{CaseZ2} Let $N\cong\frac{1}{2}(1,1,0)$ and $Y_2:=\Hilb{G/N}{\Hilb{N}{\C^3}}$. Then
\[
Y_2\cong\Hilb{G}{\C^3} \Longleftrightarrow G\cong\frac{1}{2r}(1,a,-a-1) \text{ with $a^2\equiv1$ (mod $4r$)}
\]
\end{lem}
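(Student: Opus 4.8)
By the reductions carried out before the lemma we are in the situation $N\cong\frac12(1,1,0)$ with $G/N$ cyclic. I would first treat the case in which $G$ itself is cyclic (the non-cyclic possibilities, which are never of the form $\frac1{2r}(1,a,-1-a)$, are disposed of at the end by the same invariant comparison as below). Writing $G=\frac1{2r}(1,a,b)$, the requirement that the unique order-two subgroup be $N=\frac12(1,1,0)$ forces $a$ odd and $b$ even, and $G\subset\SL(3,\C)$ gives $b\equiv-1-a\pmod{2r}$, so $G\cong\frac1{2r}(1,a,-1-a)$ and it only remains to identify the condition on $a$. In the junior simplex $\Delta$ put $P_0=\tfrac12(e_1+e_2)$, the midpoint of $\overline{e_1e_2}$, and let $\Delta_1=\overline{e_1P_0e_3}$, $\Delta_2=\overline{e_2P_0e_3}$; by the description in Section \ref{AbCase}, the fan of $Y_2=\Hilb{G/N}{\Hilb{N}{\C^3}}$ refines $\Delta$ by first inserting the segment $\overline{e_3P_0}$ and then Craw--Reid-triangulating each $\Delta_i$ with respect to the lattice $L_G$ (equivalently, for the induced $G/N$-action on the corresponding chart $U_1\cong\C^3_{x^2,z,y/x}$ or $U_2\cong\C^3_{y^2,z,x/y}$ of $\Hilb{N}{\C^3}$), whereas the fan of $\Hilb{G}{\C^3}$ is the Craw--Reid triangulation of $(\Delta,L_G)$.

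This reduces the lemma to the following two points. \emph{(A)} A Craw--Reid triangulation is ``local'': once a segment is one of its edges, its restriction to either side is again the Craw--Reid triangulation of that region; since the cone over $\Delta_i$ with lattice $L_G$ is the quotient by $G/N$ of the corresponding chart of $\Hilb{N}{\C^3}$, it follows that if $\overline{e_3P_0}$ is an edge of the Craw--Reid triangulation of $(\Delta,L_G)$ then that triangulation coincides with the fan of $Y_2$, hence $Y_2\cong\Hilb{G}{\C^3}$ over $\C^3/G$. \emph{(B)} Conversely, $Y_2\cong\Hilb{G}{\C^3}$ as varieties forces the two triangulations to agree up to a lattice automorphism of $(\Delta,L_G)$ --- one sees this, as in the non-Abelian examples of Section \ref{Examples}, by comparing the dual graphs and normal-bundle degrees of the fibres over $0$. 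The lattice automorphisms of $(\Delta,L_G)$ are the coordinate permutations preserving $G$, and for $G=\frac1{2r}(1,a,-1-a)$ the only non-trivial candidate is the transposition $e_1\leftrightarrow e_2$, which is an automorphism precisely when $a^2\equiv1\pmod{2r}$ and, in any case, fixes both $e_3$ and $P_0$ and hence maps $\overline{e_3P_0}$ to itself. Therefore $Y_2\cong\Hilb{G}{\C^3}$ if and only if $\overline{e_3P_0}$ is an edge of the Craw--Reid triangulation of $(\Delta,L_G)$.

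It then remains to show that $\overline{e_3P_0}$ is an edge of the Craw--Reid triangulation of $(\Delta,L_G)$ if and only if $a^2\equiv1\pmod{4r}$. The plan here is to use the description of the triangulation at the vertex $e_3$ (the combinatorics recalled in Proposition \ref{KeyProp}): the edges emanating from $e_3$ are read off from a $2$-dimensional ``Klein polygon'' associated to the projection of the lattice points of $L_G$ near $e_3$ onto the opposite edge $\overline{e_1e_2}$, and the segment $\overline{e_3P_0}$ --- which hits $\overline{e_1e_2}$ at its midpoint --- is compatible with the regular tesselation precisely when a certain primitive lattice vector along $\overline{e_3P_0}$ and the lattice directions bounding $\Delta_1,\Delta_2$ satisfy a determinant condition; a direct computation turns this condition into $a^2\equiv1\pmod{4r}$, the extra factor $2$ over the naive congruence $a^2\equiv1\pmod{2r}$ coming from the index-two sublattice $L_N\subset L_G$ carried by $N$.

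The step I expect to be the main obstacle is exactly this last one: carrying out the Craw--Reid combinatorics near $e_3$ carefully enough to see that the relevant congruence is $\pmod{4r}$ and not the weaker $\pmod{2r}$ (which only governs the milder fact that the $G/N$-actions on $U_1$ and $U_2$ have the same type). A secondary point requiring care is the passage in \emph{(B)} from ``isomorphic as varieties'' to ``triangulations equal up to a lattice automorphism'', which I would justify, following the treatment of $D_{2n}$ and $G_{12}$ in Section \ref{Examples}, by showing that the fibre over the origin is a discrete invariant that already distinguishes the two resolutions whenever $a^2\not\equiv1\pmod{4r}$, and by handling the non-cyclic $G$ in the same way.
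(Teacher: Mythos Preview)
Your setup matches the paper's --- the point $P=\frac12(1,1,0)$, the decomposition $\Delta=\Delta_1\cup\Delta_2$, and the segment $L=\overline{e_3P}$ --- but there is a genuine gap in step~(A), and step~(B) misses the paper's key mechanism.

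In (A), the ``locality'' claim that the Craw--Reid triangulation of $\Delta$ restricts to the Craw--Reid triangulation of each $\Delta_i$ once $L$ is one of its edges is not justified: the algorithm for $\Delta_i$ would use Newton polygons at the vertices $e_i,P,e_3$, whereas the algorithm for $\Delta$ uses those at $e_1,e_2,e_3$, and there is no a~priori reason these agree near $P$. The paper does not argue this way. For the converse it instead uses that $a^2\equiv1\pmod{4r}$ forces the lattice $\Z^2_\Delta$ to be symmetric under the reflection in $L$ (with $\{v_1,v_2\}$ and $\{v_1,-v_2\}$ both lattice bases, $v_1$ along $L$ and $v_2$ along $\overline{e_1e_2}$); this makes the continued fractions at $e_1$ and $e_2$ equal and the one at $e_3$ palindromic, and then the Craw--Reid triangulation of $\Delta$ visibly contains $L$, is symmetric about it, and coincides with the $Y_2$ fan.

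For the forward direction, both sides are toric crepant resolutions over $\C^3/G$, so an isomorphism forces the fans to be literally equal --- no lattice automorphism enters, and your fibre-invariant plan is unnecessary. The paper's argument is local at $P$, not at $e_3$: if the fans agree, Proposition~\ref{KeyProp} forces each $\Delta_i$ to contain a regular triangle at $P$ whose sides extend to some vertex $e_j$, and every edge through $P$ must hit a vertex (in particular $L$ is an edge). The primitive vectors $v_1$ along $L$ and $v_2$ along $\overline{e_1e_2}$ issuing from $P$ bound a basic triangle, hence form a basis of $\Z^2_\Delta$. The reflection symmetry this imposes on the lattice gives $a^2\equiv1\pmod{2r}$, and the basis condition on $\{v_1,v_2\}$ is exactly what sharpens it to $a^2\equiv1\pmod{4r}$. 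Your instinct that the $4r$-versus-$2r$ distinction is the crux is correct, but the natural place to see it is this lattice-basis condition at $P$, not a computation at $e_3$.
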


\begin{proof}
Let $\Delta=\Delta_1\cup\Delta_2$ the triangulation for $\Hilb{N}{\C^3}$ and $P:=\frac{1}{2}(1,1,0)\in\Delta$. Suppose that we have the isomorphism as varieties. After the action of $G/N$ on $\Delta$, by Proposition \ref{KeyProp} (i) the triangles $\Delta_i$ must contain regular triangles around $P$. The sides of the regular triangles must pass through some vertex $e_i$ so there are two possible configurations (see Figure \ref{CaseNZ2}). 

Notice that any line passing through the point $P$ must go to one of the vertices $e_i$ for $i=1,2,3$ (otherwise, the sides of two regular triangles would intersect), and in particular there exists the diagonal line $L:=e_3P$. Then the vectors $v_1, v_2$ are sides of a basic triangle, therefore they form a basis of the 2-dimensional lattice $\Z^2_\Delta$.  

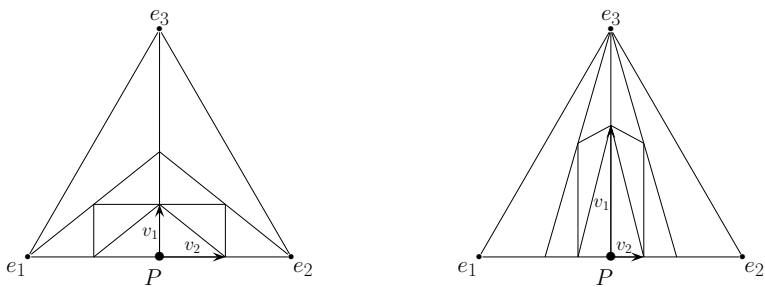
\begin{figure}[ht]
\begin{center}
\begin{pspicture}(0,-0.25)(10,3.5)
	\psset{arcangle=15}
\rput(0,0){
	\scalebox{0.35}{
	\rput(0,0){\rnode{e3}{\Large $\bullet$}}
	\rput(5,8.65){\rnode{e1}{\Large $\bullet$}}
	\rput(10,0){\rnode{e2}{\Large $\bullet$}}
	\rput(2.5,0){\rnode{2}{}}
	\rput(5,0){\rnode{4}{\Huge $\bullet$}}
	\rput(7.5,0){\rnode{6}{}}
	\rput(5,2){\rnode{9}{}}
	\rput(5,4){\rnode{11}{}}
	\rput(2.5,2){\rnode{13}{}}
	\rput(7.5,2){\rnode{16}{}}
	\ncline{e1}{e2}\ncline{e2}{e3}\ncline{e3}{e1}
	\ncline{4}{e1}\ncline{e3}{11}\ncline{11}{e2}
	\ncline{12}{17}\ncline{13}{16}\ncline{14}{15}
	\ncline{1}{10}\ncline{2}{9}
	\ncline{10}{7}\ncline{9}{6}
	\ncline{1}{12}\ncline{2}{13}\ncline{3}{14}
	\ncline{5}{15}\ncline{6}{16}\ncline{7}{17}
	\rput(-0.45,-0.45){\Huge $e_1$}
	\rput(10.45,-0.45){\Huge $e_2$}
	\rput(5,9.1){\Huge $e_3$}
	\rput(4.75,-0.75){\Huge $P$}
	\psline[arrowsize=10pt]{->}(5,0)(7.5,0)\rput(6.25,0.35){\huge $v_2$}
	\psline[arrowsize=10pt]{->}(5,0)(5,2)\rput(4.65,0.9){\huge $v_1$}
	}}
\rput(6,0){
	\scalebox{0.35}{
	\rput(0,0){\rnode{e3}{\Large $\bullet$}}
	\rput(5,8.65){\rnode{e1}{\Large $\bullet$}}
	\rput(10,0){\rnode{e2}{\Large $\bullet$}}
	\rput(2.5,0){\rnode{2}{}}
	\rput(3.75,0){\rnode{3}{}}
	\rput(5,0){\rnode{4}{\Huge $\bullet$}}
	\rput(6.25,0){\rnode{5}{}}
	\rput(7.5,0){\rnode{6}{}}
	\rput(8.75,0){\rnode{7}{}}
	\rput(5,5){\rnode{8}{}}
	\rput(3.75,4.325){\rnode{9}{}}
	\rput(6.25,4.325){\rnode{10}{}}	
	\ncline{e1}{e2}\ncline{e2}{e3}\ncline{e3}{e1}
	\ncline{2}{e1}\ncline{3}{8}\ncline{4}{e1}
	\ncline{5}{8}\ncline{6}{e1}
	\ncline{3}{9}\ncline{8}{9}
	\ncline{8}{10}\ncline{5}{10}
	\rput(-0.45,-0.45){\Huge $e_1$}
	\rput(10.45,-0.45){\Huge $e_2$}
	\rput(5,9.1){\Huge $e_3$}
	\rput(4.75,-0.75){\Huge $P$}
	\psline[arrowsize=10pt]{->}(5,0)(6.25,0)\rput(5.5,0.35){\huge $v_2$}
	\psline[arrowsize=10pt]{->}(5,0)(5,5)\rput(4.65,2){\huge $v_1$}
	}}
	
\end{pspicture}
\caption{The two possible regular tesselations around $P=\frac{1}{2}(1,1,0)$ and the generators $v_1$, $v_2$ of the lattice $\Z^2_\Delta$.}
\label{CaseNZ2}
\end{center}
\end{figure}

This implies that the lattice is symmetric with respect to the diagonal line $L$ and in particular the regular triangles around $P$ have the same side. Then it follows that the continued fraction $\frac{2r}{a}$ at the vertex $e_3$ has to be symmetric with respect to the middle entry, and the boundary of the Newton polygon must contain a lattice point in $L$. This implies that the expansion of $\frac{2r}{2r-a}$ is also symmetric thus if $x^iy^j\in(\Z^2_\Delta)^\vee$ then $x^jy^i\in(\Z^2_\Delta)^\vee$ for any $i$ and $j$. In other words, we have the condition $a^2\equiv 1$ (mod $2r$) in order to have such symmetric Newton polygon. In addition, the vectors $v_1$ and $v_2$ being a basis of $\Z^2_\Delta$ imply that $a^2\equiv1$ (mod $4r$), so the groups $G$ we are looking for are precisely $G\cong\frac{1}{2r}(1,a,-a-1)$ with $a^2\equiv1$ (mod $2n$).

Conversely, if the group is of the form $G\cong\frac{1}{2r}(1,a,-a-1)$ with $a^2\equiv1$ (mod $4r$), then $\{v_1,v_2\}$ and $\{v_1,-v_2\}$ form a basis of the lattice of $\Delta$. It follows that the line $L$ has to be part of the triangulation and again the distribution of points along $\Delta$ is symmetric with respect to $L$, with regular triangles in $\Delta_1$ and $\Delta_2$ around $P$. The continued fractions at the vertices $e_1$ and $e_2$ are the same, and the one for the vertex $e_3$ is symmetric with respect to the middle term, so by the Craw-Reid method the triangulation of $\Hilb{G}{\C^3}$ and $\Hilb{G/N}{\Hilb{N}{\C^3}}$ are the same, and the result follows.
\end{proof}

\begin{exa} In this example we show a group $G$ with two subgroups, with one of them there is an isomorphism as moduli spaces (hence as varieties) between $\Hilb{G}{\C^3}$ and $\Hilb{G/N}{Y_1}$, and with the other an isomorphism as varieties but in different chamber. 

Let $G=\frac{1}{6}(1,1,4)$. By taking $N=\frac{1}{2}(1,1,0)$ and let $Y_1:=\Hilb{N}{\C^3}$. Then we have an isomorphism $\Hilb{G}{\C^3}\cong\Hilb{G/N}{Y_1}$ as moduli spaces of representations of the McKay quiver. 

{\renewcommand{\arraystretch}{1.25}
\begin{figure}[h]
\begin{center}
\begin{pspicture}(0,0)(1,1.25)
	\psset{arcangle=20,nodesep=2pt}
\rput(0,0.5){
$\theta := 
\begin{tabular}{|c|c|}
\cline{1-2}
$\theta_0$	& $\theta_1$	 \\
\cline{1-2}
$\theta_2$	& $\theta_3$	 \\
\cline{1-2}
$\theta_4$	& $\theta_5$	 \\
\cline{1-2}
\end{tabular}
=
\begin{tabular}{|c|c|}
\cline{1-2}
$-a-\varepsilon(b_1+b_2)$	& $a$	 \\
\cline{1-2}
$-a+\varepsilon b_1$	& $a$	 \\
\cline{1-2}
$-a+\varepsilon b_2$	& $a$	 \\
\cline{1-2}
\end{tabular}$}
\end{pspicture}
\end{center}
\caption{Stability condition for $\Hilb{\Z/3}{\Hilb{\Z/2}{\C^3}}$.}
\label{StabZ6}
\end{figure}}

The crepant resolution $Y_1$ is covered by $U_1\cong\C^3_{y^2,z,x/y}$ and $U_2\cong\C^3_{x^2,z,y/x}$, and $\tau_1^{-1}(0)$ consists of the single $\PP^1$ joining the two toric fixed points with coordinates $(x:y)$. In every open set the action of $G/N$ is isomorphic to $\frac{1}{3}(1,2,0)$, and we have that $\tau_1^{-1}(0)=Y_1^{G/N}$. The corresponding $G$-constellations shown in the Figure below. The chamber $C$ for $\Hilb{G}{\C^3}$ is given by the inequalities
\[
\theta_3>0, \theta_5>0, \theta_2+\theta_3>0, \theta_4+\theta_5>0, \theta_1+\theta_3+\theta_5>0, \theta_0<0, \theta_0+\theta_4<0
\]

\begin{figure}[ht]
\begin{center}
\begin{pspicture}(0,0)(10,2.25)
	\psset{arcangle=15}
\rput(0,-0.25){
\scalebox{0.3}{
	\rput(5,8.65){\rnode{x}{\Huge $\bullet$}}
	\rput(0,0){\rnode{z}{\Huge $\bullet$}}
	\rput(10,0){\rnode{y}{\Huge $\bullet$}}
	\rput(2.5,1.44){\rnode{1}{}}
	\rput(5,2.89){\rnode{2}{}}
	\rput(7.5,4.32){\rnode{3}{\Huge $\bullet$}}
	\ncline{-}{x}{y}\ncline{-}{y}{z}\ncline{-}{z}{x}  
	\ncline{-}{z}{3}
	}}
\rput(1.25,1){$U_1$}
\rput(1.75,0.25){$U_2$}
\psline{->}(3,1)(4,1)	
\rput(4,-0.25){
\scalebox{0.3}{
	\rput(5,8.65){\rnode{x}{\Huge $\bullet$}}
	\rput(0,0){\rnode{z}{\Huge $\bullet$}}
	\rput(10,0){\rnode{y}{\Huge $\bullet$}}
	\rput(2.5,1.44){\rnode{1}{\Huge $\bullet$}}
	\rput(5,2.89){\rnode{2}{\Huge $\bullet$}}
	\rput(7.5,4.32){\rnode{3}{\Huge $\bullet$}}
	\ncline{-}{x}{y}\ncline{-}{y}{z}\ncline{-}{z}{x}  
	\ncline{-}{z}{3}\ncline{-}{x}{1}\ncline{-}{x}{2}
	\ncline{-}{y}{1}\ncline{-}{y}{2}
	}}
{\footnotesize{	
\rput(4.65,0.5){$U_{11}$}
\rput(5.3,0.85){$U_{12}$}
\rput(5.85,1.25){$U_{13}$}
\rput(5,-0.05){$U_{21}$}
\rput(5.65,0.25){$U_{22}$}
\rput(6.25,0.5){$U_{23}$}
}}

\rput(9.5,1){
\scalebox{0.85}{
	${\renewcommand{\arraystretch}{1.25}
	\begin{array}{|c|c|}
		\hline
		U_{11} & \{1,y,y^2,y^3,y^4,y^5\} \\
		U_{12} & \{1,y,y^2,y^3,z,yz\} \\
		U_{13} & \{1,y,z,yz,z^2,y^2\} \\
		U_{21} & \{1,x,x^2,x^3,x^4,x^5\} \\
		U_{22} & \{1,x,x^2,x^3,z,xz\} \\
		U_{23} & \{1,y,z,xz,z^2,xz^2\} \\
		\hline
	\end{array}}$
	}}
\end{pspicture}
\end{center}
\caption{Fan for $\Hilb{\Z/3}{\Hilb{\Z/2}{\C^3}}$ and the corresponding $G$-constellations.}
\end{figure}

For $\Hilb{G/N}{\Hilb{N}{\C^3}}$, according to Definition \ref{theta} the stability is given in Figure \ref{StabZ6} where $a,b\in\Q$ and $0<\varepsilon<<1$, which is contained in $C$. \\

Now take $N=\frac{1}{3}(1,1,1)$. Then we have that $\Hilb{G}{\C^3}\cong\Hilb{G/N}{\Hilb{N}{\C^3}}$ are isomorphic as varieties but in different chambers. In this case $\Hilb{N}{\C^3}$ has 3 open sets 
\[V_1\cong\C^3_{y^3,\frac{z}{y},\frac{x}{y}}, V_2\cong\C^3_{x^3,\frac{z}{x},\frac{y}{x}}, V_3\cong\C^3_{\frac{y}{z},\frac{x}{z},z^3},\]
and $G/N\cong\frac{1}{2}(1,1,0)$ in every open set. Then we obtain the distinguished $G$-constellations shown below. In this case the stability condition is given in Figure \ref{StabZ6-2}, where in particular we have that $\theta_3<0$ so the chamber is different than $C$.

{\renewcommand{\arraystretch}{1.25}
\begin{figure}[h]
\begin{center}
\begin{pspicture}(0,0)(1,1.25)
	\psset{arcangle=20,nodesep=2pt}
\rput(0,0.5){
$\theta := 
\begin{tabular}{|c|c|c|}
\cline{1-3}
$\theta_0$	& $\theta_1$	& $\theta_2$	 \\
\cline{1-3}
$\theta_3$	& $\theta_4$	& $\theta_5$	 \\
\cline{1-3}
\end{tabular}
=
\begin{tabular}{|c|c|c|}
\cline{1-3}
$-a_1-a_2-\varepsilon b$	& $a_1$	& $a_2$	 \\
\cline{1-3}
$-a_1-a_2+\varepsilon b$	& $a_1$	& $a_2$	 \\
\cline{1-3}
\end{tabular}$}
\end{pspicture}
\end{center}
\caption{Stability condition for $\Hilb{\Z/2}{\Hilb{\Z/3}{\C^3}}$.}
\label{StabZ6-2}
\end{figure}}

\begin{figure}[ht]
\begin{center}
\begin{pspicture}(0,0)(10,2.5)
	\psset{arcangle=15}
\rput(0,-0.25){
\scalebox{0.3}{
	\rput(5,8.65){\rnode{x}{\Huge $\bullet$}}
	\rput(0,0){\rnode{z}{\Huge $\bullet$}}
	\rput(10,0){\rnode{y}{\Huge $\bullet$}}
	\rput(2.5,1.44){\rnode{1}{}}
	\rput(5,2.89){\rnode{2}{\Huge $\bullet$}}
	\rput(7.5,4.32){\rnode{3}{}}
	\ncline{-}{x}{y}\ncline{-}{y}{z}\ncline{-}{z}{x}  
	\ncline{-}{x}{2}\ncline{-}{y}{2}\ncline{-}{z}{2}
	}}
\rput(1.2,1){$V_1$}
\rput(1.95,1){$V_3$}
\rput(1.65,0.15){$V_2$}
\psline{->}(3,1)(4,1)	
\rput(4,-0.25){
\scalebox{0.3}{
	\rput(5,8.65){\rnode{x}{\Huge $\bullet$}}
	\rput(0,0){\rnode{z}{\Huge $\bullet$}}
	\rput(10,0){\rnode{y}{\Huge $\bullet$}}
	\rput(2.5,1.44){\rnode{1}{\Huge $\bullet$}}
	\rput(5,2.89){\rnode{2}{\Huge $\bullet$}}
	\rput(7.5,4.32){\rnode{3}{\Huge $\bullet$}}
	\ncline{-}{x}{y}\ncline{-}{y}{z}\ncline{-}{z}{x}  
	\ncline{-}{z}{3}\ncline{-}{x}{1}\ncline{-}{x}{2}
	\ncline{-}{y}{1}\ncline{-}{y}{2}
	}}
{\footnotesize{	
\rput(4.65,0.5){$V_{11}$}
\rput(5.3,0.85){$V_{12}$}
\rput(5.85,1.25){$V_{31}$}
\rput(5,-0.05){$V_{21}$}
\rput(5.65,0.25){$V_{22}$}
\rput(6.25,0.5){$V_{33}$}
}}

\rput(9.5,1){
\scalebox{0.85}{
	${\renewcommand{\arraystretch}{1.25}
	\begin{array}{|c|c|}
		\hline
		V_{11} & \{1,y,y^2,y^3,y^4,y^5\} \\
		V_{12} & \{1,y,y^2,\frac{z}{y},z,yz\} \\
		V_{21} & \{1,x,x^2,x^3,x^4,x^5\} \\
		V_{22} & \{1,x,x^2,\frac{z}{x},z,xz\} \\
		
		V_{31} & \{1,z,z^2,\frac{x}{z},x,xz\} \\
		V_{32} & \{1,z,z^2,\frac{y}{z},y,yz\} \\
		\hline
	\end{array}}$
	}}
\end{pspicture}
\end{center}
\caption{Fan for $\Hilb{\Z/2}{\Hilb{\Z/3}{\C^3}}$ and the corresponding $G$-constellations.}
\end{figure}
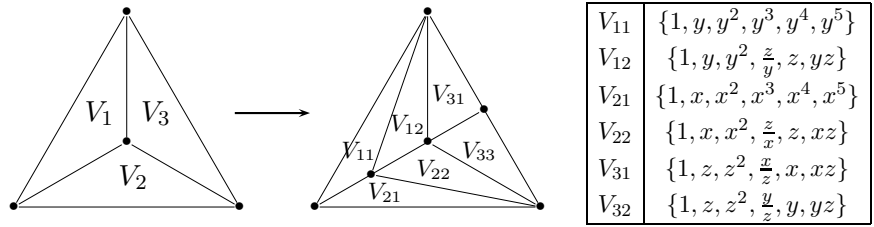

\end{exa}

\subsubsection{Some non-Abelian subgroups of $\SL(3,\C)$}

In this section we present some non-Abelian subgroups $G\subset\SL(3,\C)$ for which $\Hilb{G/N}{\Hilb{N}{\C^3}}$ and $\Hilb{G}{\C^3}$ are non-isomorphic, proving Theorem \ref{thm:isoDim3} (iii).

By the calculations presented in section \ref{Examples} we know that if $G\subset\SO(3)$ is of type $D_{2n}$ or $G_{12}$ and $N$ is the maximal Abelian subgroup of $G$ then the fibre over $0\in\C^3/G$ is different in both spaces. Therefore we conclude that $\Hilb{G/N}{\Hilb{N}{\C^3}}$ is not isomorphic to $\Hilb{G}{\C^3}$.\\

Now consider $G\subset\GL(2,\C)$ to be a small non-Abelian subgroup and let $N=G\cap\SL(2,\C)$. Then we can embed $G$ into $\SL(3, \C)$ to form a non-Abelian intransitive subgroup (c.f.\ type $B$ in \cite{YY}) and we obtain the following result as a consequence of Proposition \ref{prop:non-min}.

\begin{col}\label{col:non-iso}
If $G\subset\SL(3,\C)$ is a non-Abelian small intransitive subgroup and $N=G\cap\SL(2,\C)$ then $\Hilb{G/N}{\Hilb{N}{\C^3}}$ is not isomorphic to $\Hilb{G}{\C^3}$.
\end{col}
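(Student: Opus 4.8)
The statement to prove is Corollary~\ref{col:non-iso}: if $G\subset\SL(3,\C)$ is a non-Abelian small intransitive subgroup and $N=G\cap\SL(2,\C)$, then $\Hilb{G/N}{\Hilb{N}{\C^3}}$ is not isomorphic to $\Hilb{G}{\C^3}$.

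The plan is to reduce the three-dimensional statement to the two-dimensional one already established in Proposition~\ref{prop:non-min}. An intransitive subgroup $G\subset\SL(3,\C)$ of type $B$ in the Yau--Yu classification preserves a decomposition $\C^3=\C^2\oplus\C$, so $G$ acts on $\C^2$ via a small subgroup $\bar G\subset\GL(2,\C)$ and on the $\C$-factor by the inverse determinant character; since $G$ is non-Abelian, so is $\bar G$, and $\bar G\not\subseteq\SL(2,\C)$ because otherwise $G$ would act trivially on the third coordinate and would not be small in $\SL(3,\C)$. Under this identification $N=G\cap\SL(2,\C)$ corresponds to $\bar N=\bar G\cap\SL(2,\C)$, which acts trivially on the $\C$-factor. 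First I would record the standard fact that for a group acting as a product $N\subset\GL(2,\C)\times\{1\}$ on $\C^2\times\C$ one has $\Hilb{N}{\C^3}\cong\Hilb{N}{\C^2}\times\C$, and that this isomorphism is $G/N$-equivariant, so that $\Hilb{G/N}{\Hilb{N}{\C^3}}\cong\Hilb{G/N}{\Hilb{N}{\C^2}}\times\C$. The same product structure applies to $\Hilb{G}{\C^3}\cong\Hilb{\bar G}{\C^2}\times\C$ because $G$ acts as $\bar G$ on $\C^2$ and the $\C$-factor contributes nothing extra to a $G$-cluster (this is the intransitive reduction, well known from the McKay-quiver description).

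With these reductions the three-dimensional comparison becomes the assertion that $\Hilb{G/N}{\Hilb{N}{\C^2}}\times\C$ and $\Hilb{\bar G}{\C^2}\times\C$ are not isomorphic over $\C^3/G=(\C^2/\bar G)\times\C$. By Proposition~\ref{prop:non-min}, $\Hilb{G/N}{\Hilb{N}{\C^2}}$ is \emph{not} a minimal resolution of $\C^2/\bar G$: its canonical bundle is not nef, because the exceptional curve $\widetilde E_1$ over which the ramification-index computation was carried out has $K\cdot\widetilde E_1<0$. On the other hand $\Hilb{\bar G}{\C^2}$ \emph{is} the minimal resolution of $\C^2/\bar G$ (it is the unique crepant resolution of a surface quotient singularity, with $K$ nef — in fact numerically trivial). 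Crossing with $\C$ preserves these properties in the appropriate relative sense: for $\pi\colon W\to \C^2/\bar G$ a resolution, the relative canonical $K_{W\times\C/(\C^2/\bar G)\times\C}$ pulls back from $K_{W/(\C^2/\bar G)}$, so $W\times\C$ is a relatively minimal model over $(\C^2/\bar G)\times\C$ if and only if $W$ is over $\C^2/\bar G$. Hence $\Hilb{\bar G}{\C^2}\times\C$ is relatively minimal while $\Hilb{G/N}{\Hilb{N}{\C^2}}\times\C$ is not, and since a relative minimal model over a fixed normal base is unique up to isomorphism over the base when it exists and is $\Q$-factorial (which holds here), the two cannot be isomorphic as varieties over $\C^3/G$; in particular they are not abstractly isomorphic in a way compatible with the resolution maps.

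The main obstacle, and the point requiring the most care, is making the "times $\C$" argument rigorous: one must check that the non-nefness of $K$ detected on the surface is genuinely inherited by the threefold and is a bona fide obstruction to isomorphism. Concretely, I would argue as follows. An isomorphism $\Hilb{G/N}{\Hilb{N}{\C^3}}\cong\Hilb{G}{\C^3}$ over $\C^3/G$ would, after removing the $\C$-factor (restricting to a fibre, i.e.\ the slice $\{t=0\}$ or noting that both sides are pulled back from surfaces), produce an isomorphism $\Hilb{G/N}{\Hilb{N}{\C^2}}\cong\Hilb{\bar G}{\C^2}$ over $\C^2/\bar G$; but the right-hand side is the minimal resolution and the left-hand side is not, contradiction. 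The slight subtlety is that an abstract isomorphism of the threefolds need not a priori be a product map, so I would instead use the intrinsic birational geometry: both threefolds are crepant resolutions composed with a further blow-up structure over the same normal base, the exceptional locus over the origin of $\C^3/G$ is one-dimensional (as for any $\SO(3)$-type or intransitive small group), and one compares the self-intersection/normal-bundle data of the exceptional curves — exactly the degrees computed in section~\ref{Examples} for the $D_{2n}$ and $G_{12}$ cases. The presence of a curve with normal bundle of degree summing to something other than what appears on $\Hilb{G}{\C^3}$ (equivalently, a curve on which $K$ is negative for $\Hilb{G/N}{\Hilb{N}{\C^3}}$ but $K\equiv 0$ on $\Hilb{G}{\C^3}$) is an isomorphism invariant of the pair (variety, map to $\C^3/G$), and this finishes the proof.
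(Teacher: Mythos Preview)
Your reduction via product decompositions is incorrect. Since $\bar G\not\subset\SL(2,\C)$, the embedding of $G$ into $\SL(3,\C)$ is $g\mapsto\diag(g,\det g^{-1})$, and the action on the third coordinate $z$ is \emph{nontrivial}. Hence $\C^3/G$ is not isomorphic to $(\C^2/\bar G)\times\C$, and neither of the claimed isomorphisms $\Hilb{G}{\C^3}\cong\Hilb{\bar G}{\C^2}\times\C$ and $\Hilb{G/N}{\Hilb{N}{\C^3}}\cong\Hilb{G/N}{\Hilb{N}{\C^2}}\times\C$ holds. (Your first step, $\Hilb{N}{\C^3}\cong\Hilb{N}{\C^2}\times\C$, is fine because $N=G\cap\SL(2,\C)$ does act trivially on $z$; but then $G/N$ acts nontrivially on the $\C$-factor, so the next iterated Hilbert scheme is not a product either.) Your fallback argument via exceptional fibres also stumbles: for these intransitive groups the fibre over the origin is not one-dimensional in general, so the normal-bundle bookkeeping from the $D_{2n}$ and $G_{12}$ cases does not transfer.

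The paper's argument avoids products entirely and uses the correct substitute. The $G$-invariant hyperplane $\{z=0\}\cong\C^2$ descends to a divisor $\C^2/G\subset\C^3/G$, and one compares the \emph{proper transforms} of this divisor under the two crepant resolutions. The proper transform in $\Hilb{G}{\C^3}$ is $\Hilb{G}{\C^2}$, which is the minimal resolution of $\C^2/G$; the proper transform in $\Hilb{G/N}{\Hilb{N}{\C^3}}$ is $\Hilb{G/N}{\Hilb{N}{\C^2}}$, which is non-minimal by Proposition~\ref{prop:non-min}. An isomorphism of the threefolds over $\C^3/G$ would identify these two surfaces over $\C^2/G$, a contradiction. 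The missing idea in your attempt is exactly this: replace ``factor of a product'' by ``proper transform of a subvariety of the base''.
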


\begin{proof}
Both sides are crepant resolutions of $\C^3/G$ and the proper transforms
of $\C^2/G\subset \C^3/G$ to them are $\Hilb{G/N}{\Hilb{N}{\C^2}}$ and
$\Hilb{G}{\C^2}$ respectively.
The former is not a minimal resolution by Proposition \ref{prop:non-min} but the latter is minimal. This implies that the two crepant resolutions are different.
\end{proof}

\bibliographystyle{alpha}

\end{document}